\newtheorem{thm}{Theorem}[subsection]
\newtheorem{lem}[thm]{Lemma}
\newtheorem{prop}[thm]{Proposition}
\newtheorem{cor}[thm]{Corollary}
\theoremstyle{definition}
\newtheorem{de}[thm]{Definition}
\newtheorem{example}[thm]{Example}
\theoremstyle{remark}
\numberwithin{equation}{section}
\def\sideremark#1{\ifvmode\leavevmode\fi\vadjust{\vbox to0pt{\vss
 \hbox to 0pt{\hskip\hsize\hskip1em
 \vbox{\hsize3cm\tiny\raggedright\pretolerance10000
 \noindent #1\hfill}\hss}\vbox to8pt{\vfil}\vss}}}
\def\sideremark#1{\relax} 
\newlength{\sirkaustavorig} %% urci puvodni sirku textu s nazvem ustavu
\newlength{\sirkafakultaorig} %% urci sirku textu s nazvem fakulty
\begin{document}

\begin{titlepage}
\pagestyle{empty}

\changetext{}{11.6mm}{}{}{}
\begin{adjustwidth}{-2mm}{}
\vspace*{-5mm}
 \hspace{-17mm}
\begin{minipage}{28mm}
  \includegraphics[width=28mm]{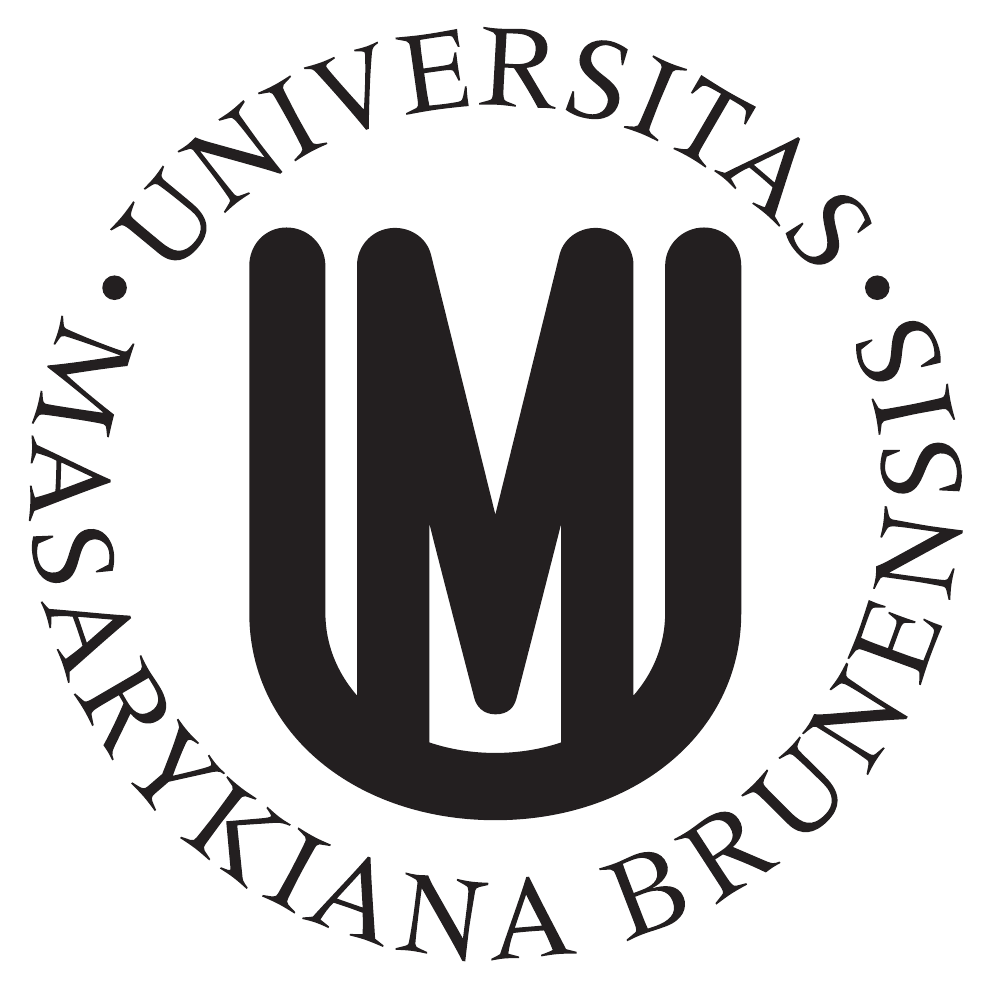}
\end{minipage}\hspace{2mm}
\begin{minipage}{113mm}
  \begin{center}\vspace*{3mm}
    {\fontsize{22pt}{28pt}\selectfont{\textbf{MASARYK UNIVERSITY}}}\\[2mm]
    {\fontsize{21pt}{28pt}\selectfont{\textbf{\textsc{Faculty of Science}}}}\\[1mm]

    \settowidth{\sirkaustavorig}{\fontsize{16pt}{28pt}\selectfont{\textbf{\textsc{Department of Mathematics and Statistics}}}}
   
\settowidth{\sirkafakultaorig}{\fontsize{21pt}{28pt}\selectfont{\textbf{\textsc{P{\v{r}}{\'{i}}rodov{\v{e}}deck{\'{a}}
fakulta}}}}
    \ifthenelse{\lengthtest{\sirkaustavorig > 0.95\sirkafakultaorig}}%
        {\vspace*{-0.5mm}\textbf{\textsc{\scaletowidth{0.95\sirkafakultaorig}{Department of Mathematics and Statistics}}}\\[-1mm]}%
        {\vspace*{-4mm}\fontsize{16pt}{28pt}\selectfont{\textbf{\textsc{Department of Mathematics and Statistics}}}\\[-6mm]}
    \noindent \rule{\textwidth}{2pt}\vspace*{2mm}
  \end{center}
\end{minipage}\hspace{2mm}
\begin{minipage}{28mm}
  \includegraphics[width=28mm]{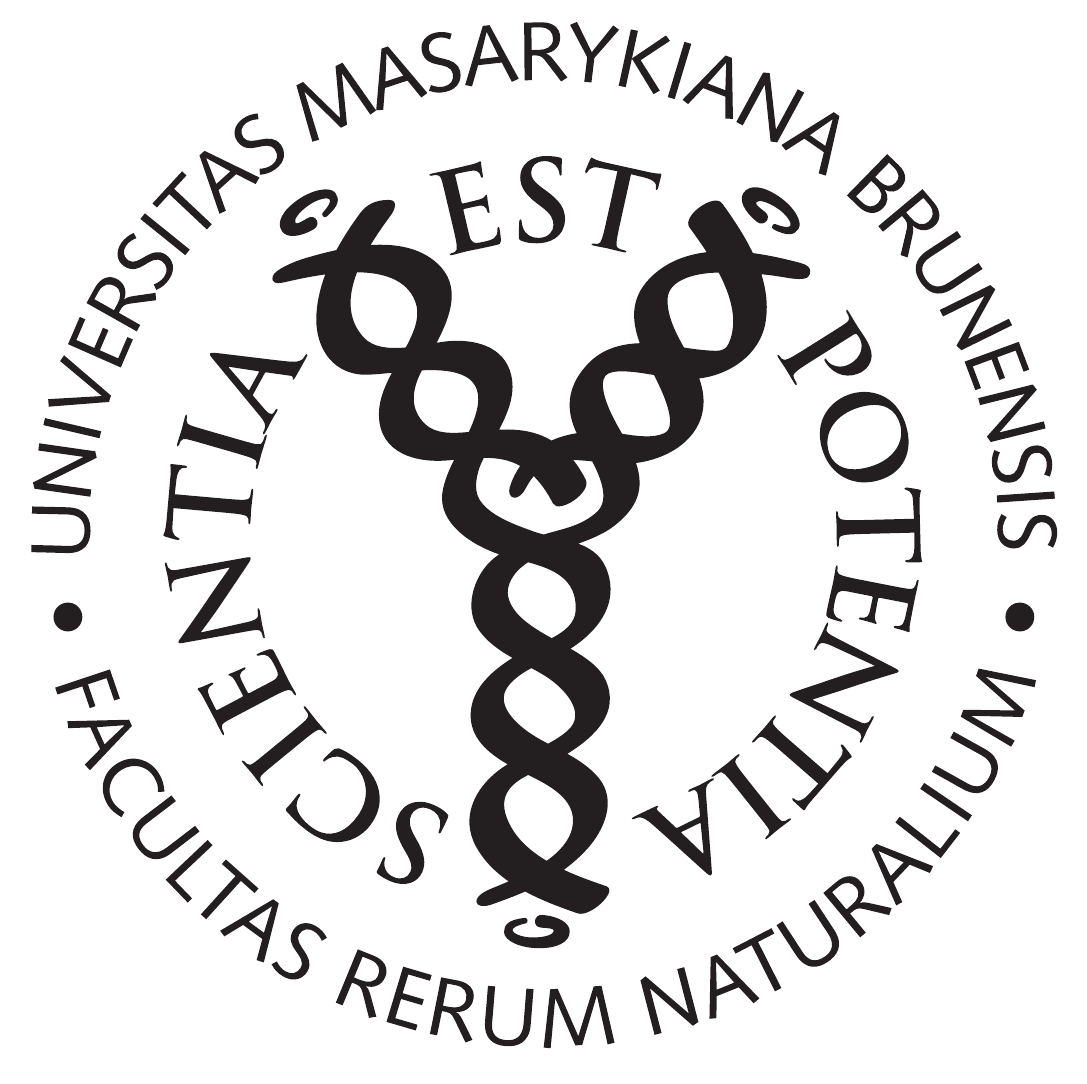}
\end{minipage}
\end{adjustwidth}
\changetext{}{-11.6mm}{}{}{}

 \vfill

\begin{center}
\fontsize{16pt}{1} 
Ph.D. Dissertation
\selectfont 
\bf
\\[24pt]
\fontsize{24pt}{1} 
\selectfont 
Geometric structures invariant to symmetries
\\[36pt]
\rm
\fontsize{20pt}{1} 
\selectfont 
\sc
Jan Gregorovi\v c
\vfill
\fontsize{13pt}{1} 
\selectfont 
Supervisor: Prof. RNDr. Jan Slov\' ak, DrSc.
\\[24pt]
\rule[2pt]{\textwidth}{.5pt}
\rm
Brno, 2012

\end{center}
\end{titlepage}

\thispagestyle{empty}

\section*{Bibliographic entry}
\newlength{\LevySloupecENG}
\settowidth{\LevySloupecENG}{{\bf Degree Programme:} } %% urcit potrebnou sirku leveho sloupce 

\newlength{\PravySloupecENG}
\setlength{\PravySloupecENG}{\textwidth - \LevySloupecENG - 24 pt} %% dopocitat pravy sloupec

\noindent
\begin{tabular}{ p{\LevySloupecENG} p{\PravySloupecENG} }
  {\bf Author:}           &Mgr. Jan Gregorovi\v c \\
                          & Faculty of Science, Masaryk University\\
                          & Department of Mathematics and Statistics \\[6mm]
  {\bf Title of Thesis:}  &Geometric structures invariant to symmetries  \\[6mm]
  {\bf Degree Programme:} &Mathematics (4 years)  \\[6mm]
  {\bf Field of Study:}   &Geometry, Topology and Global Analysis  \\[6mm]
  {\bf Supervisor:}       &Prof. RNDr. Jan Slov\' ak, DrSc. \\
                          & Faculty of Science, Masaryk University\\
                          & Department of Mathematics and Statistics \\[6mm]
  {\bf Academic Year:}    &2011/2012  \\[6mm]
  {\bf Number of Pages:}  & 90 \\[6mm]
  {\bf Keywords:}         &symmetric spaces, geometric structures, Cartan geometries, parabolic geometries, homogeneous spaces, extension functors
\end{tabular}

\cleardoublepage
\thispagestyle{empty}

\section*{Bibliografick{\'{y}} z{\'{a}}znam}
\newlength{\LevySloupecCZ}
\settowidth{\LevySloupecCZ}{{\bf Studijn{\'{i}} program:} } %% urcit potrebnou sirku leveho sloupce 

\newlength{\PravySloupecCZ}
\setlength{\PravySloupecCZ}{\textwidth - \LevySloupecCZ - 24 pt} %% dopocitat pravy sloupec

\noindent
\begin{tabular}{p{\LevySloupecCZ} p{\PravySloupecCZ} }
  {\bf Autor:}            &Mgr. Jan Gregorovi\v c  \\
                          & P{\v{r}}{\'{i}}rodov{\v{e}}deck{\'{a}} fakulta, Masarykova univerzita\\
                          & \'Ustav Matematiky a Statistiky  \\[6mm]
  {\bf N{\'{a}}zev pr{\'{a}}ce:}      &Geometrick\'e struktury invariantn\'i vzhledem k symetri\'im  \\[6mm]
  {\bf Studijn{\'{i}} program:} & Matematika (\v cty\v rlet\'e) \\[6mm]
  {\bf Studijn{\'{i}} obor:}    &Geometrie, Topologie a Glob\'aln\'i anal\'yza \\[6mm]
  {\bf Vedouc{\'{i}} pr{\'{a}}ce:}    &Prof. RNDr. Jan Slov\' ak, DrSc. \\
                          & P{\v{r}}{\'{i}}rodov{\v{e}}deck{\'{a}} fakulta, Masarykova univerzita\\
                          & \'Ustav Matematiky a Statistiky  \\[6mm]
  {\bf Akademick{\'{y}} rok:}   &2011/2012 \\[6mm]
  {\bf Po{\v{c}}et stran:}      &90 \\[6mm]
  {\bf Kl{\'{i}}{\v{c}}ov{\'{a}} slova:}    &symetrick\'e prostory, geometrick\'e struktury, Cartanovy geometrie, parabolick\'e geometie, homogenn\'i prostory, extension funktor
\end{tabular}

\cleardoublepage

\section*{Abstract}

We introduce and discuss (local) symmetries of geometric structures. These symmetries generalize the classical (locally) symmetric spaces to various other geometries. Our main tools are homogeneous Cartan geometries and their explicit description. This allows us to describe the structure of symmetric geometric structures and to provide a general construction of such structures. Since we can view the classical (locally) symmetric spaces as special case, this allows us to classify various geometric structures on semisimple symmetric spaces. Then we investigate the case of symmetric parabolic geometries in detail and obtain classification of symmetric AHS-structures and symmetric parabolic contact geometries in the semisimple cases.

\section*{Abstrakt}

V t\'eto pr\'aci se budeme zab\'yvat (lok\'aln\'imi) symetriemi geometrick\'ych  struktur. Tyto symetrie zobec\v nuj\'i klasick\'e symetrick\'e prostory pro dal\v s\'i geometrie. Hlavn\'imi n\'astroji jsou homogenn\'i Cartanovy geometrie a jejich p\v resn\'y popis. Tyto n\'astroje n\'am umo\v z\v nuj\'i popsat strukturu symetrick\'ych geometrick\'ych struktur a p\v rin\'a\v s\'i obecnou konstrukci t\v echto geometi\'i. To, \v ze jsou symetrick\'e prostory speci\'aln\'im p\v r\'ipadem t\v echto struktur, n\'am umo\v z\v nuje klasifikovat n\v ekter\'e geometrick\'e struktury na polojednoduch\'ych symetrick\'ych prostorech. D\'ale podrobn\v e prozkoum\'ame symetrie parabolick\'ych geometri\'i a v polojednoduch\'em p\v r\'ipad\v e klasifikujeme symetrick\'e AHS-struktury a symetrick\'e parabolick\'e kontaktn\'i struktury.

\thispagestyle{empty} 
\newpage
\thispagestyle{empty} 
\newpage
\quad
\vfill

\begin{center}
\copyright Jan Gregorovi\v c, Masaryk University, 2012
\end{center}
\thispagestyle{empty} 
\newpage

\tableofcontents

\thispagestyle{empty} 
\newpage
\setcounter{page}{1}
\section*{Preface}

Symmetric spaces are fascinating geometric structures studied in different areas of mathematics for more than hundred years. One of the viewpoints on symmetric spaces is that a symmetric space is a smooth connected manifold equipped with special diffeomorphisms, called symmetries, chosen at each point in a such way that the symmetries preserve this chosen structure. This allows a direct generalization to other geometric structures just by assuming that the symmetries preserve the other geometric structure.

The first and third chapter of this work contain description of several geometric structures, for which we want to define the symmetries. This is done in the second chapter and at the end end of the third chapter. The second chapter also explains, how the classical symmetric spaces fit in our approach, and investigates geometric structures on symmetric spaces. In the fourth and fifth chapters, we investigate and construct examples of symmetric parabolic geometries in the simplest cases. In fact, we obtain classification of non-flat symmetric AHS-structures and parabolic contact structures in the case when the groups generated by the symmetries are semisimple, and we explicitly construct many of them.

For the convenience of the reader, the appendices display tables containing  the classification of simple symmetric spaces and tables containing the gradings associated to the parabolic geometries, which we are investigating.

\bf Acknowledgments.\rm I would like to thank my supervisor Jan Slov\'ak for all-round help,
support and encouragement. I would also like to thank Lenka Zalabov\'a for many discussions about the topic. This work was supported by the grant GACR 201/09/H012.

\newpage
\section{Geometric structures}

In this chapter, we summarize the description and basic properties of two types of geometric structures, namely, $P_0$-structures and Cartan connections. The section \ref{1.1} is based on the approach to geometric structures in \cite{odk2} and the description of their automorphism in \cite{odk1}. The section \ref{1.2} is devoted to Cartan geometries and their automorphism and contains results from \cite{odk3}. The section \ref{1.3} deals with special class of Cartan geometries, where the group of automorphisms of the Cartan geometry acts transitively. In this case, there is an explicit description of such geometries, which is contained in \cite{odk4} in the global case and some further details can be found in \cite{odk5}. The description in the local case is then a direct generalization of the global case. This description will provide a general construction of symmetric geometric structures. The section \ref{1.4} contains known results about relations between homogeneous spaces, which provide examples of our construction. The last section \ref{1.5} deals with the case of affine geometries and first order $P_0$-structures, where our approaches to geometric structures coincide. These results can also be found in \cite{odk3}.

%In this part, we summarize results on two types of geometric structures. The results are based on the classical literature, where one can find further details and proofs of the statements. Look for informations on geometric structures and their automorphisms in \cite{odk1}, the prolongations in \cite{odk2}, (homogeneous) Cartan connections in \cite{odk3} and extensions \cite[Ham]{odk4} and \cite{odk5}.

\subsection{$P_0$-structures}\label{1.1}

Let $M$ be a smooth connected manifold of dimension $n$ and let $P^1M$ be the bundle of linear frames over $M$, then $P^1M$ is a principal $GL(n,\mathbb{R})$-bundle over $M$. The fibre of $P^1M$ is the set of all possible frames of the tangent bundle $T_xM$ and $GL(n,\mathbb{R})$ is identified with the set of all transition maps between the frames, as soon as one of the frames is fixed.

\begin{de}
Let $P_0$ be a Lie subgroup of $GL(n,\mathbb{R})$ and $\mathcal{P}_0$ a principal $P_0$-bundle over $M$, then a $P_0$-structure is a $P_0$-equivariant inclusion $i: \mathcal{P}_0\to P^1M$.
\end{de}

In other words, a $P_0$-structure is a choice of a subbundle of $P^1M$ with transition maps from $P_0$. More generally, we can define:

\begin{de}
Let $P_0$ be a Lie group, $j: P_0\to GL(n,\mathbb{R})$ a Lie group homomorphism and $\mathcal{P}_0$ a principal $P_0$-bundle over $M$, then a $P_0$-structure is a principal bundle morphism $i: \mathcal{P}_0\to P^1M$ over $j$.
\end{de}

Obviously, $i(\mathcal{P}_0)$ is an underlying $j(P_0)$-structure. Further, there are the following important examples of $P_0$-structures.

\begin{example}
Restriction $\mathcal{P}_0|_{U}$ of $P_0$-structure $\mathcal{P}_0$ to an open subset $U$ of $M$ is again a $P_0$-structure.
\end{example}

\begin{example}\label{1.1.3}
Let $K$ be a Lie group with Lie subgroup $H$. Let $\mathfrak{h}$ be the Lie algebra of $H$ and $\mathfrak{k}$ be the Lie algebra of $K$. Then the restriction of $Ad$ action of $H$ to the quotient $\mathfrak{k}/\mathfrak{h}$ defines a Lie algebra homomorphism\linebreak $\underline{Ad}: H\to GL(\mathfrak{k}/\mathfrak{h})$. Then we can form the semidirect product $\mathcal{P}_0=\mathfrak{k}/\mathfrak{h}\ltimes_{\underline{Ad}} H$, which is a principal $H$-bundle over $\mathfrak{k}/\mathfrak{h}$. Since $P^1(\mathfrak{k}/\mathfrak{h})\cong \mathfrak{k}/\mathfrak{h}\ltimes Gl(\mathfrak{k}/\mathfrak{h})$, we see, that the choice of a frame $\beta$ of $\mathfrak{k}/\mathfrak{h}$ provides an $H$-structure $\mathcal{P}_0$ over $i_\beta\circ \underline{Ad}$, where $i_\beta: Gl(\mathfrak{k}/\mathfrak{h})\to Gl(n,\mathbb{R})$ is the isomorphism induced by the frame $\beta$.

If $(K,H)$ is a reductive pair i.e. there is an $Ad(H)$-invariant complement of $\mathfrak{h}$ in $\mathfrak{k}$, which we will denote again $\mathfrak{k}/\mathfrak{h}$, then $Ad: H\to GL(\mathfrak{k}/\mathfrak{h})$ and we can form the semidirect product $\mathcal{P}_0=\mathfrak{k}/\mathfrak{h}\ltimes_{Ad} H$. Then the choice of a frame $\beta$ of $\mathfrak{k}/\mathfrak{h}$ provides an $H$-structure $\mathcal{P}_0$ over $i_\beta\circ Ad$.
\end{example}

A diffeomeorphism $\phi: M\to M'$ between two smooth manifolds $M$ and $M'$  induces a principal fibre bundle isomorphism $P^1\phi: P^1M\to P^1M'$. If there is a $P_0$-structure $\mathcal{P}_0'$ on $M'$, then $(P^1\phi)^{-1}i'(\mathcal{P}_0')$ is a $j(P_0)$-structure on $M$.

\begin{de}
Let $i:\mathcal{P}_0\to P^1M,\ i':\mathcal{P}_0'\to P^1M'$ be $P_0$-structures on $M$ and $M'$. Then we say, that a diffeomorphism $\phi: M\to M'$ satisfying $$(P^1\phi)^{-1}i'(\mathcal{P}_0')=i(\mathcal{P}_0)$$ is an isomorphism of $P_0$-structures $\mathcal{P}_0$ and $\mathcal{P}_0'$. 

We will call a diffeomorphism between open subsets $U,\ U' $ of $M,\ M'$ a local isomorphism of $P_0$-structures, if it is an isomorphism of $P_0$-structures $\mathcal{P}_0|_U$ and $\mathcal{P}_0'|_{U'}$.

We say that a $P_0$-structure $\mathcal{P}_0$ is (locally) equivalent to a $P_0$-structure $\mathcal{P}_0'$, if there is a covering $U_i$ of $M$ by open sets together with maps\linebreak $\phi_{U_i}:U_i\to M'$ such, that each $\phi_{U_i}$ is a local isomorphism of $P_0$-structures $\mathcal{P}_0|_{U_i}$ and $\mathcal{P}_0'|_{\phi_{U_i}(U_i)}$.
\end{de}

A vector field $X$ on $M$ defines for small $t\in \mathbb{R}$ a locally defined diffeomorphism $\phi_t$ by sending $x\mapsto Fl^{X}_t(x)$, where $Fl_t^X$ is the flow of the vector field $X$. Then $\phi_t$ is a local one parameter group of locally defined diffeomorphism i.e. $\phi_0=id_M$ and $\phi_{s+t}=\phi_t\circ \phi_s$ for small $t$ and $s$. If the vector field $X$ is complete, then $\phi_t$ are globally defined diffeomorphisms, for sufficiently small $t$ .

\begin{de}
Let $\mathcal{P}_0$ be a $P_0$-structure on $M$, then a vector field is an infinitesimal automorphism, if its flow generates a local one parameter group of local automorphisms of $P_0$-structure $\mathcal{P}_0$.
\end{de}

If the infinitesimal automorphism $X$ is complete, then it generates a local one parameter group of automorphisms of $\mathcal{P}_0$, which is contained in the connected component of the identity of the group of automorphisms of $\mathcal{P}_0$, by the very definition of the flow.

Next we will remind the prolongations of $P_0$-structures.

Let $\mathfrak{p}_0\subset \mathfrak{gl}(n,\mathbb{R})$ be the Lie algebra of $P_0\subset Gl(n,\mathbb{R})$. We define the first prolongation $\mathfrak{p}_1$ as the space of all tensors $t\in S^2\mathbb{R}^n$ such, that the map $v\mapsto t(v,v_1)$ is in $\mathfrak{p}_0$, for all fixed $v_1\in \mathbb{R}^n$ . In fact, $\mathfrak{p}_1$ is the kernel of the differential $\partial: \mathbb{R}^{n*}\otimes \mathfrak{p}_0\to \mathbb{R}^{n*}\wedge \mathbb{R}^{n*}\otimes \mathbb{R}^{n}$. Then we can define subgroup $P_1\subset Gl(\mathbb{R}^n+\mathfrak{p}_0)$ induced by the elements $t\in \mathfrak{p}_1$ as $v\mapsto v+t(.,v)$ for $v\in \mathbb{R}^n$ and $id_{\mathfrak{p}_0}$ on the rest. It is obvious that $\mathfrak{p}_1$ is the Lie algebra of $P_1$. Iterating this process we get the $k$-th prolongation $\mathfrak{p}_k$ and $P_k$.

\begin{de}
If $\mathfrak{p}_k={0}$ for some $k$, then the $P_0$-structures are said to be of finite type of order $k$. If there is no such $k$, then they are said to be of infinite type.
\end{de}

Let $\mathcal{P}_0$ be a $P_0$-structure on $M$ and let $\theta_u: T_u\mathcal{P}_0\to \mathbb{R}^{n}$ be the soldering form on $\mathcal{P}_0$ i.e. map given as the composition of projection to $TM$ and the evaluation of coordinates in the frame $u$. Let $H$ be a complement to vertical subspace in $T_u\mathcal{P}_0$, then $\theta_u$ provides an isomorphism of $\mathbb{R}^{n}$ and $H$ and $T^H_u(X,Y):=d\theta(u)(\theta_u^{-1}(X),\theta_u^{-1}(X))$ is an element of $\mathbb{R}^{n*}\wedge \mathbb{R}^{n*}\otimes \mathbb{R}^{n}$. In \cite{odk2}, it is shown that for any two complements $H_1, H_2$ the difference $T^{H_1}_u(X,Y)-T^{H_2}_u(X,Y)\in \partial(\mathbb{R}^{n*}\otimes \mathfrak{p}_0)$. So there is the unique $T_u:=[T_u^{H}]$ in $\mathbb{R}^{n*}\wedge \mathbb{R}^{n*}\otimes \mathbb{R}^{n}/\partial(\mathbb{R}^{n*}\otimes \mathfrak{p}_0)$ and we define:

\begin{de}
For a $P_0$-structure $\mathcal{P}_0$ on $M$, the map $T:\mathcal{P}_0\to \mathbb{R}^{n*}\wedge \mathbb{R}^{n*}\otimes \mathbb{R}^{n}/\partial(\mathbb{R}^{n*}\otimes \mathfrak{p}_0)$ is called the torsion of the $P_0$-structure $\mathcal{P}_0$.
\end{de}

If we follow \cite{odk2}, then a choice of a complement of $\partial(\mathbb{R}^{n*}\otimes \mathfrak{p}_0)$ in $\mathbb{R}^{n*}\wedge \mathbb{R}^{n*}\otimes \mathbb{R}^{n}$ provides a construction of a $P_1$-structure $\mathcal{P}_1$ on the manifold $\mathcal{P}_0$ called the first prolongation. Iterating this process we get the $k$-th prolongation as $P_k$-structure $\mathcal{P}_k$ on $\mathcal{P}_{k-1}$. The following proposition reduces the problem of equivalence of the $P_0$-structures, the proof can be found in \cite[pp. 336]{odk2}:

\begin{prop}
For any fixed choice of complements, the $P_0$--structures $\mathcal{P}_0$ and $\mathcal{P}_0'$ are locally equivalent if and only if the $k$-th prolongations $\mathcal{P}_k$ and $\mathcal{P}_k'$ are locally equivalent $P_k$-structures.
\end{prop}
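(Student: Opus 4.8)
The plan is to reduce to the single-step case $k=1$ and then iterate, since by construction $\mathcal{P}_k$ is the first prolongation of $\mathcal{P}_{k-1}$ and likewise for the primed structures. For one step it suffices to establish two functorial facts: every local isomorphism of $P_0$-structures lifts canonically to a local isomorphism of the first prolongations, and conversely every local isomorphism of the first prolongations descends to one of the underlying $P_0$-structures. Since local equivalence is defined through an open cover $U_i$, it is enough to argue for genuine local isomorphisms $\phi$ defined on open sets, and then reassemble.

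For the forward implication I would start from the observation that a local isomorphism $\phi$ of $P_0$-structures is a base diffeomorphism whose frame-bundle lift $P^1\phi$ carries $i(\mathcal{P}_0)$ onto $i'(\mathcal{P}_0')$; the induced map $\phi_0\colon\mathcal{P}_0\to\mathcal{P}_0'$ is $P_0$-equivariant and preserves the soldering form $\theta$. The prolongation $\mathcal{P}_1\subset P^1\mathcal{P}_0$ was cut out from $P^1\mathcal{P}_0$ solely by the condition that the structure function $T^H_u$ lie in the fixed complement of $\partial(\mathbb{R}^{n*}\otimes\mathfrak{p}_0)$. As $\phi_0$ preserves $\theta$ and the torsion $T$ is a natural invariant that transforms equivariantly under $P^1\phi_0$, the normalization condition is preserved, so the lift $P^1\phi_0$ maps $\mathcal{P}_1$ onto $\mathcal{P}_1'$ and is $P_1$-equivariant. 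This is the desired local isomorphism of $P_1$-structures; I would note that the whole construction is functorial, which legitimizes the iteration.

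For the converse I would exploit the canonical form on the prolongation. On $\mathcal{P}_1$ there is a tautological $(\mathbb{R}^n\oplus\mathfrak{p}_0)$-valued $1$-form assembling the pulled-back soldering form of $\mathcal{P}_0$ with the $\mathfrak{p}_0$-component encoded by each prolonged frame, and any local isomorphism $\Psi$ of the $P_1$-structures $\mathcal{P}_1,\mathcal{P}_1'$ preserves it. The crucial step is to deduce from this that $\Psi$ respects the two nested vertical distributions, namely the fibers of $\mathcal{P}_1\to\mathcal{P}_0$ and of $\mathcal{P}_0\to M$, so that $\Psi$ is fiber-preserving over a diffeomorphism $\phi\colon\mathcal{P}_0\to\mathcal{P}_0'$; the $\mathbb{R}^n$-part of the tautological form then forces $\phi$ to preserve the soldering form of $\mathcal{P}_0$, whence $\phi$ covers a base diffeomorphism that is a local isomorphism of the $P_0$-structures. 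I expect this descent to be the main obstacle: one must rule out that an abstract diffeomorphism preserving only the prolonged data mixes base and fiber directions, which is precisely the assertion that the prolongation records the original structure faithfully. Granting it, combining the two implications and inducting on $k$ proves the proposition.
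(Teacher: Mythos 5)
The paper offers no proof of this proposition at all --- it defers to Sternberg \cite[pp.~336]{odk2} --- and your sketch is exactly the standard argument given there: the forward direction by functorially lifting a structure-preserving base diffeomorphism through $P^1$ and observing that the fixed normalizing complement makes the lift respect the prolonged subbundle, the converse by descent via the tautological $(\mathbb{R}^n\oplus\mathfrak{p}_0)$-valued form. The descent step you flag as the main obstacle is in fact less delicate than you fear under the paper's definitions: a local isomorphism of the $P_1$-structures is \emph{by definition} a diffeomorphism $\psi$ of (open subsets of) $\mathcal{P}_0$ with $P^1\psi(\mathcal{P}_1)=\mathcal{P}_1'$, so only the descent from $\mathcal{P}_0$ to $M$ remains, and that follows because every adapted frame in $\mathcal{P}_1$ carries the $\mathfrak{p}_0$-summand onto the vertical subspace of $\mathcal{P}_0\to M$; hence $T\psi$ preserves the vertical distribution, $\psi$ descends to some $\phi\colon M\to M'$, and the $\mathbb{R}^n$-component of the canonical form identifies $\psi$ with $P^1\phi$, which shows $\phi$ is a local isomorphism of the $P_0$-structures.
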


An automorphism $\phi_0$ of $\mathcal{P}_0$ induces an automorphism $\phi_1=P^1\phi_0|_{\mathcal{P}_1}$ of $\mathcal{P}_1$. So inductively $\phi_k$ is an automorphism of the $k$-th prolongation. On the other hand, an automorphism of the $k$-th prolongation induces by composing with the projection an underlying mapping, which is clearly an automorphism of the $(k-1)$-st prolongation. Thus the following holds, for detailed proof look in \cite[pp. 22]{odk1}.

\begin{prop}
The group of automorphisms of a $P_0$-structure $\mathcal{P}_0$ is the same as the group of automorphisms of the $k$-th prolongation $\mathcal{P}_k$.

The Lie algebra of infinitesimal automorphisms of a $P_0$-structure $\mathcal{P}_0$ is the same as the Lie algebra of infinitesimal automorphisms of the $k$-th prolongation $\mathcal{P}_k$.
\end{prop}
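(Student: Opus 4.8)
The statement essentially combines the two observations made just above it, so the plan is to turn them into a pair of mutually inverse group homomorphisms and then transport the argument to the infinitesimal level. By induction it suffices to treat the single step $k=1$, i.e. to prove $\mathrm{Aut}(\mathcal{P}_0)\cong\mathrm{Aut}(\mathcal{P}_1)$; composing the resulting isomorphisms along the tower $\mathcal{P}_0,\mathcal{P}_1,\dots,\mathcal{P}_k$ then yields the full claim. Throughout, recall that $\mathrm{Aut}(\mathcal{P}_0)$ is realized as diffeomorphisms of $M$, while $\mathrm{Aut}(\mathcal{P}_1)$ is realized as diffeomorphisms of the total space $\mathcal{P}_0$.

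First I would make the lift precise and check that it is well defined. Given $\phi_0\in\mathrm{Aut}(\mathcal{P}_0)$, the induced map $P^1\phi_0$ on the frame bundle $P^1\mathcal{P}_0$ preserves the soldering form $\theta$ of $\mathcal{P}_0$, since preservation of $\theta$ is exactly what it means for $\phi_0$ to respect the underlying frame structure. Consequently $P^1\phi_0$ preserves the torsion $T$ and, more importantly, the subbundle of adapted frames singled out by the fixed choice of complement of $\partial(\mathbb{R}^{n*}\otimes\mathfrak{p}_0)$ used to build $\mathcal{P}_1$. Hence $P^1\phi_0$ restricts to a map $\phi_1:=P^1\phi_0|_{\mathcal{P}_1}$, which is an automorphism of the $P_1$-structure $\mathcal{P}_1$. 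Functoriality of $P^1$ gives $(\phi_0\circ\psi_0)_1=\phi_1\circ\psi_1$ and $(\mathrm{id})_1=\mathrm{id}$, so $\phi_0\mapsto\phi_1$ is a group homomorphism $L$.

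The converse map is the one indicated in the text: an automorphism $\psi_1$ of $\mathcal{P}_1\subset P^1\mathcal{P}_0$ is a principal bundle morphism over a diffeomorphism $\underline{\psi_1}$ of the base $\mathcal{P}_0$, and because $\mathcal{P}_1$ is an adapted frame subbundle, $\underline{\psi_1}$ is forced to be an automorphism of $\mathcal{P}_0$; this defines a homomorphism $\pi$. The identity $\pi\circ L=\mathrm{id}$ is immediate, since $\phi_1$ covers $\phi_0$ by construction. The crux of the proof — and the step I expect to be the main obstacle — is the reverse identity $L\circ\pi=\mathrm{id}$, which amounts to the uniqueness assertion that an automorphism of $\mathcal{P}_1$ is completely determined by its underlying map on $\mathcal{P}_0$. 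The point is that $P^1\underline{\psi_1}$ already prescribes the action on all of $P^1\mathcal{P}_0\supset\mathcal{P}_1$, and any $\psi_1$ lifting $\underline{\psi_1}$ and preserving $\theta$ must agree with this prescribed action on the adapted frames; one has to verify that the prolongation construction leaves no freedom beyond the base map, which is exactly where the defining property of $\mathfrak{p}_1$ as the prolongation of $\mathfrak{p}_0$ enters.

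Finally I would obtain the Lie algebra statement by differentiating the group isomorphism along flows. If $X$ is an infinitesimal automorphism of $\mathcal{P}_0$, its local flow $\phi_t$ consists of local automorphisms; applying $L$ gives a local one parameter group $(\phi_t)_1$ of automorphisms of $\mathcal{P}_1$, whose generator $X_1$ is the sought infinitesimal automorphism — concretely the natural prolongation of $X$ to $\mathcal{P}_0$. Naturality of this prolongation makes $X\mapsto X_1$ linear and bracket preserving, and the inverse is supplied by projecting flows via $\pi$; together with the uniqueness established above this yields the claimed isomorphism of Lie algebras. Iterating over the tower then completes the argument.
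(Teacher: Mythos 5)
Your argument is correct and follows essentially the same route as the paper, which likewise lifts automorphisms via $\phi_1=P^1\phi_0|_{\mathcal{P}_1}$, projects automorphisms of $\mathcal{P}_k$ to the base, and delegates the details to Kobayashi. You additionally isolate the genuine crux that the paper leaves to that reference — the uniqueness statement $L\circ\pi=\mathrm{id}$, i.e.\ that an automorphism of $\mathcal{P}_1$ preserving the adapted frames must preserve the soldering form and hence coincide with the prolongation of its base map — which is exactly where the content lies.
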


If $P_0$-structures are of finite type of order $k+1$, then there is an $e$-structure on the $k$-th prolongation $\mathcal{P}_k$, which is equivalent to a cross section $\Omega: \mathcal{P}_k\to P^1\mathcal{P}_k$. Then we can define a map $\omega=\theta\circ T_u\Omega: T_u\mathcal{P}_k\to \mathbb{R}^n+\mathfrak{p}_0+\dots+\mathfrak{p}_k$, where $\theta$ is the soldering form on $P^1\mathcal{P}_k$. There is the following trivial example:

\begin{example}
We start with the semidirect product $\mathcal{P}_0=\mathbb{R}^n \ltimes P_0$ over $\mathbb{R}^n$, which is trivially a $P_0$-structure. If $P_0$-structures are of finite type of order $k+1$, then $\mathcal{P}_k=\mathbb{R}^n \ltimes P$, where $P=(\dots(P_0\ltimes P_1)\ltimes \dots )\ltimes P_k$ and $\ltimes$ is a semidirect product of Lie groups induced by the standard actions. If we denote $\mathfrak{g}$ the vector space $\mathbb{R}^n+\mathfrak{p}_0+\dots+\mathfrak{p}_k$, then we extend the structure of the Lie algebra of $P$ to a Lie algebra structure on $\mathfrak{g}$ by defining the missing Lie brackets by $[v_1,v_2]=0$ for $v_1,v_2\in \mathbb{R}^n$ and $[X,v_1]=X(.,\dots,.,v_1)$ for $X\in \mathfrak{p}_0+\dots+\mathfrak{p}_k$,. Then $\omega$ is a $\mathfrak{g}$-valued one form on $\mathcal{P}_k$, which is a linear isomorphism of $T_u\mathcal{P}_k$ and $\mathfrak{g}$ for all $u\in \mathcal{P}_k$. It can be shown, that $\omega$ is $P$-equivariant in this case.
\end{example}

The example might indicate, that there could be a well defined right action of $P$ on $\mathcal{P}_k$ compatible with $\omega$. Generally this is not true. However, if we collect general results from \cite{odk2}, we can formulate the following proposition, using the notation of the previous example:

\begin{prop}
Let $\mathcal{P}_0$ be a $P_0$-structure on $M$ of finite type of order $k+1$. Then:
\begin{enumerate}
\item $p: \mathcal{P}_k\to M$ is a fibre bundle over $M$ with fibre $P_x$ diffeomorphic to $P\subset Gl(\mathbb{R}^n+\mathfrak{p}_0+\dots+\mathfrak{p}_{k-1})$;

\item $\omega$ is a $\mathfrak{g}$-valued one form on $\mathcal{P}_k$;

\item $\omega$ is a linear isomorphism of $T_u\mathcal{P}_k$ and $\mathfrak{g}$ for all $u\in \mathcal{P}_k$.
\end{enumerate}

Moreover, an automorphism $\phi$ of $\mathcal{P}_0$ induces a fibre bundle morphism $\phi_k$ such that the pullback $(\phi_k)^*\omega=\omega$. An infinitesimal automorphism $X$ induces a vector field on $\mathcal{P}_k$ such, that $\mathfrak{L}_X\omega=0$, where $\mathfrak{L}$ is the Lie derivative.
\end{prop}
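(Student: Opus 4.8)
The plan is to assemble the statement from the prolongation theory of \cite{odk2}, proceeding in three stages: first the bundle structure by induction along the prolongation tower, then the coframe from the finite type hypothesis, and finally the behaviour under automorphisms by naturality.

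For item (1) I would argue inductively. By the construction recalled above, each prolongation $\mathcal{P}_i$ is a $P_i$-structure on the manifold $\mathcal{P}_{i-1}$, hence the projection $\mathcal{P}_i\to\mathcal{P}_{i-1}$ is a principal $P_i$-bundle. Composing the projections $\mathcal{P}_k\to\mathcal{P}_{k-1}\to\cdots\to\mathcal{P}_0\to M$ exhibits $p:\mathcal{P}_k\to M$ as an iterated fibration, hence a fibre bundle over $M$. The typical fibre is obtained by iterating the fibres $P_0,P_1,\dots,P_k$; since at each step $P_i$ acts on the previous stage through the standard action, the resulting fibre carries exactly the semidirect product structure $P=(\cdots(P_0\ltimes P_1)\ltimes\cdots)\ltimes P_k$. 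A dimension count $\dim\mathcal{P}_k=n+\dim\mathfrak{p}_0+\cdots+\dim\mathfrak{p}_k=\dim\mathfrak{g}$ confirms that the fibre $P_x$ is diffeomorphic to $P$. I would stress, as the surrounding text already does, that this is merely a fibre bundle and not in general a principal $P$-bundle, so no global right $P$-action is being claimed at this point.

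Items (2) and (3) follow from the finite type hypothesis. Since the $P_0$-structures are of finite type of order $k+1$, we have $\mathfrak{p}_{k+1}=0$, and the theorem of \cite{odk2} provides a canonical $e$-structure on $\mathcal{P}_k$, that is, a cross section $\Omega:\mathcal{P}_k\to P^1\mathcal{P}_k$ of the linear frame bundle. By definition a value $\Omega(u)$ is a linear frame of $T_u\mathcal{P}_k$, i.e. an isomorphism of a fixed model space with $T_u\mathcal{P}_k$; composing with the soldering form $\theta$ on $P^1\mathcal{P}_k$ yields $\omega=\theta\circ T_u\Omega$, which is therefore a one form on $\mathcal{P}_k$ valued in the model space. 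The dimension count above identifies this model space with $\mathfrak{g}=\mathbb{R}^n+\mathfrak{p}_0+\cdots+\mathfrak{p}_k$, giving (2), and since $\Omega(u)$ is a frame, $\omega$ restricts on each $T_u\mathcal{P}_k$ to a linear isomorphism onto $\mathfrak{g}$, giving (3). The genuine content, and the step I expect to be the main obstacle, is the existence of this canonical $e$-structure: one must verify that at finite type the tautological forms accumulated along the tower leave no residual freedom at the top level, so that together they span the whole cotangent space of $\mathcal{P}_k$. This is precisely where $\mathfrak{p}_{k+1}=0$ enters, and it is the point at which I would lean most heavily on the detailed computation in \cite[pp.~336]{odk2}.

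For the moreover part I would invoke naturality of the whole construction. The prolongation procedure and the resulting canonical coframe depend only on the $P_0$-structure, together with the once-and-for-all fixed choices of complements, so they are preserved by isomorphisms of $P_0$-structures. Concretely, an automorphism $\phi$ of $\mathcal{P}_0$ induces the automorphism $\phi_k$ of the $k$-th prolongation, as recorded in the proposition above; because $\phi_k$ carries the canonical frame field $\Omega$ to itself, it satisfies $(\phi_k)^*\omega=\omega$. For an infinitesimal automorphism $X$, its flow consists of local automorphisms $\phi_t$ of $\mathcal{P}_0$; lifting to $\phi_{t,k}$ gives $(\phi_{t,k})^*\omega=\omega$ for all small $t$, and differentiating at $t=0$ yields $\mathfrak{L}_X\omega=0$ for the induced vector field on $\mathcal{P}_k$. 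The only care needed here is to confirm that differentiating the pullback identity indeed produces the Lie derivative along the induced field, which is routine.
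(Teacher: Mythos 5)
Your proposal is correct and coincides with the paper's approach: the paper offers no proof of this proposition at all, saying only that it is obtained by collecting general results from \cite{odk2}, and your three-stage assembly (the tower of principal $P_i$-bundles for the fibre bundle structure, the canonical $e$-structure supplied by the finite type hypothesis $\mathfrak{p}_{k+1}=0$ for the coframe, and naturality of the whole construction for the automorphism statements) is exactly that collection. The one step you leave implicit --- that the fibre over $x$ is genuinely diffeomorphic to $P$ rather than merely of the right dimension --- follows because each $P_i$ with $i\geq 1$ is a contractible vector group, so the principal $P_i$-bundles are trivial over the fibres of the lower stages and the fibre is a product $P_0\times\mathfrak{p}_1\times\dots\times\mathfrak{p}_k\cong P$.
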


If there is a well defined right action of $P$ on $\mathcal{P}_k$ compatible with $\omega$, the proposition leads to the notion discussed in the next section.

\subsection{Cartan geometries}\label{1.2}

The Cartan geometries are the following geometric structures, for more details and proofs look in \cite{odk3}.

\begin{de}
Let $G$ be a Lie group with Lie algebra $\mathfrak{g}$, and $P$ be a Lie subgroup of $G$. Let $p: \mathcal{G}\to M$ be a principal fibre bundle with structure group $P$. We say that $\mathfrak{g}$-valued one form $\omega$ on $\mathcal{G}$ is a Cartan connection if:
\begin{enumerate}
\item $\omega$ is equivariant with respect to the action $r$ of $P$ i.e. $(r^p)^*\omega=Ad(p^{-1})\circ \omega$ for all $p\in P$;

\item $\omega$ reproduces the fundamental vector fields of the action of $P$;

\item $\omega$ is a linear isomorphism of $T_u\mathcal{G}$ and $\mathfrak{g}$ for all $u\in \mathcal{G}$.
\end{enumerate}
The pair $(p: \mathcal{G}\to M,\omega)$ is then called Cartan geometry of type $(G,P)$.
\end{de}

If $\beta$ is a frame of $\mathfrak{g}$, then $\omega^{-1}_u(\beta)$ is a frame of $T_u\mathcal{G}$ and we obtain a cross section $\sigma_{\beta}: \mathcal{G}\to P^1\mathcal{G}$. Further, $T_up\circ\omega^{-1}_u$ induces a map $\mathcal{G}\times \mathfrak{g}\to TM$. Since $\omega^{-1}(X)$ is vertical for $X\in \mathfrak{p}$, we get a vector bundle isomorphism of $\mathcal{G}\times_{\underline{Ad}} \mathfrak{g}/\mathfrak{p}$ and $TM$, where the action $\underline{Ad}$ of $P$ is the restriction of the adjoint action on the quotient space. Let $P^1$ be the kernel of $\underline{Ad}: P \to GL(\mathfrak{g}/\mathfrak{p})$ and $P_0:=P/P^1$. Then $\mathcal{P}_0:=\mathcal{G}/P^1 \cong \operatorname{Im}(P^1p\circ \sigma_{\beta})\subset P^1M$ is a principal fibre bundle with structure group $P_0$ i.e. $P_0$-structure on $M$. So we have shown that:

\begin{lem}\label{1.2.2}
Let $(p: \mathcal{G}\to M,\omega)$ be a Cartan geometry of type $(G,P)$. There is a $P$-structure $\mathcal{G}$ on $M$ over $\underline{Ad}: P \to GL(\mathfrak{g}/\mathfrak{p})$ and there is the underlying $P_0$-structure $\mathcal{P}_0$ on $M$.
\end{lem}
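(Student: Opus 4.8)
The plan is to realize $\mathcal{G}$ itself as the total space of a (generalized) $P$-structure by using $\omega$ together with the base projection $p$ to manufacture, at each point $u$, a linear frame of $T_{p(u)}M$, and then to pass to the quotient by $P^1$ to obtain the honest subbundle of $P^1M$. Almost all of the geometric content is already visible in the discussion preceding the statement; what remains is to check that the constructions are well defined, smooth, and equivariant, so I will concentrate on those points.

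First I would construct the map $i:\mathcal{G}\to P^1M$. Fix $u\in\mathcal{G}$ over $x=p(u)$ and consider the linear map $T_up\circ\omega_u^{-1}:\mathfrak{g}\to T_xM$. By axiom (2) of a Cartan connection, $\omega_u^{-1}(X)$ is the value of the fundamental vector field of $X\in\mathfrak{p}$, hence vertical, so $T_up\circ\omega_u^{-1}$ vanishes on $\mathfrak{p}$ and descends to a linear map $\theta_u:\mathfrak{g}/\mathfrak{p}\to T_xM$. Since $\omega_u$ is a linear isomorphism by axiom (3) and $\omega_u^{-1}(\mathfrak{p})$ is exactly the vertical subspace $\ker T_up$, the induced map $\theta_u$ is injective; as $\dim(\mathfrak{g}/\mathfrak{p})=n=\dim M$, it is an isomorphism. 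Composing with the fixed frame $\beta:\mathbb{R}^n\to\mathfrak{g}/\mathfrak{p}$ yields a linear frame $i(u):=\theta_u\circ\beta\in P^1_xM$, which depends smoothly on $u$ because $\omega$, $p$, and $\beta$ do.

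The step I expect to carry the weight is the equivariance of $i$, which is precisely where axiom (1) enters. From $(r^g)^*\omega=Ad(g^{-1})\circ\omega$ one extracts $\omega_{u\cdot g}^{-1}=T_ur^g\circ\omega_u^{-1}\circ Ad(g)$, and since $p\circ r^g=p$ forces $T_{u\cdot g}p\circ T_ur^g=T_up$, it follows that $\theta_{u\cdot g}=\theta_u\circ\underline{Ad}(g)$. Writing $j:=i_\beta\circ\underline{Ad}$ for the homomorphism $P\to GL(n,\mathbb{R})$ that expresses $\underline{Ad}(g)$ in the frame $\beta$, this rereads as $i(u\cdot g)=i(u)\cdot j(g)$, where the dot on the right is the principal action on $P^1M$. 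Hence $i$ is a principal bundle morphism over $j$, that is, a $P$-structure on $M$ over $\underline{Ad}$ in the sense of the second definition of Section~\ref{1.1}, which is the first assertion.

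For the underlying $P_0$-structure I would pass to the quotient. Because $P^1=\ker\underline{Ad}$, the map $i$ is constant on $P^1$-orbits, so it factors through $\mathcal{P}_0:=\mathcal{G}/P^1$ as a map $\bar\imath:\mathcal{P}_0\to P^1M$, and $\mathcal{P}_0$ is a principal $P_0$-bundle over $M$ with $P_0=P/P^1$. Since $\underline{Ad}$ descends to an \emph{injective} homomorphism $P_0\hookrightarrow GL(\mathfrak{g}/\mathfrak{p})$, the map $\bar\imath$ is injective on each fibre, and as points in distinct fibres lie over distinct base points it is globally injective; thus $\bar\imath$ is a $P_0$-equivariant inclusion identifying $\mathcal{P}_0$ with the subbundle $\operatorname{Im}(P^1p\circ\sigma_\beta)\subset P^1M$. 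This is exactly the underlying $P_0$-structure, which completes the argument. The only genuinely delicate points are the two well-definedness claims (the descent through $\mathfrak{g}/\mathfrak{p}$ and through $P^1$) and the equivariance computation; everything else is bookkeeping with the three Cartan axioms.
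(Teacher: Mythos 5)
Your construction is exactly the one the paper uses: the induced map $T_up\circ\omega_u^{-1}$ descending to an isomorphism $\mathfrak{g}/\mathfrak{p}\to T_{p(u)}M$, the frame $\beta$ turning this into a section into $P^1M$ (the paper's $P^1p\circ\sigma_\beta$), equivariance over $\underline{Ad}$, and the quotient $\mathcal{P}_0=\mathcal{G}/P^1$ with $P^1=\ker\underline{Ad}$. The proposal is correct and just spells out the equivariance and well-definedness checks that the paper leaves implicit.
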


Now, $\mathcal{G}\to \mathcal{P}_0$ is a principal fibre bundle with structure group $P_1$. Since $P_1\subset P$, $\omega$ is clearly a Cartan connection of type $(G,P_1)$ on $\mathcal{G}\to \mathcal{P}_0$. So we can iterate this process for the projection from $\mathcal{G}\to \mathcal{P}_i$ to obtain a sequence $\mathcal{P}_i:=\mathcal{G}/P^{i+1} \subset P^1\mathcal{P}_{i-1}$ of principal fibre bundles with structure groups $P_i:=P/P^{i+1}$. In some cases this will be a prolongation of the $P_0$-structure $\mathcal{P}_0$ (for the rightly chosen complements). For example, this will be the case for all the geometries dealt with in the chapter four of this work, except for the case of projective structures, because $\mathcal{P}_0=P^1M$ in that case.

Now we define morphisms of Cartan geometries.

\begin{de}
A morphism of Cartan geometries $\phi$ between $(p: \mathcal{G}\to M,\omega)$ and $(p': \mathcal{G}'\to M',\omega')$ of the same type $(G,P)$ is a principal bundle morphism $\phi: \mathcal{G}\to \mathcal{G}'$ such, that $\phi^*\omega'=\omega$. 

We say that the Cartan geometry $(p: \mathcal{G}\to M,\omega)$ is locally equivalent to the Cartan geometry $(p': \mathcal{G}'\to M',\omega')$ if there is a covering $U_i$ of $M$ and injective morphisms $\phi_{U_i}$ of  Cartan geometries $(p: \mathcal{G}|_{U_i}\to U_i,\omega|_{U_i})$ and $(p': \mathcal{G}'\to M',\omega')$. We say that they are locally isomorphic if they are locally equivalent in both directions.

We denote $\operatorname{Aut}(\mathcal{G},\omega)$ the group of all automorphisms of $(p: \mathcal{G}\to M,\omega)$.

We will call an isomorphism of Cartan geometries between restrictions $\mathcal{G}|_U$ and $\mathcal{G}|_{U'}$ to two open subsets $U,\ U' $ of $M$ a local automorphism of Cartan geometry $(p: \mathcal{G}\to M,\omega)$ and we denote $\operatorname{Aut}_{loc}(\mathcal{G},\omega)$ the pseudogroup of local automorphisms.
\end{de}

Further, we define infinitesimal version of automorphisms.

\begin{de}
Let $(p: \mathcal{G}\to M,\omega)$ be a Cartan geometry of type $(G,P)$. Then a right invariant vector field $X$ on $\mathcal{G}$ is an infinitesimal automorphism of $(p: \mathcal{G}\to M,\omega)$ if $\mathfrak{L}_X\omega=0$. We denote $\operatorname{Inf}(\mathcal{G},\omega)$ the Lie algebra of all infinitesimal automorphisms of $(p: \mathcal{G}\to M,\omega)$ and we denote $\operatorname{Inf}_{loc}(\mathcal{G},\omega)$ the sheaf of locally defined infinitesimal automorphisms.
\end{de}

The following proposition will be essential for later considerations. It goes back to \cite{odk19}, see also \cite[pp. 96-98]{odk3}:

\begin{prop}\label{1.2.4}
Let $(p: \mathcal{G}\to M,\omega)$ be a Cartan geometry of type $(G,P)$ over a connected manifold $M$ and $U$ an open subset of $M$. Then the dimension of the Lie algebra $\operatorname{Inf}(\mathcal{G}|_U,\omega|_U)$ is at most $\operatorname{dim}(G)$ and $\operatorname{Aut}_{loc}(\mathcal{G},\omega)$ is generated by $\operatorname{Inf}_{loc}(\mathcal{G},\omega)$. The group $\operatorname{Aut}(\mathcal{G},\omega)$ is a Lie group with the Lie algebra consisting of all complete infinitesimal automorphisms. 
\end{prop}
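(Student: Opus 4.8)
\emph{Proof plan.} The decisive structural fact, on which everything rests, is condition (3) in the definition of a Cartan connection: $\omega_u\colon T_u\mathcal{G}\to\mathfrak{g}$ is a linear isomorphism at every point. Thus $\omega$ is an absolute parallelism (an $e$-structure) on $\mathcal{G}$, and in particular $\dim\mathcal{G}=\dim\mathfrak{g}=\dim G$. Automorphisms of $(\mathcal{G},\omega)$ are exactly the diffeomorphisms of $\mathcal{G}$ preserving this parallelism, and (right-invariant) infinitesimal automorphisms are exactly the vector fields $X$ with $\mathfrak{L}_X\omega=0$. The plan is therefore to prove the three assertions by specializing the classical theory of automorphism (pseudo)groups of absolute parallelisms to this setting; the reason the bound is $\dim G$ rather than something larger is precisely that $\dim\mathcal{G}$ already equals $\dim G$.

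For the dimension estimate I would argue by rigidity. Write $\kappa:=d\omega+\tfrac12[\omega,\omega]$ for the curvature and set $f:=\omega(X)\colon\mathcal{G}\to\mathfrak{g}$. Cartan's formula $\mathfrak{L}_X\omega=\iota_X d\omega+d(\iota_X\omega)$ turns the equation $\mathfrak{L}_X\omega=0$ into $df=[f,\omega(\cdot)]-\kappa\bigl(\omega^{-1}(f),\cdot\bigr)$. The right-hand side depends, at each point, only on the value of $f$ there (the forms $\omega$ and $\kappa$ being fixed), and it is \emph{linear} in $f$; hence along any smooth path in $\mathcal{G}$ the function $f$ satisfies a linear ordinary differential equation. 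Consequently, on the connected manifold $\mathcal{G}|_U$ (we may assume $U$ connected, otherwise argue componentwise) an infinitesimal automorphism is uniquely determined by the single value $f(u_0)\in\mathfrak{g}$ at one chosen point $u_0$. The evaluation map $X\mapsto\omega(X)(u_0)$ is thus an injective linear map $\operatorname{Inf}(\mathcal{G}|_U,\omega|_U)\to\mathfrak{g}$, giving $\dim\operatorname{Inf}(\mathcal{G}|_U,\omega|_U)\le\dim\mathfrak{g}=\dim G$.

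The same rigidity controls finite automorphisms. A diffeomorphism $\phi$ with $\phi^*\omega=\omega$ preserves each constant vector field $\omega^{-1}(A)$, $A\in\mathfrak{g}$, and hence intertwines their flows; since the flows of finitely many such fields through a point $u_0$ sweep out a neighbourhood of $u_0$, any local automorphism is determined by its single value $\phi(u_0)$. I would use this to identify, for fixed $u_0$, the germs of local automorphisms sending $u_0$ to a variable point with the points of a submanifold $O\subset\mathcal{G}$ (the pseudo-orbit of $u_0$), via $\phi\mapsto\phi(u_0)$. Running the computation of the previous paragraph in families identifies $T_{u_0}O$ with $\{\,\omega(X)(u_0): X\in\operatorname{Inf}_{loc}\text{ near }u_0\,\}$, i.e. the values at $u_0$ of germs of infinitesimal automorphisms. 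Therefore the time-$t$ flows of elements of $\operatorname{Inf}_{loc}$ already realize all local automorphisms near the identity, which is exactly the assertion that $\operatorname{Aut}_{loc}(\mathcal{G},\omega)$ is generated by $\operatorname{Inf}_{loc}(\mathcal{G},\omega)$.

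Finally, for the global group: by the rigidity just established the action of $\operatorname{Aut}(\mathcal{G},\omega)$ on $\mathcal{G}$ is free, so the orbit map $\Phi\mapsto\Phi(u_0)$ is injective, and transporting a smooth structure from its image equips $\operatorname{Aut}(\mathcal{G},\omega)$ with the structure of a Lie group acting smoothly on $\mathcal{G}$; a one-parameter subgroup corresponds to a complete vector field $X$ with $\mathfrak{L}_X\omega=0$, and conversely every complete infinitesimal automorphism integrates to such a subgroup, so the Lie algebra is precisely the space of complete infinitesimal automorphisms. This last part is the technical heart of the proposition, and the main obstacle: the difficulty is not the rigidity (which is elementary ODE uniqueness) but showing that the image of the orbit map is a closed embedded submanifold and that the resulting group is a Lie group acting properly — that is, upgrading the pointwise uniqueness to genuine smoothness and closedness of $\operatorname{Aut}(\mathcal{G},\omega)$. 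For this step I would invoke the classical results on automorphism groups of absolute parallelisms in \cite{odk19} and \cite[pp.~96--98]{odk3} rather than reprove them.
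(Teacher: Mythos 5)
The paper offers no proof of this proposition at all — it only points to Palais \cite{odk19} and to \cite[pp.~96--98]{odk3} — and your sketch is exactly the standard argument from those sources: the linear first-order system $df=[f,\omega(\cdot)]-\kappa\bigl(\omega^{-1}(f),\cdot\bigr)$ pins an infinitesimal automorphism down by a single value in $\mathfrak{g}$ (giving the bound $\dim G$), a local automorphism is determined by one point because it intertwines the flows of the constant fields $\omega^{-1}(A)$, and the Lie-group structure on $\operatorname{Aut}(\mathcal{G},\omega)$ is the genuinely hard step that you, like the paper, correctly defer to the cited references. So your proposal is sound and takes essentially the same route as the paper's (implicit) proof.
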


Now, the left multiplication by elements $g_1,g_2\in G$ cover the same map on $G/P$ if and only if $g_1=g_2n$, where $n\in G$ is an element with trivial action on $G/P$. Such $n$ form the maximal normal subgroup $N$ of $G$ contained in $P$. The following proposition shows that there is a similar rigidity in the case of Cartan geometries, for proof look in \cite[pp. 75]{odk3}:

\begin{prop}\label{1.2.6}
Let $N$ be the maximal normal subgroup of $G$ contained in $P$. Then if $\phi_1$ and $\phi_2$ are two morphisms of the Cartan geometries $(p: \mathcal{G}\to M,\omega)$ and $(p: \mathcal{G}'\to M',\omega')$ of type $(G,P)$ covering the same diffeomorphism $M\to M'$. Then there is a smooth map $\psi: \mathcal{G}\to N$ such, that $\phi_1(u)=\phi_2(u)\cdot \psi(u)$ for all $u\in \mathcal{G}$.
\end{prop}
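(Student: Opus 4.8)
The plan is to build the fibrewise comparison map $\psi$, convert the condition $\phi_i^*\omega'=\omega$ into a first order differential equation for $\psi$, and then integrate that equation along the $\omega$-constant vector fields to pin down its values. First I would construct $\psi$: since $\phi_1$ and $\phi_2$ are principal $P$-bundle morphisms covering the same diffeomorphism $f\colon M\to M'$, for every $u\in\mathcal G$ the points $\phi_1(u)$ and $\phi_2(u)$ lie in one fibre of $\mathcal G'$, and as $P$ acts freely and transitively on fibres there is a unique $\psi(u)\in P$ with $\phi_1(u)=\phi_2(u)\cdot\psi(u)$. It is smooth, being the division map of the principal bundle applied to $(\phi_2,\phi_1)$, and equivariance of the $\phi_i$ yields $\psi(u\cdot p)=p^{-1}\psi(u)\,p$.

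Next I would extract the differential equation. Writing $\phi_1=r^{\psi}\circ\phi_2$, differentiating, and applying $\omega'$ — using the equivariance $(r^p)^*\omega'=Ad(p^{-1})\circ\omega'$ and the reproduction of fundamental fields, together with $\phi_1^*\omega'=\phi_2^*\omega'=\omega$ — one obtains at every $u$ and every $\xi\in T_u\mathcal G$
\[
\delta\psi(\xi)=\bigl(\mathrm{Id}-Ad(\psi(u)^{-1})\bigr)\,\omega(\xi),
\]
where $\delta\psi=\psi^*\mu$ is the left logarithmic derivative of $\psi$ ($\mu$ the Maurer--Cartan form of $P$). Since $\omega$ maps $T_u\mathcal G$ onto all of $\mathfrak g$ while $\delta\psi$ is $\mathfrak p$-valued, this forces $(\mathrm{Id}-Ad(\psi(u)^{-1}))\mathfrak g\subset\mathfrak p$, i.e. $\underline{Ad}(\psi(u))=\mathrm{Id}$ and $\psi(u)\in P^1$. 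This pointwise conclusion is strictly weaker than membership in $N$, and improving it is the real content of the statement.

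The decisive step is to integrate the displayed identity along the $\omega$-constant fields. Fixing $u_0$ and putting $n_0:=\psi(u_0)$, for $Y\in\mathfrak g$ I take $u(t)=Fl^{\omega^{-1}(Y)}_t(u_0)$, so that $\omega(\dot u(t))=Y$ is constant; the identity then becomes an autonomous first order ODE on $G$ for $t\mapsto\psi(u(t))$, with right hand side $q\mapsto(\mathrm{Id}-Ad(q^{-1}))Y$ and initial value $n_0$. A direct computation shows that $c(t):=e^{-tY}n_0\,e^{tY}$ has exactly this logarithmic derivative and the same initial value, so by uniqueness $\psi(u(t))=e^{-tY}n_0\,e^{tY}$; iterating along broken flow lines gives $\psi=g^{-1}n_0\,g$ at the endpoint reached by $g=e^{t_1Y_1}\cdots e^{t_kY_k}$. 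Here the gap to $N$ finally closes: because $\psi$ is $P$-valued, every such conjugate $g^{-1}n_0\,g$ lies in $P$, and as these $g$ fill a neighbourhood of $e$ in $G$, the element $n_0$ fixes a whole neighbourhood of the base point $eP$ under left translation on the connected homogeneous space $G/P$. Left translation being real-analytic, $n_0$ must then act trivially on all of $G/P$, that is $n_0\in N$; as $u_0$ was arbitrary, $\psi$ is $N$-valued.

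I expect the main obstacle to be precisely this last upgrade. The Cartan-connection axioms only register first order information and hence only give $\psi(u)\in P^1$, which in general is far larger than $N$ (for parabolic $(G,P)$ with $G$ simple, $N$ is central while $P^1$ is a full nilpotent part of $P$). Bridging the two genuinely requires exploiting that $\phi_1$ and $\phi_2$ intertwine the $\omega$-constant framings, i.e. integrating along the fields $\omega^{-1}(Y)$, and then invoking analyticity together with connectedness of $G/P$; a purely pointwise argument cannot reach the conclusion.
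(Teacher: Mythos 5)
The paper does not prove this proposition at all --- it is quoted from \cite[pp. 75]{odk3} --- so there is no internal argument to compare with; judged on its own, your proof is correct and follows the standard route. The fibrewise definition and equivariance of $\psi$, the identity $\delta\psi(\xi)=\bigl(\mathrm{Id}-Ad(\psi(u)^{-1})\bigr)\omega(\xi)$, the observation that pointwise this only yields $\psi(u)\in P^1=\ker(\underline{Ad})$, and the integrated formula $\psi\bigl(Fl^{\omega^{-1}(Y)}_t(u_0)\bigr)=e^{-tY}\psi(u_0)e^{tY}$ all check out, and you correctly locate the real content in the integration step. The one hypothesis you use that the statement does not record is connectedness of $G/P$ (equivalently $G=G^{\circ}P$): the analyticity step only shows that $n_0$ acts trivially on the connected component of $eP$, i.e. that $\psi(u_0)\in\bigcap_{g\in G^{\circ}P}gPg^{-1}$, which coincides with the maximal normal subgroup $N$ exactly when $G/P$ is connected. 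Since this assumption is implicit throughout this part of the paper (cf. proposition \ref{flat2}), it is a caveat rather than a flaw; if you prefer to avoid invoking real-analyticity, you can instead re-run the flow argument at the endpoint of each broken flow line to see that the set $\{g\in G: g^{-1}n_0g\in P\}$ contains the subgroup generated by a neighbourhood of $e$, hence all of $G^{\circ}$, and is right $P$-invariant.
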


So we define:

\begin{de}
The Cartan geometry from the previous proposition is effective if $N$ is trivial and infinitesimally effective if $M$ is connected and $N$ is discrete.
\end{de}

In both cases, $\phi_1=\phi_2\cdot n$ for some $n\in N$ if they cover the same base map. If there is no danger of confusion, we will denote the base map of a morphism with the same symbol. 

Now we define the curvature, which characterizes each Cartan geometry.

\begin{de}
Let $(p: \mathcal{G}\to M,\omega)$ be a Cartan geometry of type $(G,P)$. The curvature form $K$ is a $\mathfrak{g}$-valued two form on $\mathcal{G}$ defined as $$K(\nu,\eta)=d\omega(\nu,\eta)+[\omega(\nu),\omega(\eta)].$$ It is not hard to verify that the curvature vanishes on vertical arguments and so we can define the curvature (function) $\kappa: \mathcal{G}\to \bigwedge^2 (\mathfrak{g}/\mathfrak{p})^*\otimes \mathfrak{g}$ by $$\kappa(u)(X,Y):=K(\omega^{-1}(X),\omega^{-1}(Y))=[X,Y]-\omega([\omega^{-1}(X),\omega^{-1}(Y)](u)).$$

We say that the geometry is torsion free if the images of $\kappa(u)$ are in $\mathfrak{p}$ and we say that the geometry is flat if $\kappa(u)=0$ for all $u\in \mathcal{G}$.
\end{de}

The definition implies the formula $$\kappa=\kappa'\circ \phi$$ for each morphism $\phi$ of Cartan geometries with curvatures $\kappa$ and $\kappa'$. This provides a strong condition on local equivalence of Cartan geometries. If the geometry is flat it leads to complete solution:

\begin{prop}\label{flat1}
Let $\omega_G$ be the Maurer-Cartan form of $G$. Then $(G\to G/P,\omega_G)$ is a flat Cartan geometry and a Cartan geometry $(p: \mathcal{G}\to M,\omega)$ of type $(G,P)$ is locally isomorphic to $(G\to G/P,\omega_G)$ if and only if $(p: \mathcal{G}\to M,\omega)$ is flat.
\end{prop}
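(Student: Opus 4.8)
The statement splits into checking that the model $(G\to G/P,\omega_G)$ is a flat Cartan geometry, and proving the equivalence; for the latter one direction is formal and the other is an integrability argument.

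First I would verify that $\omega_G$ is a Cartan connection of type $(G,P)$: writing $\omega_G|_g=T_g L_{g^{-1}}$, the pointwise isomorphism property and the reproduction of the fundamental vector fields of the right $P$-action are immediate, while $P$-equivariance is the transformation law $(r^p)^*\omega_G=\mathrm{Ad}(p^{-1})\circ\omega_G$ of the Maurer--Cartan form. Flatness is then the Maurer--Cartan equation $d\omega_G+\tfrac12[\omega_G,\omega_G]=0$: inserting it into the definition of $K$ gives $K\equiv0$, hence $\kappa\equiv0$. The ``only if'' half of the equivalence is now formal. If $(\mathcal{G},\omega)$ is locally isomorphic to $(G,\omega_G)$, then on each piece of the cover there is a morphism $\phi$ onto the model, and the naturality $\kappa=\kappa'\circ\phi$ recorded above the proposition forces $\kappa\equiv0$, since the model is flat; as the pieces cover $M$, the geometry $(\mathcal{G},\omega)$ is flat.

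For the ``if'' direction, assume $(\mathcal{G},\omega)$ is flat. By the definition of $K$ and $\kappa$ this is exactly the Maurer--Cartan equation $d\omega+\tfrac12[\omega,\omega]=0$ for $\omega$. The goal is to produce, on a cover of $M$, $P$-equivariant maps $\Phi\colon\mathcal{G}|_{p^{-1}(U_i)}\to G$ with $\Phi^*\omega_G=\omega$; such a $\Phi$ is precisely an injective morphism of Cartan geometries onto an open part of the model, and its base map $U_i\to G/P$ is the required local isomorphism. To build $\Phi$ I would use the classical integrability trick: on $\mathcal{G}\times G$ set $\tau:=\mathrm{pr}_1^*\omega-\mathrm{pr}_2^*\omega_G$ and $D:=\ker\tau$. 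Because $\omega$ and $\omega_G$ are pointwise isomorphisms onto $\mathfrak{g}$, the space $D$ is at each point the graph of $\omega_G^{-1}\circ\omega$, so it is a distribution of constant rank $\dim G=\dim\mathcal{G}$ transverse to the fibres of both projections.

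The crux, and the step I expect to be the main obstacle, is involutivity of $D$. Evaluating $d\tau=\mathrm{pr}_1^*d\omega-\mathrm{pr}_2^*d\omega_G$ on pairs of sections of $D$, the two structure equations turn the $d\omega$ and $d\omega_G$ terms into bracket terms plus the curvature of $\omega$; on $D$ the arguments satisfy $\omega(\xi)=\omega_G(\eta)$, so the bracket terms cancel and $d\tau|_D$ reduces to the curvature of $\omega$, which vanishes by flatness. Hence $d\tau|_D=0$, $D$ is integrable by Frobenius, and each leaf projects locally diffeomorphically to $\mathcal{G}$ and to $G$; writing a leaf as a graph yields a local diffeomorphism $\Phi$ with $\Phi^*\omega_G=\omega$. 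Finally I would establish $P$-equivariance by a flow argument: since $\Phi^*\omega_G=\omega$ and $\omega(\zeta_X)=X=\omega_G(X^L)$ for $X\in\mathfrak{p}$, the map $\Phi$ intertwines the fundamental field $\zeta_X$ with the left-invariant field $X^L$ on $G$, whose flow is right multiplication by $\exp(tX)$; thus $\Phi(u\cdot\exp tX)=\Phi(u)\cdot\exp tX$, giving $\Phi(u\cdot p)=\Phi(u)\cdot p$. Each $\Phi$ is therefore a principal bundle morphism intertwining the connections, and restricting to the cover $\{U_i\}$ of $M$ produces the local isomorphism to $(G\to G/P,\omega_G)$ demanded by the definition. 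Working on sufficiently small pieces sidesteps any global (simple-connectedness) obstruction, which is enough for this local statement.
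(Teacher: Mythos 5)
Your argument is correct and is exactly the standard proof (Maurer--Cartan equation for flatness of the model, naturality of curvature for the easy direction, and the Frobenius argument on the graph distribution $\ker(\mathrm{pr}_1^*\omega-\mathrm{pr}_2^*\omega_G)$ for the converse); the paper itself does not reproduce a proof of this proposition but defers to \cite{odk3}, where this is the argument given. The only point worth tightening is the last step: the flow argument yields $\Phi(u\cdot p)=\Phi(u)\cdot p$ only for $p$ in the identity component of $P$, so for general $P$ one should define the extension of $\Phi$ to all of $p^{-1}(U_i)$ by equivariance and check it is well defined and still pulls back $\omega_G$ to $\omega$, which follows since the distribution $D$ is invariant under the diagonal right $P$-action.
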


We will say, that $(G\to G/P,\omega_G)$ is a homogeneous model (of Cartan geometry of type $(G,P)$). The automorphisms of the homogeneous model are the following:

\begin{prop}\label{flat2}
Let $G/P$ be connected. Then the automorphisms of the homogeneous model $(G\to G/P,\omega_G)$ are exactly the left multiplications by elements of $G$ and any morphism of two restrictions of the homogeneous model to open subsets can be uniquely extended to an automorphism of the homogeneous model i.e. $\operatorname{Aut}(G,\omega_G)=\operatorname{Aut}_{loc}(G,\omega_G)$.
\end{prop}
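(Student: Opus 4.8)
The plan is to prove the two assertions in turn, both resting on a single rigidity principle for maps preserving an absolute parallelism. First I would check that each left translation $L_a\colon G\to G$, $g\mapsto ag$, really is an automorphism of $(G\to G/P,\omega_G)$. Since $L_a$ commutes with every right translation $r^p$ (because $(ag)p=a(gp)$), it is a principal $P$-bundle morphism covering left multiplication by $a$ on $G/P$; and since the Maurer--Cartan form is left invariant, $L_a^*\omega_G=\omega_G$. Thus $\{L_a : a\in G\}\subset\operatorname{Aut}(G,\omega_G)$, and distinct $a$ give distinct $L_a$ because $L_a(e)=a$. The real content is the converse inclusion and the local-to-global statement.

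The key tool I would isolate is a rigidity lemma: if $f_1,f_2$ are two (possibly locally defined) morphisms between opens of $G$ with $f_i^*\omega_G=\omega_G$ that agree at one point $u_0$, then they agree on the connected component of $u_0$ in their common domain. This holds because $\omega_G$ is a pointwise linear isomorphism, so $f_i^*\omega_G=\omega_G$ forces $d(f_i)_u=(\omega_G)_{f_i(u)}^{-1}\circ(\omega_G)_u$; in particular agreement at $u_0$ already forces agreement of the derivatives there. Moreover each $f_i$ intertwines the constant vector fields $\omega_G^{-1}(X)$, which here are exactly the left-invariant fields with flow $g\mapsto g\exp(tX)$, so $f_i(g\exp(tX))=f_i(g)\exp(tX)$ wherever defined. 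Hence the set where $f_1=f_2$ is both open and closed, and the lemma follows. This propagation of agreement along the flows is the main obstacle; everything else is book-keeping around it.

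For the first assertion, given $\phi\in\operatorname{Aut}(G,\omega_G)$ I would set $a=\phi(e)$ and compare $\phi$ with $L_a$; they agree at $e$, so by the lemma they agree on the connected component $G^0$ of the identity. To pass from $G^0$ to all of $G$ using only that $G/P$ is connected, I would note that $G^0$ is an open, hence closed, subgroup, so $G^0P$ is clopen and $G^0P/P$ is clopen in $G/P$; connectedness then gives $G=G^0P$. For $g=g_0p$ with $g_0\in G^0$ and $p\in P$, right $P$-equivariance of both $\phi$ and $L_a$ yields $\phi(g)=\phi(g_0)p=L_a(g_0)p=L_a(g)$. Thus $\phi=L_a$, proving $\operatorname{Aut}(G,\omega_G)=\{L_a:a\in G\}$.

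For the second assertion, let $\phi$ be a local automorphism between $p^{-1}(U)$ and $p^{-1}(U')$ and fix $u_0$ in its domain; put $a=\phi(u_0)u_0^{-1}$, so that $\phi$ and $L_a$ agree at $u_0$. The rigidity lemma shows $\phi=L_a$ on the connected component of $u_0$, and the agreement set is $P$-saturated by equivariance, so (taking $U$ connected, or arguing componentwise) $\phi$ coincides with the restriction of the global automorphism $L_a$. This exhibits $\phi$ as the restriction of an element of $\operatorname{Aut}(G,\omega_G)$, and uniqueness of the extension follows once more from the lemma, since two global automorphisms agreeing on a nonempty open set agree on $G$. Hence $\operatorname{Aut}(G,\omega_G)=\operatorname{Aut}_{loc}(G,\omega_G)$.
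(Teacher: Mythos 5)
Your proof is correct and is the standard argument (the one in \cite{odk3}, which the paper cites rather than reproducing): left translations preserve $\omega_G$, and the rigidity of maps preserving an absolute parallelism --- propagated along the flows of the constant vector fields $\omega_G^{-1}(X)$ and across fibres by $P$-equivariance --- forces any (local) automorphism to coincide with $L_{\phi(u_0)u_0^{-1}}$. The only caveat is the one you already flag in passing: for the extension statement the open set $U$ must be taken connected, since on a disconnected domain a local automorphism could act by different left translations on different components and then would not extend to a single global automorphism.
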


The second part of the proposition is known as the Liouville theorem and offers alternative description of a flat Cartan geometry as follows:

The proposition \ref{flat1} provides us a covering $U_i$ of $M$ and isomorphisms $\phi_{U_i}$ of the Cartan geometry restricted to $U_i$ to a restriction of the homogeneous model. Viewing it as atlas of $M$, the transition functions are then automorphisms of two restrictions of the homogeneous model. Thus they are restrictions of left multiplications by elements of $G$ by the proposition \ref{flat2}.

Conversely, suppose we have given an atlas for a manifold $M$ such, that the images of the charts are open subsets in the homogeneous model and the transition functions are restrictions of left multiplications by elements of $G$. Then we can pull back the appropriate restrictions of the homogeneous model to the domains of the charts and glue them via the isomorphism provided by left multiplications to a principal $P$-bundle over M. The resulting Cartan geometry on $M$ is flat by the construction.

\subsection{(Locally) homogeneous Cartan geometries and extensions}\label{1.3}

Now, we shall investigate the structure of homogeneous Cartan geometries. For further details and proofs look in \cite{odk3}, \cite{odk4} and \cite{odk5}.

\begin{de}
Let $(p: \mathcal{G}\to M,\omega)$ be a Cartan geometry of type $(G,P)$. We say that the geometry is $K$-homogeneous if there is a Lie subgroup $K$ of $\operatorname{Aut}(\mathcal{G},\omega)$, which acts transitively on $M$. If we denote $H$ the stabilizer of a point, then $M$ is the homogeneous space $K/H$. In the notation, we shall omit the prefix $K$ if the group is clear from the context.

We say that a Cartan geometry $(p: \mathcal{G}\to M,\omega)$ of type $(G,P)$ is locally homogeneous if the Cartan geometry is locally isomorphic to a homogeneous Cartan geometry of type $(G,P)$.
\end{de}

There is a similar description of locally homogeneous Cartan geometries $(p: \mathcal{G}\to M,\omega)$ as in the flat case.

More explicitly, there is a homogeneous Cartan geometry $(\mathcal{G}' \to K/H,\omega')$ of type $(G,P)$, a covering $U_i$ of $M$, and local isomorphisms $\phi_{U_i}$ of the Cartan geometry restricted to $U_i$ to $(\mathcal{G}' \to K/H,\omega')$, which gives an atlas of $M$ with values in $K/H$, such that the transition maps are local automorphisms of the homogeneous Cartan geometry $(\mathcal{G}' \to K/H,\omega')$.

Conversely, suppose we have given an atlas $(U_i,\phi_{U_i})$ of $M$ such, that the images of $\phi_{U_i}$ are open subsets of $K/H$ and the transition maps are local isomorphisms of the homogeneous Cartan geometry $(\mathcal{G}' \to K/H,\omega')$. Then we can use as in the flat case the pullbacks and glue them to get a locally homogeneous Cartan geometry.

Now, the following is obvious:

\begin{cor}
Let $(p: \mathcal{G}\to M,\omega)$ be a locally homogeneous Cartan geometry, then $\operatorname{Aut}_{loc}(\mathcal{G},\omega)$ acts transitively on $M$.
\end{cor}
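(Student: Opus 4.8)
The plan is to show that the $\operatorname{Aut}_{loc}(\mathcal{G},\omega)$-orbit of each point of $M$ is open, and then to invoke connectedness of $M$ to conclude that there is a single orbit. By the definition of a locally homogeneous geometry, together with the explicit atlas description preceding the corollary, we have a homogeneous Cartan geometry $(\mathcal{G}'\to K/H,\omega')$, an open cover $\{U_i\}$ of $M$, and local isomorphisms $\phi_{U_i}$ of $(\mathcal{G}|_{U_i},\omega|_{U_i})$ onto restrictions of $(\mathcal{G}',\omega')$ to open subsets of $K/H$. The crucial feature we exploit is that $K\subset\operatorname{Aut}(\mathcal{G}',\omega')$ acts transitively on $K/H$.

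First I would establish local transitivity at an arbitrary point $x\in M$. Pick a chart $U_i\ni x$ and set $y:=\phi_{U_i}(x)\in K/H$. Since $K$ acts transitively, the orbit map $k\mapsto k\cdot y$ is a submersion $K\to K/H$, hence an open map, so the image of a sufficiently small neighborhood of the identity $e\in K$ is an open neighborhood $W$ of $y$ contained in $\phi_{U_i}(U_i)$. For each such $k$ near $e$, the element $k$ is an automorphism of $(\mathcal{G}',\omega')$, and hence the conjugate $\phi_{U_i}^{-1}\circ k\circ\phi_{U_i}$, restricted to the open set where it is defined, is a local automorphism of $(\mathcal{G},\omega)$; on the base it sends $x$ to $\phi_{U_i}^{-1}(k\cdot y)$. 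As $k$ ranges over a neighborhood of $e$, the points $k\cdot y$ fill $W$, so the $\operatorname{Aut}_{loc}(\mathcal{G},\omega)$-orbit of $x$ contains the open neighborhood $\phi_{U_i}^{-1}(W)$ of $x$.

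Next I would package this into the statement that every orbit is open. Being related by a local automorphism carrying one point to another is an equivalence relation on $M$ (local automorphisms contain the identity, are invertible on their domains, and compose), so the orbits partition $M$; and the previous step shows that each orbit contains a neighborhood of each of its points. Finally, since $M$ is connected and is the disjoint union of its open orbits, there can be only one orbit. Hence $\operatorname{Aut}_{loc}(\mathcal{G},\omega)$ acts transitively.

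The main obstacle is the careful transfer of symmetries through the charts in the second step: one must check that $\phi_{U_i}^{-1}\circ k\circ\phi_{U_i}$ is genuinely an isomorphism of Cartan geometries between two restrictions of $(\mathcal{G},\omega)$ — this is exactly where $\phi_{U_i}$ being a \emph{local isomorphism of Cartan geometries}, not merely a diffeomorphism, is used — and that its domain is a nonempty open set containing $x$, which is guaranteed by taking $k$ close to $e$ so that $k\cdot y$ stays inside $W\subset\phi_{U_i}(U_i)$. An alternative route, if one prefers to argue infinitesimally, is to transfer the fundamental vector fields of the $K$-action through $\phi_{U_i}$ to obtain infinitesimal automorphisms on $U_i$ spanning $T_xM$, and then apply Proposition \ref{1.2.4} — that $\operatorname{Aut}_{loc}$ is generated by $\operatorname{Inf}_{loc}$ — to produce local automorphisms moving $x$ in every direction; this again yields open orbits and the same conclusion.
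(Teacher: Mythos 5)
Your proof is correct and is exactly the argument the paper leaves implicit: the corollary is stated with no proof (``the following is obvious'') immediately after the atlas description of locally homogeneous geometries, and the intended reasoning is precisely your transfer of the transitive $K$-action through the charts $\phi_{U_i}$ to get open orbits, followed by connectedness of $M$. Your remark that one must use that the $\phi_{U_i}$ are local isomorphisms of Cartan geometries (not just diffeomorphisms) so that $\phi_{U_i}^{-1}\circ k\circ\phi_{U_i}$ lands in $\operatorname{Aut}_{loc}(\mathcal{G},\omega)$ is the right point to flag, and your argument handles it correctly.
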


So clearly, there is the following solution of the equivalence problem of locally homogeneous Cartan geometries:

\begin{lem}
Locally homogeneous Cartan geometries are locally equi\-valent if and only if they are locally isomorphic.
\end{lem}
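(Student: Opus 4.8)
The plan is to notice that one implication is immediate from the definitions and to concentrate all the work on the converse. By definition \emph{locally isomorphic} means \emph{locally equivalent in both directions}, so locally isomorphic trivially implies locally equivalent. The entire content of the lemma is therefore the implication: if two locally homogeneous Cartan geometries $(p:\mathcal{G}\to M,\omega)$ and $(p':\mathcal{G}'\to M',\omega')$ of type $(G,P)$ satisfy that $\mathcal{G}$ is locally equivalent to $\mathcal{G}'$, then $\mathcal{G}'$ is also locally equivalent to $\mathcal{G}$.

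First I would unwind the hypothesis. Local equivalence of $\mathcal{G}$ with $\mathcal{G}'$ provides a covering $\{U_i\}$ of $M$ together with injective morphisms $\phi_{U_i}:\mathcal{G}|_{U_i}\to\mathcal{G}'$ satisfying $\phi_{U_i}^*\omega'=\omega$. The key local observation is that each such $\phi_{U_i}$ is in fact an isomorphism onto an open subgeometry of $\mathcal{G}'$. Indeed, since $\omega$ and $\omega'$ are pointwise linear isomorphisms (property (3) of a Cartan connection) and $\phi_{U_i}^*\omega'=\omega$, the tangent map of $\phi_{U_i}$ at every point equals $(\omega')^{-1}\circ\omega$ and is an isomorphism; as source and target have the same dimension $\dim\mathfrak{g}$, the map $\phi_{U_i}$ is a local diffeomorphism, hence, being injective, an open embedding. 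Because $\phi_{U_i}$ is a principal $P$-bundle morphism over the identity of $P$, its image is $P$-invariant and therefore of the form $\mathcal{G}'|_{W_i}$ for an open set $W_i=p'(\operatorname{Im}\phi_{U_i})\subset M'$. Thus $\phi_{U_i}:\mathcal{G}|_{U_i}\to\mathcal{G}'|_{W_i}$ is an isomorphism of Cartan geometries, and its inverse $\phi_{U_i}^{-1}:\mathcal{G}'|_{W_i}\to\mathcal{G}$ is an injective morphism in the opposite direction.

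At this stage I would have injective morphisms $\mathcal{G}'|_{W_i}\to\mathcal{G}$, but the sets $W_i$ need not cover $M'$. This is where local homogeneity of $\mathcal{G}'$ enters: by the preceding corollary, $\operatorname{Aut}_{loc}(\mathcal{G}',\omega')$ acts transitively on $M'$. Assuming $M\neq\emptyset$, at least one $W_{i_0}$ is nonempty; fix a point $z_0\in W_{i_0}$. Given an arbitrary $y\in M'$, transitivity of the pseudogroup yields a local automorphism $\alpha$ of $(\mathcal{G}',\omega')$ with $y$ in its domain and $\alpha(y)=z_0$. Shrinking the domain to the open neighbourhood $V_y:=\alpha^{-1}(W_{i_0})$ of $y$, the map $\alpha$ becomes an isomorphism $\mathcal{G}'|_{V_y}\to\mathcal{G}'|_{\alpha(V_y)}\subset\mathcal{G}'|_{W_{i_0}}$, and the composite $\phi_{U_{i_0}}^{-1}\circ\alpha:\mathcal{G}'|_{V_y}\to\mathcal{G}$ is an injective morphism, being a composition of maps preserving the Cartan connections and of injective maps. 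As $y$ ranges over $M'$ the neighbourhoods $V_y$ form a covering of $M'$, so these composites exhibit $\mathcal{G}'$ as locally equivalent to $\mathcal{G}$, which completes the nontrivial implication.

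The main obstacle, and the conceptual heart of the argument, is the passage from \emph{local isomorphisms onto the particular open pieces} $W_i$ to \emph{local isomorphisms over a neighbourhood of every point} of $M'$: a priori the images $W_i$ could miss large parts of $M'$, and there is no reason for a bare local equivalence to be reversible. It is precisely transitivity of the local automorphism pseudogroup, guaranteed by the local homogeneity hypothesis via the corollary, that lets one transport a single local isomorphism to the whole of $M'$. The remaining points---that an injective Cartan morphism is an open embedding onto a saturated open set, and that compositions and restrictions of local automorphisms remain morphisms of the restricted geometries---are routine consequences of the pointwise-isomorphism property of Cartan connections and of the pseudogroup structure.
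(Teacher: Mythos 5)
Your argument is correct and follows exactly the route the paper intends: the paper offers no written proof (it derives the lemma ``clearly'' from the preceding corollary on transitivity of $\operatorname{Aut}_{loc}$), and your proof supplies precisely the missing details --- that an injective Cartan morphism is an open embedding onto a $P$-saturated restriction, and that transitivity of the local automorphism pseudogroup of the locally homogeneous target transports one inverse local isomorphism to a covering of all of $M'$. Nothing to correct.
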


Now, let $(p: \mathcal{G}\to K/H,\omega)$ be a homogeneous Cartan geometry of type $(G,P)$. Then $K$ acts on $\mathcal{G}$ by a fiber preserving action, which commutes with the principal action of $P$. Thus for a fixed $u_0\in \mathcal{G}$ such, that $p(u_0)=eH$, there is a unique Lie group homomorphism $i: H\to P$ defined by $hu_0=u_0i(h)$. Since the action of $K$ is fiber transitive, we get that $\mathcal{G}=K\times_i P$ is the associated bundle to $K\to K/H$ obtained by the action $i$ of $H$ on $P$. The insertion $j(k)=ku_0$ of $K$ to $\mathcal{G}$ induces a map $\alpha=\omega_{u_0} \circ T_ej: \mathfrak{k}\to \mathfrak{g}$. It is easy to check that the pair $(i,\alpha)$ has the following properties.

\begin{de}
Let $P$ be a Lie subgroup of a Lie group $G$ and let $H$ be a Lie subgroup of a Lie group $K$. We say, that pair a $(i,\alpha)$ is an extension of $(K,H)$ to $(G,P)$ if it satisfies:
\begin{itemize}
\item $i:H\to P$ is a Lie group homomorphism
\item $\alpha: \mathfrak{k}\to \mathfrak{g}$ is a linear mapping extending $T_ei:\mathfrak{h}\to \mathfrak{p}$
\item $\alpha$ induces a vector space isomorphism of $\mathfrak{k}/\mathfrak{h}$ and $\mathfrak{g}/\mathfrak{p}$
\item $Ad(i(h))\circ \alpha=\alpha \circ Ad(h)$ for all $h\in H$ i.e. $\alpha$ is a homomorphism of the representations $Ad(H)$ and $Ad(i(H))|_{\operatorname{Im}(\alpha)}$
\end{itemize}
\end{de}

In the case of Cartan geometry $(p: \mathcal{G}\to M,\omega)$ of type $(K,H)$, the extension $(i,\alpha)$ provides a Cartan connection $\omega_\alpha$ of type $(G,P)$ on $\mathcal{G}\times_iP$ by setting $\omega_\alpha|_{T(\mathcal{G}\times_i\{e\})}=\alpha \circ \omega$ and extending it to the entire $\mathcal{G}\times_iP$ by the $P$-action:

\begin{prop}
An extension $(i,\alpha)$ of $(K,H)$ to $(G,P)$ induces a functor from Cartan geometries of type $(K,H)$ to Cartan geometries of type $(G,P)$, mapping $( \mathcal{G}\to M,\omega)$ to $(\mathcal{G}\times_iP\to M,\omega_\alpha)$. 
%The curvature $\kappa_\alpha$ is given by $\kappa_{\alpha}((u,e))(X,Y)=\alpha\circ\kappa(u)(X,Y)+\Phi_\alpha(u,e)(X,Y)$ for $X,Y\in \mathfrak{k}/\mathfrak{h}$, where $\Phi_\alpha$ is determined (as a natural section) by $H$-invariant element $\Phi_\alpha(e,e)(X,Y)=[\alpha(X),\alpha(Y)]-\alpha([X,Y])$.
\end{prop}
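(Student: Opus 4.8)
The plan is to construct the object map and the morphism map of the functor separately, and to isolate the verification that $\omega_\alpha$ is a genuine Cartan connection as the technical heart of the argument; this is essentially the construction that produced an extension from a homogeneous geometry run in reverse, but now applied to an arbitrary Cartan geometry of type $(K,H)$ rather than to $K/H$ itself.

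First I would make the construction explicit. Writing $\mathcal{G}\times_iP$ for the quotient of $\mathcal{G}\times P$ by the $H$-action $h\cdot(u,g)=(uh,i(h)^{-1}g)$, one has the relation $[uh,g]=[u,i(h)g]$, the class $[u,g]$ has the well-defined image $p(u)$ in $M$, and $\mathcal{G}\times_iP\to M$ is a principal $P$-bundle under $[u,g]\cdot p=[u,gp]$. The map $\iota\colon u\mapsto[u,e]$ sends $\mathcal{G}$ onto a reduction $\mathcal{R}$ of this bundle to $i(H)$. On $\mathcal{R}$ I set $\omega_\alpha:=\alpha\circ\omega$, meaning $\omega_\alpha(\iota_*\dot u)=\alpha(\omega(\dot u))$, and on the remaining directions I declare $\omega_\alpha$ to reproduce the fundamental fields of the $P$-action and to be $P$-equivariant. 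These three requirements determine a unique $\mathfrak{g}$-valued one form, since every tangent vector at $[u,g]$ is the sum of $(r^{g})_*$ of a vector tangent to $\mathcal{R}$ at $[u,e]$ and a $P$-fundamental field.

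Then comes the step I expect to be the main obstacle: checking that this prescription is consistent, i.e. that the value forced on $\mathcal{R}$ by the $P$-equivariant extension agrees with $\alpha\circ\omega$. Concretely, the point $[uh,e]$ of $\mathcal{R}$ equals $[u,e]\cdot i(h)$, and computing $\omega_\alpha$ of the velocity of the curve $[u(t)h,e]$ in the two ways gives $\alpha(Ad(h^{-1})\omega(\dot u))$ from the reduction formula, using equivariance of $\omega$ for type $(K,H)$, and $Ad(i(h)^{-1})\alpha(\omega(\dot u))$ from the $P$-action; these coincide precisely because $Ad(i(h))\circ\alpha=\alpha\circ Ad(h)$. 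The matching on vertical $H$-directions is the statement $\alpha|_{\mathfrak{h}}=T_ei$: a fundamental field of $X\in\mathfrak{h}$ in $\mathcal{R}$ is simultaneously the $P$-fundamental field of $T_ei(X)$, and $\omega_\alpha$ reads it as $\alpha(X)$ from the reduction and as $T_ei(X)$ from reproduction, which agree. Thus all four axioms in the definition of an extension are used exactly here. With consistency in hand, equivariance and reproduction of the fundamental fields hold by construction, so only the pointwise isomorphism $T_v(\mathcal{G}\times_iP)\cong\mathfrak{g}$ remains, and by equivariance it suffices to check it at $v=[u,e]$: the composite $\omega_\alpha\circ\iota_*=\alpha\circ\omega_u$ has image $\alpha(\mathfrak{k})$, while the $P$-fundamental fields contribute all of $\mathfrak{p}$, and since $\alpha$ induces an isomorphism $\mathfrak{k}/\mathfrak{h}\cong\mathfrak{g}/\mathfrak{p}$ one gets $\alpha(\mathfrak{k})+\mathfrak{p}=\mathfrak{g}$, so $\omega_\alpha$ is onto; the same isomorphism forces $\dim(\mathcal{G}\times_iP)=\dim M+\dim P=\dim G$, whence $\omega_\alpha$ is a linear isomorphism.

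Finally I would define the functor on morphisms by $\Phi([u,g]):=[\phi(u),g]$ for a morphism $\phi$ of type $(K,H)$. The $H$-equivariance of $\phi$, namely $\phi(uh)=\phi(u)h$, makes $\Phi$ well defined on the quotient, $\Phi$ is visibly $P$-equivariant and covers the base map of $\phi$, and $\Phi^*\omega'_\alpha=\omega_\alpha$ follows by comparing on the reduction, where $\phi^*\omega'=\omega$ yields $\omega'_\alpha(\Phi_*\iota_*\dot u)=\alpha(\omega'(\phi_*\dot u))=\alpha(\omega(\dot u))=\omega_\alpha(\iota_*\dot u)$, and then extending by the $P$-equivariance of both forms. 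Functoriality, i.e. $\mathrm{id}\mapsto\mathrm{id}$ and $(\psi\circ\phi)\times_i\mathrm{id}=(\psi\times_i\mathrm{id})\circ(\phi\times_i\mathrm{id})$, is immediate from the formula for $\Phi$.
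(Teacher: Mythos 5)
Your proposal is correct and follows exactly the construction the paper sketches (the paper states the proposition without proof, merely indicating that $\omega_\alpha$ is defined on $T(\mathcal{G}\times_i\{e\})$ as $\alpha\circ\omega$ and extended by the $P$-action); your consistency check on the reduction, where the equivariance axiom $Ad(i(h))\circ\alpha=\alpha\circ Ad(h)$ and the condition $\alpha|_{\mathfrak{h}}=T_ei$ are used, together with the dimension count $\alpha(\mathfrak{k})+\mathfrak{p}=\mathfrak{g}$, supplies precisely the details the paper leaves to the references.
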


We denote $[k,p]$ the elements of  $K\times_iP$ represented by $(k,p)\in K\times P$.

We see, that the extension provides the structure of the (locally) homogeneous Cartan geometries% (notice that the curvature of the homogeneous models vanishes)
:

\begin{cor}\label{1.3.6}
A homogeneous geometry $(p: \mathcal{G}\to K/H,\omega)$ of type $(G,P)$ is given by an extension of the homogeneous model $(K\to K/H,\omega_K)$ and a locally homogeneous geometry of type $(G,P)$ is locally isomorphic to an extension of the homogeneous model $(K\to K/H,\omega_K)$. The curvature $\kappa$ is given by $$\kappa([e,e])(X,Y)=[\alpha(X),\alpha(Y)]-\alpha([X,Y])$$ for $X,Y\in \mathfrak{k}/\mathfrak{h}$.
\end{cor}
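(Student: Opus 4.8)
The first part of the statement is essentially a repackaging of the construction carried out just before the definition of an extension, so the plan is to make precise in what sense the given geometry agrees with the image of the homogeneous model under the extension functor. Recall from that construction that, after fixing $u_0\in\mathcal{G}$ with $p(u_0)=eH$, one obtains the homomorphism $i\colon H\to P$ via $hu_0=u_0i(h)$, the identification $\mathcal{G}\cong K\times_iP$ with $u_0=[e,e]$, and the linear map $\alpha=\omega_{u_0}\circ T_ej\colon\mathfrak{k}\to\mathfrak{g}$, where $j(k)=ku_0$; moreover $(i,\alpha)$ is an extension of $(K,H)$ to $(G,P)$. Applying the extension functor of the previous proposition to the homogeneous model $(K\to K/H,\omega_K)$ of type $(K,H)$ produces the Cartan geometry $(K\times_iP\to K/H,(\omega_K)_\alpha)$ of type $(G,P)$. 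The content of the first claim is then that, under the identification $\mathcal{G}\cong K\times_iP$, the Cartan connection $\omega$ coincides with $(\omega_K)_\alpha$.

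To prove this equality I would use a transitivity argument. Both forms are $P$-equivariant by the first axiom of a Cartan connection. Both are also $K$-invariant: $\omega$ because $K$ is a group of automorphisms of $(\mathcal{G},\omega)$ by hypothesis, and $(\omega_K)_\alpha$ because the left translations of $K$ are automorphisms of the homogeneous model $(K\to K/H,\omega_K)$ (Proposition \ref{flat2}) and, the extension being a functor, these automorphisms are carried to automorphisms of $(K\times_iP,(\omega_K)_\alpha)$ covering the $K$-action on $K/H$. Since every element of $\mathcal{G}=K\times_iP$ can be written as $k\cdot(u_0\cdot p)$ for suitable $k\in K$ and $p\in P$, the combined $K\times P$-action is transitive and it therefore suffices to check $\omega_{u_0}=((\omega_K)_\alpha)_{u_0}$. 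Now $T_{u_0}\mathcal{G}$ is spanned by the vertical subspace and by $T_ej(\mathfrak{k})$, because $\alpha$ induces an isomorphism $\mathfrak{k}/\mathfrak{h}\cong\mathfrak{g}/\mathfrak{p}$ and hence $\alpha(\mathfrak{k})+\mathfrak{p}=\mathfrak{g}$. On vertical vectors both connections reproduce the fundamental vector fields of the $P$-action, hence agree; on $T_ej(\mathfrak{k})$ both connections, precomposed with $T_ej$, return $\alpha$---for $\omega$ this is the definition of $\alpha$, and for $(\omega_K)_\alpha$ it follows from the defining property $(\omega_K)_\alpha|_{T(K\times_i\{e\})}=\alpha\circ\omega_K$ together with $(\omega_K)_e=\operatorname{id}_{\mathfrak{k}}$. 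Thus $\omega$ and $(\omega_K)_\alpha$ agree at $u_0$, and by $K$-invariance and $P$-equivariance they agree everywhere.

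The local statement is then immediate: a locally homogeneous geometry is by definition locally isomorphic to a homogeneous one, which by the first part is an extension of a homogeneous model, so composing the local isomorphisms gives the claim. For the curvature formula I would evaluate $\kappa$ at $u_0=[e,e]$ using the fundamental vector fields $\zeta_{\tilde X}$ of the $K$-action associated to lifts $\tilde X\in\mathfrak{k}$ of $X\in\mathfrak{k}/\mathfrak{h}$. These satisfy $\zeta_{\tilde X}(u_0)=\omega_{u_0}^{-1}(\alpha(\tilde X))$, and since $K$ acts by automorphisms they are infinitesimal automorphisms, $\mathfrak{L}_{\zeta_{\tilde X}}\omega=0$. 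Because $K(\,\cdot\,,\,\cdot\,)=d\omega(\,\cdot\,,\,\cdot\,)+[\omega(\,\cdot\,),\omega(\,\cdot\,)]$ is tensorial, I may compute it on these extensions. The Lie-derivative identity $\zeta(\omega(\eta))=\omega([\zeta,\eta])$ collapses the structure equation to $d\omega(\zeta_{\tilde X},\zeta_{\tilde Y})(u_0)=\omega([\zeta_{\tilde X},\zeta_{\tilde Y}])(u_0)=-\alpha([\tilde X,\tilde Y])$, using $[\zeta_{\tilde X},\zeta_{\tilde Y}]=-\zeta_{[\tilde X,\tilde Y]}$ for the left action; adding $[\omega(\zeta_{\tilde X}),\omega(\zeta_{\tilde Y})](u_0)=[\alpha(\tilde X),\alpha(\tilde Y)]$ yields the stated formula $\kappa([e,e])(X,Y)=[\alpha(X),\alpha(Y)]-\alpha([X,Y])$.

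The main obstacle I anticipate is bookkeeping around invariance and signs rather than any deep difficulty: one must justify the $K$-invariance of $(\omega_K)_\alpha$ from the functoriality of the extension, verify that $T_ej(\mathfrak{k})$ together with the vertical space really fills out $T_{u_0}\mathcal{G}$, and fix the sign convention $[\zeta_{\tilde X},\zeta_{\tilde Y}]=-\zeta_{[\tilde X,\tilde Y]}$ so that the curvature comes out with the correct sign. One should also check that the formula is independent of the chosen lifts $\tilde X,\tilde Y$, which follows from the compatibility properties of the extension $(i,\alpha)$, in particular that $\alpha$ intertwines $Ad(h)$ and $Ad(i(h))$.
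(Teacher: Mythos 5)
Your argument is correct and is essentially the argument the paper intends: the corollary is stated there without proof as an immediate consequence of the construction of $(i,\alpha)$ from a fixed $u_0$ and of the extension functor, and your verification that $\omega=(\omega_K)_\alpha$ (via $P$-equivariance, $K$-invariance and agreement at $u_0$) together with the structure-equation computation on the fundamental vector fields $\zeta_{\tilde X}$ is exactly the detail being suppressed. The sign bookkeeping and the independence of the choice of lifts both check out, the latter also following directly from the fact that $\kappa$ vanishes on vertical arguments.
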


The next proposition computes the infinitesimal automorphisms of a (locally) homogeneous Cartan geometry. Our formulation expands the formula in \cite[Theorem 2.5]{odk4} in a form suitable for our computations later on.

\begin{prop}\label{lab_1}
Let $(p: \mathcal{G}\to M,\omega)$ be a (locally) homogeneous Cartan geometry. Let $U$ be a simply connected, connected neighborhood of $p(u)\in M$. Let us further identify $\operatorname{Inf}(\mathcal{G}|_U,\omega|_U)$ as a subset in $\mathfrak{g}$ using the values at $u$.  Then $\operatorname{Inf}(\mathcal{G}|_U,\omega|_U)$ is the set of $X\in \mathfrak{g}$, such that $\sum_i R^i(X)=0$, where $R^i: \mathfrak{g}\to \bigwedge^{(i+2)} \mathfrak{k}^*\otimes\mathfrak{g}$ is defined inductively as:
$$R^0(X_1,X_2)(X)=[\kappa(X_1,X_2),X]+\kappa([\alpha(X_1),X],\alpha(X_2))-\kappa([\alpha(X_2),X],\alpha(X_1))$$
 \[
\begin{split}
\iota_{X_{i+2}}(R^i(X))=&-R^{i-1}([\alpha(X_{i+2}),X]+\kappa(X,\alpha(X_{i+2})))\\
&-[\alpha(X_2),R^{i-1}(X)]-\kappa(R^{i-1}(X),\alpha(X_2)).
\end{split}
\]
Using our identification, the bracket of $X,Y\in \operatorname{Inf}(\mathcal{G}|_U,\omega|_U)$ is given by $\kappa(u)(X,Y)-[X,Y]\in \mathfrak{g}$.
\end{prop}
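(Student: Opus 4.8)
The plan is to characterise infinitesimal automorphisms by a first-order differential equation and then to read the conditions $R^i(X)=0$ as the integrability conditions of an associated linear connection on a trivial bundle. For an infinitesimal automorphism $\xi$ with $f=\omega(\xi):\mathcal{G}\to\mathfrak{g}$, Cartan's formula $\mathfrak{L}_\xi\omega=d\iota_\xi\omega+\iota_\xi d\omega$ together with the structure equation $d\omega(\nu,\eta)=K(\nu,\eta)-[\omega(\nu),\omega(\eta)]$ gives, after evaluating on $\eta=\omega^{-1}(Y)$, the equation $$\omega^{-1}(Y)\cdot f=[f,Y]-\kappa(\bar f,\bar Y),$$ so that $\mathfrak{L}_\xi\omega=0$ is equivalent to this system along the frames $\omega^{-1}(Y)$. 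The same computation, applied to two solutions with values $X,Y\in\mathfrak{g}$ at $u$, immediately yields $\omega_u([\xi,\eta])=\kappa(u)(X,Y)-[X,Y]$, which is the asserted bracket formula; I would dispose of this part first, since it needs nothing beyond the structure equation.

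Next I would exploit homogeneity. Writing the geometry as an extension of $(K\to K/H,\omega_K)$ as in Corollary~\ref{1.3.6}, the insertion $j:K\to\mathcal{G}$, $k\mapsto[k,e]$ with $j(e)=u$, satisfies $j^*\omega=\alpha\circ\omega_K$, and because every element of $K$ acts by an automorphism the relation $\kappa=\kappa\circ\phi$ forces $\kappa$ to be constant along $j(K)$, equal to $\kappa(u)$. Since $f$ is recovered from its restriction $F:=f\circ j:K\to\mathfrak{g}$ by $P$-equivariance, the system above reduces to the constant-coefficient linear system $L_ZF=A_Z(F)$ for $Z\in\mathfrak{k}$, where $A_Z(Y):=[Y,\alpha(Z)]-\kappa(\bar Y,\overline{\alpha(Z)})$ and $L_Z$ is the invariant field with $\omega_K(L_Z)=Z$. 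Thus $\operatorname{Inf}(\mathcal{G}|_U,\omega|_U)$ is identified, through $X=F(e)=f(u)$, with the space of initial values admitting a parallel section on $U$ of the linear connection $\nabla_Z=L_Z-A_Z$ on the trivial bundle $U\times\mathfrak{g}$.

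The core of the argument is then the integrability analysis of $\nabla$. On the simply connected $U$ a parallel section with prescribed value $X$ at $u$ exists precisely when $X$ is annihilated by the curvature $R^\nabla$ of $\nabla$ and by all its iterated covariant derivatives; finiteness of the chain is guaranteed by Proposition~\ref{1.2.4}, which bounds $\dim\operatorname{Inf}$ by $\dim G$. I would define $R^0$ to be this curvature, viewed as a map $\mathfrak{g}\to\bigwedge^2\mathfrak{k}^*\otimes\mathfrak{g}$, and $R^{i}$ to be its $i$-th covariant derivative, so that the existence condition becomes $\sum_iR^i(X)=0$. Computing $$R^\nabla(Z,W)=-A_{[Z,W]}+[A_Z,A_W]$$ (the $L$-derivatives drop because the coefficients are constant) and expanding $[A_Z,A_W]$ reproduces $R^0$, while the Leibniz rule for $\nabla$ together with the rewriting of $L_{X_{i+2}}$ through the basic equation produces the stated inductive step: the ``modified argument'' $[\alpha(X_{i+2}),X]+\kappa(X,\alpha(X_{i+2}))$ is exactly how $L_{X_{i+2}}$ acts on the initial-value slot, and the terms $[\alpha(\cdot),R^{i-1}(X)]$, $\kappa(R^{i-1}(X),\alpha(\cdot))$ are $-A_{\cdot}$ applied to $R^{i-1}(X)$.

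The step I expect to be the real obstacle is matching the raw expression for $R^\nabla$ with the closed form of $R^0$: this forces one to invoke the Jacobi identity in $\mathfrak{g}$ and, crucially, the first Bianchi identity for the Cartan curvature $\kappa$ to absorb the terms $[[Y,\alpha(Z)],\alpha(W)]$ and $\kappa(\kappa(\cdots),\cdots)$ into the three-term shape of $R^0$. The analogous bookkeeping, carried through the covariant-derivative recursion, is what must be controlled to land precisely on the displayed formula for $\iota_{X_{i+2}}R^i$.
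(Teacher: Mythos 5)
The paper itself gives no proof of this proposition: it is stated as an expansion of \cite[Theorem 2.5]{odk4} and the argument is deferred to that reference, so there is nothing internal to compare against. Your reconstruction is exactly the expected one behind that result --- the first-order equation $\omega^{-1}(Y)\cdot f=[f,Y]-\kappa(f,Y)$ from $\mathfrak{L}_\xi\omega=0$, the bracket formula from the structure equation, reduction by homogeneity to a constant-coefficient connection $\nabla_Z=L_Z-A_Z$ on $U\times\mathfrak{g}$, and the identification of the $R^i$ with the curvature and its iterated covariant derivatives --- and it is sound in outline. The one point to tighten is the sufficiency direction of the Frobenius step: it rests not on Proposition \ref{1.2.4} but on the fact that, by $K$-invariance, the decreasing chain of joint kernels consists of constant subspaces of $\mathfrak{g}$, hence stabilizes to an $A_Z$-invariant subspace on which $\nabla$ restricts to a flat connection; simple connectedness of $U$ then produces the parallel sections (the residual sign discrepancies you flag are real but match the evident typographical slips in the displayed recursion, e.g.\ $X_2$ for $X_{i+2}$).
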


Also we can solve the problem of local isomorphism, and thus of the local equivalence, of the (locally) homogeneous Cartan geometries.

\begin{prop}\label{1.4.7}
Let $(i,\alpha)$ and $(\hat{i},\hat{\alpha})$ be two extension of $(K,H)$ to $(G,P)$. Then the extended geometries are locally isomorphic if and only if there are $p_0\in P$ and a Lie algebra automorphism $\sigma$ of $K$ preserving $H$ such, that $\hat{i}(\sigma(h))=p_0i(h)p_0^{-1}$ and $\hat{\alpha}=Ad_{p_0^{-1}}\circ \alpha\circ T\sigma$. The local isomorphism is given by $(k,p)\mapsto(\sigma(k),p_0p)$.
\end{prop}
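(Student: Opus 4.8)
The plan is to work with the explicit homogeneous models provided by Corollary \ref{1.3.6}: the extension $(i,\alpha)$ gives the Cartan geometry $(K\times_i P\to K/H,\omega_\alpha)$ and $(\hat i,\hat\alpha)$ gives $(K\times_{\hat i}P\to K/H,\omega_{\hat\alpha})$, with basepoints $u_0=[e,e]$ and $\hat u_0=[e,e]$. Throughout I use the coordinates $[k,p]$ and the pullback of $\omega_\alpha$ to $K\times P$, which (in the conventions fixed above) has the form $\operatorname{Ad}(p^{-1})\circ\alpha\circ\omega_K+\omega_P$, where $\omega_K,\omega_P$ are the left Maurer--Cartan forms. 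I would then prove the two implications separately.

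For the ``if'' direction, given $p_0$ and $\sigma$ I set $\Phi([k,p])=[\sigma(k),p_0p]$ and check it is an isomorphism. The verification splits cleanly. First, $\Phi$ is well defined on $K\times_i P$ precisely because $[kh,i(h)^{-1}p]$ and $[k,p]$ must have equal images; using $\sigma(kh)=\sigma(k)\sigma(h)$ and the defining relation in $K\times_{\hat i}P$ this reduces exactly to $\hat i(\sigma(h))=p_0 i(h)p_0^{-1}$, the first condition. Second, $P$-equivariance is immediate from $\Phi([k,p]q)=[\sigma(k),p_0pq]$, and $\Phi$ covers the map $kH\mapsto\sigma(k)H$ of $K/H$, which is a well-defined diffeomorphism exactly because $\sigma$ is an automorphism with $\sigma(H)=H$. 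Finally, lifting to $\tilde\Phi=\sigma\times L_{p_0}$ on $K\times P$ and using $\sigma^*\omega_K=T\sigma\circ\omega_K$ together with the left-invariance $L_{p_0}^*\omega_P=\omega_P$, a direct computation shows that $\Phi^*\omega_{\hat\alpha}=\omega_\alpha$ is equivalent to the second compatibility condition relating $\hat\alpha,\alpha,\operatorname{Ad}_{p_0}$ and $T\sigma$. Hence $\Phi$ is a (global, and in particular local) isomorphism.

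For the ``only if'' direction I start from a local isomorphism $\psi$ and first normalize it. Since both geometries are $K$-homogeneous, $K\subset\operatorname{Aut}$ acts transitively, so composing $\psi$ in the target with a left translation $L_c$, $c\in K$, I may assume its base map fixes $eH$; then $\psi(u_0)$ lies in the fibre over $eH$ and I read off $p_0\in P$ from $\psi(u_0)=[e,p_0]=\hat u_0\,p_0$. To produce $\sigma$ I use the rigidity of morphisms of Cartan geometries: a morphism intertwines the absolute parallelisms and is therefore determined by its value at a single point (the mechanism behind Proposition \ref{1.2.6}), while the Liouville-type extension for homogeneous geometries, the analogue of Proposition \ref{flat2}, lets local automorphisms extend to global ones inside the transitive action. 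Conjugating the left $K$-action of the source through $\psi$ then yields, for each $k\in K$, a global automorphism of the target of the form $L_{\sigma(k)}$, so that $\sigma\colon K\to K$ is a Lie group automorphism, preserving $H$ because $\psi$ fixes $eH$ and $H$ is its stabiliser. Having $\sigma$ and $p_0$, I verify the two identities by evaluating at $u_0$: the defining relations $h\cdot u_0=u_0\,i(h)$ and $\alpha=\omega_{u_0}\circ T_e j$ with $j(k)=k\cdot u_0$, combined with $\psi\circ L_k=L_{\sigma(k)}\circ\psi$, $\psi(u_0)=\hat u_0\,p_0$ and $P$-equivariance, give $\hat i(\sigma(h))=p_0 i(h)p_0^{-1}$, and differentiating while using $\psi^*\omega_{\hat\alpha}=\omega_\alpha$ yields the required relation between $\hat\alpha$ and $\alpha$.

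I expect the main obstacle to be exactly this construction of $\sigma$ in the necessity direction: converting the analytic datum of a local isomorphism into an honest Lie group automorphism of $K$ preserving $H$. The delicate points are (i) invoking the correct rigidity and extension statements so that $\psi\circ L_k\circ\psi^{-1}$ is forced to be a \emph{global} automorphism, and (ii) ensuring that this automorphism again lies in the transitive $K$-action and not merely in the possibly larger group $\operatorname{Aut}$, which is where homogeneity of both geometries under the \emph{same} group $K$ is essential. Once $\sigma$ is in hand the remaining identities are routine evaluations at $u_0$, and the uniqueness half of the rigidity shows that $\psi$ coincides with $(k,p)\mapsto(\sigma(k),p_0p)$, closing the argument.
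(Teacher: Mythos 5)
Your sufficiency argument is correct and is essentially the routine verification the paper leaves implicit. The genuine problem is in the necessity direction, at exactly the point you flag but do not close: you need $\psi\circ L_k\circ\psi^{-1}$ to be a left translation $L_{\sigma(k)}$ with $\sigma(k)\in K$, and nothing you invoke delivers this. Proposition \ref{1.2.6} and the rigidity of Cartan morphisms only tell you that $\psi\circ L_k\circ\psi^{-1}$ is a well-defined (local) automorphism of the target determined by its one-jet at a point; they do not tell you it lies in the copy of $K$ inside $\operatorname{Aut}_{loc}$. The "Liouville-type analogue" you appeal to is false for extended geometries: Proposition \ref{flat2} identifies $\operatorname{Aut}$ of the homogeneous model $(G\to G/P,\omega_G)$ with $G$, but for an extension $K\times_iP$ the (pseudo)group of automorphisms merely \emph{contains} $K$ and is in general strictly larger. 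Concretely, if the extended geometry is flat (e.g.\ $K=SO(n+1)$, $H=SO(n)$ included into the conformal model $(PO(n+1,1),P)$), then $\operatorname{Aut}_{loc}$ is modeled on all of $G$ by Propositions \ref{flat1} and \ref{flat2}, and conjugating $L_k$ by a generic automorphism $\psi$ — even one whose base map fixes $eH$ — takes $K$ to a different conjugate of $K$ inside $G$. So $\sigma$ as you define it does not exist, and homogeneity of both geometries under the same $K$ does not repair this. To keep your global strategy you would first have to show that the local isomorphism can be \emph{chosen} $K$-equivariant, which is precisely the content you are missing.

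The paper's route avoids the issue by never leaving the basepoint. From the explicit formula $\omega_\alpha([k,p])=Ad(p^{-1})\circ\alpha\circ\omega_K\circ Tl^{k^{-1}}$ (plus the vertical part) it concludes that $\Phi$ is completely determined by $\Phi([e,e])=[k_0,p_0]$ and $T_{[e,e]}\Phi$. Writing $j(k)=[k,e]$, the subspace $T_ej(\mathfrak{k})\subset T_{[e,e]}(K\times_iP)$ is where $\omega_\alpha$ restricts to $\alpha$, and the paper extracts $\sigma$ as the \emph{Lie algebra} automorphism induced by $T_{[e,e]}\Phi$ on this copy of $\mathfrak{k}$ (the statement is deliberately phrased with a Lie algebra automorphism for this reason); the relation $\hat{\alpha}=Ad_{p_0^{-1}}\circ\alpha\circ T\sigma$ is then just $\Phi^*\omega_{\hat\alpha}=\omega_\alpha$ evaluated at $[e,e]$, compatibility with brackets follows from $\kappa=\hat\kappa\circ\Phi$ together with the curvature formula of Corollary \ref{1.3.6}, and $\hat{i}(\sigma(h))=p_0i(h)p_0^{-1}$ is read off from $P$-equivariance on the fibre over $eH$. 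Your evaluations at $u_0$ in the last step are fine, but they all presuppose the group-level $\sigma$ whose existence is the actual content to be proved.
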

\begin{proof}
Let $\Phi: K\times_i P\to K\times_{\hat{i}} P$ be a morphism of principal $P$-bundles such, that $(\Phi)^*\omega_\alpha=\omega_{\hat{\alpha}}$ and $\Phi([e,e])=[k_0,p_0]$. We know, that $\omega_\alpha(X)([k,p])=Ad(p^{-1})\circ \alpha \circ \omega(Tr^{p^{-1}} \circ Tl^{k^{-1}}(X))([e,e])$, thus $\Phi$ is uniquely determined by $\Phi([e,e])$ and $T_{[e,e]}\Phi$. If $\Phi([e,e])=[k_0,e]$, then $\hat{i}(k_0^{-1}hk_0)=i(h)$, which corresponds to an inner Lie algebra automorphism $\sigma$ of $K$ preserving $H$. If $\Phi([e,e])=[e,p_0]$, then $\hat{i}(h)=p_0i(h)p_0^{-1}$. Since $\omega_K(T_{[e,e]}\Phi \omega_K^{-1}(X))\in \mathfrak{k}$ for $X\in \mathfrak{k}$, $T_{[e,e]}\Phi$ corresponds to an automorphism $\sigma$ of $K$ preserving $H$ and the claim follows.
\end{proof}

\subsection{Examples of extensions}\label{1.4}

We introduce a few simple examples of extensions, which we will use later. First, we notice, that we can compose two extensions:

\begin{lem}\label{1.4.11}
Let $(i,\alpha)$ be an extension of $(K,H)$ to $(K',H')$ and $(\hat{i},\hat{\alpha})$ an extension of $(K',H')$ to $(G,P)$. Then $(\hat{i}\circ i, \hat{\alpha} \circ \alpha)$ is an extension of $(K,H)$ to $(G,P)$.
\end{lem}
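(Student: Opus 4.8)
The plan is to check the four defining conditions of an extension directly for the pair $(\hat{i}\circ i,\hat{\alpha}\circ\alpha)$, using only that $(i,\alpha)$ and $(\hat i,\hat\alpha)$ each satisfy those four conditions. Write $\mathfrak{k}',\mathfrak{h}'$ for the Lie algebras of $K',H'$. The first two conditions are immediate: $\hat i\circ i\colon H\to P$ is a Lie group homomorphism as a composite of Lie group homomorphisms, and $\hat\alpha\circ\alpha\colon\mathfrak{k}\to\mathfrak{g}$ is linear as a composite of linear maps. That it extends $T_e(\hat i\circ i)$ follows from the chain rule: on $\mathfrak{h}$ the map $\alpha$ restricts to $T_ei\colon\mathfrak{h}\to\mathfrak{h}'$ and $\hat\alpha$ restricts to $T_e\hat i\colon\mathfrak{h}'\to\mathfrak{p}$, so the composite restricts to $T_e\hat i\circ T_ei=T_e(\hat i\circ i)$.

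For the third condition I would first record the inclusion $\alpha(\mathfrak{h})\subseteq\mathfrak{h}'$ (again because $\alpha|_{\mathfrak{h}}=T_ei$) together with $\hat\alpha(\mathfrak{h}')\subseteq\mathfrak{p}$; these guarantee $(\hat\alpha\circ\alpha)(\mathfrak{h})\subseteq\mathfrak{p}$, so $\hat\alpha\circ\alpha$ descends to a well-defined linear map $\mathfrak{k}/\mathfrak{h}\to\mathfrak{g}/\mathfrak{p}$. A short computation on representatives then shows this induced map equals the composite of the isomorphism $\mathfrak{k}/\mathfrak{h}\to\mathfrak{k}'/\mathfrak{h}'$ induced by $\alpha$ with the isomorphism $\mathfrak{k}'/\mathfrak{h}'\to\mathfrak{g}/\mathfrak{p}$ induced by $\hat\alpha$, and a composite of isomorphisms is an isomorphism.

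The only step requiring any care is the fourth, equivariance, condition, and even there the work is just chaining the two given intertwining relations in the correct order. Fixing $h\in H$ and starting from $Ad(\hat i(i(h)))\circ\hat\alpha\circ\alpha$, I would first apply the equivariance of $\hat\alpha$ at the element $i(h)\in H'$ to rewrite $Ad(\hat i(i(h)))\circ\hat\alpha=\hat\alpha\circ Ad(i(h))$, and then apply the equivariance of $\alpha$ at $h$ to rewrite $Ad(i(h))\circ\alpha=\alpha\circ Ad(h)$; combining yields $\hat\alpha\circ\alpha\circ Ad(h)$, as required. Thus the main (and essentially only) obstacle is bookkeeping, namely keeping track of which group each adjoint action lives on and invoking each equivariance identity for the correct argument. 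No deeper ingredient is needed, since the statement is purely formal once the four conditions are unwound.
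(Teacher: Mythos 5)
Your proof is correct: the paper states Lemma \ref{1.4.11} without proof, treating it as an immediate consequence of the definition, and your direct verification of the four defining conditions (composition of homomorphisms, chain rule for the restriction to $\mathfrak{h}$, composition of the induced isomorphisms on quotients, and chaining the two equivariance identities at $h$ and $i(h)$) is exactly the argument being implicitly invoked. Nothing further is needed.
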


\begin{example}\label{1.4.8}
\bf Extension to effective geometry \rm

Let $N$ be the maximal normal subgroup of $G$ contained in $P$. Then $G'=G/N$ and $P'=P/N$ are Lie groups and mapping $g\mapsto gN$ is a homomorphism of Lie groups $G\to G'$, which trivially defines extension of $(G,P)$ to $(G',P')$. Of course, the geometry of type $(G',P')$ is effective and the lemma \ref{1.4.11} implies:

\begin{cor}
Any extension of $(K,H)$ to $(G,P)$ induces extension of \linebreak $(K,H)$ to effective $(G',P')$.
\end{cor}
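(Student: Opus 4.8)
The plan is to make explicit the extension of $(G,P)$ to $(G',P')$ alluded to in Example~\ref{1.4.8} and then to compose it with the given extension via Lemma~\ref{1.4.11}. Write $\pi\colon G\to G'=G/N$ for the quotient homomorphism. Since $N\subseteq P$, this restricts to a Lie group homomorphism $\hat{i}:=\pi|_P\colon P\to P'=P/N$, and I set $\hat{\alpha}:=T_e\pi\colon\mathfrak{g}\to\mathfrak{g}'$. I claim $(\hat{i},\hat{\alpha})$ is an extension of $(G,P)$ to $(G',P')$; granting this, Lemma~\ref{1.4.11} immediately yields the extension $(\hat{i}\circ i,\hat{\alpha}\circ\alpha)$ of $(K,H)$ to $(G',P')$, and it only remains to record that $(G',P')$ is effective.

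First I would check the four axioms for $(\hat{i},\hat{\alpha})$. That $\hat{i}$ is a Lie group homomorphism, and that $\hat{\alpha}$ is linear and restricts to $T_e\hat{i}$ on $\mathfrak{p}$, are immediate from functoriality of the differential applied to $\pi$. The substantive point is the third axiom: since $\mathfrak{n}=\operatorname{Lie}(N)\subseteq\mathfrak{p}$, the differential $T_e\pi$ has kernel $\mathfrak{n}$ and descends to isomorphisms $\mathfrak{g}/\mathfrak{n}\xrightarrow{\sim}\mathfrak{g}'$ and $\mathfrak{p}/\mathfrak{n}\xrightarrow{\sim}\mathfrak{p}'$; by the third isomorphism theorem these induce $\mathfrak{g}/\mathfrak{p}\cong(\mathfrak{g}/\mathfrak{n})/(\mathfrak{p}/\mathfrak{n})\cong\mathfrak{g}'/\mathfrak{p}'$, which is exactly the vector space isomorphism required of $\hat{\alpha}$.

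For the fourth axiom I would invoke naturality of the adjoint representation under a group homomorphism: for any $p\in P$ one has $T_e\pi\circ\operatorname{Ad}(p)=\operatorname{Ad}(\pi(p))\circ T_e\pi$, which reads $\hat{\alpha}\circ\operatorname{Ad}(p)=\operatorname{Ad}(\hat{i}(p))\circ\hat{\alpha}$. This is obtained by differentiating at $e$ the identity $\pi\circ\mathrm{conj}_p=\mathrm{conj}_{\pi(p)}\circ\pi$, where $\mathrm{conj}_p(g)=pgp^{-1}$. With all four axioms verified, $(\hat{i},\hat{\alpha})$ is an extension, and the composition is then handled entirely by Lemma~\ref{1.4.11}.

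Finally, for effectiveness I would argue by the correspondence theorem: a normal subgroup of $G'=G/N$ contained in $P'=P/N$ is of the form $M/N$ for a normal subgroup $M$ of $G$ with $N\subseteq M\subseteq P$, and maximality of $N$ forces $M=N$, so the maximal normal subgroup of $G'$ contained in $P'$ is trivial. The proof is thus essentially bookkeeping; the only step demanding any care is the third axiom, namely the passage to the quotient isomorphism on Lie algebras together with the naturality of $\operatorname{Ad}$, since the genuine work of composing extensions has already been packaged into Lemma~\ref{1.4.11}.
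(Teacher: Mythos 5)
Your proposal is correct and follows exactly the route the paper takes: the quotient map $G\to G/N$ ``trivially defines'' an extension of $(G,P)$ to $(G',P')$, which is then composed with the given one via Lemma~\ref{1.4.11}. You merely spell out the axiom checks and the effectiveness of $(G',P')$ that the paper leaves implicit, and those details are all in order.
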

\end{example}

\begin{example}\label{1.4.9}
\bf Extension from simply connected models \rm

Let $K_c$ be connected, simply connected Lie group with Lie algebra $\mathfrak{k}$ and $H_c$ connected subgroup of $K_c$ with Lie algebra $\mathfrak{h}$. Let $(K,H)$ be a pair with the same Lie algebras, then $id_\mathfrak{k}|_{H_c}$ defines a Lie group homomorphism $i_c: H_c\to H$ and $(i_c,id_\mathfrak{k})$ is an extension of $(K_c,H_c)$ to $(K,H)$. Thus the homogeneous model of $(K_c,H_c)$ is locally isomorphic to homogeneous model of $(K,H)$. Further, the lemma \ref{1.4.11} implies:

\begin{cor}
Any extension of $(K,H)$ to $(G,P)$ induces extension of \linebreak $(K_c,H_c)$ to $(G,P)$.
\end{cor}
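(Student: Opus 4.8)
The plan is to reduce the statement to a single application of the composition lemma for extensions, Lemma \ref{1.4.11}. The body of Example \ref{1.4.9} already supplies the intermediate object, namely the pair $(i_c, id_\mathfrak{k})$, so the strategy is: first confirm that $(i_c, id_\mathfrak{k})$ really is an extension of $(K_c,H_c)$ to $(K,H)$ by checking the four conditions in the Definition of an extension, and then compose it with the given extension of $(K,H)$ to $(G,P)$.

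First I would pin down the homomorphism underlying $i_c$. Since $K_c$ is connected and simply connected with Lie algebra $\mathfrak{k}$, and $K$ is a Lie group with the same Lie algebra, the identity $id_\mathfrak{k}$ integrates, by the integrability theorem for Lie algebra homomorphisms out of a simply connected group, to a unique Lie group homomorphism $\phi: K_c\to K$ with $T_e\phi = id_\mathfrak{k}$. Because $H_c$ is connected with Lie algebra $\mathfrak{h}$ and $T_e\phi$ carries $\mathfrak{h}$ into $\mathfrak{h}=\operatorname{Lie}(H)$, the image $\phi(H_c)$ is a connected subgroup of $K$ with Lie algebra $\mathfrak{h}$, hence lies in the identity component of $H$; thus $i_c:=\phi|_{H_c}$ lands in $H$ and is a Lie group homomorphism, giving the first condition. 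Conditions two and three are then immediate: $id_\mathfrak{k}$ is linear, it extends $T_e i_c = id_\mathfrak{h}$, and it visibly induces the identity isomorphism of $\mathfrak{k}/\mathfrak{h}$ with itself.

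The only condition carrying content is the equivariance $Ad(i_c(h))\circ id_\mathfrak{k} = id_\mathfrak{k}\circ Ad(h)$ for $h\in H_c$, and this is the one step I would treat carefully. It follows by differentiating the homomorphism property of $\phi$: for $k\in K_c$ the relation $\phi(kk'k^{-1}) = \phi(k)\phi(k')\phi(k)^{-1}$ gives, upon applying $T_e$, the identity $Ad_K(\phi(k))\circ T_e\phi = T_e\phi\circ Ad_{K_c}(k)$, i.e. $Ad_K(\phi(k)) = Ad_{K_c}(k)$ under the identification $T_e\phi = id_\mathfrak{k}$; restricting to $k=h\in H_c$ yields exactly the fourth condition. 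Hence $(i_c, id_\mathfrak{k})$ is an extension of $(K_c,H_c)$ to $(K,H)$.

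Finally, given any extension $(i,\alpha)$ of $(K,H)$ to $(G,P)$, I would apply Lemma \ref{1.4.11} to the pair consisting of $(i_c, id_\mathfrak{k})$ and $(i,\alpha)$, concluding that $(i\circ i_c,\ \alpha\circ id_\mathfrak{k}) = (i\circ i_c,\alpha)$ is an extension of $(K_c,H_c)$ to $(G,P)$, which is the assertion. I do not expect a serious obstacle here: once $\phi$ is in hand the argument is entirely formal, and the single step that is not pure bookkeeping is the equivariance check, which reduces to the standard identity $Ad\circ T\phi = T\phi\circ Ad$ for a Lie group homomorphism.
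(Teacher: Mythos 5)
Your proposal is correct and follows the paper's own route: the body of Example \ref{1.4.9} constructs exactly the intermediate extension $(i_c,id_{\mathfrak{k}})$ of $(K_c,H_c)$ to $(K,H)$ (with $i_c$ obtained by integrating $id_{\mathfrak{k}}$ from the simply connected group and restricting), and the corollary is then obtained, as you do, by a single application of the composition Lemma \ref{1.4.11}. Your explicit verification of the four conditions in the definition of an extension, in particular the $Ad$-equivariance via $Ad\circ T\phi=T\phi\circ Ad$, simply makes precise what the paper leaves implicit.
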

\end{example}

\begin{example}\label{1.4.10}
\bf Extension by an involution fixing a point \rm

Let $(K,H)$ be effective pair and let $\sigma$ be an involution of $K$ fixing point $eH$ in $K/H$. Consider the semidirect product $K'=K\ltimes_\sigma \mathbb{Z}_2$ and $H'=H\ltimes_\sigma \mathbb{Z}_2$. If we take $i:H\to H'$ as the inclusion and $\alpha=id_{\mathfrak{k}}$, then $(i,\alpha)$ is extension of $(K,H)$ to $(K',H')$. 

\begin{lem}
The pair $(K',H')$ is effective if and only if $\sigma|_H$ is an outer automorphism of $H$.
\end{lem}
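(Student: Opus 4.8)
The plan is to unwind effectivity of $(K',H')$ directly in terms of the maximal normal subgroup $N'$ of $K'$ contained in $H'$, and to cut it down using the effectivity of $(K,H)$. Write $s$ for the generator of the $\mathbb{Z}_2$-factor, so that $s^2=e$ and $sks^{-1}=\sigma(k)$ for $k\in K$, with $K$ normal in $K'$; then every element of $K'$ is either $k$ or $ks$ with $k\in K$, $H'=H\cup Hs$, and $H'\cap K=H$.

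First I would observe that $N'\cap K$ is a normal subgroup of $K$ (being the intersection of the two normal subgroups $N'$ and $K$ of $K'$) contained in $H'\cap K=H$, so by effectivity of $(K,H)$ it is trivial. Since $N'\subseteq H'=H\cup Hs$ and $N'\cap H\subseteq N'\cap K=\{e\}$, the nonidentity elements of $N'$ all lie in the single coset $Hs$; but any two of them differ by an element of $N'\cap K=\{e\}$, so $N'$ contains at most one such element. Hence $N'$ is either trivial or $N'=\{e,hs\}$ with $h\in H$, and the whole question reduces to deciding when an order-two subgroup $\{e,hs\}\subseteq H'$ is normal in $K'$.

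Next I would pin down that condition. An order-two subgroup is normal exactly when its nontrivial element is central, so I need $hs\in Z(K')$ together with $(hs)^2=e$. Expanding $(hs)k=k(hs)$ for all $k\in K$ gives $\sigma(k)=h^{-1}kh$, i.e. $\sigma$ is conjugation by $h^{-1}$ on the whole of $K$; commuting $hs$ with $s$ forces $\sigma(h)=h$, and $(hs)^2=h\sigma(h)=h^2$, so the order-two requirement gives $h^2=e$ (whence $h=h^{-1}$ and conjugation by $h^{-1}$ is conjugation by $h$). Thus $(K',H')$ is non-effective precisely when $\sigma$ is realized as conjugation by an involution $h\in H$ on all of $K$.

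Finally I would match this with the stated criterion. One direction is immediate: if $\sigma$ is conjugation by such an $h\in H$ on $K$, then restricting to $H$ shows $\sigma|_H$ is the inner automorphism $k\mapsto hkh^{-1}$ of $H$; so non-effectivity forces $\sigma|_H$ inner, equivalently effectivity forces $\sigma|_H$ outer. The converse is the step I expect to be the main obstacle: knowing only that $\sigma|_H=\mathrm{conj}_{h_0}|_H$ for some $h_0\in H$, one must promote this to conjugation by a single involution on all of $K$. I would attempt this by passing to $\tau=\mathrm{conj}_{h_0}^{-1}\circ\sigma$, which fixes $H$ pointwise and, by involutivity of $\sigma$, satisfies $\tau^2=\mathrm{conj}_{h_0^{-2}}$, and then trying to force $\tau=\mathrm{id}$ (and $h_0$ to be an involution) from the interplay of $\sigma^2=\mathrm{id}$ with the effectivity of $(K,H)$. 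Controlling this automorphism $\tau$ fixing $H$ pointwise is exactly where the hypotheses on the pair and on $\sigma$ must be used in full, and it is the delicate point of the argument.
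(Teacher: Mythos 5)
Your reduction is correct and, up to packaging, lands on exactly the computation the paper performs: the paper conjugates $(h_0,1)$ by $(g,0)$ and $(g,1)$ directly, whereas you first show that the maximal normal subgroup $N'$ of $K'$ contained in $H'$ meets $K$ trivially (using normality of $K$ in $K'$ and effectivity of $(K,H)$), deduce that $N'$ is either trivial or a central order-two subgroup $\{e,hs\}$, and then read off the conditions $\sigma(k)=hkh^{-1}$ for all $k\in K$, $\sigma(h)=h$, $h^2=e$. Both routes prove the same equivalence: $(K',H')$ fails to be effective precisely when $\sigma$, as an automorphism of all of $K$, is conjugation by an involution $h\in H$. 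Your version is arguably cleaner, since the observation that an order-two normal subgroup is central replaces the two explicit conjugation identities, and the step $N'\cap K=\{e\}$ makes explicit something the paper leaves implicit.

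The ``delicate point'' you flag --- promoting ``$\sigma|_H$ is inner as an automorphism of $H$'' to ``$\sigma$ is conjugation on all of $K$ by an involution of $H$'' --- is not present in the paper's proof either: both directions of the paper's argument work exclusively with the global condition $\sigma(g)=h_0gh_0$ for all $g\in K$ with $h_0\in H$, $h_0^2=e$, and the words ``inner''/``outer'' in the statement must be read in that operational sense. Do not try to prove the promotion in general; it is false as a literal implication. For instance, with $H$ trivial and $\sigma\neq\mathrm{id}$ (say $K=\mathbb{R}$, $\sigma=-\mathrm{id}$), the restriction $\sigma|_H$ is trivially inner, yet $(e,1)$ is not central in $K'$ and $(K',H')$ is effective. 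So the residual gap lies in the wording of the statement, not in your argument: once ``$\sigma|_H$ outer'' is understood as ``$\sigma$ is not conjugation by an involution of $H$,'' your proof is complete and coincides in substance with the paper's.
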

\begin{proof}
If $\sigma(k)=h_0kh_0, \ h_0^2=e$, for some $h_0\in H$, then 
$$(g,0)(h_0,1)(g^{-1},0)=(gh_0\sigma(g^{-1}),1)=(gh_0^2g^{-1}h_0,1)=(h_0,1)$$ 
and 
$$(g,1)(h_0,1)(\sigma(g^{-1}),1)=(gh_0\sigma(g^{-1}),1)=(h_0,1).$$ Thus the group generated by $(h_0,1)$ is normal subgroup of $K'$ contained in $H'$.

If $N\cong \mathbb{Z}_2$ is normal subgroup of $K'$ contained in $H'$, it contains element of the form $(h_0,1), h_0^2=e$. Then $(gh_0\sigma(g^{-1}),1)\in N$ and $gh_0\sigma(g^{-1})=h_0$. Thus $h_0gh_0=\sigma(g)$ i.e. the involution is inner and $h_0\in H$.
\end{proof}

Now assume that $\sigma$ is an outer automorphism of $H$ and there is an extension of $(K,H)$ to $(G,P)$. Then there is an extension of $(K',H')$ to $(G,P)$ if and only if there is $g_0\in P$ such, that $g_0^2=id$ and $i(\sigma(h))=g_0i(h)g_0$. Then $i(h,1)=hg_0$ extends $i$ from $H\to P$ to $H'\to P$.
\end{example}

\begin{example}\label{1.4.12}
\bf Extensions associated to some locally homogeneous geometries \rm

Let $\mathcal{G}\to M$ be a locally homogeneous geometry of type $(G,P)$ locally isomorphic to an extension $(K\times_i P,\omega_\alpha)$ of the homogeneous model $(K\to K/H,\omega_K)$. From the description of the locally homogeneous geometries, there is an atlas $(U_i,\phi_{U_i})$ with values in $K/H$. Let us assume that the transition maps are in $\operatorname{Aut}(K,\omega_K)$. Then instead of dealing with pullbacks of $\omega_\alpha$, we can pullback $\omega_K$ and due to our assumption we can glue the pullbacks to get Cartan geometry $\mathcal{G}'\to M$ of type $(K,H)$. Then extension $(i,\alpha)$ clearly gives us a Cartan geometry of type $(G,P)$ on $M$, which is locally isomorphic to the initial one. Thus we have shown:

\begin{cor}\label{1.4.13}
A locally homogeneous Cartan geometry locally isomorphic to an extension of $(K\to K/H,\omega_K)$ such, that the transition maps are in $\operatorname{Aut}(K,\omega_K)$, is an extension of a flat Cartan geometry of type $(K,H)$.
\end{cor}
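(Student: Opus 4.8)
The plan is to reuse the gluing construction from Example~\ref{1.4.12} almost verbatim, with the homogeneous model $(K\to K/H,\omega_K)$ playing the role taken by $(G\to G/P,\omega_G)$ in the flat-case discussion at the end of Section~\ref{1.2}, and then to observe that the intermediate geometry of type $(K,H)$ produced there is flat. First I would invoke the description of locally homogeneous Cartan geometries to fix the atlas $(U_i,\phi_{U_i})$ with values in $K/H$, where each $\phi_{U_i}$ is a local isomorphism of $(\mathcal{G}|_{U_i},\omega|_{U_i})$ onto a restriction of the extended geometry $(K\times_i P,\omega_\alpha)$ and, by hypothesis, the transition maps $\phi_{U_j}\circ\phi_{U_i}^{-1}$ lie in $\operatorname{Aut}(K,\omega_K)$. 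Over each $U_i$ I would then pull back the homogeneous model $(K\to K/H,\omega_K)$ along $\phi_{U_i}$, obtaining a Cartan geometry of type $(K,H)$ on $U_i$ that is tautologically locally isomorphic to $(K\to K/H,\omega_K)$.

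The heart of the argument is the gluing, and it is also the only place where the hypothesis is genuinely needed. On each double overlap the transition map is an element of $\operatorname{Aut}(K,\omega_K)$, so pulling it back yields a canonical isomorphism between the two restricted pullback bundles that intertwines the two pulled-back copies of $\omega_K$; because the $\phi_{U_i}$ come from an atlas, these identifications satisfy the cocycle condition on triple overlaps automatically. Hence the local pullbacks assemble into a single principal $H$-bundle $\mathcal{G}'\to M$ equipped with a Cartan connection $\omega'$ of type $(K,H)$. The subtle point — and the reason a mere atlas of base diffeomorphisms would not suffice — is that gluing the connection $\omega'$, rather than only the underlying maps into $K/H$, forces the transition maps to preserve $\omega_K$, i.e.\ to be automorphisms of the full Cartan geometry. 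Since each chart now exhibits $(\mathcal{G}'|_{U_i},\omega'|_{U_i})$ as locally isomorphic to the homogeneous model, Proposition~\ref{flat1} shows at once that $(\mathcal{G}'\to M,\omega')$ is flat.

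It then remains to apply the extension functor associated to $(i,\alpha)$ to $(\mathcal{G}'\to M,\omega')$, producing a Cartan geometry $(\mathcal{G}'\times_i P\to M,\omega'_\alpha)$ of type $(G,P)$, and to check that it is locally isomorphic to the original $(\mathcal{G}\to M,\omega)$. This verification is routine and already implicit in Example~\ref{1.4.12}: the extension functor commutes with restriction and pullback, so over each $U_i$ the extension of $\mathcal{G}'|_{U_i}$ is exactly the $\phi_{U_i}$-pullback of $(K\times_i P,\omega_\alpha)$, which by construction of the atlas is locally isomorphic to $\mathcal{G}|_{U_i}$; assembling these over the cover yields the required local isomorphism. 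As $(\mathcal{G}',\omega')$ is flat, this realizes the given geometry as an extension of a flat Cartan geometry of type $(K,H)$, which is the assertion.
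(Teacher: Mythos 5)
Your proposal is correct and follows essentially the same route as the paper's own argument in Example~\ref{1.4.12}: pull back $\omega_K$ rather than $\omega_\alpha$ over the charts of the atlas, glue via the transition maps in $\operatorname{Aut}(K,\omega_K)$ to obtain a Cartan geometry of type $(K,H)$ (flat because it is locally isomorphic to the homogeneous model), and then apply the extension functor $(i,\alpha)$ to recover a geometry locally isomorphic to the original. You merely make explicit two points the paper leaves implicit, namely the cocycle condition on triple overlaps and the appeal to Proposition~\ref{flat1} for flatness.
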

\end{example}

\subsection{Affine geometries and first order $P_0$-structures}\label{1.5}

An affine connection $\nabla$ on $M$ can be described as a principal connection form $\gamma$ on $P^1M$ i.e. as a $Gl(n,\mathbb{R})$-equivariant $\mathfrak{gl}(n,\mathbb{R})$-valued one form $\gamma$ on $P^1M$. If we denote $\theta$ the soldering form of $P^1M$, then $\omega_\nabla=\gamma+\theta$ is a one form on $P^1M$ with values in the Lie algebra $\mathfrak{a}(n,\mathbb{R}):=\mathbb{R}^n+\mathfrak{gl}(n,\mathbb{R})$ of the Lie group $A(n,\mathbb{R})=\mathbb{R}^n\ltimes Gl(n,\mathbb{R})$ of affine transformations of $\mathbb{R}^n$. The matrix form of $A(n,\mathbb{R})$ is
$$
\left(  \begin{array}{cc}   
1&0 \\ X&A
\end{array}  \right) \subset Gl(n+1,\mathbb{R}),
$$
where $X\in\mathbb{R}^n$ and $A\in Gl(n,\mathbb{R})$. It is easy to show the following (check \cite[Section 1.3.5]{odk3} for the proof):

\begin{lem}\label{1.5.1}
The one form $\omega_\nabla=\gamma+\theta$ corresponding to the affine connection $\nabla$ is a Cartan connection and $(P^1M\to M,\omega_\nabla)$ is a Cartan geometry of type $(A(n,\mathbb{R}),Gl(n,\mathbb{R}))$. The curvature $\kappa_\nabla$ of the Cartan connection has values in $\mathfrak{a}(n,\mathbb{R})$ and decomposes to torsion $T$ (the $\mathbb{R}^n$ part) and curvature $R$ (the $\mathfrak{gl}(n,\mathbb{R})$ part) of the connection $\nabla$. 

The correspondence between $\omega_\nabla$ and $\nabla$ induces an equivalence of categories of Cartan geometries of type $(A(n,\mathbb{R}),Gl(n,\mathbb{R}))$ and affine connections.
\end{lem}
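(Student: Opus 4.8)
The plan is to check the three defining axioms of a Cartan connection for $\omega_\nabla=\gamma+\theta$, then to compute its curvature directly from the definition, and finally to produce a construction inverse to $\nabla\mapsto\omega_\nabla$ that yields the claimed equivalence of categories. Throughout I would realize $\mathfrak{a}(n,\mathbb{R})=\mathbb{R}^n\oplus\mathfrak{gl}(n,\mathbb{R})$ by the blocks $\begin{pmatrix}0&0\\X&A\end{pmatrix}$ and record two algebraic facts: the adjoint action of $\begin{pmatrix}1&0\\0&B\end{pmatrix}\in Gl(n,\mathbb{R})$ sends $(X,A)$ to $(BX,BAB^{-1})$, and the bracket is $[(X_1,A_1),(X_2,A_2)]=(A_1X_2-A_2X_1,[A_1,A_2])$.

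With these conventions the Cartan axioms split along the two summands. The $Gl(n,\mathbb{R})$-equivariance of the principal connection $\gamma$ (adjoint action) together with the horizontal equivariance of the soldering form $\theta$ (standard representation) matches exactly the adjoint action computed above, giving axiom (i); $\gamma$ reproduces the fundamental vector fields while $\theta$ annihilates vertical vectors, so $\omega_\nabla$ reproduces them, giving axiom (ii); and since $\gamma$ restricts to an isomorphism of the vertical subspace onto $\mathfrak{gl}(n,\mathbb{R})$ while $\theta$ restricts to an isomorphism of a $\gamma$-horizontal complement onto $\mathbb{R}^n$, their sum is a linear isomorphism $T_uP^1M\to\mathfrak{a}(n,\mathbb{R})$, giving axiom (iii). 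The dimension count $\dim P^1M=n+n^2=\dim\mathfrak{a}(n,\mathbb{R})$ serves as a sanity check.

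For the curvature I would substitute $\omega_\nabla=\gamma+\theta$ into $K(\nu,\eta)=d\omega(\nu,\eta)+[\omega(\nu),\omega(\eta)]$ and separate components using the bracket formula. The $\mathbb{R}^n$-part equals $d\theta(\nu,\eta)+\gamma(\nu)\theta(\eta)-\gamma(\eta)\theta(\nu)$, which is the first structure equation and hence the torsion $T$ of $\nabla$; the $\mathfrak{gl}(n,\mathbb{R})$-part equals $d\gamma(\nu,\eta)+[\gamma(\nu),\gamma(\eta)]$, the second structure equation and hence the curvature $R$. This yields the decomposition $\kappa_\nabla=T+R$.

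The substantive part is the equivalence of categories, which I would establish by building an explicit inverse functor. The key structural observation is that for $(G,P)=(A(n,\mathbb{R}),Gl(n,\mathbb{R}))$ the homomorphism $\underline{Ad}:Gl(n,\mathbb{R})\to GL(\mathfrak{g}/\mathfrak{p})=GL(\mathbb{R}^n)$ is just the standard representation, hence faithful; so its kernel $P^1$ is trivial and, since the maximal normal subgroup $N$ of Proposition \ref{1.2.6} is likewise trivial, the pair is effective. By Lemma \ref{1.2.2} the underlying $P_0$-structure of an abstract Cartan geometry of this type has $P_0=Gl(n,\mathbb{R})$; being an equivariant reduction to the full structure group it is all of $P^1M$, so the total space is identified with $P^1M$. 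Splitting the Cartan connection $\omega$ into its $\mathfrak{p}$- and $\mathfrak{g}/\mathfrak{p}$-parts recovers a principal connection $\gamma$ and the soldering form $\theta$, whence $\omega=\omega_\nabla$ for the affine connection determined by $\gamma$; the two round-trips are visibly identities, so the functor is essentially surjective. For full faithfulness I would use that a principal bundle morphism preserving the soldering forms is forced to be the prolongation $P^1f$ of its base diffeomorphism $f$, so that $\phi^*\omega'=\omega$ reduces to $\phi^*\gamma'=\gamma$, i.e. to $f$ being affine; morphisms on the two sides then correspond bijectively. I expect the main obstacle to be exactly this identification of the abstract total space with $P^1M$: it hinges entirely on the faithfulness of $\underline{Ad}$, which is what singles out the affine case among general types $(G,P)$.
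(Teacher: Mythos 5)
The paper does not actually prove this lemma --- it states ``It is easy to show the following'' and defers to \cite[Section 1.3.5]{odk3} --- so there is no in-paper argument to compare against. Your proposal is the standard proof and it is sound: the two algebraic facts you record (the adjoint action $(X,A)\mapsto(BX,BAB^{-1})$ and the bracket $(A_1X_2-A_2X_1,[A_1,A_2])$) are correct for the block realization used in the paper, the three Cartan axioms do split along the summands exactly as you say, and the curvature computation reproduces the two structure equations. For the inverse functor, your key observation --- that $\underline{Ad}$ is the standard representation, hence faithful, so $P^1$ is trivial and the abstract bundle is a full $Gl(n,\mathbb{R})$-reduction of $P^1M$ --- is the right one; the only step worth making explicit is that the identification $\mathcal{G}\cong P^1M$ is the map $u\mapsto T_up\circ\omega_u^{-1}|_{\mathbb{R}^n}$ from Lemma \ref{1.2.2}, under which the $\mathbb{R}^n$-component of $\omega$ becomes, by construction, the canonical soldering form; this is what licenses the claim that the abstract splitting recovers $\gamma+\theta$ and that a soldering-form-preserving morphism must be $P^1f$. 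With that spelled out, the round-trips and the full faithfulness argument go through as you describe.
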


Further, we define the following class of Cartan geometries:

\begin{de}
We say, that $(K,H)$ is an affine type of Cartan geometry if there is an extension of $(K,H)$ to $(A(n,\mathbb{R}),Gl(n,\mathbb{R}))$. We say that Cartan geometries of affine type are affine geometries.
\end{de}

The next proposition shows, why we call them affine geometries.

\begin{prop}\label{1.5.3}
Let $(p:\mathcal{P}\to M,\omega)$ be a Cartan geometry of type $(K,H)$. Then it is an affine geometry if and only if the pair $(K,H)$ is reductive (cf. \ref{1.1.3}). Moreover, extensions $(i,\alpha)$ of $(K,H)$ to $(A(n,\mathbb{R}),Gl(n,\mathbb{R}))$ are in one to one correspondence with choices of a frame of $\mathfrak{k}/\mathfrak{h}$. All the choices of a frame are equivalent and there is (up to equivalence) unique affine connection induced by the geometry of affine type.
\end{prop}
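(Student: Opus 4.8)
The plan is to set up an explicit dictionary between reductive data for $(K,H)$ and extensions to $(A(n,\mathbb{R}),Gl(n,\mathbb{R}))$, and then to extract the frame correspondence and the statement about connections from this dictionary using Proposition~\ref{1.4.7} and the equivalence of categories in Lemma~\ref{1.5.1}. Two structural facts drive everything. First, writing $\mathfrak{g}=\mathfrak{a}(n,\mathbb{R})=\mathbb{R}^n\oplus\mathfrak{gl}(n,\mathbb{R})$, a direct matrix computation gives $Ad(A)(v,B)=(Av,ABA^{-1})$, so $\mathbb{R}^n$ is an $Ad(Gl(n,\mathbb{R}))$-invariant complement of $\mathfrak{p}=\mathfrak{gl}(n,\mathbb{R})$; the model pair is thus reductive, and reductivity is exactly the feature to be transported across $\alpha$. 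Second, differentiating the equivariance axiom $Ad(i(h))\circ\alpha=\alpha\circ Ad(h)$ at $h=\exp(tX)$ with $X\in\mathfrak{h}$ yields the identity $\alpha([X,Y])=[\alpha(X),\alpha(Y)]$ for all $X\in\mathfrak{h}$, $Y\in\mathfrak{k}$, which is the main technical lever.

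For ``reductive $\Rightarrow$ affine'' I would start from an $Ad(H)$-invariant complement $\mathfrak{m}$, $\mathfrak{k}=\mathfrak{h}\oplus\mathfrak{m}$, and a frame $\beta\colon\mathfrak{m}\xrightarrow{\sim}\mathbb{R}^n$. Exactly as in Example~\ref{1.1.3}, put $i=i_\beta\circ Ad|_{\mathfrak{m}}\colon H\to Gl(n,\mathbb{R})$ and let $\alpha$ equal $\beta$ on $\mathfrak{m}$ (valued in $\mathbb{R}^n$) and $T_ei$ on $\mathfrak{h}$. Checking the four axioms is routine: $i$ is a homomorphism, $\alpha$ extends $T_ei$, $\alpha$ restricts to the isomorphism $\beta$ on the complement and so induces $\mathfrak{k}/\mathfrak{h}\cong\mathbb{R}^n=\mathfrak{g}/\mathfrak{p}$, and equivariance holds on $\mathfrak{m}$ because $i_\beta(Ad(h))\,\beta=\beta\,Ad(h)$ and on $\mathfrak{h}$ because $i$ is a homomorphism. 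This produces an extension from every frame.

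For the converse I would transport the invariant complement $\mathbb{R}^n\subset\mathfrak{g}$ back along $\alpha$. The set $\mathfrak{m}=\alpha^{-1}(\mathbb{R}^n)$ is immediately $Ad(H)$-invariant, since $\alpha$ is equivariant and $\mathbb{R}^n$ is $Ad(i(H))$-invariant; the real content is that an invariant \emph{complement} of $\mathfrak{h}$ exists at all. The identity above shows that $\ker\alpha=\ker(T_ei)$ is an ideal of $\mathfrak{k}$ contained in $\mathfrak{h}$, so under (infinitesimal) effectiveness $\alpha$ is injective and $\mathfrak{m}\cap\mathfrak{h}=0$; to upgrade this to a genuine splitting $\mathfrak{k}=\mathfrak{h}\oplus\mathfrak{m}$ I would invoke complete reducibility of the isotropy representation, which is available under the semisimplicity assumptions standing in this work. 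I expect this existence of an invariant complement to be the first genuine obstacle, as $\alpha^{-1}(\mathbb{R}^n)$ need not by itself have the right dimension.

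Finally, for the frame correspondence and the connection I would send an extension to the isomorphism $\mathfrak{k}/\mathfrak{h}\xrightarrow{\sim}\mathbb{R}^n$ furnished by the third axiom, with the construction of the second paragraph as candidate inverse. Making this a bijection is the main obstacle: the frame does not see the $\mathfrak{gl}(n,\mathbb{R})$-component of $\alpha$ along $\mathfrak{m}$, and any $H$-equivariant map $\mathfrak{m}\to\mathfrak{gl}(n,\mathbb{R})$ may be added without breaking the axioms---this is precisely the freedom of an invariant connection on $K/H$---so the correspondence must be read up to the equivalence of Proposition~\ref{1.4.7}. Granting this, two frames differing by $p_0\in Gl(n,\mathbb{R})$ give extensions related by Proposition~\ref{1.4.7} with $\sigma=\mathrm{id}$ and parameter $p_0$, hence locally isomorphic extended geometries; pushing this through the equivalence of categories of Lemma~\ref{1.5.1} turns it into an equivalence of the induced affine connections, giving that all frames are equivalent and that the affine connection is unique up to equivalence.
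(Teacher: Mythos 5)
Your overall route is the paper's route. The forward implication is verbatim the construction of Example~\ref{1.1.3}: fix an $Ad(H)$-invariant complement $\mathfrak{m}$, a frame $\beta$, set $i=i_\beta\circ Ad$ and let $\alpha$ be $\beta$ on $\mathfrak{m}$ and $T_ei$ on $\mathfrak{h}$; the equivalence of all frames and the uniqueness of the induced connection are obtained, exactly as in the text, from Proposition~\ref{1.4.7} (with $\sigma=\mathrm{id}$ and $p_0$ the transition matrix) pushed through Lemma~\ref{1.5.1}. Your additional observation that the frame does not determine $\alpha$ --- one may add any $H$-equivariant map $\mathfrak{m}\to\mathfrak{gl}(n,\mathbb{R})$ without violating the axioms --- is correct and is precisely the freedom the paper tracks separately later (the endomorphism $b_2$ in Corollary~\ref{4.1.5}, the choice of invariant connection); reading the stated bijection modulo this normalization, or as a bijection onto the extensions with $\alpha(\mathfrak{m})=\mathbb{R}^n$, is the right reading and matches how the result is actually used.

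The one place you genuinely part company with the paper is the converse, and it is the one step of your proposal that does not close. The paper's proof simply asserts that $\mathfrak{m}=\alpha^{-1}(\mathbb{R}^n)$ is an $Ad(H)$-invariant complement of $\mathfrak{h}$. You correctly note that invariance is immediate while the complement property is not: since $\dim\alpha^{-1}(\mathbb{R}^n)=\dim\bigl(\mathbb{R}^n\cap\operatorname{Im}(\alpha)\bigr)+\dim\ker\alpha$, and the axioms only force $\operatorname{Im}(\alpha)+\mathfrak{p}=\mathfrak{g}$, the image of a complement of $\mathfrak{h}$ may be a graph over $\mathbb{R}^n$ with a nonzero $\mathfrak{gl}(n,\mathbb{R})$-component, and that component is in general pinned down by the identity $\alpha([X,Y])=[\alpha(X),\alpha(Y)]$, $X\in\mathfrak{h}$, so it cannot simply be discarded. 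Your repair, however, appeals to complete reducibility of the isotropy representation ``under the semisimplicity assumptions standing in this work,'' and no such hypothesis appears in the proposition, which is stated for an arbitrary pair $(K,H)$; so as written the implication ``affine type $\Rightarrow$ reductive'' is not proved in the stated generality. You should either supply an argument forcing $\mathbb{R}^n\subset\operatorname{Im}(\alpha)$ (equivalently, that $\alpha$ maps some complement of $\mathfrak{h}$ onto $\mathbb{R}^n$), or state explicitly the extra hypothesis on $(K,H)$ under which your complete-reducibility argument applies. Note that this is exactly the point at which the paper's own proof consists of a one-line assertion, so you have located the genuine difficulty; but locating it and then invoking an unavailable hypothesis does not discharge it.
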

\begin{proof}
Let $(i,\alpha)$ be an extension of $(K,H)$ to $(A(n,\mathbb{R}),Gl(n,\mathbb{R}))$. Then, since $i(Ad(H))\subset Gl(\alpha(\mathfrak{k}/\mathfrak{h}))$, the preimage of $\alpha$ of $\mathbb{R}^n$ is $Ad(H)$-invariant complement to $\mathfrak{h}$ in $\mathfrak{k}$. Thus the pair $(K,H)$ is reductive.

Conversely, if the pair $(K,H)$ is reductive, then $Ad(H)\subset GL(\mathfrak{k}/\mathfrak{h})$ and the choice of a frame $\beta$ of $\mathfrak{k}/\mathfrak{h}$ clearly defines an extension $(i_\beta,\alpha_\beta)$ of $(K,H)$ to $(A(n,\mathbb{R}),Gl(n,\mathbb{R}))$.

Since the preimage of $\alpha$ of the standard frame of $\mathbb{R}^n$ is a frame of $\mathfrak{k}/\mathfrak{h}$ and the transition maps between two frames are in $Gl(n,\mathbb{R})$, all extensions are equivalent due to proposition \ref{1.4.7}.
\end{proof}

So as in the example \ref{1.1.3}, the choice of a frame $\beta$ of $\mathfrak{k}/\mathfrak{h}$ provides an $H$-structure on $M$ over $i_\beta: H\to GL(n,\mathbb{R})$. Thus it is obvious, that:

\begin{cor}\label{1.5.4}
Let $P_0\subset Gl(n,\mathbb{R})$ and $(p:\mathcal{P}\to M,\omega)$ be an affine geometry of type $(K,H)$. Then the choice of a frame $\beta$ of $\mathfrak{k}/\mathfrak{h}$ such that $i_\beta(H)\subset P_0$ provides a $P_0$-structure $\mathcal{P}\times_H P_0$ and induces extension $(i_\beta,\alpha_\beta)$ of $(K,H)$ to $(\mathbb{R}^n\ltimes P_0,P_0)$. The induced $P_0$-structures are equivalent if and only if the induced extensions are equivalent.
\end{cor}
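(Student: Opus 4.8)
The plan is to view $\mathcal P\times_{i_\beta}P_0$ as the underlying $P_0$-structure of a single Cartan geometry produced by the extension functor, and then to read off both equivalences from that geometry together with the uniqueness statement in Proposition \ref{1.5.3}.

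First I would verify that the frame really produces the two objects claimed. The extension $(i_\beta,\alpha_\beta)$ of $(K,H)$ to $(A(n,\mathbb R),Gl(n,\mathbb R))$ furnished by Proposition \ref{1.5.3} factors through $(\mathbb R^n\ltimes P_0,P_0)$: the assumption $i_\beta(H)\subset P_0$ gives $\alpha_\beta(\mathfrak h)=T_ei_\beta(\mathfrak h)\subset\mathfrak p_0$, and $\alpha_\beta$ carries the $Ad(H)$-invariant complement of $\mathfrak h$ isomorphically onto $\mathbb R^n$, so $\alpha_\beta(\mathfrak k)\subset\mathbb R^n+\mathfrak p_0$ and all four extension axioms persist after shrinking the target group. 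Applying the induced functor to $(\mathcal P,\omega)$ produces the Cartan geometry $\mathcal C_\beta:=(\mathcal P\times_{i_\beta}P_0,\omega_{\alpha_\beta})$ of type $(\mathbb R^n\ltimes P_0,P_0)$. Because $P_0\subset Gl(n,\mathbb R)$ acts faithfully on $\mathbb R^n=\mathfrak g/\mathfrak p$, the kernel $P^1$ appearing in Lemma \ref{1.2.2} is trivial; hence the underlying $P_0$-structure of $\mathcal C_\beta$ is the total space $\mathcal P\times_{i_\beta}P_0$ itself, which is the $P_0$-structure of the statement. Extending $\mathcal C_\beta$ once more to $Gl(n,\mathbb R)$ returns, via Lemma \ref{1.5.1}, the affine connection associated to $(\mathcal P,\omega)$, and by Proposition \ref{1.5.3} this connection is independent of the admissible frame up to equivalence.

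With this dictionary the forward implication is immediate: if the extensions $(i_\beta,\alpha_\beta)$ and $(i_{\hat\beta},\alpha_{\hat\beta})$ are equivalent, Proposition \ref{1.4.7} supplies $p_0\in P_0$ and an automorphism $\sigma$ of $K$ preserving $H$ realizing a local isomorphism of $\mathcal C_\beta$ and $\mathcal C_{\hat\beta}$; for a change of frame $\hat\beta=\beta\circ a$ one has $\sigma=\mathrm{id}$ and $p_0=a$, and the map $[u,p]\mapsto[u,p_0p]$ is already a bundle isomorphism intertwining $\omega_{\alpha_\beta}$ and $\omega_{\alpha_{\hat\beta}}$. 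Passing to underlying $P_0$-structures (quotient by the trivial $P^1$ and descent to $M$) yields a local isomorphism of $\mathcal P\times_{i_\beta}P_0$ and $\mathcal P\times_{i_{\hat\beta}}P_0$, so the $P_0$-structures are locally equivalent.

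For the converse I would take a local isomorphism $\phi$ of the two $P_0$-structures and promote it to an isomorphism of the full geometries $\mathcal C_\beta,\mathcal C_{\hat\beta}$, whereupon Proposition \ref{1.4.7} returns the equivalence of extensions. Here the uniqueness clause of Proposition \ref{1.5.3} is the decisive ingredient: after extension to $Gl(n,\mathbb R)$ both geometries carry one and the same affine connection attached to $(\mathcal P,\omega)$, so $\phi$ transports the canonical connection of the target to an affine connection on the source reducible to the same $P_0$-structure, and by uniqueness these differ only by an equivalence taking values over $P_0$. Absorbing that $P_0$-valued correction upgrades $\phi$ to a morphism matching $\omega_{\alpha_\beta}$ with $\omega_{\alpha_{\hat\beta}}$. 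I expect this promotion to be the main obstacle: a bare isomorphism of $P_0$-structures sees only the soldering part of the Cartan connection, whereas equivalence of extensions in Proposition \ref{1.4.7} constrains the whole of $\omega_{\alpha_\beta}$, and the work lies in checking that the connection part is forced to match, and that the resulting correction is exactly the pair $(p_0,\sigma)$ encoding the change of frame together with the base map of $\phi$.
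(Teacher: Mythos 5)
The paper offers no argument for this corollary at all: it is introduced with ``Thus it is obvious, that:'' and is meant to follow from Example \ref{1.1.3}, Proposition \ref{1.5.3}, Lemma \ref{1.5.5} and Proposition \ref{1.4.7}. So the comparison here is really about whether your reconstruction is sound. Your first paragraph and the forward implication are essentially right: the factorization of $(i_\beta,\alpha_\beta)$ through $(\mathbb{R}^n\ltimes P_0,P_0)$, the observation that $P^1$ is trivial so the underlying $P_0$-structure is the whole total space, and the explicit isomorphism $[u,p]\mapsto[u,p_0p]$ for a change of frame by $p_0\in P_0$ are all correct. (One caveat: when the equivalence of extensions involves a nontrivial automorphism $\sigma$ of $K$, the map $(k,p)\mapsto(\sigma(k),p_0p)$ of Proposition \ref{1.4.7} lives on the homogeneous model; for a general affine geometry $(\mathcal{P},\omega)$ you need $\sigma$ to act on $\mathcal{P}$, which is automatic only in the flat/locally homogeneous situation in which the corollary is later applied.)

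The genuine gap is in your converse. You want to promote a $P_0$-structure isomorphism $\phi$ to an isomorphism of the Cartan geometries $\mathcal{C}_\beta$ and $\mathcal{C}_{\hat\beta}$, and you invoke the uniqueness clause of Proposition \ref{1.5.3} to force the connection parts to match. But that clause says only that the affine connection induced by the affine geometry $(\mathcal{P},\omega)$ is independent of the admissible frame; it does \emph{not} say that a given $P_0$-structure admits a unique compatible affine connection. A bare isomorphism of $P_0$-structures is just a diffeomorphism matching frame subbundles; $\phi^*\nabla'$ is some connection reducible to the source $P_0$-structure, and nothing in Proposition \ref{1.5.3} identifies it with $\nabla$ (for $P_0=Gl(n,\mathbb{R})$ every diffeomorphism is a $P_0$-structure isomorphism and certainly need not preserve $\nabla$). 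The ingredient that actually closes this step, in the situation the paper uses (Corollary \ref{2.5.3}), is Lemma \ref{1.5.5} together with the prolongation theory of Section \ref{1.1}: for a \emph{first order} $P_0$-structure the Cartan connection is the first prolongation, hence canonically attached to the $P_0$-structure, so every local isomorphism of $P_0$-structures automatically pulls back $\omega_{\alpha_{\hat\beta}}$ to $\omega_{\alpha_\beta}$, and only then does Proposition \ref{1.4.7} return the equivalence of extensions. Without the first-order (or some normalization) hypothesis the promotion you describe is not available, and your ``absorbing that $P_0$-valued correction'' step has no justification.
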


We show that we can describe every first order $P_0$-structure in this way.

Let $P_0\subset Gl(n,\mathbb{R})$. Then for a first order $P_0$-structure $\mathcal{P}$, the first prolongation defines a $\mathbb{R}^n+\mathfrak{p}_0$-valued one form $\omega$ on $\mathcal{P}$. Since there is right $P_0$-action on $\mathcal{P}$, one can check that this defines a Cartan connection.

\begin{lem}\label{1.5.5}
Let $\mathcal{P}$ be a first order $P_0$-structure on $M$ and $\omega$ the one form given by the first prolongation. Then $(p:\mathcal{P}\to M,\omega)$ is a Cartan connection of affine type $(\mathbb{R}^n\ltimes P_0,P_0)$. The Cartan connection is torsion-free if and only if the $P_0$-structure is torsion-free.
\end{lem}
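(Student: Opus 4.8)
The plan is to exhibit $\omega$ in the form $\omega=\theta+\gamma$, where $\theta$ is the restriction to $\mathcal{P}$ of the soldering form of $P^1M$ (the $\mathbb{R}^n$-part) and $\gamma$ is the $\mathfrak{p}_0$-valued principal connection singled out by the first prolongation, and then to recognize this as exactly the situation of Lemma \ref{1.5.1} with $P_0$ in place of $Gl(n,\mathbb{R})$. Recall that a choice of complement $\mathcal{C}$ to $\partial(\mathbb{R}^{n*}\otimes\mathfrak{p}_0)$ in $\mathbb{R}^{n*}\wedge\mathbb{R}^{n*}\otimes\mathbb{R}^n$ is what produces the prolongation; concretely it selects principal $P_0$-connections on $\mathcal{P}$ whose torsion lands in $\mathcal{C}$, and for a first order structure $\mathfrak{p}_1=\ker\partial=0$, so this $\gamma$ is unique and $\omega=\theta+\gamma$ is well defined. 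The model algebra is $\mathfrak{g}=\mathbb{R}^n+\mathfrak{p}_0$, the structure group is $P=P_0$, and $\dim\mathcal{P}=n+\dim\mathfrak{p}_0=\dim\mathfrak{g}$.

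First I would verify the three Cartan-connection axioms. Equivariance follows from $(r^p)^*\theta=A^{-1}\cdot\theta$ and $(r^p)^*\gamma=Ad(p^{-1})\gamma$ for $p=(0,A)\in\mathbb{R}^n\ltimes P_0$, which combine into $(r^p)^*\omega=Ad(p^{-1})\circ\omega$ once one observes that in the matrix form of $A(n,\mathbb{R})$ the adjoint action of $p$ sends $(X,B)\mapsto(A^{-1}X,A^{-1}BA)$. Reproduction of fundamental vector fields is immediate: $\theta$ annihilates verticals and $\gamma$ is a principal connection, so $\omega(\zeta_B)=B$ for $B\in\mathfrak{p}_0$. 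Pointwise bijectivity follows by splitting $T_u\mathcal{P}$ into its vertical subspace, on which $\theta=0$ and $\gamma$ is an isomorphism onto $\mathfrak{p}_0$, and the horizontal distribution $H_\gamma=\ker\gamma$, on which $\theta$ is an isomorphism onto $\mathbb{R}^n$, together with the dimension count above. All three checks are identical to those in Lemma \ref{1.5.1}, only carried out inside $P_0$ rather than $Gl(n,\mathbb{R})$, so I would simply invoke that argument.

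For the torsion assertion I would use the decomposition of the curvature $\kappa$ of $\omega$ into its $\mathbb{R}^n$-part and its $\mathfrak{p}_0$-part, exactly as in Lemma \ref{1.5.1}. The $\mathbb{R}^n$-part is the torsion of the affine connection $\gamma$, and restricting the structure equation to the horizontal distribution $H_\gamma=\ker\gamma$ (where $\omega=\theta$) it equals $d\theta(u)(\theta_u^{-1}\,\cdot\,,\theta_u^{-1}\,\cdot\,)$ with $\theta_u^{-1}$ landing in $H_\gamma$. In other words, the $\mathbb{R}^n$-part of $\kappa(u)$ is precisely the representative $T^{H_\gamma}_u$ of the $P_0$-structure torsion $T_u$ associated with the complement $H_\gamma$, so the Cartan connection is torsion-free if and only if $T^{H_\gamma}_u=0$.

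Finally I would reconcile the two notions of torsion-freeness, which is the one genuinely delicate point. The Cartan condition fixes a \emph{specific} representative $T^{H_\gamma}_u$, whereas $P_0$-structure torsion-freeness asks only that the \emph{class} $T_u=[T^{H_\gamma}_u]$ vanish in $\mathbb{R}^{n*}\wedge\mathbb{R}^{n*}\otimes\mathbb{R}^n/\partial(\mathbb{R}^{n*}\otimes\mathfrak{p}_0)$. The bridge is the defining property of the prolongation connection: by construction $T^{H_\gamma}_u\in\mathcal{C}$, while $\mathcal{C}\cap\partial(\mathbb{R}^{n*}\otimes\mathfrak{p}_0)=0$ since $\mathcal{C}$ is a complement. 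Hence $[T^{H_\gamma}_u]=0$ forces $T^{H_\gamma}_u\in\mathcal{C}\cap\partial(\mathbb{R}^{n*}\otimes\mathfrak{p}_0)=0$, so vanishing of the class already forces vanishing of this representative; the converse is trivial. This yields the equivalence ``$\omega$ torsion-free $\iff$ $\mathcal{P}$ torsion-free'' and completes the proof. The main obstacle is not any single computation but correctly matching the representative-level Cartan condition to the quotient-level structure condition through the transversality of $\mathcal{C}$.
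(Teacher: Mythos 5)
Your argument is correct and follows exactly the route the paper intends: the paper offers no written proof of Lemma \ref{1.5.5} beyond the remark that ``one can check'' the Cartan axioms, implicitly reducing to Lemma \ref{1.5.1} via the splitting $\omega=\theta+\gamma$, which is precisely what you carry out, and your reconciliation of the two torsions through the transversality $\mathcal{C}\cap\partial(\mathbb{R}^{n*}\otimes\mathfrak{p}_0)=0$ supplies the one step the paper leaves entirely implicit. The only point you assert at the same level of informality as the paper is the $P_0$-equivariance of the prolongation connection $\gamma$ (which tacitly uses that the chosen complement $\mathcal{C}$ is compatible with the $P_0$-action), so no genuine gap relative to the source.
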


%Now the question is, what is the minimal type $(K,H)$, such that the affine geometry is extension of such pair. The answer gives the holonomy of the principal connection $\gamma$. We wont add many details, which can be found for example in \cite[KoNo]{odk7}, and state only the result.

%\begin{prop}\label{2.1.3}
%Let $(P^1M\to M,\omega)$ be a Cartan geometry of type $(A(n,\mathbb{R}),Gl(n,\mathbb{R}))$.

%Then for any $u\in P^1M$, there is the holonomy group $Hol(u)\subset Gl(n,\mathbb{R})$ of all $h\in Gl(n,\mathbb{R})$ such, that $uh$ is connected with $u$ by a horizontal path. For any first order $P_0$-structure on $M$ holds $Hol(u)\subset P_0$.

%There is a $Hol(u)$-structure $\mathcal{G}(u)$ on $M$ and $\omega|_{\mathcal{G}(u)}$ is a Cartan connection of affine type $(\mathbb{R}^n\ltimes Hol(u),Hol(u))$.

%If $v=ua\in P^1M$, then right multiplication by $a$ is a principal bundle morphism $\mathcal{G}(u)\to \mathcal{G}(v)$ over conjugation by $a^{-1}$ mapping $Hol(u)\to Hol(v)$, which provides equivalence between the corresponding Cartan geometries.

%It holds $\operatorname{Inf}_{loc}(\mathcal{G}(u),\omega|_{\mathcal{G}(u)})=\operatorname{Inf}_{loc}(P^1M,\omega)|_{\mathcal{G}(u)}$.
%\end{prop}

%It is clear that inclusion $Hol(u)\subset P_0\subset Gl(n,\mathbb{R})$ together with identity on $\mathbb{R}^n$ defines an extension of $(\mathbb{R}^n\ltimes Hol(u),Hol(u))$ to $(\mathbb{R}^n\ltimes P_0,P_0)$ and to $(A(n,\mathbb{R}),Gl(n,\mathbb{R}))$.

\newpage
\section{Symmetries of geometric structures and \\ symmetric spaces}

This chapter is devoted to symmetries of geometric structures generalizing the symmetries of symmetric spaces. The section \ref{2.1} contains definitions of (local) symmetries of geometric structures introduced in the first chapter and contains main properties of automorphisms of such symmetric geometric structures. These definitions generalize the approach to symmetric spaces introduced in \cite{odk13} in the global case and \cite{odk14} in the local case. The sections \ref{2.2} and \ref{2.3} compare our approach to symmetric spaces with other equivalent approaches, which can be found for example in \cite{odk6} and \cite{odk7}. The section \ref{2.4} is devoted to investigation of the structure of locally symmetric geometric structures and contains the main result in theorem \ref{3.1.3}, which allows us to construct all such symmetric geometries (corollary \ref{2.4.4}). The results are compilation of \cite{odk7?} and \cite{odk14}. The section \ref{2.5} is then a summary of a construction of first order geometric structures on symmetric spaces. The section \ref{2.6} is a collection of various results on affine structures on symmetric spaces from the literature and it is shown, how to obtain them using our approach.

\subsection{Symmetries and local symmetries}\label{2.1}

We are interested in special automorphisms of $P_0$-structures from the following example, which we will call symmetries:

\begin{example}\label{1.1.5}
For $x\in M$, let $S_x: M\to M$ be an automorphism of a $P_0$-structure $\mathcal{P}_0$ over $j: P_0\to Gl(n,\mathbb{R})$ such that $S_xx=x$ and $S_x^2=id_M$. Then $P^1S_x(u)=u\cdot j(p_0)$ for some $p_0\in P_0$ and $(P^1S_x)^2(u)=u\cdot j(p_0)^2=u$. Thus the action of $T_xS_x$ on $T_xM$ and its matrix $j(p_0)$ in the frame $u$ have only eigenvalues $\pm 1$. Consequently, $T_xM$ decomposes to the $\pm 1$ eigenspaces $T_x^\pm M$.
\end{example}

Now, we define symmetric $P_0$-structures:

\begin{de}\label{2.1.2}
Let $M$ be a connected smooth manifold and let $\mathcal{P}_0$ be a $P_0$-structure over $j: P_0\to Gl(n,\mathbb{R})$ on $M$. Then a smooth system of involutive symmetries (we will say only system of symmetries) on $\mathcal{P}_0$ is a smooth map $S: M\times M \to M$ satisfying the following five conditions:
\begin{itemize}
\item[GS1] If we denote $S_xy:=S(x,y)$, then $S_x$ is an automorphism of $\mathcal{P}_0$.
\item[GS2] $S_xx=x$ for all $x\in M$.
\item[GS3] $S_x^2=id_M$ for all $x\in M$.
\item[GS4] $S_x$ is an automorphism of the system of symmetries $S$ i.e.
$$S(S_xy,S_xz)=S_{S_xy}S_xz=S_xS_yz=S_xS(y,z)$$
for all $x,y,z\in M$.
\item[GS5] Let $p_0\in P_0$ be such, that $P^1S_x(u)=u\cdot j(p_0)$ for some $u$ in the fiber over $x$. Then there is no $p\in P_0,\ j(p)^2=id$ such, that the $-1$ eigenspace of $j(p_0)$ is a proper subspace in the $-1$ eigenspace of $j(p)$. 
\end{itemize}
We will say that $S_x$ is a symmetry (at $x$). Moreover,  $P_0$-structure $\mathcal{P}_0$ is said to be symmetric, if there exists non-trivial system of symmetries $S$ of $\mathcal{P}_0$. If  $(P^1M,S)$ is a symmetric $Gl(n,\mathbb{R})$-structure, then we say that $(M,S)$ is a symmetric space.
\end{de}

The axiom [GS5] restricts the possible symmetric $P_0$-structures to only those with maximal $T^-M$, otherwise any $P_0$-structure would admit trivial system of symmetries with $S_x=id_M$. If $P_0=Gl(n,\mathbb{R})$, then  [GS5] is just the condition $T_xS_x=-id_{T_xM}$, so our definition generalizes the well-known symmetric spaces as defined in \cite{odk6}.

Local version of the previous definition is the following:

\begin{de}\label{2.1.3}
Let $\mathcal{P}_0$ be a $P_0$-structure on a connected smooth manifold $M$ over $j: P_0\to Gl(n,\mathbb{R})$. Then a system of local symmetries on $\mathcal{P}_0$ is a smooth map $S: N \to M$, where $N$ is some open neighborhood of the diagonal in $M\times M$, satisfying the following five conditions:
\begin{itemize}
\item[GS1] If we denote $S_xy=S(x,y)$, then $S_x$ is an automorphism of $\mathcal{P}_0$ on the definition domain $U_x$ of $S_x$.
\item[GS2] $S_xx=x$ for all $x\in M$.
\item[GS3] $S_x^2=id_{U_x}$ for all $x\in M$.
\item[GS4] $S_x$ is a local automorphism of the system of symmetries $S$ i.e. $$S(S_xy,S_xz)=S_{S_xy}S_xz=S_xS_yz=S_xS(y,z)$$ for $x,y,z\in W$, where $W$ is some open neighborhood of diagonal in $M\times M\times M$. 
\item[GS5] Let $p_0\in P_0$ be such, that $P^1S_x(u)=u\cdot j(p_0)$ for some $u$ in the fiber over $x$. Then there is no $p\in P_0,\ j(p)^2=id$ such, that the $-1$ eigenspace of $j(p_0)$ is a proper subspace in the $-1$ eigenspace of $j(p)$.
\end{itemize}
We will say that $S_x$ is a (local) symmetry (at $x$) and that $P_0$-structure $\mathcal{P}_0$ is locally symmetric, if there exists a non-trivial system of local symmetries $S$ of $\mathcal{P}_0$.  If $(P^1M,S)$ is a locally symmetric $Gl(n,\mathbb{R})$-structure, then we say that $(M,S)$ is a locally symmetric space.
\end{de}

If $(\pi:\mathcal{P}_0\to M,S)$ is a (locally) symmetric $P_0$-structure with torsion $T$, then \begin{align*}
T_u(X,Y)&=(S_{\pi(u)})^*T_u(X,Y)=T_{u\cdot j(p_0)}(X,Y)\\&=j(p_0)(T_u(j(p_0)(X),j(p_0)(Y))).
\end{align*} Thus the torsion provides an obstruction on the possible $\pm 1$ eigenspaces of $j(p_0)$. On the other hand, even if the torsion allows existence of some suitable $p\in Gl(n,\mathbb{R})$, it does have to be contained in $j(P_0)$. For example $-id_{T_xM}\notin Sl(2n+1,\mathbb{R})$. This leads to the useful, but nearly forgotten, notion of a reflexion space introduced by O. Loos in \cite{odk13}: A  reflexion space is a pair $(M,S)$ satisfying [GS2]-[GS4]. A local version of reflexion spaces was investigated by the author in \cite{odk14}.

Now we define symmetric Cartan geometries. It is clear, that an automorphism $\phi$ of a Cartan geometry restricted to the underlying $P_0$-structure (cf. lemma \ref{1.2.2}) is an automorphism of the $P_0$-structure and the restriction of $\phi$ to $M$ is a diffeomorphism of $M$. Thus we can easily adapt the definition of a symmetric $P_0$-structures for Cartan geometries:

\begin{de}\label{2.1.4}
Let $(p: \mathcal{G}\to M,\omega)$ be a Cartan geometry of type $(G,P)$. We say that it is a symmetric Cartan geometry if there is a system of symmetries $S$ of the underlying $P_0$-structure and the symmetries $S_x$ are base maps of automorphisms of the Cartan geometry.

We say that it is a locally symmetric Cartan geometry if there is a system of local symmetries $S$ of the underlying $P_0$-structure and the local symmetries $S_x$ are base maps of local automorphisms of the Cartan geometry.
\end{de}

Since the infinitesimal automorphisms of a Cartan geometry project correctly on vector fields on $M$, we get the following corollary of proposition \ref{1.2.4}:

\begin{cor}\label{2.1.7}
The group generated by symmetries of a symmetric Cartan geometry is a Lie group. The pseudo-group generated by local symmetries is generated by a sheaf of Lie subalgebras of infinitesimal automorphisms.
\end{cor}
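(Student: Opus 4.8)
The plan is to deduce both assertions from Proposition \ref{1.2.4} by realizing the (local) symmetries inside the automorphism (pseudo-)group of the Cartan geometry and then exploiting the smoothness of the system $S$. By Definition \ref{2.1.4} every symmetry $S_x$ is the base map of an automorphism of $(p:\mathcal{G}\to M,\omega)$, so I would first fix, for each $x$, a lift $\tilde{S}_x\in\operatorname{Aut}(\mathcal{G},\omega)$; by Proposition \ref{1.2.6} such a lift is unique up to the maximal normal subgroup $N\subset P$, which is discrete in the (infinitesimally) effective case. Since $\operatorname{Aut}(\mathcal{G},\omega)$ is a Lie group by Proposition \ref{1.2.4}, the group $\tilde{K}$ generated by the $\tilde{S}_x$ is an abstract subgroup of a Lie group, and the group generated by the symmetries themselves is its image under the projection to base maps.

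For the first assertion I would show that $\tilde{K}$ carries a Lie group structure. The key is that smoothness of $S:M\times M\to M$ makes $x\mapsto\tilde{S}_x$ smooth into $\operatorname{Aut}(\mathcal{G},\omega)$, so that for a fixed base point $y_0$ the transvections $x\mapsto\tilde{S}_x\tilde{S}_{y_0}$ trace a path starting (at $x=y_0$) from an element covering $\operatorname{id}_M$. Because $M$ is connected, the subgroup generated by all transvections is path-connected, and by the Yamabe-type theorem that a path-connected subgroup of a Lie group is an immersed Lie subgroup it is a Lie group; the full $\tilde{K}$ is then obtained by adjoining the single involution class represented by any $\tilde{S}_x$, i.e. it is a discrete (index two) extension and stays a Lie group. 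Projecting to base maps gives the group generated by the symmetries as a Lie group.

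For the second assertion I would argue locally. Each local symmetry is a local automorphism, hence lies in $\operatorname{Aut}_{loc}(\mathcal{G},\omega)$, which by Proposition \ref{1.2.4} is generated by the sheaf $\operatorname{Inf}_{loc}(\mathcal{G},\omega)$. To pin down the relevant subsheaf I would differentiate smooth families of transvections: for a smooth curve $c$ through a point the map $t\mapsto S_{c(t)}\circ S_{c(0)}$ is a smooth curve of local automorphisms equal to the identity at $t=0$, so its derivative at $t=0$ is a locally defined infinitesimal automorphism, an element of $\operatorname{Inf}_{loc}(\mathcal{G},\omega)$ (recalling that these project correctly to vector fields on $M$ and that by Proposition \ref{1.2.4} the Lie algebra is governed by such fields). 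Letting $c$ vary spans, over each point, a Lie subalgebra of $\operatorname{Inf}_{loc}$, and these fit together into a subsheaf; one then verifies that the pseudo-group generated by the local symmetries coincides with the one generated by this subsheaf together with the reflections, which is precisely the claim.

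The main obstacle I expect is twofold. First, the lifting ambiguity from Proposition \ref{1.2.6} must be controlled: one has to ensure that the lifts $\tilde{S}_x$ can be chosen to depend smoothly on $x$ and that products compose compatibly modulo $N$, which is exactly where (infinitesimal) effectivity enters. Second, and more seriously, upgrading ``abstract subgroup of a Lie group'' to ``Lie subgroup'' is not automatic; the argument rests on path-connectedness of the transvection part together with the Yamabe-type closure theorem, and in the local case on checking that the infinitesimal automorphisms produced by differentiation genuinely generate the whole pseudo-group rather than a proper sub-pseudo-group. Matching the generated pseudo-group with an honest sheaf of Lie subalgebras, rather than with a loose collection of locally defined vector fields, is the delicate bookkeeping that I expect to require the most care.
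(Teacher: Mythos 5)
Your argument is correct and follows essentially the same route as the paper: the corollary is derived directly from Proposition \ref{1.2.4} (symmetries are automorphisms, $\operatorname{Aut}(\mathcal{G},\omega)$ is a Lie group, $\operatorname{Aut}_{loc}$ is generated by $\operatorname{Inf}_{loc}$), and the substantive details you supply — the path-connected transvection subgroup, differentiating $t\mapsto S_{c(t)}\circ S_{c(0)}$ to produce infinitesimal automorphisms, and recovering the symmetries from the resulting flows — are exactly the computations the paper carries out immediately afterwards in Proposition \ref{1.1.10}. The two points you flag as delicate (controlling the lifts modulo $N$, and checking that the flows genuinely generate the transvection part via $S_{\phi_t(x)}S_x=\phi_{2t}$) are the right ones and are settled there.
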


%Since there is a smooth inclusion $M\to \operatorname{Aut}(\mathcal{G},\omega)$ given by $x\mapsto S_x$, the group generated by symmetries of a symmetric Cartan geometry has at most two connected components. Thus we can conclude:

%\begin{prop}
%The group generated by symmetries of a symmetric Cartan geometry is a Lie transformation group of $M$.
%\end{prop}

Since extension is a functor, we get the following important corollary:

\begin{cor}
An extension of a (locally) symmetric Cartan geometry is a (locally) symmetric Cartan geometry.
\end{cor}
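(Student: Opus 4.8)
The plan is to reduce everything to the functoriality of the extension established in the preceding proposition, and then to check the five axioms of a system of symmetries one at a time for the underlying structure of the extended geometry.

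First I would recall that an automorphism of a Cartan geometry is precisely an isomorphism of the geometry with itself. Writing the extension functor as $(\mathcal{G}\to M,\omega)\mapsto(\mathcal{G}\times_iP\to M,\omega_\alpha)$, functoriality sends an automorphism $\phi$ of $(\mathcal{G}\to M,\omega)$ to an automorphism $\phi\times_i\mathrm{id}_P$ of $(\mathcal{G}\times_iP\to M,\omega_\alpha)$; since the extension does not touch the base, $\phi\times_i\mathrm{id}_P$ covers the very same base diffeomorphism as $\phi$. Hence, if each symmetry $S_x$ is the base map of an automorphism $\phi_x$ of the original geometry, it is automatically the base map of an automorphism of the extended geometry.

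Next I would transport the system of symmetries $S:M\times M\to M$ verbatim: the extension leaves both $M$ and the map $S$ unchanged. Axioms [GS2], [GS3] and [GS4] are conditions on the base maps $S_x$ alone (they involve only $S$, the fixed-point and involutivity conditions, and the compatibility $S(S_xy,S_xz)=S_xS(y,z)$), so they hold for the extended geometry because they held for the original one. Axiom [GS1] for the new underlying $P_0'$-structure follows from the previous paragraph together with the observation preceding Definition \ref{2.1.4}: the base map of an automorphism of a Cartan geometry is an automorphism of its underlying $P_0$-structure, and here the relevant automorphism is $\phi_x\times_i\mathrm{id}_P$.

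The one step that genuinely requires the structure of the extension is [GS5], and this is where I expect the main difficulty. The eigenspace decomposition $T_xM=T_x^+M\oplus T_x^-M$ is intrinsic to the diffeomorphism $S_x$, hence unchanged. Under the canonical identifications $T_xM\cong\mathfrak{k}/\mathfrak{h}$ and $T_xM\cong\mathfrak{g}/\mathfrak{p}$ coming from the two Cartan connections, the property $Ad(i(h))\circ\alpha=\alpha\circ Ad(h)$ from the definition of an extension shows that $\alpha$ is an isomorphism of $H$-modules carrying the $-1$-eigenspace of the original symmetry onto that of the extended one; in particular the element realizing $P^1S_x$ in the extended geometry may be taken to be $i$ of the one realizing it originally, so the $-1$-eigenspace is transported faithfully. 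It then remains to argue that its \emph{maximality} survives: any involution in $P_0'=P/P^1$ with a strictly larger $-1$-eigenspace would, via $\alpha^{-1}$ and the compatibility of the adjoint actions, have to contradict the maximality guaranteed by [GS5] for the original geometry. Verifying this against the a priori larger structure group $P_0'$ is the technical heart of the argument. Finally, the locally symmetric case is identical, replacing automorphisms by local automorphisms and the system of symmetries by a system of local symmetries; functoriality applies equally to the injective local morphisms, and Corollary \ref{2.1.7} ensures the resulting local symmetries still generate the expected pseudo-group.
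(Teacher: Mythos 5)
The paper's entire justification for this corollary is the single clause ``Since extension is a functor,'' so your proposal follows exactly the paper's route and simply spells it out: functoriality sends an automorphism $\phi$ of $(\mathcal{G}\to M,\omega)$ to the automorphism $[u,p]\mapsto[\phi(u),p]$ of $(\mathcal{G}\times_iP\to M,\omega_\alpha)$ covering the same base map, the map $S$ is unchanged, and [GS1]--[GS4] transport verbatim. All of that is correct and is what the author intends.

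Your hesitation about [GS5] is not a defect of your write-up but a genuine feature of the statement. The maximality of the $-1$-eigenspace is tested against \emph{all} involutions of the structure group of the underlying structure, and after extension this group is $P/P^1$ rather than the corresponding quotient of $H$; an involution of the larger group with a strictly bigger $-1$-eigenspace need not lie in the image of $i$, so it cannot be pulled back through $\alpha$ to contradict [GS5] for the original geometry. Functoriality alone therefore does not yield [GS5] for the extended structure, and the paper itself concedes this later: in Theorem \ref{3.5.1} the condition that there be no $p\in P$ with $p^2=e$ whose $-1$-eigenspace properly contains that of $Ad(i(h))$ is imposed as an explicit \emph{hypothesis} on the extension, precisely so that ``[GS5] is satisfied.'' So the step you call the technical heart is in fact an additional assumption on $(i,\alpha)$, not something you failed to derive; with that assumption added (or with [GS5] read relative to the original structure group), your argument is complete and matches the paper's.
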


Let $(\mathcal{P}_0,S)$ be a  locally symmetric $P_0$-structure, $W$ the neighborhood from the definition and consider a neighborhood  $W_x$ of $x$ such, that $W_x\times W_x\times W_x\subset W$ and $V_x=\{S_yz: y,z\in W_x\}$. The conditions [GS1]-[GS5] hold for all points of $W_x$ and [GS1]-[GS3] hold for all points of $V_x$. Thus we can differentiate those formulas along curves in $W_x$ or $V_x$. For example $$R_x(X)(y):=\frac12\frac{d}{ds}|_{s=0} S_{c(s)}S_xy,$$ where $c(s)$ is a curve in $W_x$ satisfying $c(0)=x, \ c'(0)=X$. Then $R_x$ defines a mapping from $T_xM$ to vector fields on $V_x$. We will show that $R_x(X)$ is in fact an infinitesimal automorphism on $V_x$:

\begin{prop}\label{1.1.10}
Let $X\in T_x^-M$ (cf. example \ref{1.1.5}). Then $R_x(X)$ defined in the previous paragraph is an infinitesimal automorphism of a locally symmetric $P_0$-structure $\mathcal{P}_0$, defined on $V_x$ with value $X$ at $x$. Its flow induces local one parameter subgroup $\phi_t$ generated by local symmetries and induces isomorphism of the Lie algebras of vector fields generated by the values of $R_x(X)$ and $R_{\phi_t(x)}(X)$, for  $X\in T^-M$. Moreover, if the $P_0$-structure $\mathcal{P}_0$ is symmetric, then the vector fields $R_x(X)$ are complete.
\end{prop}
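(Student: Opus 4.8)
The plan is to view $R_x(X)$ as the (scaled) velocity of a curve of automorphisms through the identity, to deduce on these grounds that it is an infinitesimal automorphism, and then to identify its flow explicitly as a product of two symmetries, from which the remaining assertions follow.

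First I would fix a curve $c$ in $W_x$ with $c(0)=x$, $c'(0)=X$ and consider the family $\Psi_s:=S_{c(s)}\circ S_x$. By [GS1] each of $S_{c(s)}$ and $S_x$ is an automorphism of $\mathcal{P}_0$, so each $\Psi_s$ is an automorphism, and by [GS3] we have $\Psi_0=S_x\circ S_x=\mathrm{id}$. Thus $s\mapsto\Psi_s$ is a smooth curve of automorphisms through the identity, and $R_x(X)=\tfrac12\frac{d}{ds}\big|_{s=0}\Psi_s$ depends only on $c'(0)=X$; this makes $R_x(X)$ a well-defined vector field on $V_x$. To compute its value at $x$, I differentiate $s\mapsto S(c(s),x)$, obtaining $D_1S|_{(x,x)}(X)$, where $D_1$ denotes the derivative in the first argument. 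Differentiating $S(c(s),c(s))=c(s)$ (a consequence of [GS2]) gives $D_1S|_{(x,x)}+D_2S|_{(x,x)}=\mathrm{id}$ on $T_xM$, while $D_2S|_{(x,x)}=T_xS_x$ acts as $-\mathrm{id}$ on $T_x^-M$ by Example~\ref{1.1.5}. Hence $D_1S|_{(x,x)}=2\,\mathrm{id}$ on $T_x^-M$, and $R_x(X)(x)=X$ for $X\in T_x^-M$.

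Next I would argue that $R_x(X)$ is an infinitesimal automorphism. Since $\Psi_s$ is a smooth family of automorphisms of $\mathcal{P}_0$ with $\Psi_0=\mathrm{id}$, its velocity is a tangent vector at the identity of the (pseudo)group of automorphisms; under the identification of infinitesimal automorphisms with such tangent vectors (the discussion following the prolongation propositions in \S\ref{1.1}, cf. \cite{odk1}), $R_x(X)$ is an infinitesimal automorphism. Equivalently, prolonging to $P^1M$, the curve $P^1\Psi_s$ preserves the subbundle $\mathcal{P}_0\subset P^1M$ setwise, so its generator is tangent to $\mathcal{P}_0$ and annihilates the structure. To pin down the flow, let $\gamma$ be the integral curve of $R_x(X)$ with $\gamma(0)=x$; since $R_x(X)$ is independent of the chosen curve, I take $c=\gamma$. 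I then claim $\phi_t=S_{\gamma(t/2)}\circ S_x$. Both sides equal $\mathrm{id}$ at $t=0$ and have the same velocity $R_x(X)$ there. The crucial input is the reflexion--space identity $S_xS_yS_x=S_{S_xy}$, which is immediate from [GS4] together with $S_x^2=\mathrm{id}$; differentiating [GS4] shows more generally that each symmetry conjugates transvection fields by $(S_z)_*R_w(Y)=R_{S_zw}(T_wS_z\,Y)$. Using these relations one verifies the one--parameter group law $\phi_{s+t}=\phi_s\circ\phi_t$ and that $\phi_t(x)=\gamma(t)$, so $\{\phi_t\}$ is the flow of $R_x(X)$. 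In particular $\phi_t=S_{\gamma(t/2)}\circ S_x$ is a product of two local symmetries, so the flow is generated by local symmetries, which also reconfirms that it consists of automorphisms.

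Finally, the conjugation formula $(\phi_t)_*R_y(Y)=R_{\phi_t(y)}(T_y\phi_t\,Y)$ (again from differentiated [GS4], valid because $\phi_t$ is an automorphism of the system $S$ preserving the splitting $T^\pm M$) shows that $(\phi_t)_*$ restricts to an isomorphism from the Lie algebra of vector fields generated by $\{R_x(X):X\in T_x^-M\}$ onto the one generated by $\{R_{\phi_t(x)}(X):X\in T^-_{\phi_t(x)}M\}$. For completeness in the global case, when $S$ is defined on all of $M\times M$ the map $\phi_t=S_{\gamma(t/2)}\circ S_x$ is, for small $t$, a globally defined diffeomorphism (a product of two global symmetries), and the group law $\phi_{nt}=(\phi_t)^n$ then extends the flow to all $t\in\mathbb{R}$; hence $R_x(X)$ is complete (equivalently, it is a fundamental vector field of the Lie group generated by symmetries, cf. Corollary~\ref{2.1.7}). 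The main obstacle is the identification of the flow: verifying that it is exactly $S_{\gamma(t/2)}\circ S_x$ for all $t$, i.e. establishing the group law and the differentiated--[GS4] conjugation identities, is where the reflexion--space axioms do the real work, whereas the value at $x$, the infinitesimal--automorphism property, the Lie--algebra isomorphism, and completeness are comparatively routine once the flow has been identified.
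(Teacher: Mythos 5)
Your plan reproduces the right skeleton: compute $R_x(X)(x)$, show $R_x(X)$ is an infinitesimal automorphism, identify its flow with the transvections $S_{\gamma(t/2)}\circ S_x$, and read off the conjugation isomorphism and completeness. Several pieces are sound: the evaluation at $x$ via differentiating $S(c(s),c(s))=c(s)$ is exactly the paper's computation; your route to the infinitesimal-automorphism property (velocity of a smooth curve of automorphisms through the identity, hence a natural lift tangent to $\mathcal{P}_0$) is a legitimate alternative to the paper's, which instead deduces it only after exhibiting the flow as a product of symmetries; and the doubling argument for completeness matches the paper.

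The gap is in the step you yourself flag as the main obstacle. You propose to verify the group law $\phi_{s+t}=\phi_s\circ\phi_t$ for $\phi_t:=S_{\gamma(t/2)}\circ S_x$ ``using'' the identities $S_xS_yS_x=S_{S_xy}$ and $(S_z)_*R_w(Y)=R_{S_zw}(TS_z\,Y)$. These do not suffice. Applying them reduces the group law to the statement that $S_{\gamma(a)}S_{\gamma(b)}$ depends only on $a-b$, equivalently that $\gamma$ is a geodesic of the reflexion space, $S_{\gamma(a)}\gamma(b)=\gamma(2a-b)$, equivalently that the field $R_{\gamma(t)}(\gamma'(t))$ is independent of $t$ (indeed, differentiating $t\mapsto S_{\gamma(t/2)}(S_xy)$ shows your candidate family is the flow of the \emph{time-dependent} field $R_{\gamma(t/2)}(\gamma'(t/2))$, not obviously of $R_x(X)$). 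The case $a=0$ does follow from $(S_x)_*R_x(X)=-R_x(X)$, but for general $a$ you need $S_{\gamma(a)}=\phi_a S_x\phi_{-a}$, i.e.\ that the flow intertwines the system $S$ --- which is what you are trying to prove, so the verification is circular as stated. The paper closes this circle with a genuine ODE argument: it differentiates [GS4] along the flow to show $\frac{d}{dt}\bigl[\phi_{-t}(S_{\phi_t(p)}\phi_t(q))\bigr]=0$, concludes $\phi_tS_p\phi_{-t}=S_{\phi_t(p)}$ by uniqueness of solutions, and only then obtains $\phi_{2t}=S_{\phi_t(x)}S_x$. Your Lie-algebra-isomorphism step inherits the same dependency, since it invokes ``$\phi_t$ is an automorphism of the system $S$.'' That intertwining computation is the missing idea; without it the group-law verification does not close.
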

\begin{proof}
Let $\phi_t$ be a local one parameter subgroup of the local diffeomorphisms given as a flow of $R_x(X)$. We show, that $\phi_t$ are generated by symmetries. Let 
$\gamma(t)=\phi_{-t}(S_{\phi_t(p)}\phi_t(q)).$ Then 
 \[ 
\begin{split}
\gamma'(t)&=\frac12\frac{d}{ds}|_{s=0}(-S_{c(s)}S_x\gamma(t)+\phi_{-t}S_{S_{c(s)}S_x\phi_t(p)}\phi_t(q)+\phi_{-t}S_{\phi_t(p)}S_{c(s)}S_x\phi_t(q))
\end{split}
\]

Differentiating $S_{c(s)}S_yz=S_{S_{c(s)}y}S_{c(s)}z$ we obtain 
\[\frac{d}{ds}|_{s=0}S_{c(s)}S_yz=\frac{d}{ds}|_{s=0} (S_{S_{c(s)}y}S_{x}z+S_{S_{x}y}S_{c(s)}z).\]

Thus 
\[ 
\begin{split}
\frac{d}{ds}|_{s=0} S_{c(s)}S_xS_yz&=\frac{d}{ds}|_{s=0} S_{c(s)}S_{S_xy}S_xz\\
&=\frac{d}{ds}|_{s=0} (S_{S_{c(s)}S_xy}S_{x}S_xz+S_{S_{x}S_xy}S_{c(s)}S_xz)\\
&=\frac{d}{ds}|_{s=0} (S_{S_{c(s)}S_xy}z+S_{y}S_{c(s)}S_xz).
\end{split}\]

Then since $\phi_t$ is flow of $R_x(X)$,
\[ 
\begin{split}
\gamma'(t)&=\frac12\frac{d}{ds}|_{s=0}(-S_{c(s)}S_x\gamma(t)+\phi_{-t}S_{c(s)}S_xS_{\phi_t(p)}\phi_t(q))\\
&=-R_x(X)(\gamma(t))+(\phi_t)^*R_x(X)(\gamma(t))=0.
\end{split}
\]

So we have shown that $\gamma(t)$ is constant and
\[ \phi_{-t}(S_{\phi_t(p)}\phi_t(q))=\gamma(0)=S_pq.\]

Differentiating $S_{c(t)}S_{c(t)}y=y$ we obtain
\[ \frac{d}{ds}|_{s=0}(S_xS_{c(s)}y+S_{c(s)}S_xy)=0.\]

Then
\[2(S_x)^*R_x(X)(y)=\frac{d}{ds}|_{s=0}S_xS_{c(s)}S_xS_xy=-\frac{d}{ds}|_{s=0}S_{c(s)}S_xy=-2R_x(X)(y).\]

Finally, since $(S_x)^*Fl^{R_x(X)}_t(x)=Fl^{(S_x)^*R_x(X)}_t$,
\[ 
S_{\phi_t(x)}S_x=\phi_tS_x\phi_{-t}S_x=\phi_t\phi_t=\phi_{2t}.
\]

Thus $R_x(X)$ is an infinitesimal automorphism and we want to evaluate it at $x$. Differentiating $S_{c(t)}c(t)=c(t)$ we obtain
\[ \frac{d}{ds}|_{s=0} (S_{c(s)}x+S_xc(s))=\frac{d}{ds}|_{s=0} c(s).\]

So if $X\in T_x^-M$, then 
\[R_x(X)(x)=\frac12\frac{d}{ds}|_{s=0} S_{c(s)}S_xx=\frac12\frac{d}{ds}|_{s=0}(c(s)-S_xc(s))=X,\]
and if $X\in T_x^+M$, then
\[R_x(X)(x)=\frac12\frac{d}{ds}|_{s=0} S_{c(s)}S_xx=\frac12\frac{d}{ds}|_{s=0}(c(s)-S_xc(s))=0.\]

Differentiating $S_{y}S_{c(s)}z=S_{S_yc(s)}S_{y}z$ we obtain
 \[\frac{d}{ds}|_{s=0}S_{y}S_{c(s)}z=\frac{d}{ds}|_{s=0} S_{S_yc(s)}S_{y}z.\]

Thus $TS_y$ maps $X\in T_x^\pm M$ to $T_{S_yx}^\pm M$ and
$$\phi_{-2t}^*(R_x(X)(y))=\frac12\frac{d}{ds}|_{s=0} S_{S_{\phi_t(x)}S_xc(s)}S_{S_{\phi_t(x)}S_xx}y=R_{\phi_{2t}(x)}(TS_{\phi_t(x)}TS_x(X))(y)$$
i.e. $\phi_{t}$ induces isomorphism of the Lie algebras generated by $R_x$ and $R_{\phi_t(x)}$.

The above computation gives us equality $$TS_xR_y(X)(S_xz)=R_{S_xy}(TS_x(X))(z),$$ which we will use later.

If the symmetries are globally defined, we extend $\phi_{t}$ from $t\in (a,b)$ to $t\in (2a,2b)$ by $\phi_{2t}=S_{\phi_{t}(x)}S_x$ and the last claim follows.
\end{proof}

\subsection{Affine locally symmetric spaces}\label{2.2}

There are several other definitions of locally symmetric spaces than \ref{2.1.3}. First we start with the following definition:

\begin{de}\label{2.1.1}
Let $M$ be a connected smooth manifold. We say that $M$ is an affine locally symmetric space if there is a torsion-free affine connection $\nabla$, whose curvature $R$ satisfies $\nabla R=0$.
\end{de}

We show that a (locally) symmetric space is an affine locally symmetric space:

\begin{prop}
Let $(M,S)$ be a locally symmetric space, then there is a unique linear torsion-free affine connection $\nabla$ invariant to all symmetries and $(M,\nabla)$ is an affine locally symmetric space.
\end{prop}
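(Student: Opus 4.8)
The plan is to reduce almost everything to a single parity observation and to isolate the genuine work in the existence of \emph{one} invariant connection. First I would record the basic datum: since $(M,S)$ is a locally symmetric \emph{space} we have $P_0=Gl(n,\mathbb{R})$, and axiom [GS5] together with $-\operatorname{id}\in Gl(n,\mathbb{R})$ forces $j(p_0)=-\operatorname{id}$, i.e. $T_xS_x=-\operatorname{id}_{T_xM}$ for every $x$. Thus $T_x^-M=T_xM$, and by Proposition~\ref{1.1.10} every $X\in T_xM$ produces an infinitesimal automorphism $R_x(X)$ with $R_x(X)(x)=X$, whose flow realises the transvections $\phi_{2t}=S_{\phi_t(x)}S_x$.

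Next I would establish a \textbf{parity lemma}. If $\nabla$ is any affine connection on $M$ for which all symmetries are affine, then any tensor field $\Theta$ canonically built from $\nabla$ is $S_x$-invariant, and at the fixed point $x$ invariance reads $(S_x^*\Theta)_x=\Theta_x$, while $T_xS_x=-\operatorname{id}$ gives $(S_x^*\Theta)_x=(-1)^{c+r}\Theta_x$, where $c$ and $r$ are the numbers of covariant and contravariant slots of $\Theta$. Hence $\Theta_x=0$ whenever $c+r$ is odd, and, as $x$ is arbitrary, $\Theta\equiv 0$. Applied to the torsion $T$ (type $(1,2)$, $c+r=3$) this shows that every invariant connection is automatically torsion-free; applied to $\nabla R$ (type $(1,4)$, $c+r=5$) it gives $\nabla R=0$, so $(M,\nabla)$ is affine locally symmetric in the sense of Definition~\ref{2.1.1}; and applied to the difference $D=\nabla-\nabla'$ of two invariant connections (a $(1,2)$-tensor) it gives $D\equiv 0$, i.e. \textbf{uniqueness}. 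In particular torsion-freeness, uniqueness and $\nabla R=0$ all come for free once invariance is known, so the whole statement reduces to producing one $S$-invariant connection.

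For \textbf{existence} I would exploit local homogeneity. By Proposition~\ref{1.1.10} the fields $R_x(X)$ span $T_xM$, so the local symmetries act locally transitively, and by Corollary~\ref{2.1.7} the resulting pseudogroup is generated by a sheaf of Lie algebras; locally $M$ is therefore $K/H$ for the group $K$ generated by the symmetries, with $H$ the stabiliser of a base point $o$. The involutive automorphism $\sigma$ of $\mathfrak{k}$ induced by $S_o$ splits $\mathfrak{k}=\mathfrak{h}\oplus\mathfrak{m}$ into its $\pm1$-eigenspaces, with $\mathfrak{m}\cong T_oM$ and the symmetric-space relations $[\mathfrak{h},\mathfrak{m}]\subseteq\mathfrak{m}$, $[\mathfrak{m},\mathfrak{m}]\subseteq\mathfrak{h}$. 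This reductive decomposition carries its canonical invariant connection, which is $K$-invariant and, precisely because $[\mathfrak{m},\mathfrak{m}]\subseteq\mathfrak{h}$, torsion-free, and, being canonically attached to the symmetric structure, is preserved by the symmetries. Gluing these locally defined connections is then legitimate: on overlaps they agree by the uniqueness already proved, so they assemble into a single global affine connection $\nabla$ invariant under all symmetries, which by the parity lemma is the required unique torsion-free connection with $\nabla R=0$.

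The hard part will be this existence step, and specifically the passage from the infinitesimal data of Proposition~\ref{1.1.10} to an honest invariant connection. The two delicate points are: (i) checking that the local model $K/H$ is not merely reductive but genuinely symmetric, $[\mathfrak{m},\mathfrak{m}]\subseteq\mathfrak{h}$, which is exactly where the involutivity of the symmetries (axiom [GS3]) and $T_xS_x=-\operatorname{id}$ enter; and (ii) verifying that each local canonical connection is invariant under \emph{all} symmetries, not only those fixing the chosen base point, so that the uniqueness-based gluing applies. Once these are settled, the parity lemma renders torsion-freeness, uniqueness, and $\nabla R=0$ automatic.
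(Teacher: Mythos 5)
Your parity argument is exactly the paper's: once one $S$-invariant connection exists, invariance of the torsion, of $\nabla R$, and of the difference of two invariant connections, combined with $T_xS_x=-\operatorname{id}$ at the fixed point, kills every odd-valence invariant tensor, giving torsion-freeness, $\nabla R=0$ and uniqueness in one stroke. Where you genuinely diverge is the existence step, and here the paper is much lighter than your plan. It does not pass through a local model $K/H$ at all: it simply defines
$\nabla_XY(x)=[R_x(X),Y](x)$,
using the infinitesimal transvections $R_x(X)$ already supplied by Proposition~\ref{1.1.10}, checks that this is an affine connection, and verifies invariance directly from the identity $(S_x)^*R_y(X)=R_{S_xy}((S_x)^*X)$ established in the proof of that proposition. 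This one-line formula buys you everything your existence step is meant to produce, while avoiding the two delicate points you flag. Your route, by contrast, needs the finite-dimensionality of the Lie algebra generated by the $R_x(X)$ and an integration theorem to realise $M$ locally as $K/H$; in the paper that machinery only appears in Section~\ref{2.4} (and your citation of Corollary~\ref{2.1.7} is slightly misplaced, since that corollary concerns Cartan geometries, which at this stage have not yet been attached to $M$ --- the connection being constructed is precisely what provides one). The detour is not circular, because the local structure theory is developed independently of this proposition, but it front-loads a substantial amount of Section~\ref{2.4} into a statement that admits a direct two-line construction. If you keep your architecture, you must actually prove, not just assert, that the sheaf of fields $R_x(X)$ integrates to a local transitive action with symmetric isotropy; the explicit bracket formula is the way the paper sidesteps exactly that.
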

\begin{proof}
We define $\nabla_XY(x)=[R_x(X),Y](x)$ (cf. proposition \ref{1.1.10}). One can easily check, that $\nabla$ is an affine connection. Then $$((S_x)^*\nabla_X Y)(y)=TS_x(\nabla_{(S_x)^*X}(S_x)^*Y)(S_xy)=[R_{y}(X),Y](y),$$ where we used equality $(S_x)^*R_y(X)(z)=R_{S_xy}((S_x)^*(X))(z)$ (cf. proof of  proposition \ref{1.1.10}). Thus $\nabla$ is invariant to all symmetries $S_x$.

Now the curvature $R$ of the locally symmetric space satisfies $$(S_x)^*(\nabla R)=\nabla R.$$ This is a tensor field of type $(4,1)$ and $(S_x)^*$ acts there as $-id$, thus $\nabla R=0$. Since the same holds for any tensor field invariant to symmetries with odd number of arguments, the torsion and the difference between $\nabla$ and any other affine connection invariant to all symmetries vanish.
\end{proof}

The connection $\nabla$ from the proposition is called the canonical connection of the (locally) symmetric space.

As a corollary of proposition \ref{1.1.10} and lemma \ref{1.5.1} we obtain:

\begin{cor}
Let $(M,S)$ be a (locally) symmetric space. Then the corresponding affine geometry is (locally) homogeneous.
\end{cor}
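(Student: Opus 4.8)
The plan is to identify the ``corresponding affine geometry'' with the Cartan geometry $(P^1M\to M,\omega_\nabla)$ of type $(A(n,\mathbb{R}),Gl(n,\mathbb{R}))$ attached by Lemma \ref{1.5.1} to the canonical connection $\nabla$, and then to produce enough automorphisms of this Cartan geometry to make the action on the base transitive. The first thing I would record is that, because the previous proposition shows $\nabla$ to be invariant under every symmetry $S_x$, the equivalence of categories in Lemma \ref{1.5.1} turns each $S_x$ into the base map of a genuine (local) automorphism of $(P^1M,\omega_\nabla)$, and not merely of the underlying $Gl(n,\mathbb{R})$-structure. This is the bridge that upgrades the purely $P_0$-structural data into honest Cartan automorphisms.

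Next I would exploit that for a (locally) symmetric space one has $P_0=Gl(n,\mathbb{R})$, so axiom [GS5] forces $T_xS_x=-\mathrm{id}_{T_xM}$ and hence $T_x^-M=T_xM$ at every $x$ (cf. Example \ref{1.1.5}). Applying Proposition \ref{1.1.10} to each $X\in T_xM=T_x^-M$ then yields an infinitesimal automorphism $R_x(X)$ with value $X$ at $x$, whose flow $\phi_t$ satisfies $\phi_{2t}=S_{\phi_t(x)}S_x$. Since $R_x(X)(x)=X$, the transvection $\phi_{2t}$ moves $x$ in the direction $X$; letting $X$ range over $T_xM$ and $t$ vary, these compositions of two symmetries already reach a full neighbourhood of each point. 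Combined with the first step, this shows that $\operatorname{Aut}_{loc}(P^1M,\omega_\nabla)$ acts transitively on $M$.

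To close the global (symmetric) case I would invoke completeness: Proposition \ref{1.1.10} guarantees that on a globally symmetric space the fields $R_x(X)$ are complete, so the transvections are globally defined and generate a Lie subgroup $K\subseteq\operatorname{Aut}(P^1M,\omega_\nabla)$. Because the infinitesimal transvections span every tangent space, the $K$-orbits are open, hence also closed, and connectedness of $M$ forces a single orbit; thus $K$ acts transitively and the geometry is $K$-homogeneous.

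The delicate point, and the step I expect to be the main obstacle, is the locally symmetric case, where one must pass from ``local automorphisms act transitively'' to the definitional requirement of being locally isomorphic to a homogeneous model. Here the plan is to note that by Proposition \ref{1.2.4} the local infinitesimal automorphisms form a finite-dimensional Lie algebra $\mathfrak{k}$ (of dimension at most $\dim A(n,\mathbb{R})$) spanning each tangent space; integrating $\mathfrak{k}$ near a fixed $x_0$ to a local group $K$ with stabiliser $H$ produces a homogeneous model $(K\to K/H,\omega_K)$, and Corollary \ref{1.3.6} realises a neighbourhood of $x_0$ in $(P^1M,\omega_\nabla)$ as an extension of this model, i.e. as locally isomorphic to a homogeneous geometry. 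The care needed is in checking that the stabiliser integrates to a suitable subgroup and that the resulting extension data $(i,\alpha)$ reproduce exactly $\omega_\nabla$; both can be settled using that $\omega_\nabla$ is determined by its value at one point together with $P$-equivariance, the same rigidity exploited in the proof of Proposition \ref{1.4.7}.
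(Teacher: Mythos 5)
Your proposal is correct and follows essentially the same route the paper intends: the paper states this corollary with no written proof, citing exactly Proposition \ref{1.1.10} (the infinitesimal automorphisms $R_x(X)$, which span $T_xM$ since [GS5] with $P_0=Gl(n,\mathbb{R})$ forces $T^-M=TM$, and which are complete in the global case) together with Lemma \ref{1.5.1} (identifying the canonical connection with a Cartan geometry of type $(A(n,\mathbb{R}),Gl(n,\mathbb{R})))$. Your extra care about upgrading local transitivity to genuine local isomorphism with a homogeneous model is a legitimate refinement of what the paper leaves implicit here and carries out only later (in the proof of Theorem \ref{3.1.3}), but it does not change the method.
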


Since $(S_x)^*$ act as an involution on the infinitesimal automorphisms (cf. proposition \ref{1.1.10}), the example \ref{1.4.10} and corollary \ref{1.4.13} imply:

\begin{cor}
A (locally) symmetric space is an extension of a flat Cartan geometry.
\end{cor}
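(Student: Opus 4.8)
The plan is to promote the local homogeneity already obtained in the preceding corollary to homogeneity under the group generated by \emph{all} the symmetries, and then to recognize the symmetry at the base point as the involution feeding Example \ref{1.4.10}. First I would fix a base point $x_0\in M$ with $x_0=eH$ and recall that the affine geometry of $(M,S)$ (Lemma \ref{1.5.1}) is (locally) homogeneous under the transvection group $K$, the connected group generated by the infinitesimal automorphisms $R_x(X)$ of Proposition \ref{1.1.10}; here $M=K/H$ and $H$ is the isotropy of $x_0$. Since each symmetry preserves the canonical connection, every $S_x$ is an automorphism of this affine Cartan geometry, so the group $K'$ generated by $K$ together with the symmetries also acts by automorphisms and (locally) transitively.

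Second, I would isolate the extra structure. By Proposition \ref{1.1.10} conjugation by $S_{x_0}$, i.e.\ $(S_{x_0})^*$, is an involution of the space of infinitesimal automorphisms; integrating, it is an involution $\sigma$ of $K$, and since $S_{x_0}$ fixes $x_0$ it preserves $H$ and fixes $eH$. Thus Example \ref{1.4.10} applies to $(K,H)$ and $\sigma$: set $K'=K\ltimes_\sigma\mathbb{Z}_2$ and $H'=H\ltimes_\sigma\mathbb{Z}_2$, which is exactly the group generated by transvections and symmetries together with its isotropy, and the pair consisting of the inclusion $H\hookrightarrow H'$ and $\mathrm{id}_{\mathfrak k}$ is an extension of $(K,H)$ to $(K',H')$. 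To carry the existing extension of $(K,H)$ to $(A(n,\mathbb{R}),Gl(n,\mathbb{R}))$ over to $(K',H')$ I would use the last part of Example \ref{1.4.10} with $g_0$ the linear isotropy $i(S_{x_0})=T_{x_0}S_{x_0}=-\mathrm{id}_{\mathbb{R}^n}$, which for a symmetric space is forced by axiom [GS5] (cf.\ Example \ref{1.1.5}); then $g_0^2=\mathrm{id}$, and since $g_0$ is central the required relation $i(\sigma(h))=g_0\,i(h)\,g_0=i(h)$ holds, as $T_{x_0}(S_{x_0}hS_{x_0})=(-\mathrm{id})\,i(h)\,(-\mathrm{id})=i(h)$.

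At this point the affine geometry is a (locally) $K'$-homogeneous Cartan geometry of type $(A(n,\mathbb{R}),Gl(n,\mathbb{R}))$, hence by Corollary \ref{1.3.6} it is (locally isomorphic to) an extension of the homogeneous model $(K'\to K'/H',\omega_{K'})$, which is flat by Proposition \ref{flat1}. In the globally homogeneous case this already proves the claim. In the locally homogeneous case I would invoke Corollary \ref{1.4.13}: the transition maps of the homogeneity atlas are generated by local symmetries and transvections, that is, by the $K'$-action, so on the flat model they are restrictions of left multiplications by elements of $K'$ and therefore lie in $\operatorname{Aut}(K',\omega_{K'})$; Corollary \ref{1.4.13} then exhibits the affine geometry as an extension of a flat Cartan geometry of type $(K',H')$.

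I expect the main obstacle to be precisely this last point, namely checking that the transition maps land in $\operatorname{Aut}(K',\omega_{K'})$ rather than merely in the larger $\operatorname{Aut}_{loc}$ of the type $(A(n,\mathbb{R}),Gl(n,\mathbb{R}))$ geometry; this requires choosing the local trivializations so that they come from the $K'$-homogeneous structure and then using the Liouville property (Proposition \ref{flat2}) to promote local automorphisms of the flat model to genuine elements of $K'$. A secondary technical nuisance is the case when $\sigma|_H$ is inner, where $(K',H')$ fails to be effective; there one either argues directly with the non-effective pair or passes to the effective quotient via Example \ref{1.4.8} before concluding.
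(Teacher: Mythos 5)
Your proposal is correct and follows essentially the same route as the paper: the paper's proof consists precisely of the observation that $(S_x)^*$ acts as an involution on the infinitesimal automorphisms (Proposition \ref{1.1.10}) combined with Example \ref{1.4.10} and Corollary \ref{1.4.13}, which are exactly the ingredients you assemble. Your additional care about the transition maps landing in $\operatorname{Aut}(K',\omega_{K'})$ and about effectivity only makes explicit what the paper leaves implicit.
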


We will answer the question which flat Cartan geometries have extensions to symmetric spaces in the next section.

\subsection{Homogeneous symmetric spaces and classification of symmetric spaces}\label{2.3}

Another definition of a (locally) symmetric space is the following.

\begin{de}
Let $(K,H,h)$ be the triple satisfying:
\begin{itemize}
\item $K$ is a Lie group with Lie subgroup $H$ such that $K/H$ is connected
\item $h\in H$, $h^2=id$ and $H$ is open subgroup of the centralizer of $h$ in $K$
\item the maximal normal subgroup of $K$ contained in $H$ is trivial.
\end{itemize}
Then we call this triple a homogeneous symmetric space and say that a flat Cartan geometry of type $(K,H,h)$ is a homogeneous locally symmetric space.
\end{de}

Since $H$ is open subgroup of the centralizer of $h$ in $K$, $\mathfrak{h}$ is the $1$-eigenspace of $Ad(h)$, while the $-1$-eigenspace of $Ad(h)$ can be identified with $\mathfrak{k}/\mathfrak{h}$ as the $Ad(H)$-invariant complement to $\mathfrak{h}$ in $\mathfrak{k}$.

\begin{lem}
Let $(K,H,h)$ be a homogeneous symmetric space, then \linebreak $(K,H)$ is an affine type of Cartan geometry.
\end{lem}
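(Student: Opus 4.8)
The plan is to reduce the statement to Proposition \ref{1.5.3}, which asserts that a pair $(K,H)$ is an affine type of Cartan geometry precisely when it is reductive, i.e. when $\mathfrak{h}$ admits an $Ad(H)$-invariant complement in $\mathfrak{k}$. Thus the whole task becomes producing such a complement out of the data $(K,H,h)$.

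First I would exploit the involution. Since $h^2=\mathrm{id}$, the linear map $Ad(h):\mathfrak{k}\to\mathfrak{k}$ satisfies $Ad(h)^2=\mathrm{id}$, so it is an involution and $\mathfrak{k}$ splits as the direct sum of its $(+1)$- and $(-1)$-eigenspaces, $\mathfrak{k}=\mathfrak{k}_{+}\oplus\mathfrak{k}_{-}$. This is the candidate decomposition, with $\mathfrak{k}_{-}$ the desired complement. Next I would identify $\mathfrak{k}_{+}$ with $\mathfrak{h}$: the fixed-point subalgebra $\mathfrak{k}_{+}$ of $Ad(h)$ is exactly the Lie algebra of the centralizer $Z_K(h)$, and because $H$ is by hypothesis an open subgroup of $Z_K(h)$ it has the same Lie algebra, so $\mathfrak{h}=\mathfrak{k}_{+}$. (This is precisely the remark following the definition of a homogeneous symmetric space.) Consequently $\mathfrak{k}_{-}$ is a vector-space complement of $\mathfrak{h}$ and may be identified with $\mathfrak{k}/\mathfrak{h}$.

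It remains to check $Ad(H)$-invariance of $\mathfrak{k}_{-}$, which I expect to be the only step requiring a genuine argument. Every $h'\in H\subset Z_K(h)$ commutes with $h$, hence $Ad(h')$ commutes with $Ad(h)$ and therefore preserves each eigenspace of $Ad(h)$; in particular $Ad(h')\mathfrak{k}_{-}=\mathfrak{k}_{-}$. Thus $\mathfrak{k}_{-}$ is an $Ad(H)$-invariant complement to $\mathfrak{h}$, the pair $(K,H)$ is reductive, and Proposition \ref{1.5.3} supplies the extension of $(K,H)$ to $(A(n,\mathbb{R}),Gl(n,\mathbb{R}))$, i.e. $(K,H)$ is of affine type.

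The only real subtlety is the openness hypothesis on $H$: it is exactly what forces $\mathfrak{h}$ to coincide with the full fixed subalgebra $\mathfrak{k}_{+}$ rather than a proper subalgebra of it, and without it the eigenspace decomposition would not align with the splitting $\mathfrak{h}\oplus(\mathfrak{k}/\mathfrak{h})$. Since that hypothesis is built into the notion of a homogeneous symmetric space, the argument is short, and the bulk of the work is already recorded in the remark preceding the statement; the proof is essentially its combination with Proposition \ref{1.5.3}.
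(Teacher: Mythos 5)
Your proof is correct and follows exactly the route the paper intends: the remark preceding the lemma already records that $\mathfrak{h}$ is the $+1$-eigenspace of $Ad(h)$ and that the $-1$-eigenspace is the $Ad(H)$-invariant complement, and the lemma is then immediate from Proposition \ref{1.5.3}. The paper leaves the proof implicit, and your write-up supplies precisely the missing details (openness of $H$ in the centralizer forcing $\mathfrak{h}=\mathfrak{k}_+$, and commutation of $Ad(H)$ with $Ad(h)$ giving invariance of $\mathfrak{k}_-$), so there is nothing to correct.
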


%So let $(K,H)$ be an affine type, then $(\mathfrak{k}/\mathfrak{h}\ltimes_{Ad} H,H)$ is an affine type too. Then we define $\alpha: \mathfrak{k}\to \mathfrak{k}/\mathfrak{h}+\mathfrak{h}$ as projection of the extension of $(K,H)$ to $(A(n,\mathbb{R}),Gl(n,\mathbb{R}))$. It is clear that $\alpha$ is bijection and together with the identity on $H$ defines an extensions between $(K,H)$ and $(\mathfrak{k}/\mathfrak{h}\ltimes_{Ad} H,H)$. The extension $(\alpha,id_H)$ is called a mutation.

%So assume we are in the situation of proposition \ref{2.1.3}. If we modify the Lie bracket on $\mathbb{R}^n+R(u)(X,Y)$ by $[X,Y]=-R(u)(X,Y)$ for $X, Y\in \mathbb{R}^n$, then we get a new Lie algebra $\mathfrak{k}$. So assume, that $K$ is a Lie group with Lie algebra $\mathfrak{k}$ and Lie subgroup $Hol(u)$ such, that $K/Hol(u)$ is connected. Then the mutation of $(\mathbb{R}^n\ltimes Hol(u),Hol(u))$ to $(K,Hol(u))$ changes the curvature to $R(X,Y)-R(X,Y)-0=0$. So in the other direction we get an extension of a flat geometry to $(A(n,\mathbb{R}),Gl(n,\mathbb{R}))$, thus we have proven:

So assume, $(K,H)$ is an affine type of Cartan geometry. Then the extension of flat affine geometry of type $(K,H)$ to $(A(n,\mathbb{R}),Gl(n,\mathbb{R}))$ is torsion free if and only if the Lie bracket $[X,Y]\in \mathfrak{h}$ for $X,Y\in \mathfrak{k}/\mathfrak{h}$ (due to formula in corollary \ref{1.3.6}). If this is the case, then we consider $\sigma: \mathfrak{k}/\mathfrak{h}+\mathfrak{h}\to \mathfrak{k}/\mathfrak{h}+\mathfrak{h}$ defined as $-id$ on $\mathfrak{k}/\mathfrak{h}$ and $id$ on $\mathfrak{h}$. Since $\sigma([X,Y])=[\sigma(X),\sigma(Y)]$, $\sigma$ is an involutive automorphism of $\mathfrak{k}$. Now if we modify the geometry as in the examples \ref{1.4.8} and \ref{1.4.10}, then we obtain a homogeneous (locally) symmetric space:

\begin{prop}
A locally symmetric space $(M,S)$ is a homogeneous locally symmetric space and a symmetric space $(M,S)$ is a homogeneous symmetric space.
\end{prop}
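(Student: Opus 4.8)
The plan is to assemble the structural results already established and promote the purely algebraic involution $\sigma$ of the preceding paragraph into the group datum $(K',H',h)$ of a homogeneous symmetric space. By the preceding corollaries, a (locally) symmetric space $(M,S)$ is an affine locally symmetric space whose canonical connection $\nabla$ is torsion-free, and its associated affine geometry is (locally) homogeneous, hence locally isomorphic to an extension $(i,\alpha)$ of a homogeneous model $(K\to K/H,\omega_K)$ with $(K,H)$ reductive of affine type and $K/H$ connected. First I would record that, by the curvature formula of Corollary \ref{1.3.6} applied to the affine extension, torsion-freeness of $\nabla$ is equivalent to $[X,Y]\in\mathfrak{h}$ for all $X,Y\in\mathfrak{k}/\mathfrak{h}$: indeed $\alpha$ carries the reductive complement onto the abelian translation part $\mathbb{R}^n\subset\mathfrak{a}(n,\mathbb{R})$, so $[\alpha(X),\alpha(Y)]=0$ and $\kappa([e,e])(X,Y)=-\alpha([X,Y])$, whose $\mathbb{R}^n$-part vanishes exactly when $[X,Y]\in\mathfrak{h}$. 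Together with $[\mathfrak{h},\mathfrak{h}]\subset\mathfrak{h}$ and the $\operatorname{Ad}(H)$-invariance $[\mathfrak{h},\mathfrak{k}/\mathfrak{h}]\subset\mathfrak{k}/\mathfrak{h}$ of the complement, this makes $\sigma:=\mathrm{id}_{\mathfrak h}\oplus(-\mathrm{id}_{\mathfrak k/\mathfrak h})$ an involutive Lie algebra automorphism of $\mathfrak{k}$ whose $+1$-eigenspace is exactly $\mathfrak{h}$.

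Next I would promote $\sigma$ to a group involution. Geometrically it is realized by the symmetry $S_o$ at the base point $o=eH$: by Proposition \ref{1.1.10} conjugation by $S_o$ sends infinitesimal automorphisms to infinitesimal automorphisms and $T_oS_o=-\mathrm{id}$ on $T_oM$, identified with $\mathfrak{k}/\mathfrak{h}$, so it preserves the transvection group $K$ and induces $\sigma$ on $\mathfrak{k}$. If a purely algebraic route is preferred, Example \ref{1.4.9} lets us replace $(K,H)$ by its connected, simply connected model, on which $\sigma$ integrates to a group involution fixing $eH$ and preserving the connected subgroup $H$. Either way I then apply Example \ref{1.4.10} to form $K'=K\ltimes_\sigma\mathbb{Z}_2$ and $H'=H\ltimes_\sigma\mathbb{Z}_2$, with distinguished element $h:=(e,1)\in H'$ satisfying $h^2=\mathrm{id}$ and $\operatorname{Ad}(h)=\sigma$, and finally pass to the effective quotient by the maximal normal subgroup of $K'$ contained in $H'$ via Example \ref{1.4.8}.

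It remains to verify that the resulting triple $(K',H',h)$ satisfies the three axioms of a homogeneous symmetric space: $K'/H'\cong K/H$ is connected; $h^2=\mathrm{id}$ and, since the $+1$-eigenspace of $\operatorname{Ad}(h)=\sigma$ is precisely $\mathfrak{h}$, the subgroup $H'$ is open in the centralizer of $h$; and the effectivization of Example \ref{1.4.8} makes the maximal normal subgroup of $K'$ contained in $H'$ trivial. Because quotienting by a normal subgroup contained in $H'$ does not change the action on $\mathfrak{k}/\mathfrak{h}$, the class of $h$ still acts there as $-\mathrm{id}$ and remains a nontrivial involution. The flat Cartan geometry of type $(K',H',h)$ — i.e. the homogeneous model re-extended along $(i,\alpha)$ — is then locally isomorphic to $(M,S)$ by Proposition \ref{1.4.7}, which gives the locally symmetric statement; in the global case the completeness of the fields $R_x(X)$ from Proposition \ref{1.1.10} upgrades local transitivity to an honest transitive action of $K'$, yielding a homogeneous symmetric space.

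The main obstacle I anticipate is not the algebra but the bookkeeping that identifies the system of symmetries $S$ itself with the adjoined $\mathbb{Z}_2$-factor. One must check that the model built from $(K',H',h)$ reproduces not merely the underlying affine connection but the full symmetric structure — that the base map of the automorphism attached to $h$ is the symmetry $S_o$, so that $T_oS_o=-\mathrm{id}$ matches axiom [GS5] for a $Gl(n,\mathbb{R})$-structure — and that effectivization does not collapse $h$. Controlling the local-versus-global passage, where only local isomorphism to the model is available unless the symmetries are complete, is the second point requiring care.
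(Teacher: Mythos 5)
Your proposal is correct and follows essentially the same route as the paper: characterize torsion-freeness via the curvature formula of Corollary \ref{1.3.6} as $[X,Y]\in\mathfrak{h}$, define the involution $\sigma$ as $-\mathrm{id}$ on $\mathfrak{k}/\mathfrak{h}$ and $\mathrm{id}$ on $\mathfrak{h}$, and then modify the geometry using Examples \ref{1.4.8} and \ref{1.4.10}. You in fact supply several verifications (integrating $\sigma$ to a group involution, openness of $H'$ in the centralizer, survival of $h$ under effectivization) that the paper leaves implicit.
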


Finally, we show that all our definitions of (locally) symmetric spaces are equivalent.

Assume that $(K,H,h)$ is a homogeneous symmetric space. Then the description of flat Cartan geometries in \ref{1.2} implies, that for a flat Cartan geometry of affine type $(K,H)$, there is an atlas of $M$, such that the images of the charts are open subsets in $K/H$ and the transition functions are restrictions of left multiplications by elements of $K$. Then we can define $$S_{fH}gH=fhf^{-1}gH$$ in these charts. Really, in a different chart $S_{kfH}kgH=kfh(kf)^{-1}kgH=kfhf^{-1}gH$ i.e. the $S$ is well defined. Since $S_x$ are covered by left multiplications by elements of $K$, they are automorphisms of any of the above Cartan geometries restricted to the charts, and since the pair $(K,H)$ is effective, they are unique. Then easy computations check [GS2]-[GS5] and we obtain:

\begin{prop}\label{2.1.9}
A homogeneous (locally) symmetric space is a (locally) symmetric space.
\end{prop}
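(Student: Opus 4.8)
The plan is to produce the system of symmetries $S$ from the given data $(K,H,h)$ and then verify the axioms of Definition \ref{2.1.2} (respectively \ref{2.1.3}) for $(M,S)$, where a symmetric space means a symmetric $Gl(n,\mathbb R)$-structure on the full frame bundle $P^1M$. Working in the atlas with values in $K/H$ furnished by flatness (transition maps being restrictions of left multiplications by elements of $K$), I would adopt the formula $S_{fH}gH=fhf^{-1}gH$ already written down before the statement. The first point to settle is that this is a well-defined smooth map: the expression depends only on the coset in $g$, and it is independent of the representative $f$ because $H$ lies in the centraliser of $h$, so $f\tilde h\,h\,(f\tilde h)^{-1}=fhf^{-1}$ for every $\tilde h\in H$; smoothness is then inherited from the multiplication of $K$ and the submersion $K\to K/H$.

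Axiom [GS1] is immediate: each $S_{fH}$ is left multiplication by $fhf^{-1}\in K$ on $K/H$, hence a (local) diffeomorphism, and every diffeomorphism lifts to an automorphism of $P^1M$; as recorded before the statement these maps are moreover automorphisms of the flat Cartan geometry and, by effectiveness of $(K,H)$, are the unique such base maps. For [GS2] and [GS3] I would compute $S_{fH}fH=fhf^{-1}fH=fhH=fH$ using $h\in H$, and $S_{fH}^2gH=fh^2f^{-1}gH=gH$ using $h^2=id$. For [GS4] I would set $a=fhf^{-1}$, observe that $a$ is an involution since $a^2=fh^2f^{-1}=e$, and evaluate $S_{S_xy}(S_xz)$ for $x=fH,\ y=gH,\ z=kH$: the representative of $S_xy$ is $ag$, so the left-hand side is $(ag)h(ag)^{-1}\,a\,kH=a\,ghg^{-1}\,a^2\,kH=a\,ghg^{-1}kH$, which equals $S_x(S_yz)=fhf^{-1}ghg^{-1}kH$. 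This is the only computation of any length, and it is purely formal once one uses $a^{-1}=a=a^{2}\cdot a^{-1}$, i.e. that $a$ is an involution.

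The conceptually important axiom is [GS5]. Since we are on the full frame bundle, $P_0=Gl(n,\mathbb R)$ and, as noted after Definition \ref{2.1.2}, [GS5] reduces to $T_xS_x=-\mathrm{id}_{T_xM}$. At the base point $S_{eH}$ is the left multiplication by $h$, whose tangent map on $T_{eH}(K/H)\cong\mathfrak k/\mathfrak h$ is the induced map of $Ad(h)|_{\mathfrak k/\mathfrak h}$; by the remark following the definition of a homogeneous symmetric space, $\mathfrak k/\mathfrak h$ is precisely the $-1$-eigenspace of $Ad(h)$, so $T_{eH}S_{eH}=-\mathrm{id}$. Homogeneity—the symmetry at $fH$ being conjugate to the one at $eH$ by left translation by $f$—transports this to every point, and [GS5] follows trivially because the $-1$-eigenspace is the whole tangent space.

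The step I would treat most carefully, and the only genuine obstacle, is the assembly of the chartwise symmetries into one consistent system in the local case. Here one uses that the transition maps are restrictions of left multiplications and that the chart formula is equivariant under them, $S_{kfH}(kgH)=k\,fhf^{-1}gH=k\,S_{fH}(gH)$, so that the definitions on overlapping charts agree and glue to a smooth $S$ on a neighbourhood of the diagonal in $M\times M$. As [GS1]--[GS5] are local conditions satisfied chartwise, they descend to $M$, which yields the claim.
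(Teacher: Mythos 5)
Your proposal is correct and follows essentially the same route as the paper: defining $S_{fH}gH=fhf^{-1}gH$ in the charts furnished by flatness, checking well-definedness via the centraliser condition and the left-multiplication transition maps, and verifying [GS1]--[GS5], with [GS5] reducing to $T_xS_x=-\mathrm{id}$ because $\mathfrak{k}/\mathfrak{h}$ is the $-1$-eigenspace of $Ad(h)$. You merely write out explicitly the "easy computations" that the paper leaves to the reader.
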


Now we see, that the classification of symmetric spaces is equivalent to the classification of homogeneous symmetric spaces. We restrict ourselves to the case of semisimple symmetric spaces:

\begin{de}
We say that a homogeneous symmetric space $(K,H,h)$ is semisimple if $K$ is semisimple.
\end{de}

The semisimple homogeneous symmetric spaces are classified in \cite{odk9}:

\begin{prop}
Each semisimple homogeneous symmetric space is a finite product of symmetric spaces of the following two types (which are called simple):

\begin{itemize}
\item $((\bar{H}\times \bar{H})\ltimes \mathbb{Z}_2,H\ltimes \mathbb{Z}_2,h)$, where $\bar{H}$ is a simple Lie group, $h:=((e,e),1)$ swaps the first two factors, and $H$ is the diagonal in $\bar{H}\times \bar{H}$.

\item $(K,H,h)$ is a homogeneous symmetric space with a simple Lie group  $K$. The Lie algebras $\mathfrak{k}$ and $\mathfrak{h}$ of all possible cases can be found in the table in the appendix \ref{appA}.
\end{itemize}
\end{prop}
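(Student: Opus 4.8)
The plan is to reduce the classification to the purely algebraic problem of classifying involutive automorphisms of semisimple Lie algebras, and then to organize the answer according to the orbits of such an involution on the simple ideals.

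First I would pass to the infinitesimal data. Given $(K,H,h)$, set $\sigma = Ad(h)$, an involutive automorphism of $\mathfrak{k}$. Since $H$ is an open subgroup of the centralizer of $h$ in $K$, the remark following the definition of a homogeneous symmetric space gives $\mathfrak{h} = \mathfrak{k}^{\sigma}$ (the $+1$-eigenspace) and identifies $\mathfrak{k}/\mathfrak{h}$ with the $-1$-eigenspace $\mathfrak{m}$. Thus the triple is faithfully encoded by the semisimple symmetric Lie algebra $(\mathfrak{k},\sigma)$ together with the effectivity condition, whose infinitesimal shadow is that $\mathfrak{h}$ contains no nonzero ideal of $\mathfrak{k}$.

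Next I would decompose $\mathfrak{k} = \bigoplus_{\alpha} \mathfrak{k}_{\alpha}$ into its finitely many simple ideals. Because $\sigma$ is an automorphism and automorphisms of a semisimple Lie algebra permute the simple ideals, $\sigma$ induces a permutation of the index set; as $\sigma^{2}=id$, every orbit has one or two elements. This splits the ideals into three kinds: (a) a simple ideal $\mathfrak{k}_{\alpha}$ with $\sigma|_{\mathfrak{k}_{\alpha}}=id$; (b) a simple ideal fixed by $\sigma$ on which $\sigma$ acts as a nontrivial involution; and (c) a pair $\{\mathfrak{k}_{\alpha},\mathfrak{k}_{\beta}\}$ interchanged by $\sigma$. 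Case (a) is excluded by effectivity, since then $\mathfrak{k}_{\alpha}\subset\mathfrak{h}$ would be a nonzero ideal of $\mathfrak{k}$ contained in $\mathfrak{h}$. In case (b) the datum $(\mathfrak{k}_{\alpha},\sigma|_{\mathfrak{k}_{\alpha}})$ is a simple symmetric Lie algebra, and these are precisely the pairs classified by the classical theory of involutions of simple Lie algebras and tabulated in the appendix \ref{appA} — the simple type with $K$ simple. In case (c), $\sigma$ restricts to an isomorphism $\mathfrak{k}_{\alpha}\to\mathfrak{k}_{\beta}$; using it to identify $\mathfrak{k}_{\beta}$ with $\bar{\mathfrak{h}}:=\mathfrak{k}_{\alpha}$, the pair becomes $\bar{\mathfrak{h}}\oplus\bar{\mathfrak{h}}$ with $\sigma$ the swap, whose fixed subalgebra is the diagonal — the first listed type.

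Finally I would lift this infinitesimal decomposition back to the group level: the ideal decomposition of $\mathfrak{k}$ integrates to an (almost) direct product of $K$, and $H=K^{\sigma}$ together with $h$ splits compatibly into a product of triples $(K_{i},H_{i},h_{i})$. The main obstacle is exactly this globalization. While the splitting into orbits is immediate over $\mathfrak{k}$, one must check that it respects the precise group $K$, the open subgroup $H$, and the realization of $h$ as a genuine group element squaring to the identity, and that effectivity survives passage to the product. The nontrivial point is case (c): the swap of the two factors is an \emph{outer} automorphism of $\bar{H}\times\bar{H}$, so it cannot be realized by an interior element, and one is forced to adjoin the involution as a $\mathbb{Z}_2$-factor exactly as in Example \ref{1.4.10}, giving $K=(\bar{H}\times\bar{H})\ltimes\mathbb{Z}_2$, $H=H\ltimes\mathbb{Z}_2$, and $h=((e,e),1)$, with effectivity then pinning down the global form; this is where the interplay with the classification of the simple factors listed in \ref{appA} is genuinely used.
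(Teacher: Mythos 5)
The paper does not actually prove this proposition: it is stated as a direct citation of Berger's classification \cite{odk9}, and the appendix \ref{appA} merely tabulates the resulting pairs $(\mathfrak{k},\mathfrak{h})$ of Lie algebras. Your argument is therefore not paralleling a proof in the text but supplying the standard reduction that underlies the cited result, and in outline it is correct: passing to $\sigma=Ad(h)$ with $\mathfrak{h}=\mathfrak{k}^{\sigma}$, observing that an involutive automorphism permutes the simple ideals of $\mathfrak{k}$ in orbits of length one or two, killing the ideals on which $\sigma$ is the identity by effectivity, and splitting the rest into genuine involutions of simple algebras (Berger's tables, the second bullet) and swapped pairs whose fixed subalgebra is the diagonal, which forces the $\mathbb{Z}_2$-extension of the first bullet exactly as in example \ref{1.4.10} since the swap is outer. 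What your write-up buys over the paper is an actual argument; what it does not settle — and you say so honestly — is the globalization: a semisimple $K$ is in general only an almost-direct product of its simple factors, the center of $K$ need not lie in $H$, and (in the second bullet) an outer involution $\sigma$ of a simple $\mathfrak{k}$ cannot be realized as $Ad(h)$ for $h$ in the connected group, so a further $\mathbb{Z}_2$-extension may be hidden there too. Neither the paper nor the appendix addresses these group-level subtleties, since the table is explicitly a classification of Lie algebra pairs; read infinitesimally (equivalently, up to local isomorphism of the associated flat Cartan geometries, which is all the paper ever uses), your reduction is complete, but as a literal statement about the triples $(K,H,h)$ the "finite product" claim would need the extra bookkeeping you only gesture at.
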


%We see in the table in the appendix \ref{appA}, that we can divide simple symmetric spaces to the following classes:

%\begin{itemize}
%\item $\mathfrak{h}$ is simple and $ad(\mathfrak{h})$ is irreducible
%\item $\mathfrak{h}$ is complex, simple and $ad(\mathfrak{h})$ is irreducible
%\item $\mathfrak{h}$ consist of a simple factor and one dimensional center and $ad(\mathfrak{h})$ has two dual factors
%\item $\mathfrak{h}$ consist of a complex, simple factor and two dimensional center and $ad(\mathfrak{h})$ has two dual factors
%\end{itemize}

%Thus the representation of $ad(\mathfrak{h})$ is completely reducible. If we go trough the list in the appendix \ref{appA} then we see:

%\begin{lem}\label{2.3.7}
%There is more then one $ad(\mathfrak{h})$-invariant subspace of $\mathfrak{k}/\mathfrak{h}$ of a simple symmetric space $(K,H,h)$ if and only if the center of $H$ is non-compact. In that case, there are two $ad(\mathfrak{h})$-invariant subspaces and action of the center of $\mathfrak{h}$ is $\pm$ multiple of identity on each factor.
%\end{lem}

\subsection{Local structure of symmetric $P_0$-structures}\label{2.4}

We will continue our investigation of the structure of (locally) symmetric $P_0$-structures using notation of section \ref{2.1}.

First we notice, that we have shown that $R_x(X)(x)=0$ for $X\in T^+M$ and  $R_x(X)(x)=X$ for $X\in T^-M$  in the proof of proposition \ref{1.1.10}. Thus:

\begin{lem}
Let $(\mathcal{P},S)$ be a (locally) symmetric $P_0$-structure, then there is the decomposition of $TM$ to the two complementary distributions $T^+M$ and $T^-M$. The distribution $T^+M$ is integrable.
\end{lem}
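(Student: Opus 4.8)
The statement has two parts: that $T^+M$ and $T^-M$ are genuine (smooth, complementary) distributions, and that $T^+M$ is integrable.

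For the first part I would work with the fibrewise projectors $P^\pm_x := \tfrac12(\mathrm{id}_{T_xM} \pm T_xS_x)$. Since the system of symmetries $S$ is smooth, the endomorphism field $x \mapsto T_xS_x$ (the derivative of $S_x$ in the second variable, evaluated at the fixed point $x$) is smooth, so $P^\pm$ are smooth bundle endomorphisms; because $T_xS_x$ is an involution (cf. Example \ref{1.1.5}), $(P^\pm)^2 = P^\pm$, so they are idempotent with image $T_x^\pm M$ and $P^+ + P^- = \mathrm{id}$. It remains to see that their ranks are constant. Here I would invoke lower semicontinuity of the rank: $\operatorname{rank} P^+_x$ and $\operatorname{rank} P^-_x$ are both lower semicontinuous in $x$, while their sum is the constant $n = \dim M$; a pair of lower semicontinuous functions summing to a constant is necessarily locally constant, hence constant on the connected manifold $M$. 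A constant-rank smooth idempotent field has a smooth image subbundle, which yields the decomposition $TM = T^+M \oplus T^-M$ into smooth complementary distributions.

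For integrability of $T^+M$ I would verify Frobenius' condition $[X,Y]\in\Gamma(T^+M)$ for $X,Y\in\Gamma(T^+M)$. The crucial input, already established in the proof of Proposition \ref{1.1.10}, is that every symmetry $S_y$ preserves the distributions, i.e. $TS_y$ maps $T_z^\pm M$ onto $T_{S_y z}^\pm M$. Fixing $x$ and writing $\sigma := S_x$, this is a locally defined involution with $\sigma(x)=x$ preserving $T^+M$, so $\sigma_* : \Gamma(T^+M)\to\Gamma(T^+M)$ is defined and satisfies $\sigma_*^2=\mathrm{id}$. Given $X\in\Gamma(T^+M)$, I would symmetrize, setting $\tilde X := \tfrac12(X+\sigma_*X)$; then $\sigma_*\tilde X=\tilde X$, and since $X_x\in T_x^+M$ is fixed by $T_x\sigma$ one gets $\tilde X_x = X_x$. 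For two such $\sigma$-invariant sections the bracket is again $\sigma$-invariant, $\sigma_*[\tilde X,\tilde Y]=[\sigma_*\tilde X,\sigma_*\tilde Y]=[\tilde X,\tilde Y]$, and evaluating at the fixed point gives $T_x\sigma([\tilde X,\tilde Y]_x)=[\tilde X,\tilde Y]_x$, so $[\tilde X,\tilde Y]_x$ lies in the $+1$-eigenspace $T_x^+M$.

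The one point needing care — which I regard as the heart of the argument — is that $\tilde X,\tilde Y$ agree with $X,Y$ only at the single point $x$, not to first order, so the computation above a priori controls the bracket only for these special sections. I would close the gap by recalling that the obstruction to involutivity is tensorial: the map $(X,Y)\mapsto [X,Y]\bmod T^+M$ is $C^\infty(M)$-bilinear (since $[fX,Y]=f[X,Y]-(Yf)X$ and the correction term lies in $T^+M$), so its value at $x$ depends only on $X_x,Y_x$. Hence, for arbitrary $v,w\in T_x^+M$, choosing any sections with these values and symmetrizing, the vanishing just obtained forces $[X,Y]_x \bmod T_x^+M = 0$ for all $X,Y\in\Gamma(T^+M)$. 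As $x$ was arbitrary, $T^+M$ is involutive, and the Frobenius theorem gives its integrability.
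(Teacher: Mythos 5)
Your proof is correct, but it follows a genuinely different route from the one in the paper. The paper argues through the infinitesimal transvections $R_x(X)=\tfrac12\tfrac{d}{ds}|_{s=0}S_{c(s)}S_x$ of proposition \ref{1.1.10}: differentiating the identity $S_{S_{c(s)}x}=S_{c(s)}S_xS_{c(s)}$ yields $R_x(R_x(X)(x))=R_x(X)$, so the vector field $R_x(X)$ vanishes identically if and only if $X\in T_x^+M$; this identifies $T^+M$ as the common kernel of the maps $X\mapsto R_x(X)$, and integrability then follows from a relatedness argument (sections of $T^+M$ are exactly the fields related to the zero field under the action $\mu(x,y)(\xi,0)=(R_x(\xi)(y),0)$, and relatedness is preserved by brackets). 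You instead stay entirely at the level of the pointwise involutions $T_xS_x$: the smooth idempotents $\tfrac12(\mathrm{id}\pm T_xS_x)$ together with the rank-constancy argument give the subbundle statement, and involutivity comes from symmetrizing sections by $\sigma=S_x$ and invoking tensoriality of the Frobenius obstruction, using only the fact (obtained by differentiating [GS4]) that $TS_x$ preserves $T^\pm M$. Each approach has its merits: yours is more elementary and makes explicit the smoothness and constant rank of the eigendistributions, a point the paper's proof leaves implicit; the paper's characterization $T^+M=\{X: R_x(X)=0\}$ is the one that is actually reused downstream (for the leaves $F_x$, the algebras $\mathfrak{k}_x$ and proposition \ref{2.4.2}), so it does more work for the subsequent structure theory. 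Your closing observation that the symmetrized fields agree with the given ones only at the point, and that tensoriality of the obstruction is what closes the gap, is exactly the right care to take.
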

\begin{proof}
Differentiating $S_{S_{x(s)}x}y=S_{x(s)}S_{x}S_{x(s)}y$ we obtain
\[\frac{d}{ds}|_{s=0}S_{S_{x(s)}x}y=\frac{d}{ds}|_{s=0}S_{x(s)}S_{x}S_{x}y+\frac{d}{ds}|_{s=0}S_{x}S_{x}S_{x(s)}y=2\frac{d}{ds}|_{s=0}S_{x(s)}y.\]
Then $R_x(R_x(X)(x))(y)=\frac14 \frac{d}{ds}|_{s=0} S_{S_{x(s)}x}S_xy=\frac12 \frac{d}{ds}|_{s=0} S_{x(s)}S_xy=R_x(X)(y)$. Thus $R_x(X)(y)=0$ if and only if $X\in T^+M$. 

There is an action $\mu$ of $W_x\times W_x$ on vector fields $(\xi,0)$ on $W_x\times W_x$ defined by $\mu(x,y)(\xi,0):=(R_{x}(\xi)(y),0)$. Then the vector field $(\xi,0)$ is $\mu(x,y)$-related to $(0,0)$ if and only if $\xi\in T^+W_x$. Since the bracket of $\mu$-related vector fields is $\mu$-related, $T^+M$ is an integrable distribution.
\end{proof}

We will denote $F_x$ the leaf of $T^+M$ trough $x$. Since $TS_y$ preserves $T^+M$, $S_y(F_x\cap W_x)\subset F_{S_yx}$. Since $R_y(X)=0$ for any $y\in F_x\cap W_x$ and $X\in T_yF_x$, $S_x|_{W_x}=S_y|_{W_x}$ for $y\in F_x\cap W_x$. Thus the only infinitesimal automorphisms we can obtain by composing symmetries are generated (by bracket of vector fields) by $R_x(X), \ X\in T^-M$. Let $\mathfrak{k}_x$ be the Lie algebra generated by $R_x(X), \ X\in T^-M$. Then $[R_x(X),R_x(Y)]$ is bilinear in $X,\ Y$ and the difference of $[[R_x(X),R_x(Y)],R_x(Z)]$ and $R_x([[R_x(X),R_x(Y)],R_x(Z)](x))$ is in the stabilizer of $x$, thus the difference is zero due to $(S_x)^*$ action. So we proved, that $\mathfrak{k}_x$ is finite dimensional, actually spanned by $R_x(X)$ and $[R_x(X),R_x(Y)]$.

Remind that $S_y$ is isomorphism of Lie algebras $\mathfrak{k}_x$ to $\mathfrak{k}_{S_yx}$. If we denote $N_x$ the orbit of the pseudo-group generated by symmetries trough $x$, then $\mathfrak{k}_{S_yx}(S_yx)=T_{S_yx}N_x$. So there are subgroups $P_x$ of $P_0$ such, that the restriction of the (locally) symmetric $P_0$-structure to $N_x$ is a (locally) symmetric $P_x$-structure. Thus we will investigate (locally) symmetric $P_0$-structures under one of the following conditions:

\begin{prop}\label{2.4.2}
The following conditions are equivalent for any (locally) symmetric $P_0$-structure on $M$:
\begin{itemize}
\item The (pseudo)-group generated by symmetries acts transitively on $M$.
\item $TM$ is the only integrable distribution containing $T^-M$.
\item The torsion $T(X,Y)$ of the $P_0$-structure spans $T^+M$ for $X,Y\in T^-M$.
\end{itemize} 
\end{prop}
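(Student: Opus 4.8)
The plan is to route all three conditions through the \emph{orbit distribution} $D$ defined by $D_x:=T_xN_x=\mathfrak{k}_x(x)$, where $N_x$ is the orbit of the symmetry pseudo-group through $x$. As recalled just above the statement, the orbits $N_x$ foliate (an open part of) $M$, so $D$ is an integrable distribution, and since $\mathfrak{k}_x$ is spanned by the fields $R_x(X)$ together with their brackets $[R_x(X),R_x(Y)]$ with $R_x(X)(x)=X$ for $X\in T^-_xM$, one gets the pointwise description
\[
D_x=T^-_xM+\operatorname{span}\{[R_x(X),R_x(Y)](x):X,Y\in T^-_xM\}.
\]
First I would locate these brackets: by the identity $(S_x)^*R_y(X)=R_{S_xy}(TS_xX)$ from Proposition \ref{1.1.10}, the map $(S_x)^*$ acts as $-\mathrm{id}$ on each $R_x(X)$ (since $TS_xX=-X$ on $T^-M$), hence fixes $[R_x(X),R_x(Y)]$; evaluating an $(S_x)^*$-fixed field at the fixed point $x$ forces its value into the $+1$-eigenspace, so $[R_x(X),R_x(Y)](x)\in T^+_xM$. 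Consequently $D=TM$ if and only if these brackets span $T^+M$ at every point, and by connectedness transitivity of the pseudo-group is exactly the statement $D=TM$.

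Next I would run the cycle $(\text{2})\Rightarrow(\text{1})\Rightarrow(\text{3})\Rightarrow(\text{2})$. The implication $(\text{2})\Rightarrow(\text{1})$ is immediate: $D$ is an integrable distribution containing $T^-M$, so if $TM$ is the only such distribution then $D=TM$, i.e.\ the pseudo-group acts transitively. For the return leg I would use the Frobenius tensor of $T^-M$: if $E\supseteq T^-M$ is integrable and $\xi,\eta$ are local sections of $T^-M\subseteq E$, then $[\xi,\eta]\in E$, so the $T^+M$-component $F(X,Y)$ of $[\xi,\eta](x)$ lies in $E_x$; thus $E\supseteq T^-M+\operatorname{span}\{F(X,Y)\}$, and if the right-hand side is already $TM$ then $E=TM$.

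The real content, and the step I expect to be the main obstacle, is the pointwise identification that the three subspaces of $T^+_xM$
\[
\operatorname{span}\{T(X,Y)\},\quad \operatorname{span}\{[R_x(X),R_x(Y)](x)\},\quad \operatorname{span}\{F(X,Y)\}\qquad(X,Y\in T^-_xM)
\]
all coincide, tying the intrinsic torsion of the $P_0$-structure to both the orbit brackets and the non-integrability of $T^-M$. Granting it, the two remaining implications close: $(\text{1})\Rightarrow(\text{3})$ because $D=TM$ forces the orbit brackets, hence $T(X,Y)$, to span $T^+M$; and $(\text{3})\Rightarrow(\text{2})$ because $T(X,Y)$ spanning $T^+M$ makes $F(X,Y)$ span $T^+M$, which by the Frobenius argument above pins every integrable $E\supseteq T^-M$ to be $TM$.

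To prove the identification I would realize the intrinsic torsion through a symmetry-invariant linear connection $\nabla$, compatible with the $P_0$-structure and preserving the splitting $TM=T^+M\oplus T^-M$, built from the reduction fields via $\nabla_XY(x)=[R_x(X),Y](x)$. For sections $\xi,\eta$ of $T^-M$ its torsion is $T_\nabla(\xi,\eta)=\nabla_\xi\eta-\nabla_\eta\xi-[\xi,\eta]$, whose $T^+M$-component equals $-F(X,Y)$, while the same computation read through the sections $R_x(\cdot)$, using $R_x(X)(x)=X$ and the $(S_x)^*$-parity already established, matches $F$ with the orbit bracket. The delicate point is that the intrinsic torsion $T$ is only defined modulo $\partial(\mathbb{R}^{n*}\otimes\mathfrak{p}_0)$, so I must check that this $T^-M$-antisymmetric, $T^+M$-valued component is not annihilated in the quotient; this is exactly what axiom [GS5] provides, the maximality of the $-1$-eigenspace of the symmetry keeping $\mathfrak{p}_0$ small enough that $\partial(\mathbb{R}^{n*}\otimes\mathfrak{p}_0)$ meets this component trivially. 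Verifying this non-degeneracy, and thereby the common identification, is where the argument really has to work.
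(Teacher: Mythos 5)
Your proposal follows essentially the same route as the paper: the paper also establishes the equivalence of the first two conditions through the orbit distribution of the finite-dimensional Lie algebras $\mathfrak{k}_x$ spanned by the $R_x(X)$ and $[R_x(X),R_x(Y)]$, and then closes the loop with exactly the identity you single out, namely $T(X,Y)(x)=[R_x(X),R_x(Y)](x)$ together with $[R_x(X),R_x(Y)](x)\in T^+_xM$. The step you flag as the main obstacle in fact closes in one line from the connection you yourself introduce: with $\nabla_XY(x)=[R_x(X),Y](x)$ and $R_x(X)(x)=X$ one computes $T_\nabla(X,Y)(x)=\nabla_{R_x(X)}R_x(Y)(x)-\nabla_{R_x(Y)}R_x(X)(x)-[R_x(X),R_x(Y)](x)=[R_x(X),R_x(Y)](x)$, which is simultaneously the Frobenius obstruction of $T^-M$ and a representative of the intrinsic torsion, so your three subspaces coincide on the nose. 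Your residual worry about the quotient by $\partial(\mathbb{R}^{n*}\otimes\mathfrak{p}_0)$ concerns only how the third condition is to be read --- the paper implicitly evaluates $T$ in the invariant representative furnished by this connection --- and does not affect the chain of equivalences.
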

\begin{proof}
We have shown in the previous paragraph that the (pseudo)-group generated by symmetries acts transitively on $M$ if and only if $TM$ is the only integrable distribution containing $T^-M$. Since $$T(X,Y)(x)=[R_x(X),R_x(Y)](x)$$ for $X,Y\in T^-M$ and $[R_x(X),R_x(Y)](x)\in T^+_xM$, $T(X,Y)$ spans $T^+M$ if and only if $TM$ is the only integrable distribution containing $T^-M$. 
\end{proof}

\begin{de}
We say that a (locally) symmetric $P_0$-structure is of maximal torsion, if it satisfies the above conditions.
\end{de}

Next we show, that there is a Cartan connection corresponding to a (locally) symmetric $P_0$-structure of maximal torsion.

\begin{thm}\label{3.1.3}
For each locally symmetric $P_0$-structure of maximal torsion on $M$, there is a unique flat Cartan geometry of type $(K,H)$ on $M$ such, that:
\begin{itemize}
\item $K/H$ is connected and simply connected;
\item the maximal normal subgroup of $K$ contained in $H$ is trivial;
\item there is $h\in K$ such, that $h^2=id$ and $H$ is contained in the centralizer of $h$ in $K$ and the symmetries are locally given by $S_{kH}fH=khk^{-1}fH$.
\end{itemize}
\end{thm}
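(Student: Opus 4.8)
The plan is to reconstruct a homogeneous datum $(K,H,h)$ directly from the infinitesimal automorphisms generated by the symmetries, to verify it has the stated form, and then to realize $M$ as locally isomorphic to the homogeneous model $K/H$. Fix a base point $x_0\in M$ and let $\mathfrak{k}$ be the finite-dimensional Lie algebra spanned by the fields $R_{x_0}(X)$, $X\in T^-_{x_0}M$, together with their brackets, as in the discussion following proposition \ref{1.1.10} in section \ref{2.4}. The identity $(S_{x_0})^*R_{x_0}(X)=-R_{x_0}(X)$ from the proof of proposition \ref{1.1.10} shows that $\sigma:=(S_{x_0})^*$ is an involutive automorphism of $\mathfrak{k}$ whose $(-1)$-eigenspace $\mathfrak{k}^-$ is $\{R_{x_0}(X):X\in T^-_{x_0}M\}$ and whose $(+1)$-eigenspace $\mathfrak{k}^+$ is spanned by the brackets, so $(\mathfrak{k},\sigma)$ is a symmetric Lie algebra. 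Setting $\mathfrak{h}:=\{Z\in\mathfrak{k}:Z(x_0)=0\}$, the evaluation $Z\mapsto Z(x_0)$ is a bijection of $\mathfrak{k}^-$ onto $T^-_{x_0}M$ (since $R_{x_0}(X)(x_0)=X$) and sends $\mathfrak{k}^+$ into $T^+_{x_0}M$; hence $\mathfrak{h}\subseteq\mathfrak{k}^+$, so $\sigma|_{\mathfrak{h}}=\mathrm{id}$, and maximal torsion (proposition \ref{2.4.2}) makes the brackets span $T^+_{x_0}M$, so evaluation identifies $\mathfrak{k}/\mathfrak{h}$ with $T^-_{x_0}M\oplus T^+_{x_0}M=T_{x_0}M$.

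Next I would integrate this infinitesimal datum to a global triple. Let $\tilde K$ be the connected, simply connected Lie group with Lie algebra $\mathfrak{k}$, let $\tilde H$ be the connected subgroup integrating $\mathfrak{h}$, and lift $\sigma$ to an involution $\tilde\sigma$ of $\tilde K$; because $\sigma|_{\mathfrak{h}}=\mathrm{id}$ and $\tilde H$ is connected, $\tilde\sigma$ fixes $\tilde H$ pointwise. Following example \ref{1.4.10} I form $\hat K=\tilde K\ltimes_{\tilde\sigma}\mathbb{Z}_2$ and $\hat H=\tilde H\ltimes_{\tilde\sigma}\mathbb{Z}_2$, so that $h:=(e,1)\in\hat H$ has $h^2=\mathrm{id}$, realizes $\sigma$ by conjugation, and satisfies $\hat H\subseteq C_{\hat K}(h)$. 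Then, as in example \ref{1.4.8}, I quotient by the maximal normal subgroup $N$ of $\hat K$ contained in $\hat H$, obtaining $K=\hat K/N$, $H=\hat H/N$, $h=hN$; non-triviality of the system of symmetries forces $S_{eH}\neq\mathrm{id}$ and hence $h\notin N$, so $h$ still has order two. Since $N\subseteq\hat H$, we have $K/H=\hat K/\hat H\cong\tilde K/\tilde H$, which is connected and simply connected because $\tilde K$ is simply connected and $\tilde H$ is connected, and the maximal normal subgroup of $K$ in $H$ is trivial by construction. This yields exactly the three structural bullets, with $H$ contained in the centralizer of $h$.

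It remains to place the flat Cartan geometry on $M$ and to match the symmetries. The model $(K\to K/H,\omega_K)$ is flat by proposition \ref{flat1}, and the computation of section \ref{2.3} underlying proposition \ref{2.1.9} shows it carries the system $S_{kH}fH=khk^{-1}fH$, whose infinitesimal data at $eH$ is again $(\mathfrak{k},\mathfrak{h},\sigma)$. Since $M$ and $K/H$ share the same Lie algebra of infinitesimal automorphisms, the same involution and the same stabilizer at corresponding points, $M$ is locally isomorphic to $K/H$ near $x_0$; by maximal torsion the (pseudo-)group generated by symmetries acts transitively (proposition \ref{2.4.2}), so this propagates to a covering of $M$ by charts with values in $K/H$. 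The transition maps are local automorphisms of the model, hence restrictions of left translations and so automorphisms of $(K\to K/H,\omega_K)$; pulling back $\omega_K$ and gluing as in section \ref{1.2} produces a flat Cartan geometry of type $(K,H)$ on $M$ whose symmetries are the given $S_x$.

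For uniqueness, any flat Cartan geometry of type $(K',H')$ meeting the three conditions recovers, through its symmetries, the same data $(\mathfrak{k},\mathfrak{h},\sigma)$ at $x_0$, and the requirements that $K'/H'$ be connected and simply connected and that the maximal normal subgroup be trivial pin down $(K',H',h')$ up to isomorphism (cf.\ proposition \ref{1.4.7}), hence also the geometry. I expect the main obstacle to be the integration step of the second paragraph: passing from the sheaf of locally defined infinitesimal automorphisms (corollary \ref{2.1.7}) to a single global group with the correct topology --- simple connectivity of $K/H$, closedness of $H$, effectiveness, and the realization of $\sigma$ as conjugation by an element $h\in K$ --- while checking that the abstract formula $S_{kH}fH=khk^{-1}fH$ reproduces the originally given symmetries rather than merely an isomorphic copy.
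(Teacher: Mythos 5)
Your proposal is correct and follows essentially the same route as the paper's proof: build the finite-dimensional Lie algebra $\mathfrak{k}_x$ from the fields $R_x(X)$ and their brackets, integrate via Lie's second fundamental theorem to a connected, simply connected group with the stabilizer subalgebra giving $H$, adjoin the involution induced by $S_{x_0}$ as in example \ref{1.4.10}, factor out the maximal normal subgroup as in example \ref{1.4.8}, and glue the pulled-back homogeneous model over an atlas whose transition maps are automorphisms of the model. Your write-up is considerably more detailed than the paper's terse argument (in particular the verification that $\mathfrak{h}\subseteq\mathfrak{k}^+$ and that $h$ survives the quotient), but the underlying strategy is the same.
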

\begin{proof}
The Lie second fundamental theorem provides us an atlas $(U_i,\phi_i)$ corresponding to $\mathfrak{k}_x,\ x\in M,$ satisfying the conditions from the characterization of the locally homogeneous Cartan geometries in the corollary \ref{1.4.13}. Thus there is a flat Cartan geometry on $M$ of type $(K,H)$, where $K$ is connected, simply connected Lie group locally generated by the symmetries and $H$ is the image of the connected, simply connected group with Lie algebra consisting of elements of $\mathfrak{k}_x$ vanishing at $x$ of Lie group homomorphism induced by inclusion of Lie algebras. Of course, $K/H$ is simply connected. Since $S_{eH}$ is a local automorphism, it induces an involution of $K$ and we extended the model according to the example \ref{1.4.10}. Then we can factor out the maximal normal subgroup of $K$ contained in $H$, and $K/H$ stays simply connected and $h$ is the class of $S_{eH}$. Since $K$ is locally generated by symmetries, $S_{kH}=kS_{eH}k^{-1}$ near $eH$.
\end{proof}

The locally symmetric Cartan geometries of maximal torsion are locally homogeneous and $S_x\in H$ i.e. they satisfy conditions of corollary \ref{1.4.13} and one can alway use extension from example \ref{1.4.9}. Thus:

\begin{cor}\label{2.4.4}
A locally symmetric Cartan geometry of maximal torsion is an extension of a flat Cartan geometry of type $(K,H)$ such, that:
\begin{itemize}
\item $K/H$ is connected;
\item the maximal normal subgroup of $K$ contained in $H$ is trivial;
\item there is $h\in K$ such, that $h^2=id$ and $H$ is contained in the centralizer of $h$ in $K$ and the symmetries are locally given by $S_{kH}fH=khk^{-1}fH$.
\end{itemize}
The Cartan geometry is symmetric if and only if it is extension of homogeneous model of $(K,H)$.
\end{cor}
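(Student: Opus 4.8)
The plan is to deduce the corollary from Theorem \ref{3.1.3} together with the structure theory of locally homogeneous geometries, so that essentially no new computation is required; the work lies in checking that the hypotheses of Corollary \ref{1.4.13} are met. I would begin with a locally symmetric Cartan geometry $(p:\mathcal{G}\to M,\omega)$ of type $(G,P)$ of maximal torsion. By Definition \ref{2.1.4} its underlying $P_0$-structure is locally symmetric, and it is again of maximal torsion, so Theorem \ref{3.1.3} provides a flat Cartan geometry of type $(K,H)$ on $M$ with $K/H$ connected, trivial maximal normal subgroup of $K$ in $H$, and symmetries given locally by $S_{kH}fH=khk^{-1}fH$ for an involution $h$ centralized by $H$.

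The first substantive step is to see that $(p:\mathcal{G}\to M,\omega)$ itself is locally homogeneous and falls under Corollary \ref{1.4.13}. Maximal torsion yields, by Proposition \ref{2.4.2}, transitivity of the pseudo-group generated by the symmetries, and by Corollary \ref{2.1.7} this pseudo-group is generated by a sheaf of infinitesimal automorphisms of the Cartan geometry; hence the geometry is locally homogeneous, and by Corollary \ref{1.3.6} it is locally isomorphic to an extension of the homogeneous model $(K\to K/H,\omega_K)$. The delicate point---the one I expect to require the most care---is that the transition maps of the resulting atlas lie in $\operatorname{Aut}(K,\omega_K)$. These transition maps are generated by the symmetries and their compositions, which in the charts are precisely the left multiplications $fH\mapsto khk^{-1}fH$ by elements of $K$; by the Liouville theorem (Proposition \ref{flat2}) such left multiplications are exactly the automorphisms of the homogeneous model. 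This is where the observation $S_x\in H$, that each symmetry acts vertically on the principal bundle, enters. Granting this, Corollary \ref{1.4.13} shows that $(p:\mathcal{G}\to M,\omega)$ is an extension of a flat Cartan geometry of type $(K,H)$, and the three listed properties transfer verbatim from Theorem \ref{3.1.3}; the passage from the simply connected $K/H$ of that theorem to the merely connected statement here is handled by Example \ref{1.4.9}, which lets one replace $K$ by any group with the same Lie algebra without disturbing the extension.

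For the final equivalence I would argue the two implications separately. If the geometry is symmetric, then the symmetries are globally defined, so by Proposition \ref{1.1.10} the infinitesimal automorphisms $R_x(X)$, $X\in T^-M$, are complete; together with the transitivity coming from maximal torsion this makes $K$ act globally and transitively, whence $M=K/H$ and the flat geometry of type $(K,H)$ is the full homogeneous model, so by Corollary \ref{1.3.6} the geometry of type $(G,P)$ is an extension of that homogeneous model. Conversely, an extension of the homogeneous model has $M=K/H$ with the maps $S_{kH}fH=khk^{-1}fH$ globally defined; these are global symmetries by Proposition \ref{2.1.9}, and since extension is a functor preserving symmetry, the extended geometry of type $(G,P)$ is symmetric. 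The only real obstacle is the bookkeeping of this last paragraph, controlling completeness and global definedness of the symmetries, rather than any hard estimate.
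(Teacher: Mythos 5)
Your proposal is correct and follows essentially the same route as the paper: the paper's own justification is the single observation that a locally symmetric Cartan geometry of maximal torsion is locally homogeneous with $S_x\in H$, hence satisfies the hypotheses of Corollary \ref{1.4.13} applied to the flat geometry from Theorem \ref{3.1.3}, with Example \ref{1.4.9} handling the passage from the simply connected $K/H$ to a merely connected one. Your additional bookkeeping (Proposition \ref{2.4.2} and Corollary \ref{2.1.7} for local homogeneity, the Liouville theorem for the transition maps, and completeness of the $R_x(X)$ for the global case) simply makes explicit what the paper leaves implicit.
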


Of course, if we denote $L$ the centralizer of $h$ in $K$ and  $N$ is the maximal normal subgroup of $K$ contained in $L$, then $(K/N,L/N,h)$ is a homogeneous symmetric space.

If we restrict the projection $\pi: K/H\to K/L$ to the images of the atlas $(U_i,\phi_i)$, then we get submersions $\pi \circ \phi_i: U_i\to K/L$. Since the vector fields $R_x(X)$ are $L$-invariant, $S\circ \pi=\pi \circ S$. Furthermore $T^+M$ is tangent to $\phi_i^{-1}(L/H)$. So we have shown:

\begin{cor}\label{2.4.6}
For a (locally) symmetric $P_0$-structure of maximal torsion on $M$, the (local) leaf space of the foliation $F_x$ tangent to $T^+M$ is a (locally) symmetric space of type $(K/N,L/N,h)$.
\end{cor}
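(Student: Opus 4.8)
The plan is to read the local leaf space directly off the flat model supplied by Theorem~\ref{3.1.3} (in the symmetric case, Corollary~\ref{2.4.4}) and then to push the system of symmetries down along the projection $\pi\colon K/H\to K/L$. First I would invoke the atlas $(U_i,\phi_i)$ with values in $K/H$ whose transition maps are restrictions of left multiplications by elements of $K$. To pin down the foliation, I would use the fact that the induced involution $\underline{Ad}(h)$ on $\mathfrak{k}/\mathfrak{h}$ has $+1$-eigenspace exactly $\mathfrak{l}/\mathfrak{h}$: since $\mathfrak{l}=\{X\in\mathfrak{k}:Ad(h)X=X\}$ is the $+1$-eigenspace of $Ad(h)$ on $\mathfrak{k}$ and contains $\mathfrak{h}$, the distribution $T^+M$ is carried by the $\phi_i$ onto the tangent spaces of the fibres $L/H$ of $\pi$. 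Hence the leaves $F_x$ are locally the preimages $\phi_i^{-1}(kL/H)$, and the local leaf space is modelled on $K/L$.

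Next I would descend the symmetries. Because $S\circ\pi=\pi\circ S$ (which follows from the $L$-invariance of the fields $R_x(X)$ already recorded before the statement), each $S_x$ sends leaves to leaves and so induces a well-defined map on the local leaf space; in the model this is simply the left multiplication $\bar S_{kL}fL=khk^{-1}fL$, which is defined on all of $K/L$ and fixes $kL$ because $h\in L$. The point that makes this a genuine symmetric-space structure rather than a more general $P_0$-structure is that the induced tangent map at $eL$ is $-\operatorname{id}$: the tangent space $\mathfrak{k}/\mathfrak{l}$ is precisely the $-1$-eigenspace of $Ad(h)$, so the collapsed $T^+$-directions have disappeared and axiom [GS5] holds with $p=-\operatorname{id}$.

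Finally I would identify the result. Writing $K/L=(K/N)/(L/N)$, where $N$ is the maximal normal subgroup of $K$ contained in $L$, the descended symmetries $\bar S$ are exactly those of the homogeneous symmetric space $(K/N,L/N,h)$ constructed as in Section~\ref{2.3} via $\bar S_{fL}gL=fhf^{-1}gL$; by Proposition~\ref{2.1.9} this homogeneous symmetric space is a (locally) symmetric space. Verifying [GS1]--[GS4] for $\bar S$ is then a routine transcription of the checks already carried out for $S$ on $K/H$, supplemented by the effectivity gained from passing to $K/N$. The global versus local dichotomy is handled uniformly: in the symmetric case Corollary~\ref{2.4.4} furnishes the homogeneous model and a genuine leaf space $K/L$, while in the locally symmetric case only the local leaf space is available.

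The main obstacle I anticipate is not any single computation but the bookkeeping around the leaf space: it is a smooth manifold only locally, so the assertion must be read as a local one, and I must check that the descended map is independent of the chart $\phi_i$ and of the chosen representative of a leaf. Each of these reduces to the equivariance $\pi\circ S=S\circ\pi$ together with the fact that the transition maps are left translations, so no new difficulty arises beyond what Theorem~\ref{3.1.3} already provides.
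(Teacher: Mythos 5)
Your proposal is correct and follows essentially the same route as the paper: the paper likewise uses the atlas with values in $K/H$, the projection $\pi\colon K/H\to K/L$ restricted to the charts, the $L$-invariance of the fields $R_x(X)$ to get $S\circ\pi=\pi\circ S$, and the observation that $T^+M$ is tangent to $\phi_i^{-1}(L/H)$. You merely spell out some details the paper leaves implicit (the eigenspace identification of $\mathfrak{l}/\mathfrak{h}$ and the verification of [GS5] downstairs), which is fine.
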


Although $H\subset L$ can be almost arbitrary, we will investigate the existence of the $P_0$-structures only for symmetric spaces.

\subsection{Symmetric $P_0$-structures on symmetric spaces}\label{2.5}

We know that the symmetric affine geometry corresponding to a symmetric space is torsion-free and of maximal torsion. We show that the torsion is the only obstruction to be an extension of a symmetric space.

\begin{prop}\label{2.5.1}
For each locally symmetric affine geometry, the following conditions are equivalent:
\begin{itemize}
\item $T_xS_x=-id_{T_xM}$
\item the affine geometry is torsion-free and of maximal torsion
\item the affine geometry is an extension of a locally symmetric space.
\end{itemize}
\end{prop}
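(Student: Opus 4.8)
The plan is to show that all three conditions are equivalent to the vanishing of the complementary distribution $T^+M$, equivalently to $T_xS_x=-id_{T_xM}$; recall from example \ref{1.1.5} that $T_xM=T_x^+M\oplus T_x^-M$ is the splitting into the $\pm1$--eigenspaces of $T_xS_x$. I would run the argument as a cycle through the three bullets in the order stated. For the first implication, suppose $T_xS_x=-id_{T_xM}$, so the element $p_0$ with $P^1S_x(u)=u\cdot j(p_0)$ has $j(p_0)=-id$ and $T^-M=TM$. The torsion of the affine connection is an $S$--invariant tensor, and under the pullback by $S_x$ its value at $x$ is multiplied by $(-1)^{2+1}=-1$, since it has an odd total number of arguments (the same parity argument used to deduce $\nabla R=0$ for symmetric spaces); invariance then forces the torsion to vanish, so the geometry is torsion--free. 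Maximal torsion is immediate from proposition \ref{2.4.2}: as $T^-M=TM$, the only integrable distribution containing $T^-M$ is $TM$ itself, which is one of the equivalent characterisations there.

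For the second implication I would first invoke corollary \ref{2.4.4}: being locally symmetric of maximal torsion, the affine geometry is an extension of a flat Cartan geometry of type $(K,H)$ with some $h\in K$, $h^2=id$, $H$ contained in the centraliser $L$ of $h$, and symmetries $S_{kH}fH=khk^{-1}fH$; moreover $T^+M$ is identified with $\mathfrak{l}/\mathfrak{h}$ (cf. corollary \ref{2.4.6} and the discussion preceding it). Torsion--freeness means, by corollary \ref{1.3.6}, that $[X,Y]\in\mathfrak{h}$ for all $X,Y\in\mathfrak{k}/\mathfrak{h}$, whereas the third characterisation in proposition \ref{2.4.2} says that the $\mathfrak{l}/\mathfrak{h}$--components of the brackets $[X,Y]$ with $X,Y\in T^-M$ span $\mathfrak{l}/\mathfrak{h}$. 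These are compatible only if $\mathfrak{l}/\mathfrak{h}=0$, i.e. $T^+M=0$ and $H$ is open in $L$. The triviality of the maximal normal subgroup of $K$ in $H$ is already guaranteed by corollary \ref{2.4.4}, so $(K,H,h)$ is a homogeneous symmetric space; by proposition \ref{2.1.9} the flat geometry of type $(K,H)$ is then a locally symmetric space, and the affine geometry is its extension.

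For the final implication, if the affine geometry is the extension of a locally symmetric space $(M,S)$, then $(P^1M,S)$ is a symmetric $Gl(n,\mathbb{R})$--structure, whose axiom [GS5] reads precisely $T_xS_x=-id_{T_xM}$ (as noted after definition \ref{2.1.2}). Since the extension functor leaves the base $M$ and the base maps of the symmetries unchanged, the same identity holds for the symmetries of the affine geometry, closing the cycle.

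The main obstacle is the middle implication, and concretely the bookkeeping that matches the abstract maximal--torsion condition of proposition \ref{2.4.2} with the bracket description of the curvature in corollary \ref{1.3.6} under the identification $T^+M\cong\mathfrak{l}/\mathfrak{h}$. Once these identifications are pinned down, the clash between $[X,Y]\in\mathfrak{h}$ (torsion--freeness) and the spanning condition (maximal torsion) forces $T^+M=0$ in a single line, and the rest is a direct appeal to the structural corollaries.
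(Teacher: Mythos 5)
Your proposal is correct and follows essentially the same route as the paper's (very terse) proof: the cycle through the three conditions using the odd-arguments parity argument for torsion-freeness, Proposition \ref{2.4.2} for maximality, and Corollary \ref{2.4.4} together with the observation that torsion-freeness plus the spanning condition forces $T^+M\cong\mathfrak{l}/\mathfrak{h}=0$. You merely spell out in detail the step the paper compresses into ``due to torsion freeness''; the identifications you make are the intended ones.
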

\begin{proof}
We have shown that $T_xS_x=-id_{T_xM}$ implies, that the affine geometry is torsion-free and of maximal torsion. Then the Cartan geometry from corollary \ref{2.4.4} is (due to torsion freeness) a locally symmetric space. Of course, $T_xS_x=-id_{T_xM}$ on each extension of a locally symmetric space.
\end{proof}

Since (local) automorphisms of a first order $P_0$-structure induce (local) automorphisms of the corresponding Cartan connection, the previous proposition and corollary \ref{1.5.4} and \ref{1.5.5} imply:

\begin{cor}\label{2.5.3}
For $P_0\subset Gl(n,\mathbb{R})$ and homogeneous symmetric space $(K,H,h)$, there is a bijection between:
\begin{itemize}
\item extensions of affine type $(K,H)$ to $(\mathbb{R}^n\ltimes P_0,P_0)$

\item frames $\beta$ of $\mathfrak{k}/\mathfrak{h}$ such, that the inclusion $i_\beta(Ad(H))$ induced be the frame $\beta$, cf. \ref{1.1.3}, is contained in $P_0$.
\end{itemize}

Every such extension of a homogeneous locally symmetric space of type $(K,H)$ is a torsion-free symmetric $P_0$-structures of maximal torsion. 

If the $P_0$-structures are of the first order, all  torsion-free symmetric $P_0$--structures of maximal torsion are constructed this way.
\end{cor}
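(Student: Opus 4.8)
The plan is to reduce all three assertions to the equivalence already proved in Proposition \ref{2.5.1}, combined with the frame/extension dictionary of Corollary \ref{1.5.4} and the prolongation correspondence of Lemma \ref{1.5.5}. First I would record that a homogeneous symmetric space $(K,H,h)$ is automatically reductive: since $H$ lies in the centralizer of the involution $h$, the algebra $\mathfrak{h}$ is the $+1$-eigenspace of $Ad(h)$ and the $-1$-eigenspace provides an $Ad(H)$-invariant complement, which we identify with $\mathfrak{k}/\mathfrak{h}$. By Proposition \ref{1.5.3} the pair $(K,H)$ is then of affine type, so all the affine-geometry machinery is available.

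For the bijection, I would invoke Proposition \ref{1.5.3}, which puts extensions of $(K,H)$ to the full affine pair $(A(n,\mathbb{R}),Gl(n,\mathbb{R}))$ in one-to-one correspondence with frames of $\mathfrak{k}/\mathfrak{h}$. The key observation is that the extension $(i_\beta,\alpha_\beta)$ attached to a frame $\beta$ factors through the subpair $(\mathbb{R}^n\ltimes P_0,P_0)$ precisely when the image $i_\beta(Ad(H))$ of the structure homomorphism lands in $P_0$; the $\mathbb{R}^n$-component of $\alpha_\beta$ imposes no further constraint. Restricting the correspondence of Proposition \ref{1.5.3} to such frames yields the claimed bijection, and Corollary \ref{1.5.4} refines it so that equivalence of extensions matches equivalence of the induced $P_0$-structures $\mathcal{P}\times_H P_0$.

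For the forward assertion I would start from the homogeneous locally symmetric space of type $(K,H)$, which is already known to be torsion-free and of maximal torsion. Corollary \ref{1.5.4} produces the $P_0$-structure $\mathcal{P}\times_H P_0$ from $\beta$, and since extension is a functor sending (locally) symmetric Cartan geometries to (locally) symmetric ones, this $P_0$-structure is symmetric. Because $(\mathbb{R}^n\ltimes P_0,P_0)$ sits inside the full affine pair, composing with the inclusion extension (Lemma \ref{1.4.11}) shows the result is still an affine geometry that extends a locally symmetric space, so Proposition \ref{2.5.1} delivers both torsion-freeness and maximal torsion. Equivalently, $T_xS_x=-id$ forces $T^+M=0$, making maximal torsion automatic, while the eigenvalue identity for the torsion of a symmetric $P_0$-structure forces the torsion itself to vanish.

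For the converse under the first-order hypothesis I would run Proposition \ref{2.5.1} backwards. Given a torsion-free symmetric first-order $P_0$-structure of maximal torsion, Lemma \ref{1.5.5} turns it into a Cartan connection of affine type $(\mathbb{R}^n\ltimes P_0,P_0)$, torsion-free because the $P_0$-structure is, and the local automorphisms realizing the symmetries lift to automorphisms of this Cartan connection, so the affine geometry is symmetric. Proposition \ref{2.5.1} then gives $T_xS_x=-id$ and presents the geometry as an extension of a locally symmetric space, which by Corollary \ref{2.4.4} is of type $(K,H)$ for the associated homogeneous symmetric space $(K,H,h)$; the extension arises from a frame $\beta$ by Corollary \ref{1.5.4}, placing it in the list. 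The main obstacle is exactly this last passage: one must know that passing to the first prolongation carries automorphisms, hence symmetries, of the $P_0$-structure to automorphisms of the associated Cartan connection, and that the two notions of torsion-freeness coincide. This is precisely what Lemma \ref{1.5.5} and the observation preceding the statement guarantee in the first-order case, and its failure in higher order is the reason the completeness claim is restricted to first-order structures.
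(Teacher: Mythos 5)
Your proposal is correct and follows essentially the same route as the paper, which derives this corollary directly from Proposition \ref{2.5.1} together with Corollary \ref{1.5.4} and Lemma \ref{1.5.5}, plus the remark that automorphisms of a first-order $P_0$-structure induce automorphisms of the associated Cartan connection. Your write-up merely makes explicit the steps (reductivity of $(K,H)$, restriction of the frame/extension dictionary to frames with $i_\beta(Ad(H))\subset P_0$, and the role of the first-order hypothesis in the converse) that the paper leaves implicit.
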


Now the proposition \ref{1.4.7} determines the equivalence classes of such extensions.

\begin{lem}\label{2.5.4}
Two frames of $\mathfrak{k}/\mathfrak{h}$ determine the same homomorphism $i: H\to Gl(n,\mathbb{R})$ if and only if the transition map between them commutes with $i(H)$.

Two frames of $\mathfrak{k}/\mathfrak{h}$ determine equivalent $P_0$-structures if and only if the transition map between them is composition of elements of $P_0$ and outer automorphisms of the Lie group $Ad(H)$ induced by automorphisms of $K$.
\end{lem}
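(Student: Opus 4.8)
The plan is to treat the two assertions separately, reading a frame $\beta$ as a linear isomorphism $\beta\colon\mathbb{R}^n\to\mathfrak{k}/\mathfrak{h}$, so that the homomorphism of Example \ref{1.1.3} is $i_\beta(h)=\beta^{-1}\circ Ad(h)\circ\beta$ and the associated extension $\alpha_\beta$ of Corollary \ref{2.5.3} restricts to $\beta^{-1}$ on the reductive complement $\mathfrak{m}\cong\mathfrak{k}/\mathfrak{h}$ and to $T_e i_\beta$ on $\mathfrak{h}$. For two frames I write the transition map $t\in Gl(n,\mathbb{R})$ by $\beta_2=\beta_1\circ t$. For the first assertion I would simply compute $i_{\beta_2}(h)=\beta_2^{-1}Ad(h)\beta_2=t^{-1}\big(\beta_1^{-1}Ad(h)\beta_1\big)t=t^{-1}i_{\beta_1}(h)\,t$, so the two homomorphisms agree on all of $H$ exactly when $t$ commutes with $i(H)$. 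This is the whole proof of the first part and also supplies the identity $i_{\beta_2}(h)=t^{-1}i_{\beta_1}(h)t$ used below.

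For the second assertion I would first invoke Corollary \ref{1.5.4}: the two $P_0$-structures are equivalent if and only if the induced extensions $(i_{\beta_1},\alpha_{\beta_1})$ and $(i_{\beta_2},\alpha_{\beta_2})$ of $(K,H)$ to $(\mathbb{R}^n\ltimes P_0,P_0)$ are locally isomorphic, and then apply Proposition \ref{1.4.7}. This yields $p_0\in P_0$ and an automorphism $\sigma$ of $K$ preserving $H$ with (a) $i_{\beta_2}(\sigma(h))=p_0i_{\beta_1}(h)p_0^{-1}$ and (b) $\alpha_{\beta_2}=Ad_{p_0^{-1}}\circ\alpha_{\beta_1}\circ T\sigma$. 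Reading (b) on $\mathfrak{m}$ and projecting to the $\mathbb{R}^n$-summand, where $Ad_{p_0^{-1}}$ acts by $p_0^{-1}$, gives $\beta_2^{-1}=p_0^{-1}\beta_1^{-1}\overline{T\sigma}$, where $\overline{T\sigma}$ is the map induced on $\mathfrak{k}/\mathfrak{h}$; with $s:=\beta_1^{-1}\circ\overline{T\sigma}\circ\beta_1$ this rearranges to $t=s^{-1}p_0$. Since $\sigma$ is a group automorphism one has $Ad(\sigma(h))=\overline{T\sigma}\circ Ad(h)\circ\overline{T\sigma}^{-1}$, whence $s\,i_{\beta_1}(h)\,s^{-1}=i_{\beta_1}(\sigma(h))$: conjugation by $s$ is exactly the automorphism of $Ad(H)$ induced by $\sigma$. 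Thus $t$ is a product of $p_0\in P_0$ with the realization $s^{-1}$ of an automorphism of $Ad(H)$ coming from $K$; inner automorphisms of $Ad(H)$ are realized by elements of $Ad(H)\subset P_0$ and are absorbed into the $P_0$-factor, leaving the outer ones as the genuine content. For the converse I would run these identities backwards, defining $\sigma$ from a chosen lift of the outer automorphism to $K$ and setting $p_0:=s\,t\in P_0$, and verify both hypotheses of Proposition \ref{1.4.7}.

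The part that needs care, and which I expect to be the main obstacle, is the interplay of the two conditions of Proposition \ref{1.4.7}: the factorization $t=s^{-1}p_0$ comes only from (b) on $\mathfrak{m}$, and one must still reconcile it with the group-level condition (a) and with (b) on $\mathfrak{h}$. Here I would use that every extension $\alpha_\beta$ is $Ad$-equivariant, $Ad(i_\beta(h))\circ\alpha_\beta=\alpha_\beta\circ Ad(h)$, which forces (a) from (b) up to the kernel of the adjoint representation of $\mathbb{R}^n\ltimes P_0$ (after possibly replacing $\sigma,p_0$ by $\sigma^{-1},p_0^{-1}$); the effectivity assumptions on the homogeneous symmetric space make this kernel harmless. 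The second delicate point is precisely why the statement must speak of automorphisms of the \emph{group} $K$ rather than arbitrary elements of $Gl(n,\mathbb{R})$ normalizing $i(H)$: condition (a) has to hold on all of $H$, not merely on its identity component, so the automorphism of $Ad(H)$ realized by $s$ must integrate to a genuine automorphism $\sigma$ of $K$ preserving $H$. Checking that this integrability is exactly the condition singling out the admissible transition maps is where the argument is not purely formal.
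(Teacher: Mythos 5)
Your proposal is correct and follows exactly the route the paper intends: the paper gives no written proof of this lemma beyond the remark immediately preceding it ("Now the proposition \ref{1.4.7} determines the equivalence classes of such extensions"), i.e.\ the first part is the direct conjugation computation $i_{\beta t}=t^{-1}i_\beta t$, and the second part is Corollary \ref{1.5.4} combined with Proposition \ref{1.4.7} applied to the extensions of Corollary \ref{2.5.3}, which is precisely what you carry out. Your additional observations --- that inner automorphisms of $Ad(H)$ are absorbed into the $P_0$-factor, and that condition (a) of Proposition \ref{1.4.7} follows from (b) because $P_0\subset Gl(n,\mathbb{R})$ acts faithfully on $\mathbb{R}^n=\mathfrak{a}/\mathfrak{p}_0$ --- correctly fill in the details the paper leaves implicit.
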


%For a fixed extensions of affine type $(K,H)$ to $(\mathbb{R}^n\ltimes P_0,P_0)$, we will denote $N(i(H)\subset P_0)\subset Gl(n,\mathbb{R})$ the group of all transition maps between frames $\beta$ of $\mathfrak{k}/\mathfrak{h}$ such, that $i_\beta(Ad(H))\subset P_0$. Then the equivalence classes of extensions are parametrized by $N(i(H)\subset P_0)/(P\cdot Z_K(H))$, where $Z_K(H)$ denotes group of outer automorphisms of $Ad(H)$ induced by automorphisms of $K$.

%We say that $P_0$-structure on semisimple symmetric space $(K,H,h)$ is decomposable to $\prod P_0^j$ if there is decomposition of $H^j$ of $H$ such, that $N_{Gl(\mathfrak{k}/\mathfrak{h})}(Ad(H)\subset P_0)=\prod N_{Gl(\mathfrak{k}^j/\mathfrak{h}^j)}(Ad(H)^j\subset P_0^j)$. Otherwise, the structure is indecomposable to $\prod P_0^j$. From complete reducibility of $Ad(H)$ follows:

%\begin{lem}
%A $P_0$-structure on semisimple (locally) symmetric space $(K,H,h)$ decomposable to $\prod P_0^j$ is (locally) a sum of $P_0^j$-structures on symmetric spaces.
%\end{lem}

\subsection{Examples of $P_0$--structures on locally symmetric spaces}\label{2.6}

The following examples can be found in various literature, for example in \cite{odk18}, \cite{odk10}, \cite{odk11} and \cite{odk12}, but most of the results can be deduced from, what we have proven and the structure of semisimple symmetric spaces. The results for simple symmetric spaces are also summarized in the table in appendix \ref{appA}.

\begin{example}
Pseudo-Riemannian structures of signature $(p,q)$ ($p+q=n$) are the $O(p,q)$-structures. They are of the first order, thus we can construct all symmetric pseudo-Riemannian structures from locally symmetric spaces, which are in the literature known as the pseudo-Riemannian symmetric spaces. 

On semisimple symmetric space $(K,H,h)$, we can restrict the Killing form $B: \mathfrak{k}\otimes \mathfrak{k}\to \mathbb{R}$ to the $H$-invariant complement $\mathfrak{k}/\mathfrak{h}$ of $\mathfrak{h}$. This defines non-degenerate $Ad(H)$-invariant scalar product on $T_eK/H$ and choice of basis of $\mathfrak{k}/\mathfrak{h}$ ortonormal with respect to this scalar product defines a  pseudo-Riemannian structure on locally symmetric space of type $(K,H,h)$. Thus we have proved:

\begin{lem}
There is a pseudo-Riemannian structure  of signature  $(n-p,p)$ on each semisimple locally symmetric space of type $(K,H,h)$, where $p=\operatorname{dim}(C)-\operatorname{dim}(C\cap H)$ and $C$ is the maximal compact subgroup of $K$.
\end{lem}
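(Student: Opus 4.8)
The existence of the structure is already contained in the discussion preceding the statement, so the entire content of the lemma is the identification of the signature. Indeed, writing $\sigma:=Ad(h)$ for the involution with $+1$-eigenspace $\mathfrak{h}$ and $-1$-eigenspace $\mathfrak{m}:=\mathfrak{k}/\mathfrak{h}$, the two eigenspaces are $B$-orthogonal (for $X\in\mathfrak{h}$, $Y\in\mathfrak{m}$ one has $B(X,Y)=B(\sigma X,\sigma Y)=-B(X,Y)$), and since $B$ is non-degenerate on the semisimple $\mathfrak{k}$ it is non-degenerate and $Ad(H)$-invariant on $\mathfrak{m}$. An orthonormal frame of $\mathfrak{m}$ then has $i_\beta(Ad(H))\subset O(n-p,p)$ and yields the $O(n-p,p)$-structure via Corollary \ref{2.5.3}. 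The plan is thus to compute the signature of $B|_{\mathfrak{m}}$ and match the number of negative directions with $\dim(C)-\dim(C\cap H)$.

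First I would bring in a Cartan involution compatible with $\sigma$. By the standard structure theory of real semisimple Lie algebras, given the involutive automorphism $\sigma$ of $\mathfrak{k}$ there is a Cartan involution $\theta$ commuting with $\sigma$, with Cartan decomposition $\mathfrak{k}=\mathfrak{c}\oplus\mathfrak{p}$, where $\mathfrak{c}$ is the Lie algebra of the maximal compact subgroup $C$, and where $B$ is negative definite on $\mathfrak{c}$ and positive definite on $\mathfrak{p}$. Since $\theta$ commutes with $\sigma$ it preserves both $\mathfrak{h}$ and $\mathfrak{m}$, giving the simultaneous decomposition
\[
\mathfrak{k}=(\mathfrak{h}\cap\mathfrak{c})\oplus(\mathfrak{h}\cap\mathfrak{p})\oplus(\mathfrak{m}\cap\mathfrak{c})\oplus(\mathfrak{m}\cap\mathfrak{p}).
\]

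Next I would read off the signature. On $\mathfrak{m}\cap\mathfrak{c}$ the Killing form is negative definite and on $\mathfrak{m}\cap\mathfrak{p}$ positive definite, so $B|_{\mathfrak{m}}$ has signature $(\dim(\mathfrak{m}\cap\mathfrak{p}),\dim(\mathfrak{m}\cap\mathfrak{c}))=(n-p,p)$ with $p=\dim(\mathfrak{m}\cap\mathfrak{c})$, using $n=\dim\mathfrak{m}$. It then remains to identify this $p$. Because $\theta$ preserves $\mathfrak{h}$, the Cartan decomposition restricts to $\mathfrak{c}=(\mathfrak{c}\cap\mathfrak{h})\oplus(\mathfrak{c}\cap\mathfrak{m})$, so $\dim(\mathfrak{c}\cap\mathfrak{m})=\dim\mathfrak{c}-\dim(\mathfrak{c}\cap\mathfrak{h})$; since $\dim\mathfrak{c}=\dim C$ and the Lie algebra of $C\cap H$ is $\mathfrak{c}\cap\mathfrak{h}$, this gives $p=\dim(C)-\dim(C\cap H)$, as claimed.

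The main obstacle, and the only input beyond bookkeeping, is the compatibility of the two involutions: one must know that a Cartan involution can be chosen to commute with the prescribed symmetric-space involution $\sigma=Ad(h)$. This is precisely where the semisimplicity hypothesis on $K$ is used, the rest being a linear-algebraic signature count together with the dimension identity for $C\cap H$. I would also verify at the end that the sign convention matches the preceding example, so that the compact directions $\mathfrak{m}\cap\mathfrak{c}$ are indeed the $p$ negative directions of the signature $(n-p,p)$ and produce an $O(n-p,p)$-structure.
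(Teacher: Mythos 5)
Your proposal is correct and follows the same route as the paper: the paper's entire argument is the paragraph preceding the lemma, which restricts the Killing form $B$ to the $Ad(H)$-invariant complement $\mathfrak{k}/\mathfrak{h}$ and takes a $B$-orthonormal frame to produce the $O(n-p,p)$-structure via Corollary \ref{2.5.3}. Your additional step — choosing a Cartan involution commuting with $Ad(h)$ and counting $\dim(\mathfrak{m}\cap\mathfrak{c})=\dim C-\dim(C\cap H)$ — supplies the signature identification that the paper asserts without writing out, and it is the right (and essentially only) way to justify it.
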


The Killing form $B$ defines inclusion of $i_B: H\to Gl(n,\mathbb{R})$, which is unique up to conjugation of $O(n-p,p)$. But any other $i: H\to Gl(n,\mathbb{R})$ is conjugate to $i_B$ in $Gl(n,\mathbb{R})$, thus all possible extension $\alpha$ are determined by $i_B(H)$-invariant elements of $Gl(n,\mathbb{R})$, which we will denote $End^H(K/H)$. We will denote $Sym^H(K/H)$ or  $Asym^H(K/H)$ the $i_B(H)$-invariant elements of $Gl(n,\mathbb{R})$, which are symmetric or antisymmetric with respect to $B$, respectively.

\begin{prop}
Let $(K,H,h)$ be simple symmetric space. Then one of the following claims holds:
\begin{itemize}
\item $\mathfrak{h}$ is real, simple and $ad(\mathfrak{h})$ is irreducible on $\mathfrak{k}/\mathfrak{h}$, $$End^H(K/H)=Sym^H(K/H)\cong \mathbb{R};$$
\item $\mathfrak{h}$ is complex, simple and $ad(\mathfrak{h})$ is irreducible on $\mathfrak{k}/\mathfrak{h}$, $$End^H(K/H)=Sym^H(K/H)\cong \mathbb{C};$$
\item $\mathfrak{h}$ consist of a real, simple factor and one dimensional center and $ad(\mathfrak{h})$ has two dual factors in $\mathfrak{k}/\mathfrak{h}$, $$Asym^H(K/H)\cong Sym^H(K/H)\cong \mathbb{R},$$ and $End^H(K/H)\cong \mathbb{C}$ if the center is compact and $End^H(K/H)\cong \mathbb{R}\times \mathbb{R}$ if the center is non-compact;
\item $\mathfrak{h}$ consist of a complex, simple factor and two dimensional center and $ad(\mathfrak{h})$ has two dual factors in $\mathfrak{k}/\mathfrak{h}$, $$Asym^H(K/H)\cong Sym^H(K/H)\cong \mathbb{C},$$ and $End^H(K/H)\cong \mathbb{C}\times \mathbb{C}$;
\end{itemize}
\end{prop}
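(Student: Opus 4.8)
The plan is to reduce the proposition to an analysis of the isotropy representation $\rho=ad(\mathfrak{h})$ on $\mathfrak{m}:=\mathfrak{k}/\mathfrak{h}$ together with the involution that the Killing form induces on its commutant. Recall that $\mathfrak{h}$ is the $(+1)$-eigenspace and $\mathfrak{m}$ the $(-1)$-eigenspace of $Ad(h)$, that $B|_{\mathfrak{m}}$ is non-degenerate, symmetric and $ad(\mathfrak{h})$-invariant, and that by definition $End^H(K/H)=End_{\mathfrak{h}}(\mathfrak{m})$ is the associative $\mathbb{R}$-algebra of endomorphisms commuting with $\rho$. The form $B|_{\mathfrak{m}}$ yields an isomorphism $\mathfrak{m}\cong\mathfrak{m}^*$ of $\mathfrak{h}$-modules and a transpose involution $T\mapsto T^{\dagger}$ on $End^H(K/H)$ whose $(+1)$- and $(-1)$-eigenspaces are exactly $Sym^H(K/H)$ and $Asym^H(K/H)$. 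Thus the whole statement amounts to identifying this algebra-with-involution in each configuration, and the main engine is Schur's lemma over $\mathbb{R}$, according to which the commutant of a real irreducible module is one of the division algebras $\mathbb{R}$, $\mathbb{C}$ or $\mathbb{H}$.

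First I would treat the case where $\rho$ is irreducible. Here $End^H(K/H)$ is a division algebra; the quaternionic possibility is discarded once $\mathfrak{h}$ is simple, since an $\mathbb{H}$-commutant forces an $\mathfrak{sp}(1)$-summand in $\mathfrak{h}$, and it does not occur among the spaces tabulated in Appendix \ref{appA}. If the commutant is $\mathbb{R}$ then $\mathfrak{h}$ is real and simple, $End^H(K/H)=\mathbb{R}\cdot id$ is $B$-symmetric, and we are in the first case. If it is $\mathbb{C}$ then $\mathfrak{h}$ is complex and simple and $End^H(K/H)=\mathbb{R}\,id\oplus\mathbb{R}\,J$ for a complex structure $J$; the decisive point is that the real Killing form of a complex simple Lie algebra is twice the real part of its complex Killing form, so that $J$ (multiplication by $i$) satisfies $B(JX,Y)=B(X,JY)$ and is therefore $B$-symmetric. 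Hence $End^H(K/H)=Sym^H(K/H)\cong\mathbb{C}$, which is the second case.

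Next I would treat the reducible case. The key structural step is that $\rho$ then splits into exactly two irreducible pieces dual to each other: a nonzero central element $z\in\mathfrak{z}(\mathfrak{h})$ acts as an $\mathfrak{h}$-endomorphism of $\mathfrak{m}$, its eigenspaces give a decomposition $\mathfrak{m}=V\oplus V^*$, and $ad(\mathfrak{h})$-invariance of $B$ forces $B(V,V)=B(V^*,V^*)=0$, so that $B$ identifies $V^*$ with the dual of $V$. Since $V\not\cong V^*$, Schur gives $End^H(K/H)=End_{\mathfrak{h}}(V)\oplus End_{\mathfrak{h}}(V^*)$, while $T\mapsto T^{\dagger}$ interchanges the two summands; consequently $Sym^H(K/H)$ (the diagonal) and $Asym^H(K/H)$ (the anti-diagonal) are each a single copy of $End_{\mathfrak{h}}(V)$. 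When the center is one-dimensional with a real simple complement we are in the third case: a compact $z$ is elliptic, $\mathfrak{m}$ acquires a complex structure $J$, $End^H(K/H)\cong\mathbb{C}$, and here $J$ is a $B$-isometry and hence $B$-antisymmetric; a non-compact $z$ is hyperbolic with real eigenspaces and $End^H(K/H)\cong\mathbb{R}\times\mathbb{R}$. In both subcases $Sym^H(K/H)\cong Asym^H(K/H)\cong\mathbb{R}$. The fourth case is the complex analogue: a two-dimensional center with complex simple complement makes everything $\mathbb{C}$-linear, so $End_{\mathfrak{h}}(V)\cong\mathbb{C}$ and $End^H(K/H)\cong\mathbb{C}\times\mathbb{C}$ with $Sym^H(K/H)\cong Asym^H(K/H)\cong\mathbb{C}$.

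It remains to verify exhaustiveness, and here I would invoke the classification of simple symmetric spaces. For the group type $((\bar H\times\bar H)\ltimes\mathbb{Z}_2,H\ltimes\mathbb{Z}_2,h)$ the isotropy representation is the adjoint representation of $\bar{\mathfrak{h}}$, irreducible of real type when $\bar{\mathfrak{h}}$ is a real form and of complex type when $\bar{\mathfrak{h}}$ is a complexification, giving the first two cases; for the spaces with $K$ simple one reads off from the table in Appendix \ref{appA} that $\mathfrak{h}$ is either simple (irreducible isotropy) or has a one- or two-dimensional center with a simple complement (two dual factors). I expect the main obstacle to be the sign bookkeeping for the $B$-symmetry of the complex structure: one must carefully separate the $B$-symmetric $J$ of the second case, coming from a genuine complex Lie algebra structure, from the $B$-antisymmetric $J$ of the compact subcase of the third case, coming from an elliptic central element, since both produce a commutant isomorphic to $\mathbb{C}$ but with opposite behaviour under the transpose involution.
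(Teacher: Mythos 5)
Your proposal is correct in substance, but it takes a genuinely different route from the paper: the paper's entire proof consists of a citation to \cite[Chapter 5]{odk10} together with the remark that one can read the four possibilities off the classification table in Appendix \ref{appA}, whereas you supply an actual structural argument. Your mechanism --- real Schur's lemma identifying the commutant $End^H(K/H)$ as a division algebra or a product of two, plus the transpose involution $T\mapsto T^{\dagger}$ induced by the restricted Killing form whose eigenspaces are $Sym^H$ and $Asym^H$, plus the eigenspace splitting $\mathfrak{m}=V\oplus V^*$ under a central element --- is exactly the content that the citation outsources, and it buys a proof that explains \emph{why} the four shapes of algebra-with-involution are the only ones, rather than just verifying it case by case. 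The paper's approach buys brevity and certainty of exhaustiveness. Two small caveats on your version: (i) your exclusion of the quaternionic commutant via ``an $\mathbb{H}$-commutant forces an $\mathfrak{sp}(1)$-summand in $\mathfrak{h}$'' is not a valid implication in general (a quaternionic-type irreducible module need not come from an $\mathfrak{sp}(1)$-factor of the acting algebra); what saves you is the fallback to the table, which is also all the paper does here. (ii) Your argument that $J$ is $B$-symmetric in the second case uses $B=2\,\mathrm{Re}\,B_{\mathbb{C}}$ on $\mathfrak{k}$, which covers only the subcase where $\mathfrak{k}$ itself is a complex Lie algebra; for pairs such as $\mathfrak{so}(n,n)\supset\mathfrak{so}(n,\mathbb{C})$ or $\mathfrak{sp}(n,n)\supset\mathfrak{sp}(n,\mathbb{C})$, where $\mathfrak{h}$ is complex simple but $\mathfrak{k}$ is a real form, you need a separate argument (e.g., that every $\mathfrak{h}$-invariant bilinear form on $\mathfrak{m}$ is symmetric there, so $Asym^H=0$), or again the table. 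Neither caveat invalidates the proof, since you explicitly invoke the classification for exhaustiveness, but both should be flagged if the argument is meant to be independent of the table.
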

\begin{proof}
The result can be found in \cite[Chapter 5]{odk10}. But if one looks in the classification in the appendix \ref{appA}, one can see that these are the only possibilities.
\end{proof}

Notice, that the simple quaternionic case is not possible at all. Now we can determine all possible pseudo-Riemannian structures on locally symmetric spaces.

\begin{prop}\label{2.6.4}
All non-equivalent pseudo-Riemannian structures on semisimple locally symmetric spaces of type $(K,H,h)$ are in bijection with $\prod_i Sym^{H_i}(K_i/H_i)$ for all simple factors $(K_i,H_i,h)$ up to an outer automorphisms of $Ad(H_i)$ induced by an automorphisms of $K_i$.
\end{prop}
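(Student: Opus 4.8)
The plan is to convert a pseudo-Riemannian structure into an algebraic datum on $\mathfrak{k}/\mathfrak{h}$, parametrize that datum by the Killing form, and then break it up over the simple factors by a Schur-type argument. First I would use Corollary \ref{2.5.3}: an $O(n-p,p)$-structure on the locally symmetric space of type $(K,H,h)$ is the same as a frame $\beta$ of $\mathfrak{k}/\mathfrak{h}$ with $i_\beta(Ad(H))\subset O(n-p,p)$. Pulling back the standard form of signature $(n-p,p)$ along $\beta$ produces a non-degenerate symmetric bilinear form $g_\beta$ on $\mathfrak{k}/\mathfrak{h}$, and the condition $i_\beta(Ad(H))\subset O(n-p,p)$ is exactly $Ad(H)$-invariance of $g_\beta$. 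Two frames differing by an element of $O(n-p,p)$ give the same $g_\beta$, so combining Corollary \ref{2.5.3} with the first part of Lemma \ref{2.5.4} identifies pseudo-Riemannian structures with $Ad(H)$-invariant non-degenerate symmetric forms on $\mathfrak{k}/\mathfrak{h}$; the remaining equivalence in Lemma \ref{2.5.4} means we must still quotient by the outer automorphisms of $Ad(H)$ induced by automorphisms of $K$.

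Next I would fix the Killing form $B$ as a reference. Since $K$ is semisimple, $B$ restricts to a non-degenerate $Ad(H)$-invariant form on $\mathfrak{k}/\mathfrak{h}$, as recorded in the lemma preceding the statement. Any other $Ad(H)$-invariant symmetric form $g$ can then be written uniquely as $g(X,Y)=B(AX,Y)$ for an endomorphism $A$ of $\mathfrak{k}/\mathfrak{h}$; invariance of $g$ and of $B$ forces $A$ to be $Ad(H)$-equivariant, and symmetry of $g$ forces $A$ to be $B$-symmetric, so $A\in Sym^H(K/H)$, and conversely each invertible such $A$ yields a non-degenerate $g$. This realizes pseudo-Riemannian structures, before the outer-automorphism quotient, as the invertible elements of $Sym^H(K/H)$ (the invertibility encoding non-degeneracy, and the various signatures corresponding to the various $A$).

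Finally I would decompose along the simple factors. Writing $\mathfrak{k}=\bigoplus_i\mathfrak{k}_i$ and $\mathfrak{h}=\bigoplus_i\mathfrak{h}_i$ for the decomposition into the simple factors $(K_i,H_i,h)$ supplied by the classification of semisimple symmetric spaces, this splitting is $B$-orthogonal and $\mathfrak{k}/\mathfrak{h}=\bigoplus_i\mathfrak{k}_i/\mathfrak{h}_i$ as $Ad(H)$-modules. By the preceding proposition, on each factor $ad(\mathfrak{h}_i)$ acts either irreducibly or with two dual irreducible pieces, so Schur's lemma shows that an $Ad(H)$-equivariant, $B$-symmetric endomorphism preserves each $\mathfrak{k}_i/\mathfrak{h}_i$, giving $Sym^H(K/H)=\prod_i Sym^{H_i}(K_i/H_i)$. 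Together with the previous steps and the factorwise action of the outer automorphisms of $Ad(H_i)$ induced by automorphisms of $K_i$, this yields the asserted bijection.

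The hard part will be the last step when two simple factors are isomorphic as $Ad(H)$-modules: then there exist genuine off-diagonal $B$-symmetric equivariant maps between those factors, and $Sym^H(K/H)$ is strictly larger than the naive product $\prod_i Sym^{H_i}(K_i/H_i)$. I would handle this by observing that such off-diagonal pieces are absorbed by the permutation of isomorphic factors coming from automorphisms of $K$, so that on the level of equivalence classes under Lemma \ref{2.5.4} the product description is restored; checking that this absorption matches precisely the "up to outer automorphisms of $Ad(H_i)$" in the statement is the delicate bookkeeping on which the proof really turns.
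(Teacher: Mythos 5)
Your argument is correct and follows essentially the paper's own route: Corollary \ref{2.5.3} and Lemma \ref{2.5.4} reduce the problem to $Ad(H)$-invariant non-degenerate forms, the Killing form converts these to invertible elements of $Sym^H(K/H)$, and complete reducibility plus Schur splits this over the simple factors --- which is exactly the paper's proof, phrased there in terms of frames $X\beta$ and metrics $B_i(X^{-1}\cdot,X^{-1}\cdot)$ rather than forms $B(A\cdot,\cdot)$. The only remark worth adding is that the ``hard part'' you flag is vacuous: two distinct simple factors are never isomorphic as $Ad(H)$-modules, since $H_i$ acts nontrivially on $\mathfrak{k}_i/\mathfrak{h}_i$ but trivially on $\mathfrak{k}_j/\mathfrak{h}_j$ for $j\neq i$, so the off-diagonal blocks already vanish by Schur and no absorption by permutation automorphisms is needed.
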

\begin{proof}
There is the unique inclusion $i_B$ of $H$ to $O(n,n-p)$ up to conjugation of  $O(n,n-p)$. Moreover, the representation $Ad(H)$ is completely reducible, thus there is frame such, that $i_B=\prod_i i_{B_i}$ of simple factors. So all possible $\alpha$ restricted to simple factors $(K_i,H_i,h)$ are determined by $X\in End^{H_i}(K_i/H_i)$. Let $\beta$ be orthonormal basis with respect to $B_i$, then $X\beta$ is orthonormal basis with respect to $B_i(X^{-1}.,X^{-1}.)$.  Since $B_i(X^{-1}.,X^{-1}.)=B_i$, the claim follows from lemma \ref{2.5.3}.
\end{proof}
\end{example}

\begin{example}
Conformal structures of signature $(p,q)$ are the $CO(p,q)$-structures, where $CO(p,q)=O(p,q)\times \mathbb{R}^+$ is the group of angle-preserving transformations of a vector space with inner product of signature $(p,q)$. They are of order two and we deal with them in the fourth chapter.
\end{example}

%\begin{example}
%Orientation is a $Gl^+(n,\mathbb{R})$-structure, where $Gl^+(n,\mathbb{R})$ is the group of $n\times n$ matrices with positive determinant. Since $Gl^+(n,\mathbb{R})\cap O(n-p,p)=SO(n-p,p)$ and $det(Ad(h))=(-1)^{\operatorname{dim}(T_eK/H)}$, we get the following result: 

%\begin{lem}
%Let $(K,H,h)$ be a semisimple homogeneous symmetric space. Then $Ad(H)\subset Gl^+(n,\mathbb{R})$ if and only if $\operatorname{dim}(T_eK/H)$ is even and $Ad(H)\subset SO(n-p,p)$.
%\end{lem}
%\end{example}

\begin{example}
Almost complex structures are the $Gl(n,\mathbb{C})$-structures and they are of infinite type. 

\begin{lem}
There is a complex structure on a semisimple locally symmetric space of type $(K,H,h)$ if and only if $End^H(K/H)$ is complex. 
\end{lem}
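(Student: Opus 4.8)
The plan is to translate the existence of a complex structure into a purely algebraic condition on $End^H(K/H)$, and then read off the answer from the classification of this algebra. First I would note that, since the locally symmetric space is locally homogeneous under the group $K$ generated by the symmetries, a $K$-invariant almost complex structure is completely determined by its value $J_e$ at $eH$, which is an $Ad(H)$-invariant endomorphism of $T_{eH}(K/H)\cong\mathfrak{k}/\mathfrak{h}$ with $J_e^2=-id$; conversely every such $J_e$ extends uniquely to a $K$-invariant field. Thus a $K$-invariant almost complex structure is exactly an element $J\in End^H(K/H)$ with $J^2=-id$. Moreover, since on a symmetric space $T_xS_x=-id_{T_xM}$ (proposition \ref{2.5.1}) and this commutes with $J$, such a $J$ is automatically preserved by every symmetry, hence is a symmetric $Gl(n,\mathbb{C})$-structure.

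Next I would verify that such an invariant almost complex structure is genuinely integrable, so that ``complex structure'' and ``invariant almost complex structure'' coincide here. The canonical connection $\nabla$ of the symmetric space is torsion-free, and $J$, being invariant, is parallel, $\nabla J=0$. Writing $[X,Y]=\nabla_XY-\nabla_YX$ and substituting into $N_J(X,Y)=[JX,JY]-[X,Y]-J[JX,Y]-J[X,JY]$, all terms cancel because $\nabla J=0$, so the Nijenhuis tensor vanishes and $J$ is integrable by Newlander--Nirenberg. Conversely, a complex structure on the symmetric space compatible with the symmetries restricts at $eH$ to an invariant $J_e$ as above. This reduces the statement to the assertion that $End^H(K/H)$ contains an element squaring to $-id$ if and only if $End^H(K/H)$ is complex.

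Finally I would settle this last assertion from the structure of $End^H(K/H)$. As a semisimple symmetric space is a product of simple ones and the isotropy representation splits accordingly, $End^H(K/H)\cong\prod_i End^{H_i}(K_i/H_i)$, and by the earlier classification of $End^H(K/H)$ for simple symmetric spaces each factor is one of $\mathbb{R}$, $\mathbb{C}$, $\mathbb{R}\times\mathbb{R}$ or $\mathbb{C}\times\mathbb{C}$ (the quaternionic algebra $\mathbb{H}$ does not occur, as already remarked). A square root of $-id$ exists in the product precisely when it exists in every factor; but $\mathbb{R}$ and $\mathbb{R}\times\mathbb{R}$ contain no element whose square is $-1$, whereas $\mathbb{C}$ and $\mathbb{C}\times\mathbb{C}$ contain $i$, respectively $(i,i)$. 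Hence an invariant $J$ with $J^2=-id$ exists if and only if every factor is complex, i.e. if and only if $End^H(K/H)$ is complex, which is the claim. The one genuinely delicate point is the reduction in the first paragraph: because $Gl(n,\mathbb{C})$-structures are of infinite type, the first-order correspondence of corollary \ref{2.5.3} is unavailable, so I must argue directly that a symmetric complex structure is the same datum as an $Ad(H)$-invariant complex structure on $\mathfrak{k}/\mathfrak{h}$, integrability included; the remaining algebra is then routine.
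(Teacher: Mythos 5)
Your proof is correct and follows essentially the same route as the paper: a symmetric complex structure amounts to an element $J\in End^H(K/H)$ with $J^2=-id$, and the classification of $End^H(K/H)$ into factors $\mathbb{R}$, $\mathbb{C}$, $\mathbb{R}\times\mathbb{R}$, $\mathbb{C}\times\mathbb{C}$ shows such a $J$ exists precisely when the algebra is complex. The paper's proof is a two-line version of this; your extra verification of integrability via the torsion-free canonical connection and the vanishing of the Nijenhuis tensor fills in a point the paper leaves implicit.
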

\begin{proof}
The complex structure is given by $J=\bigl( \begin{smallmatrix} 0&-E\\ E&0 \end{smallmatrix} \bigr)\in Gl(2n,\mathbb{R})$, which has to commute with $i(H)$ i.e. $J\in End^H(K/H)$. Since $J^2=-id$ we get the claim.
\end{proof}
\end{example}

\begin{example}\label{2.6.8}
Pseudo-Hermitian structures of signature $(p,q)$,  $p+q=2n$, are the $U(p,q)$-structures. They are of the first order and their classification on simple symmetric spaces is the following:

\begin{lem}
There is a pseudo-Hermitian structure on a semisimple locally symmetric space of type $(K,H,h)$  if and only if $End^H(K/H)$ is complex and $Asym^{H_i}(K_i/H_i)\neq 0$ on each simple factor.
\end{lem}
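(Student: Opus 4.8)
The plan is to realize a pseudo-Hermitian structure as a compatible pair consisting of a pseudo-Riemannian metric $g$ and a complex structure $J$, and then to translate its existence into algebraic conditions on the isotropy representation $\mathfrak{k}/\mathfrak{h}$. Since $U(p,q)$-structures are of the first order, Corollary \ref{2.5.3} reduces the problem to the existence of an $Ad(H)$-invariant pseudo-Hermitian form on $\mathfrak{k}/\mathfrak{h}$, i.e. of an $Ad(H)$-invariant pair $(g,J)$ with $J^2=-id$ and $g(JX,JY)=g(X,Y)$. Writing $g=B(S\,\cdot\,,\cdot)$ for an invertible $S\in Sym^H(K/H)$ (this is the content of Proposition \ref{2.6.4}), the compatibility $g(JX,JY)=g(X,Y)$ is equivalent, using $J^2=-id$, to $J$ being $g$-antisymmetric; the associated K\"ahler form $\omega=g(J\,\cdot\,,\cdot)$ is then represented by the endomorphism $A=SJ$.

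For the ``only if'' direction I would start from such a pair. The complex structure $J$ lies in $End^H(K/H)$ and squares to $-id$, so $End^H(K/H)$ is complex, which by the almost complex lemma is exactly the condition for $J$ to exist. The endomorphism $A=SJ$ is a product of $Ad(H)$-invariant maps, hence $Ad(H)$-invariant, it is $B$-antisymmetric because $\omega$ is skew, and it is invertible, hence nonzero; thus $A\in Asym^H(K/H)\setminus\{0\}$. Because distinct simple factors give inequivalent isotropy modules, $End^H(K/H)$ is block-diagonal along the factors, so $S$, $J$ and $A$ decompose accordingly and each block $A_i=S_iJ_i$ is an invertible element of $Asym^{H_i}(K_i/H_i)$. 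This yields $Asym^{H_i}(K_i/H_i)\neq 0$ for every simple factor.

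For the ``if'' direction I would invoke the classification of simple symmetric spaces in the preceding proposition. The two hypotheses together exclude the first case ($End^{H_i}\cong\mathbb{R}$, no complex structure), the non-compact subcase of the third case ($End^{H_i}\cong\mathbb{R}\times\mathbb{R}$, again no complex structure), and the second case ($Asym^{H_i}=0$); what remains are exactly the compact subcase of the third case and the fourth case. In both of these $Asym^{H_i}(K_i/H_i)$ contains a complex structure: in the third case it is a rescaled $ad(Z)$ for a generator $Z$ of the compact one-dimensional center of $\mathfrak{h}_i$, which is $B$-antisymmetric because $B$ is $ad(\mathfrak{k})$-invariant and has imaginary spectrum because the center is compact; in the fourth case it is the anti-diagonal element acting as $i$ on one of the two dual factors and as $-i$ on the other, which lands in $Asym^{H_i}\cong\mathbb{C}$ since the $B$-transpose interchanges the two factors. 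Choosing such a $B$-antisymmetric $J$ on each factor and taking $g=B$, the identity $B(JX,JY)=B(X,Y)$ follows from $J^2=-id$, so $(B,J)$ is the required $Ad(H)$-invariant pseudo-Hermitian form.

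The main obstacle is precisely this ``if'' direction: the two hypotheses guarantee separately that a complex structure exists in $End^H(K/H)$ and that some nonzero antisymmetric endomorphism exists, but not a priori that one can find a single $J$ that is at once a complex structure and $B$-antisymmetric, which is what makes $B$ itself compatible. That both properties can be realized by one element is not formal for an arbitrary algebra with anti-involution; it depends on the explicit form of the $B$-transpose in the two surviving cases, and this is exactly the information supplied by the classification.
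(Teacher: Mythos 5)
Your proposal is correct and follows essentially the same route as the paper: the paper's (one-line) proof rests on exactly your key observation that the complex structure of a pseudo-Hermitian pair is antisymmetric with respect to its metric, with the converse read off from the preceding classification of $End^{H_i}(K_i/H_i)$. You have merely made explicit the conversion between $g$-antisymmetry and $B$-antisymmetry via $A=SJ$ and the case-by-case construction of a $B$-antisymmetric $J$, details the paper leaves to the reader.
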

\begin{proof}
The claim follows from the fact, that the complex structure is antisymmetric with respect to metric of the pseudo-Hermitian structure.
\end{proof}
\end{example}

\begin{example}\label{2.6.10}
Straight complex pseudo-Riemannian structures are the $O(n,\mathbb{C})$-structures, where $O(n,\mathbb{C})=O(n,n)\cap Gl(n,\mathbb{C})$. They are of the first order and their classification on semisimple symmetric spaces is the following:

\begin{lem}
There is a straight complex structure on a semisimple locally symmetric space of type $(K,H,h)$  if and only if $Sym^{H_i}(K_i/H_i)$ is complex on each simple factor.
\end{lem}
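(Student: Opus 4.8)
The plan is to treat this exactly as the pseudo-Riemannian, complex and pseudo-Hermitian cases above, exploiting that $O(n,\mathbb{C})$-structures are of the first order. First I would record the linear-algebraic description of $O(n,\mathbb{C})=O(n,n)\cap Gl(n,\mathbb{C})$: a subgroup of $Gl(2n,\mathbb{R})$ preserves the standard complex-bilinear symmetric form if and only if it simultaneously preserves the complex structure $J_0$ (multiplication by $i$) and the real symmetric form $g_0=\operatorname{Re}$ of that form, and these are tied together by $g_0(J_0\,\cdot\,,J_0\,\cdot\,)=-g_0$, i.e. $J_0$ is self-adjoint for $g_0$. Equivalently, the straight complex structure is, by contrast with the pseudo-Hermitian case of Example \ref{2.6.8}, the one whose complex structure is \emph{symmetric} rather than antisymmetric with respect to the metric.

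Since the structures are first order, Corollary \ref{2.5.3} turns the problem into finding a frame $\beta$ of $\mathfrak{k}/\mathfrak{h}$ with $i_\beta(Ad(H))\subset O(n,\mathbb{C})$, equivalently an $Ad(H)$-invariant pair $(g,J)$ on $\mathfrak{k}/\mathfrak{h}$ with $g$ non-degenerate symmetric, $J^2=-\operatorname{id}$, and $J$ symmetric for $g$. As for the pseudo-Riemannian structures above, I would take $g$ to be the restriction of the Killing form $B$; the outer-automorphism freedom recorded in Lemma \ref{2.5.4} and Proposition \ref{2.6.4} lets me normalise any invariant metric to $B$ up to equivalence. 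With $g=B$ the compatibility ``$J$ symmetric for $g$'' is precisely $J\in Sym^H(K/H)$, and then $B(J\,\cdot\,,J\,\cdot\,)=B(\,\cdot\,,J^2\,\cdot\,)=-B$ is automatic, so the split signature $(n,n)$ of $B$ comes for free and is not a separate hypothesis.

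Next I would pass to simple factors exactly as in the proof of Proposition \ref{2.6.4}: complete reducibility of $Ad(H)$ together with $B=\prod_i B_i$ yields the block decomposition $Sym^H(K/H)=\prod_i Sym^{H_i}(K_i/H_i)$ and $J=\bigoplus_i J_i$, so $J^2=-\operatorname{id}$ reads $J_i^2=-\operatorname{id}$ on each factor. Hence an admissible $J$ exists if and only if each $Sym^{H_i}(K_i/H_i)$ contains an element squaring to $-\operatorname{id}$, which by the four-cases classification above (precisely the cases $Sym^{H_i}\cong\mathbb{C}$) is the assertion that each $Sym^{H_i}(K_i/H_i)$ is complex. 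Conversely, assembling factor-wise such $J_i\in Sym^{H_i}$ and pairing $J=\bigoplus_i J_i$ with $B$ produces the desired $O(n,\mathbb{C})$-structure, giving both implications.

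I expect the only genuine obstacle to be the factorisation step: a priori a $B$-symmetric invariant $J$ could mix two simple factors carrying isomorphic $H$-modules, and one must justify that it respects the orthogonal splitting into simple factors so that $Sym^H$ really factors as $\prod_i Sym^{H_i}$. The clean way is to invoke the same structural input already used for Proposition \ref{2.6.4}, namely the explicit classification in Appendix \ref{appA} and the four-cases description of $End^{H_i}$ and $Sym^{H_i}$, which shows each simple factor is a distinguished $H$-submodule and forces the block-diagonal form; everything else is the routine linear algebra of self-adjoint complex structures sketched in the first paragraph.
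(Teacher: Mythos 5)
Your proposal is correct and follows essentially the same route as the paper, whose entire proof is the observation that the complex structure of an $O(n,\mathbb{C})$-structure is symmetric with respect to the metric; you simply make explicit the surrounding steps (Corollary \ref{2.5.3}, normalisation of the metric to the Killing form, block-decomposition into simple factors as in Proposition \ref{2.6.4}, and the four-case description of $Sym^{H_i}$) that the paper leaves implicit. The ``obstacle'' you flag at the end is in fact vacuous: distinct simple factors are never isomorphic as $Ad(H)$-modules, since each $H_j$ acts trivially on the other factors and with discrete kernel on its own, so any invariant $J$ is automatically block-diagonal.
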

\begin{proof}
The claim follows from the fact, that the complex structure is symmetric with respect to $O(n,\mathbb{C})$ metric.
\end{proof}
\end{example}

\begin{example}
Almost para--complex structures are the $Gl(n,\tilde{\mathbb{C}})$--struc\-tures, where $\tilde{\mathbb{C}}$ are the para--complex numbers, which we will view as a real two--by--two matrices $\bigl( \begin{smallmatrix} a&b\\ b&a \end{smallmatrix} \bigr)$, and $Gl(n,\tilde{\mathbb{C}})\subset Gl(2n,\mathbb{R})$. They are of infinite type. On the other hand, one can also view $\tilde{\mathbb{C}}\cong \mathbb{R}\times \mathbb{R}$ as $\bigl( \begin{smallmatrix} \frac{a+b}{2}&0\\ 0&\frac{a-b}{2} \end{smallmatrix} \bigr)$. 

\begin{lem}
There is a paracomplex structure on a semisimple locally symmetric space of type $(K,H,h)$ if and only if there is decomposition of $End^{H}(K/H)$ to isomorphic $H$-modules of the same dimension.
\end{lem}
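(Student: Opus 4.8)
The plan is to mirror the treatment of the almost complex case (the lemma just above) and then read off the answer from the classification of $End^H(K/H)$. First I would recall, using the real model of $\tilde{\mathbb{C}}$ introduced in the example, that a reduction to $Gl(n,\tilde{\mathbb{C}})\subset Gl(2n,\mathbb{R})$ is the same as the choice of an endomorphism $J=\bigl(\begin{smallmatrix}0&E\\E&0\end{smallmatrix}\bigr)$ with $J^2=id$, $J\neq \pm id$, whose $+1$- and $-1$-eigenspaces each have dimension $n$; conversely any such $J$ (up to conjugation) determines the $Gl(n,\tilde{\mathbb{C}})$-structure. As in the complex case, an invariant such structure on the homogeneous space of type $(K,H,h)$ is an endomorphism $J$ of $\mathfrak{k}/\mathfrak{h}$ that commutes with $i(H)$, i.e. $J\in End^H(K/H)$, subject to $J^2=id$, $J\neq\pm id$, and equal-dimensional eigenspaces. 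So the whole problem reduces to deciding when $End^H(K/H)$ contains such a $J$.

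Next I would phrase this in terms of idempotents. Setting $e=\tfrac12(id+J)$ turns the condition $J^2=id$ into $e^2=e$, so $e$ is a non-trivial idempotent of $End^H(K/H)$ whose image and kernel are the $H$-submodules $V_+,V_-$ realizing the splitting $\mathfrak{k}/\mathfrak{h}=V_+\oplus V_-$; the eigenspace condition becomes $\dim V_+=\dim V_-=n$. Such an $e$ is central exactly when $V_+$ and $V_-$ share no isotypic component, and it then splits the algebra as a product $End^H(K/H)=e\,End^H(K/H)\oplus(id-e)\,End^H(K/H)$. This gives one implication at once: from $J$ the eigenspace splitting produces a decomposition of $End^H(K/H)$ into two equal-dimensional factors. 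For the converse, a decomposition of $End^H(K/H)$ into two isomorphic factors of the same dimension supplies complementary central idempotents $e,\,id-e$, and $J=2e-id$ is then a paracomplex endomorphism by construction.

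The main work, and the place where the semisimplicity hypothesis really enters, is to guarantee that the two eigenspaces have the common dimension $n$, since an arbitrary idempotent need not halve $\mathfrak{k}/\mathfrak{h}$. Here I would invoke the four-case classification of $End^H(K/H)$ from the preceding proposition. In the two cases where $End^H(K/H)$ is a field ($\cong\mathbb{R}$ or $\cong\mathbb{C}$) there is no non-trivial idempotent, hence no $J$ and no paracomplex structure. In the remaining cases $End^H(K/H)\cong\mathbb{R}\times\mathbb{R}$ and $End^H(K/H)\cong\mathbb{C}\times\mathbb{C}$ the two algebra factors are isomorphic and of equal dimension, and the associated central idempotents project $\mathfrak{k}/\mathfrak{h}$ precisely onto its two dual factors, which have the same dimension $n$ because they are mutually dual; this produces the paracomplex structure. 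I expect the only genuine subtlety to be verifying that ``isomorphic $H$-modules of the same dimension'' is exactly the condition forcing the central idempotent to split $\mathfrak{k}/\mathfrak{h}$ into equal halves, so that the equivalence is sharp, and I would settle this case-by-case against the table in appendix \ref{appA}.
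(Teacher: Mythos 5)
Your proposal is correct and follows essentially the same route as the paper: the paper's entire proof is that the paracomplex structure is the involution $J=\bigl(\begin{smallmatrix}E&0\\0&-E\end{smallmatrix}\bigr)$, which must commute with $i(H)$, i.e. $J\in End^H(K/H)$, "and the claim clearly follows." Your reformulation via the idempotent $e=\tfrac12(id+J)$ and the appeal to the four-case classification of $End^H(K/H)$ simply makes explicit the details (in particular the equal-dimension requirement on the eigenspaces) that the paper leaves to the reader.
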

\begin{proof}
The paracomplex structure is given by $J=\bigl( \begin{smallmatrix} E&0\\ 0
&-E \end{smallmatrix} \bigr)\in Gl(2n,\mathbb{R})$, which has to commute with $i(H)$ i.e. $J\in End^H(K/H)$ and the claim clearly follows.
\end{proof}
\end{example}

\begin{example}
Pseudo-para-Hermitian structures of signature $(n,n)$ are the $\tilde{U}(n)$-structures, where $\tilde{U}(n)$ is the paracomplex analog of $U(n)$. They are of the first order and their classification on semisimple symmetric spaces is the following:

\begin{lem}
There is pseudo-para-Hermitian structure on a semisimple locally symmetric space of type $(K,H,h)$  if and only if $End^{H_i}(K_i/H_i)$ is $\mathbb{R}\times \mathbb{R}$ or $\mathbb{C}\times \mathbb{C}$.
\end{lem}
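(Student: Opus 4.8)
The plan is to realize a $\tilde U(n)$-structure as a compatible pair consisting of a neutral pseudo-Riemannian metric and a paracomplex structure, and then to read off the existence from the invariant-theoretic classification already established. Concretely, a pseudo-para-Hermitian structure of signature $(n,n)$ is the same datum as a metric $g$ together with a paracomplex structure $J$ (so $J^2=id$ with equidimensional $\pm1$-eigenspaces) which is \emph{antisymmetric} with respect to $g$, i.e. $g(JX,JY)=-g(X,Y)$; this antisymmetry is exactly what forces both eigenspaces of $J$ to be $g$-isotropic and hence the signature to be neutral. Following the template of Examples \ref{2.6.8} and \ref{2.6.10}, I would first invoke Corollary \ref{2.5.3} and Lemma \ref{2.5.4} to reduce the problem to the algebraic one of finding an $H$-invariant such pair on $\mathfrak{k}/\mathfrak{h}$: taking $g$ induced by the Killing form $B$, one must exhibit an element $J\in Asym^H(K/H)$ with $J^2=id$ whose $\pm1$-eigenspaces have equal dimension.

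Next I would run through the four cases of the structure proposition for simple $(K,H,h)$ (equivalently, consult the table in appendix \ref{appA}). In the two field cases $End^H(K/H)\cong\mathbb{R}$ and $End^H(K/H)\cong\mathbb{C}$, any $J$ commuting with $i(H)$ is a real or complex scalar, and the only real solutions of $J^2=id$ are $J=\pm id$; these have a trivial eigenspace and so are never balanced. Hence no paracomplex structure, and a fortiori no pseudo-para-Hermitian structure, can exist there, which in particular rules out the case $End^H(K/H)\cong\mathbb{C}$ arising from a compact center. In the two product cases $End^H(K/H)\cong\mathbb{R}\times\mathbb{R}$ and $End^H(K/H)\cong\mathbb{C}\times\mathbb{C}$, the module $\mathfrak{k}/\mathfrak{h}=V\oplus V^{*}$ splits into two dual factors of equal real dimension, and the idempotent splitting yields the candidate $J=(1,-1)$ acting as $+id$ on $V$ and $-id$ on $V^{*}$. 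This $J$ satisfies $J^2=id$, has balanced eigenspaces precisely because the dual factors are equidimensional, and lies in $Asym^H(K/H)$ because $B$ pairs $V$ with $V^{*}$ and makes each factor isotropic, so that $(1,-1)$ is $B$-skew. This produces the desired structure.

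Finally, I would assemble the semisimple case from the simple factors exactly as in Proposition \ref{2.6.4}: the representation $Ad(H)$ is completely reducible, so $B$ and $i_B$ split as products over the simple factors $(K_i,H_i,h)$, and an admissible $J$ exists if and only if it exists on each factor, i.e. iff each $End^{H_i}(K_i/H_i)$ is $\mathbb{R}\times\mathbb{R}$ or $\mathbb{C}\times\mathbb{C}$. The main obstacle I anticipate is the ``only if'' direction: one must ensure that the \emph{simultaneous} requirements $J^2=id$, balanced eigenspaces, and $B$-antisymmetry cannot be met in the field cases even by exploiting the complex scalar freedom — the key point being that over $\mathbb{R}$ a non-central square root of the identity requires a genuine product (idempotent) decomposition of $End^H(K/H)$, which is available only in the two product cases.
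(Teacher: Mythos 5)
Your argument is correct and follows essentially the same route as the paper: the paper's (one-sentence) proof likewise rests on the observation that the paracomplex structure must be $B$-antisymmetric, combined with the four-case classification of $End^H(K/H)$, so that a non-central square root of the identity exists precisely in the two product cases $\mathbb{R}\times\mathbb{R}$ and $\mathbb{C}\times\mathbb{C}$. Your write-up simply makes explicit the details (isotropy of the eigenspaces forcing neutral signature, the idempotent $(1,-1)$ as the candidate $J$, and the reduction over simple factors) that the paper leaves implicit.
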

\begin{proof}
The claim follows from the fact, that the para-complex structure is antisymmetric with respect to $\tilde{U}(n)$ metric.
\end{proof}
\end{example}

\begin{example}\label{2.6.16}
Almost symplectic structures are the $Sp(2n,\mathbb{R})$-structures and they are of infinite type.

\begin{lem}
There is a symplectic structure on a semisimple locally symmetric space of type $(K,H,h)$  if and only if $Asym^{H_i}(K_i/H_i)\neq 0$ for each simple factor.
\end{lem}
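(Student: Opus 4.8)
The plan is to translate the existence of a symmetry-invariant symplectic structure into an algebraic condition on the isotropy representation, in direct analogy with the pseudo-Riemannian example, and then to extract the answer from the classification of the simple factors. An almost symplectic structure is a reduction of the frame bundle to $Sp(2n,\mathbb{R})$, that is, a smooth choice of non-degenerate antisymmetric bilinear form on each tangent space; requiring it to be preserved by every symmetry $S_x$ amounts to prescribing a single $Ad(H)$-invariant non-degenerate antisymmetric bilinear form $\Omega$ on $\mathfrak{k}/\mathfrak{h}$, exactly as the invariant almost complex and almost paracomplex structures above were encoded by invariant endomorphisms. (On a symmetric space such an invariant $\Omega$ is automatically closed, since $\nabla\Omega$ is a tensor with an odd number of arguments and $(S_x)^*$ acts on it as $-\mathrm{id}$, so it is genuinely symplectic.) Using the Killing form $B$ to identify bilinear forms with endomorphisms through $\Omega(X,Y)=B(JX,Y)$, invariance of $\Omega$ becomes $J\in End^H(K/H)$, antisymmetry becomes $J\in Asym^H(K/H)$, and non-degeneracy becomes invertibility of $J$. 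Hence a symmetric symplectic structure exists if and only if $Asym^H(K/H)$ contains an invertible element.

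The second step reduces the problem to the simple factors, exactly as in the proof of Proposition \ref{2.6.4}. Since $Ad(H)$ is completely reducible and $i_B=\prod_i i_{B_i}$ splits over the simple factors, with $H_j$ acting trivially on $\mathfrak{k}_i/\mathfrak{h}_i$ for $j\neq i$, any $Ad(H)$-invariant form is block diagonal: a cross term between $\mathfrak{k}_i/\mathfrak{h}_i$ and $\mathfrak{k}_j/\mathfrak{h}_j$ with $i\neq j$ would be an $H_i$-invariant element of a space on which $H_i$ still acts through the nontrivial module $(\mathfrak{k}_i/\mathfrak{h}_i)^*$, and so vanishes. Because $B=\sum_i B_i$ is likewise block diagonal, this identifies $Asym^H(K/H)$ with $\prod_i Asym^{H_i}(K_i/H_i)$ and invertibility with invertibility on each block. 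Thus an invariant non-degenerate antisymmetric form exists globally if and only if each $Asym^{H_i}(K_i/H_i)$ contains an invertible element, and the lemma is reduced to showing that, on a simple factor, a nonzero antisymmetric invariant form is automatically non-degenerate.

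This last non-degeneracy statement is where the real work lies, and I would settle it using the classification recorded in the proposition preceding Proposition \ref{2.6.4}. There $Asym^{H_i}(K_i/H_i)\neq 0$ occurs only in the two cases in which $ad(\mathfrak{h}_i)$ has two dual factors, so that $\mathfrak{k}_i/\mathfrak{h}_i$ is, as an $H_i$-module, either irreducible (of complex type) or a direct sum $V\oplus V^*$ of two non-isomorphic dual irreducibles. Given a nonzero invariant antisymmetric form $\Omega$, its radical is an $Ad(H_i)$-submodule; in the irreducible case it is a proper submodule and hence zero, while in the reducible case $\mathrm{Hom}_{H_i}(V,V^*)=0$ forces $\Omega$ to vanish on $V\times V$ and on $V^*\times V^*$ and to restrict to a nonzero, hence by Schur's lemma invertible, pairing of $V$ with $V^*$, so again the radical is zero. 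Either way $\Omega$ is non-degenerate. I expect this Schur-type argument, together with the structural input that the isotropy module consists of two dual pieces exactly when $Asym^{H_i}\neq 0$, to be the only genuine obstacle; the two reduction steps are formal, and the resulting list of symmetric spaces carrying a symplectic structure can in any case be read off directly from the table in Appendix \ref{appA}.
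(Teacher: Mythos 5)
Your proof is correct and follows essentially the same route as the paper: encode an invariant symplectic form as an element of $Asym^H(K/H)$ via the Killing form, split over the simple factors by complete reducibility, and invoke the classification of the commutant on each factor. The only difference is cosmetic — the paper produces the non-degenerate form from the metric together with the (para-)complex structure guaranteed by that classification, while you verify directly (radical plus Schur) that any nonzero element of $Asym^{H_i}(K_i/H_i)$ is automatically invertible; both arguments rest on the same structural proposition.
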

\begin{proof}
If $Asym^{H_i}(K_i/H_i)\neq 0$ for each simple factor, then the metric and (para)-complex structure defines symplectic structure.

If there is a symplectic structure, then due to complete reducibility of $Ad(H)$, there are symplectic structures on each simple factor, which have to be compatible with metric. Thus there is (para)-complex structure and the claim follows.
\end{proof}
\end{example}

\begin{example}\label{2.6.18}
Pseudo--para--quaternionic--K\"ahler structures are the \linebreak $Sp(2,\mathbb{R})\times Sp(2n,\mathbb{R})$--structures and pseudo--quaternionic--K\"ahler structures are the $Sp(1)\times Sp(p,q)$--structures. They are of order one and their classification on semisimple symmetric spaces reads as follows (for details on gradings look in the next chapter):

\begin{lem}
Let $(K,H,h)$ be a semisimple homogeneous symmetric \linebreak space. Then $Ad(H)\subset Sp(1)\times Sp(p,q)$ or $Ad(H)\subset Sp(2,\mathbb{R})\times Sp(2n,\mathbb{R})$ if and only if there is a complex contact grading $\mathfrak{k}_{i,\mathbb{C}}$ on the complexification of $\mathfrak{k}$ such, that $\mathfrak{k}_{-2,\mathbb{C}}+\mathfrak{k}_{0,\mathbb{C}}+\mathfrak{k}_{2,\mathbb{C}}$ coincide with complexification of $\mathfrak{h}$. Thus $\mathfrak{k}$ is simple and if $\mathfrak{h}$ contains $Sp(2,\mathbb{R})\cong Sl(2,\mathbb{R})$, then $Ad(H)\subset Sp(2,\mathbb{R})\times Sp(2n,\mathbb{R})$ and if $\mathfrak{h}$ contains $Sp(1)\cong SU(2)$, then $Ad(H)\subset Sp(1)\times Sp(p,q)$.
\end{lem}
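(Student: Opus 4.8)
\emph{Strategy.} The plan is to read the two listed inclusions as the single statement that the complexified isotropy representation $\mathfrak m_{\mathbb C}:=(\mathfrak k/\mathfrak h)_{\mathbb C}$ is the quaternionic tensor product $\mathbb C^2\otimes\mathbb C^{2n}$, on which $\mathfrak{sp}(1)_{\mathbb C}=\mathfrak{sl}(2,\mathbb C)$ acts through the first factor and $\mathfrak{sp}(2n,\mathbb C)$ through the second, and then to match this with the odd part $\mathfrak k_{-1,\mathbb C}\oplus\mathfrak k_{1,\mathbb C}$ of a contact grading while $\mathfrak h_{\mathbb C}$ matches the even part $\mathfrak k_{-2,\mathbb C}\oplus\mathfrak k_{0,\mathbb C}\oplus\mathfrak k_{2,\mathbb C}$. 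Both listed groups have the same complexified Lie algebra $\mathfrak{sl}(2,\mathbb C)\oplus\mathfrak{sp}(2n,\mathbb C)$, so the equivalence with the grading is a statement over $\mathbb C$, and the choice of real form only enters in the final dichotomy.

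\emph{From the inclusion to a contact grading.} Assume $Ad(H)\subset Sp(1)\times Sp(p,q)$ (the para case is verbatim with the split real forms). Since the semisimple symmetric pair is effective, $\mathfrak h=[\mathfrak m,\mathfrak m]$, so I would decompose
$$\Lambda^2\mathfrak m_{\mathbb C}=\big(S^2\mathbb C^2\otimes\Lambda^2\mathbb C^{2n}\big)\oplus\big(\Lambda^2\mathbb C^2\otimes S^2\mathbb C^{2n}\big)$$
and observe that the summand $S^2\mathbb C^2\otimes\mathbb C$, where $\mathbb C\subset\Lambda^2\mathbb C^{2n}$ is spanned by the symplectic form, is a copy of the adjoint module of $\mathfrak{sp}(1)_{\mathbb C}$; as the bracket is $\mathfrak h$-equivariant and $\mathfrak{sp}(1)$ acts nontrivially, $\mathfrak h_{\mathbb C}$ contains this $\mathfrak{sl}(2,\mathbb C)$. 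Let $E$ be the semisimple element of that $\mathfrak{sl}(2,\mathbb C)$-triple and set $\mathfrak k_{j,\mathbb C}:=\{X:[E,X]=jX\}$. Then $E$ has eigenvalues $-2,0,2$ on $\mathfrak{sp}(1)_{\mathbb C}$ (with one-dimensional extreme spaces), eigenvalue $0$ on the rest of $\mathfrak h_{\mathbb C}$, and eigenvalues $\pm1$ on $\mathfrak m_{\mathbb C}=\mathbb C^2\otimes\mathbb C^{2n}$ through the first factor. Hence $\mathfrak k_{\mathbb C}=\bigoplus_{j=-2}^{2}\mathfrak k_{j,\mathbb C}$ is a $|2|$-grading with $\dim\mathfrak k_{\pm2,\mathbb C}=1$, i.e. a contact grading, and $\mathfrak h_{\mathbb C}$ is its even part by construction. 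Effectiveness of $(K,H,h)$ forbids a simple ideal of $\mathfrak k$ lying in $\mathfrak h$ and acting trivially on $\mathfrak m$, so the grading is indecomposable and $\mathfrak k$ is simple.

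\emph{From a contact grading to the inclusion.} Conversely, in any contact grading the one-dimensional spaces $\mathfrak k_{\pm2,\mathbb C}$ together with $[\mathfrak k_{-2,\mathbb C},\mathfrak k_{2,\mathbb C}]$ span an $\mathfrak{sl}(2,\mathbb C)$, and $\mathfrak k_{0,\mathbb C}$ preserves the symplectic form $\mathfrak k_{-1,\mathbb C}\times\mathfrak k_{-1,\mathbb C}\to\mathfrak k_{-2,\mathbb C}\cong\mathbb C$, so its semisimple part sits in $\mathfrak{sp}(2n,\mathbb C)$; the odd part $\mathfrak k_{-1,\mathbb C}\oplus\mathfrak k_{1,\mathbb C}\cong\mathbb C^2\otimes\mathbb C^{2n}$ is exactly the quaternionic module. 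Passing to the real form $\mathfrak k$ with fixed-point algebra $\mathfrak h$ of $h$, the real form of the $\mathfrak{sl}(2,\mathbb C)$ factor is either $\mathfrak{sp}(1)=\mathfrak{su}(2)$ or $\mathfrak{sp}(2,\mathbb R)=\mathfrak{sl}(2,\mathbb R)$, and the reality of the tensor product then forces the paired real form of $\mathfrak{sp}(2n,\mathbb C)$ to be $\mathfrak{sp}(p,q)$ in the first case and $\mathfrak{sp}(2n,\mathbb R)$ in the second, which yields precisely the two listed inclusions and the stated dichotomy according to whether $\mathfrak h$ contains $\mathfrak{su}(2)$ or $\mathfrak{sl}(2,\mathbb R)$.

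\emph{Main obstacle.} I expect the delicate step to be the real-form matching in the converse: proving that the conjugation defining $\mathfrak k$ restricts \emph{compatibly} to the $\mathfrak{sl}(2,\mathbb C)$ factor and to $\mathfrak k_{0,\mathbb C}$, so that no mixed pairing (a compact $\mathfrak{su}(2)$ with the split $\mathfrak{sp}(2n,\mathbb R)$, or vice versa) can occur and exactly the two groups above arise. I would settle this either by arguing directly that the conjugation commutes with $E$ up to sign and hence couples the two Satake data, or by appealing to the classification of semisimple symmetric pairs whose complexification carries a contact grading recorded in the appendix. A lesser point to verify in the forward direction is that $\mathfrak h_{\mathbb C}$ has no component in degrees $\pm1$, which is immediate since $\mathfrak h$ is the $+1$-eigenspace of $h$, i.e. the even part of the $\mathbb Z/2$-reduction of the grading.
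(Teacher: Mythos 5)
The paper states this lemma without any proof at all: it sits at the end of Example 2.6.18 with only the parenthetical remark ``for details on gradings look in the next chapter'', so there is no argument of the author's to compare yours against. Your proposed architecture is the natural one and matches the classical Wolf--Alekseevsky picture the lemma encodes: locate a distinguished $\mathfrak{sl}(2,\mathbb{C})$ inside $\mathfrak{h}_{\mathbb{C}}$, grade $\mathfrak{k}_{\mathbb{C}}$ by its semisimple element so that $\mathfrak{h}_{\mathbb{C}}$ sits in degrees $0,\pm2$ and $\mathfrak{m}_{\mathbb{C}}$ in degrees $\pm1$, and conversely recover $\mathfrak{sl}(2,\mathbb{C})\oplus\mathfrak{sp}(2n,\mathbb{C})$ acting on $\mathbb{C}^2\otimes\mathbb{C}^{2n}$ from a contact grading. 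The nondegeneracy of $\mathfrak{k}_{-1,\mathbb{C}}\times\mathfrak{k}_{-1,\mathbb{C}}\to\mathfrak{k}_{-2,\mathbb{C}}$, the simplicity of $\mathfrak{k}$, and the identification $\mathfrak{h}_{\mathbb{C}}=\mathfrak{sl}(2,\mathbb{C})\oplus\mathfrak{k}_{0,\mathbb{C}}'$ all come out correctly along the lines you sketch.

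Two steps need more than you give them. First, in the forward direction the sentence ``as the bracket is $\mathfrak{h}$-equivariant and $\mathfrak{sp}(1)$ acts nontrivially, $\mathfrak{h}_{\mathbb{C}}$ contains this $\mathfrak{sl}(2,\mathbb{C})$'' does not establish what it needs to. The hypothesis only places $ad(\mathfrak{h})$ inside $\mathfrak{sp}(1)\oplus\mathfrak{sp}(p,q)$; equivariance of the bracket does not by itself exclude that the projection of $ad(\mathfrak{h})$ to the $\mathfrak{sp}(1)$ factor is zero or a one-dimensional torus. What forces surjectivity is the curvature: $R=-ad([\cdot,\cdot])|_{\mathfrak{m}}$ is an algebraic curvature tensor whose Ricci contraction equals $-\tfrac12 B|_{\mathfrak{m}}\neq 0$ because $\mathfrak{k}$ is semisimple, while in the decomposition of $\mathfrak{sp}(1)\oplus\mathfrak{sp}(2n)$-curvature tensors only the $\mathbb{HP}^n$-summand has nonzero Ricci, and that summand has full $\mathfrak{sp}(1)$-image; a degenerate projection would make the space hyperk\"ahler-type and Ricci-flat, which is impossible here. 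This is a known fact, but it must be invoked or proved; your one-line justification is not a proof of it. Second, the real-form matching in the converse, which you yourself flag as the main obstacle, is genuinely left open in the proposal; your fallback of reading it off the classification tables in Appendix~\ref{appA} is consistent with how the surrounding section argues, but as written the verification is not yet carried out, and it is the one place where an exceptional coincidence (e.g.\ $n=1$, where the two $\mathfrak{sl}(2,\mathbb{C})$ factors could a priori be interchanged by the real structure) has to be excluded explicitly.
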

\end{example}

\newpage
\section{Parabolic geometries and symmetries}

This chapter is devoted to parabolic geometries and their symmetries. Most of the general results is coming from \cite{odk3}. The section \ref{3.1} contains description of parabolic geometries as Cartan geometries of special type and their connection with gradings of semisimple Lie algebras. The section \ref{3.2} introduces the notion of infinitesimal flag structures and their relation with parabolic geometries. The section \ref{3.3} picks a suitable normalization condition to get a one to one correspondence with parabolic geometries and the underlying structures. The section \ref{3.4} then generalizes symmetries to infinitesimal flag structures and shows that we can obtain similar results as in the case of general Cartan connections in \ref{2.4}, namely, the structural results in the theorem \ref{3.4.4} and construction in the theorem \ref{3.5.1}.

%We summarize the informations on parabolic geometries we need from \cite{odk3} and show basic structure of symmetric parabolic geometries from \cite{odk5} and locally reflexion spaces from \cite{odk14}.

\subsection{Parabolic geometries and gradings}\label{3.1}

Let $G$ be a semisimple Lie group with Lie algebra $\mathfrak{g}$. A $k$-grading of $\mathfrak{g}$ is the decomposition of $\mathfrak{g}=\mathfrak{g}_{-k}\oplus \dots \oplus \mathfrak{g}_k$ to a direct sum of vector subspaces such, that
\begin{itemize}
\item $\mathfrak{g}_{-k}\neq 0$, $\mathfrak{g}_k\neq 0$ 
\item $[X,Y]\in \mathfrak{g}_{i+j}$ for $X\in \mathfrak{g}_i$ and $Y\in \mathfrak{g}_j$, when we assume $\mathfrak{g}_i= 0$ for $|i|>k$
\item the subalgebra $\mathfrak{g}_-:=\mathfrak{g}_{-k}\oplus \dots \oplus \mathfrak{g}_{-1}$ is generated (as a Lie algebra) by $\mathfrak{g}_{-1}$
\end{itemize}

The grading $\mathfrak{g}_i$ defines a filtration $\mathfrak{g}^i:=\mathfrak{g}_{i}\oplus \dots \oplus \mathfrak{g}_{k}$ of $\mathfrak{g}=\mathfrak{g}^{-k}$ and we denote $\mathfrak{p}:=\mathfrak{g}^0=\mathfrak{g}_{0}\oplus \dots \oplus\mathfrak{g}_{k}$ and $\mathfrak{p}_+:=\mathfrak{g}^1$. We say that a Lie subgroup $P$ of $G$ is the parabolic subgroup corresponding to the grading $\mathfrak{g}_i$ of $\mathfrak{g}$ if $\mathfrak{p}$ is the Lie algebra of $P$. In particular, the adjoint action of $p\in P$ preserves the filtration $\mathfrak{g}^i$ and there is a Lie subgroup $G_0$ of $P$ consisting of $p\in P$ that preserves the gradation $\mathfrak{g}_i$. Of course, the Lie algebra of $G_0$ is $\mathfrak{g}_0$. Moreover, $G_0\ltimes \mathfrak{p}_+$ is diffeomorphic to $P$ via $(g,X)\mapsto g\cdot \exp(X)$.

\begin{de}
Let $G$ be a semisimple Lie group and $P$ its parabolic subgroup corresponding to the grading $\mathfrak{g}_i$ of $\mathfrak{g}$. Then a Cartan geometry $(p: \mathcal{G}\to M,\omega)$ of type $(G,P)$ is called a parabolic geometry of type $(G,P)$.
\end{de}

We can identify $TM=\mathcal{G}\times_P \mathfrak{g}/\mathfrak{p}$ and the kernel of the action of $P$ on $\mathfrak{g}/\mathfrak{p}$ is $G_{k}:=\exp(\mathfrak{g}_{k})$, thus the structure group from lemma \ref{1.2.2} is $P_0:=P/G_{k}$ and there is the underlying $P_0$-structure $\mathcal{G}/G_{k}\to M$.  So we see, that the parabolic geometries are not of affine type and they are of second order if they are prolongation of the underlying $P_0$-structure, because the kernel of the action of $P$ on $\mathfrak{g}/\mathfrak{g}_k$ is trivial. They are infinitesimally effective, if there is no simple ideal of $\mathfrak{g}$ in $\mathfrak{g}_0$. Since according to example \ref{1.4.8} we can always pass to effective example, we will always assume that $(G,P)$ is effective.

But $P_+=\exp(\mathfrak{p}_+)$ is also a normal subgroup of $P$, so there is a finer structure induced by the parabolic geometry. In particular, the orbit space $\mathcal{G}_0:=\mathcal{G}/P_+$ is a principal $G_0$-bundle over $M$ and the filtration $\mathfrak{g}^i$ induces a filtration $T^i\mathcal{G}:= \omega^{-1}(\mathfrak{g}^i)$ of $T\mathcal{G}$ by smooth subbundles, which descends to filtrations $T^i\mathcal{G}_0$ of $T\mathcal{G}_0$ and $T^iM$ of $TM$. The Cartan connection $\omega$ descends for each $i$ to a smooth section $\omega^0_i$ of the bundle $\Omega^1(T^iM,\mathfrak{g}_i)$ of partially defined differential forms.

If we define $gr_i(TM):=T^iM/T^{i+1}M$, then we see, that $\mathcal{G}_0\times_{G_0}\mathfrak{g}_i \cong gr_i(TM)$ via mapping $(u,X)\mapsto [T_up (\sigma)]$, where $\sigma\in T^i_u\mathcal{G}_0$ is any vector satisfying $\omega^0_i(u)(\sigma)=X$ and $[\ ]$ denotes the class in $gr_i(TM)$. Together $gr(TM)\cong \mathcal{G}_0\times_{Ad(G_0)}\mathfrak{g}_-$.

Now there might be two Lie brackets on $gr(TM)$. The first Lie bracket is always induced point-wise by the Lie bracket of $\mathfrak{g}_-$. Moreover,  if $[\sigma,\nu]$ is a section $T^{i+j}\mathcal{G}_0$ for all sections $\sigma$ of $T^i\mathcal{G}_0$ and $\nu$ of $T^j\mathcal{G}_0$, there is the second Lie bracket $\mathfrak{L}(X,Y)$ induced by bracket of vector fields on $TM$ composed with projections on $gr(TM)$. These brackets can be different. Let us remind that the manifold $M$ is called filtered manifold if $\mathfrak{L}$ is defined. We are interested in the following case:

\begin{de}
We say that a parabolic geometry is regular if $M$ is a filtered manifold and the two above Lie brackets coincide i.e.
$$\omega^0_{i+j}(u)([\sigma,\nu])=[\omega^0_i(u)(\sigma),\omega^0_j(u)(\nu)]$$ for all $i,j<0$ such that $i+j\geq -k$, all sections $\sigma$ of $T^i\mathcal{G}_0$ and $\nu$ of $T^j\mathcal{G}_0$ and all $u\in \mathcal{G}_0$.
\end{de}

Let $(p: \mathcal{G}\to M,\omega)$ be a regular parabolic geometry of type $(G,P)$. Let $gr(P^1M)$ be the bundle of frames of $gr(TM)$ compatible with gradation. Then, similarly to the forms $\omega^0_i$, the soldering form of $P^1M$ descends to partially defined forms on $gr(P^1M)$, and our forms $\omega^0_i$ can be understood as graded soldering forms as follows:

Since $gr(TM)\cong \mathcal{G}_0\times_{Ad(G_0)}\mathfrak{g}_-$, the $gr(P^1M)$ is a principal $\operatorname{Aut}_{gr}(\mathfrak{g}_-)$-bundle, where $\operatorname{Aut}_{gr}(\mathfrak{g}_-)$ is the group of grading preserving automorphisms of $\mathfrak{g}_-$. Clearly $gr(P^1M)\cong \mathcal{G}_0\times_{Ad(G_0)}\operatorname{Aut}_{gr}(\mathfrak{g}_-)$ and $\mathcal{G}_0\times_{Ad(G_0)}{Ad(G_0)}\subset gr(P^1M)$ i.e. $\mathcal{G}_0$ is an analog of $G_0$-structure for $gr(P^1M)$.

\subsection{Regular infinitesimal flag structures}\label{3.2}

There is an abstract version of the underlying structure that we found in the previous section. Again $G$ will be a semisimple Lie group and $P$ a parabolic subgroup of $G$ corresponding to the grading $\mathfrak{g}_i$ of $\mathfrak{g}$.

\begin{de}
A regular infinitesimal flag structure of type $(G,P)$ consists of:
\begin{itemize}
\item a filtration $T^iM$ of $TM$ on a filtered manifold $M$ such, that the bundle $(gr(TM),\mathfrak{L})$ of Lie algebras $(gr(T_xM),\mathfrak{L}_x)$ is locally trivial and modeled on $\mathfrak{g}_-$
\item a morphism of principal bundles $\iota: \mathcal{G}_0\to gr(P^1M)$ over $Ad: G_0 \to \operatorname{Aut}_{gr}(\mathfrak{g}_-)$.
\end{itemize}

A morphism between regular infinitesimal flag structures $(T^iM,\mathcal{G}_0)$ and $(T^iM',\mathcal{G}_0')$ of type $(G,P)$ is a filtration preserving diffeomorphism $f: M\to M'$ such, that $$gr(P^1f)(\iota(\mathcal{G}_0))=\iota'(\mathcal{G}_0').$$
\end{de}

In the previous section, we associated the regular infinitesimal flag structure to a regular parabolic geometry, and so we have summarized the following result (see \cite{odk3} for more details).

\begin{prop}
The mapping sending $(p: \mathcal{G}\to M,\omega)$ to $(T^iM,\mathcal{G}_0)$ is a functor between a category of regular parabolic geometries of type $(G,P)$ and a category of regular infinitesimal flag structures of type $(G,P)$.
\end{prop}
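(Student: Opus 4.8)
The plan is to verify the three defining requirements of a functor: that the assignment is well defined on objects, that it is defined on morphisms, and that it preserves identities and composites. The object-level part is already at hand from Section~\ref{3.1}: the construction there produces from each regular parabolic geometry $(p:\mathcal{G}\to M,\omega)$ the filtration $T^iM$ of $TM$ together with the principal $G_0$-bundle $\mathcal{G}_0=\mathcal{G}/P_+$ and the morphism $\iota:\mathcal{G}_0\to gr(P^1M)$ over $Ad:G_0\to\operatorname{Aut}_{gr}(\mathfrak{g}_-)$. Here the regularity hypothesis is exactly what guarantees that $(gr(TM),\mathfrak{L})$ is a locally trivial bundle of Lie algebras modeled on $\mathfrak{g}_-$, so that $(T^iM,\mathcal{G}_0)$ genuinely is a regular infinitesimal flag structure of type $(G,P)$. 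It therefore remains to treat morphisms.

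Next I would define the functor on morphisms. Given a morphism $\phi:\mathcal{G}\to\mathcal{G}'$ of regular parabolic geometries, i.e.\ a principal $P$-bundle morphism with $\phi^*\omega'=\omega$, let $f:M\to M'$ be the base diffeomorphism it covers, and send $\phi$ to $f$. Two facts must be recorded. First, $f$ is filtration preserving: since $\phi^*\omega'=\omega$ we have $\phi(\omega^{-1}(\mathfrak{g}^i))=(\omega')^{-1}(\mathfrak{g}^i)$, so $\phi$ carries $T^i\mathcal{G}$ to $T^i\mathcal{G}'$, and passing to the quotient shows $Tf$ maps $T^iM$ to $T^iM'$. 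Second, since $P_+$ is normal in $P$ and $\phi$ is $P$-equivariant, $\phi$ sends $P_+$-orbits to $P_+$-orbits and hence descends to a morphism $\phi_0:\mathcal{G}_0\to\mathcal{G}_0'$ of principal $G_0$-bundles covering $f$.

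The key step is the compatibility $gr(P^1f)(\iota(\mathcal{G}_0))=\iota'(\mathcal{G}_0')$ required of a morphism of flag structures. For this I would use that $\phi$ intertwines the Cartan connections and therefore the descended graded soldering forms, i.e.\ $\phi_0^*(\omega')^0_i=\omega^0_i$ on $T^i\mathcal{G}_0$ for each $i<0$. Tracing the definition of $\iota$ through the isomorphism $gr_i(TM)\cong\mathcal{G}_0\times_{G_0}\mathfrak{g}_i$, the frame attached to $u\in\mathcal{G}_0$ is the class of $T_up(\sigma)$ with $\omega^0_i(u)(\sigma)=X$; applying $gr(P^1f)$ and using $Tf\circ Tp=Tp'\circ T\phi$ together with the intertwining relation shows that this frame is carried exactly to the frame attached to $\phi_0(u)\in\mathcal{G}_0'$. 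In other words $gr(P^1f)\circ\iota=\iota'\circ\phi_0$, and taking images yields the desired equality. I expect this naturality of $\iota$ to be the main obstacle, since it is where one must keep careful track of how the identification $gr(TM)\cong\mathcal{G}_0\times_{Ad(G_0)}\mathfrak{g}_-$ transforms under $\phi$.

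Finally, functoriality is routine. The identity morphism of $(\mathcal{G},\omega)$ covers $id_M$, which is precisely the identity morphism of the associated flag structure, and if $\phi$ and $\psi$ are composable morphisms of parabolic geometries then their composite covers the composite of the corresponding base maps, so the assignment preserves composition. This completes the verification that the mapping $(p:\mathcal{G}\to M,\omega)\mapsto(T^iM,\mathcal{G}_0)$ is a functor.
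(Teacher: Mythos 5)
Your proposal is correct and follows exactly the route the paper intends: the paper offers no written proof, merely observing that the object-level construction was carried out in Section 3.1 and deferring the rest to [CaSl09], and your morphism-level verification (descent of $\phi$ to $\phi_0$ on $P_+$-orbits, filtration preservation from $\phi^*\omega'=\omega$, and the naturality $gr(P^1f)\circ\iota=\iota'\circ\phi_0$ via the intertwining of the graded soldering forms) is precisely the standard argument being referenced. No gaps.
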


Moreover, since we assume that $(G,P)$ is effective, a morphism of regular infinitesimal flag structure induces unique morphism of Cartan geometries according to proposition \ref{1.2.6}.

Now let us consider a regular infinitesimal flag structure $(T^iM,\mathcal{G}_0)$ of type $(G,P)$. Then a choice of an isomorphism of $\mathcal{G}_0\times_{G_0}\mathfrak{g}_-=gr(TM)\cong TM$ allows to extend the graded soldering form $(\omega^0_{-k},\dots,\omega^0_{-1})$ to the one form $\theta$ such, that each $\theta_j$ vanishes on the complementary subspace to $T^jM$. Then a choice of a principal connection $\gamma$ on $\mathcal{G}_0$ defines the Cartan connection $\omega_0=\gamma+\theta$ on $\mathcal{G}_0$ of affine type $(P^{op},G_0)$, where $P^{op}$ is the Lie subgroup of $G$ diffeomorphic to $\mathfrak{g}_-\ltimes G_0$ via $(X,g)\mapsto \exp(X)\cdot g$. Then any extension of $(P^{op},G_0)$ to $(G,P)$, which preserves the filtration $\mathfrak{g}^i$, provides a Cartan geometry of type $(G,P)$. For example, the inclusion $P^{op}\subset G$ provides such an extension. So we see, that there are many Cartan connections corresponding to a regular infinitesimal flag structure and we want to choose a suitable representant under some normalization conditions.

\subsection{Normal parabolic geometries}\label{3.3}

Let $(p: \mathcal{G}\to M,\omega)$ be a regular parabolic geometry of type $(G,P)$. Let us denote $AM:=\mathcal{G}\times_P \mathfrak{g}$ the adjoint tractor bundle. Then we can view the curvature $\kappa: \mathcal{G}\to \bigwedge^2 (\mathfrak{g}/\mathfrak{p})^*\otimes \mathfrak{g}$ as a element of $\Omega^2(M,AM)$ of differential two forms with values in $AM$. Of course, the filtration $\mathfrak{g}^i$ induces a filtration $A^iM$ of $AM$. The following holds (see \cite[3.1.8]{odk3}):

\begin{lem}
A parabolic geometry $(p: \mathcal{G}\to M,\omega)$ of type $(G,P)$ is regular if and only if $\kappa(T^iM,T^jM)\subset A^{i+j+1}M$ and the parabolic geometry is torsion free if and only if $\kappa(TM,TM)\subset A^{0}M$.
\end{lem}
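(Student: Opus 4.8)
The plan is to treat the two equivalences separately. The torsion-free statement is essentially a definition chase, while the regularity statement requires analyzing the homogeneous components of the curvature function and matching them against the two brackets on $gr(TM)$.

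For the torsion-free equivalence I would just unwind the identifications. Under $AM=\mathcal{G}\times_P\mathfrak{g}$ the subbundle $A^0M$ is the piece over $\mathfrak{g}^0=\mathfrak{p}$, and the two-form $\kappa\in\Omega^2(M,AM)$ is by construction the object represented by the curvature function $\kappa:\mathcal{G}\to\bigwedge^2(\mathfrak{g}/\mathfrak{p})^*\otimes\mathfrak{g}$. Hence $\kappa(TM,TM)\subset A^0M$ says exactly that $\kappa(u)(X,Y)\in\mathfrak{p}$ for all $u$ and all $X,Y$, which is precisely the definition of torsion freeness stated after the curvature definition. So this direction needs only that the filtration of $AM$ is the one induced by $\mathfrak{g}^\bullet$ and that $A^0M$ sits over $\mathfrak{p}$.

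For regularity I would reduce everything to a pointwise statement about the graded pieces of $\kappa(u)$. Fix $X\in\mathfrak{g}_i$, $Y\in\mathfrak{g}_j$ with $i,j<0$ and use the $\omega$-constant fields $\omega^{-1}(X),\omega^{-1}(Y)$, which are sections of $T^i\mathcal{G}=\omega^{-1}(\mathfrak{g}^i)$ and $T^j\mathcal{G}$. The defining formula rearranges to $\omega_u([\omega^{-1}(X),\omega^{-1}(Y)])=[X,Y]-\kappa(u)(X,Y)$, with $[X,Y]\in\mathfrak{g}_{i+j}\subset\mathfrak{g}^{i+j}$. I would then read off two things by filtration degree. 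First, $\omega_u([\omega^{-1}(X),\omega^{-1}(Y)])\in\mathfrak{g}^{i+j}$ — equivalently $[\omega^{-1}(X),\omega^{-1}(Y)]\in T^{i+j}\mathcal{G}$, which upon projection to $M$ is exactly the filtered-manifold condition $[\Gamma(T^iM),\Gamma(T^jM)]\subset\Gamma(T^{i+j}M)$ — holds if and only if $\kappa(u)(X,Y)\in\mathfrak{g}^{i+j}$. Second, once inside $\mathfrak{g}^{i+j}$ the lowest slot is grading-preserving: projecting to $gr_{i+j}(TM)$ identifies $[\omega_u([\omega^{-1}(X),\omega^{-1}(Y)])]_{\mathfrak{g}_{i+j}}$ with the Levi bracket $\mathfrak{L}(X,Y)$ and $[X,Y]_{\mathfrak{g}_{i+j}}=[X,Y]$ with the algebraic bracket induced from $\mathfrak{g}_-$, so that $[\kappa(u)(X,Y)]_{\mathfrak{g}_{i+j}}$ is precisely the difference of the two brackets. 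Combining, $\kappa(u)(X,Y)\in\mathfrak{g}^{i+j+1}$, i.e. $\kappa(T^iM,T^jM)\subset A^{i+j+1}M$, holds for all such $i,j$ if and only if $M$ is a filtered manifold and the algebraic and Levi brackets coincide, which is the definition of regularity. (When $i+j<-k$ the containment is vacuous, matching the range $i+j\ge-k$ in the regularity axiom.)

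The step I expect to be the main obstacle is the careful translation between the two models in which the objects live: regularity is formulated on $\mathcal{G}_0=\mathcal{G}/P_+$ via the descended graded soldering forms $\omega^0_i$ and sections of $T^i\mathcal{G}_0$, whereas $\kappa$ lives on $\mathcal{G}$ with the full $\omega$. I would need to check that lifting a section $\sigma$ of $T^i\mathcal{G}_0$ and replacing it by $\omega^{-1}(X)$ with $X=\omega^0_i(\sigma)\in\mathfrak{g}_i$ alters the bracket only by terms of filtration degree strictly above $i+j$, so that the $\mathfrak{g}_{i+j}$-projection — the only thing entering the computation — is unaffected by the $P_+$-ambiguity and the choice of representative. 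This rests on $\omega^0_i$ being the genuine descent of $\omega$ together with the equivariance $(r^p)^*\omega=Ad(p^{-1})\circ\omega$, which makes the construction tensorial and $P$-invariant and hence consistently defined on $gr(TM)$. Establishing this compatibility, together with the $C^\infty(M)$-bilinearity of both brackets (so that evaluating on constant fields computes $\mathfrak{L}$ pointwise and so that the filtered condition may be tested on a spanning set), is the only genuinely technical point; the rest is bookkeeping with filtration degrees.
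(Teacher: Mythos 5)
Your argument is correct and is essentially the standard proof: the paper itself does not prove this lemma but cites \cite[3.1.8]{odk3}, and your computation with $\omega$-constant fields plus the filtration bookkeeping reproduces the argument given there. The one technical point you flag — that replacing projectable lifts by the non-projectable fields $\omega^{-1}(X)$ changes the bracket only by terms in $T^{\max(i,j)}\mathcal{G}\subset T^{i+j+1}\mathcal{G}$ (since $i,j\le -1$ forces $\mathfrak{g}^i,\mathfrak{g}^j\subset\mathfrak{g}^{i+j+1}$) — is exactly the right resolution, so no gap remains.
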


In \cite[3.1.10]{odk3}, one can find the description of the parabolic geometries inducing the same regular infinitesimal flag structure, and this description can be related with cohomology groups of certain chain complexes naturally associated to the pair $(G,P)$ and appearing in the prolongation process in \ref{1.1}. This offers a natural normalization condition, which uses the Kostant codifferential $\partial^*: \bigwedge^p(\mathfrak{g}/\mathfrak{p})^*\otimes \mathfrak{g}\to \bigwedge^{p-1}(\mathfrak{g}/\mathfrak{p})^*\otimes \mathfrak{g}$. Since we are interested in the case $p=2$, where we will use the following formula for $\partial^*$:

\begin{lem}\label{3.3.2}
Let $\phi\in \bigwedge^2(\mathfrak{g}/\mathfrak{p})^*\otimes \mathfrak{g}$. Let $X_i\in \mathfrak{g}$ be such that $X_i+\mathfrak{p}$ is a frame of $\mathfrak{g}/\mathfrak{p}$ and let $Z_i$ be the dual (with respect to the Killing form of $\mathfrak{g}$) frame of $\mathfrak{p}_+$. Then for all $X\in \mathfrak{g}$ is:
$$(\partial^*\phi)(X+\mathfrak{p})=2\sum_i [Z_i,\phi(X,X_i)]-\sum_i\phi([Z_i,X],X_i).$$
\end{lem}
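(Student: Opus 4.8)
The plan is to compute $\partial^*$ directly from its definition as the Kostant codifferential, i.e.\ the boundary operator of the complex computing the Lie algebra homology of $\mathfrak{p}_+$ with coefficients in the adjoint representation $\mathfrak{g}$, transported to $\bigwedge^\bullet(\mathfrak{g}/\mathfrak{p})^*\otimes\mathfrak{g}$ through the Killing-form duality $\mathfrak{p}_+\cong(\mathfrak{g}/\mathfrak{p})^*$. Concretely, this identification sends $Z\in\mathfrak{p}_+$ to the functional $X+\mathfrak{p}\mapsto B(Z,X)$; since $B(\mathfrak{g}_i,\mathfrak{g}_j)=0$ unless $i+j=0$ we have $B(\mathfrak{p}_+,\mathfrak{p})=0$, so this is well defined on $\mathfrak{g}/\mathfrak{p}$ and the chosen frames $X_i+\mathfrak{p}$ and $Z_i$ are genuinely dual. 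First I would record the completeness relation $\sum_i B(Z_i,X)\,X_i\equiv X\pmod{\mathfrak{p}}$ for every $X\in\mathfrak{g}$, which will be the device for converting chains back into cochains.

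Next I would translate the cochain $\phi$ into the chain $\tilde\phi=\tfrac12\sum_{i,j}Z_i\wedge Z_j\otimes\phi(X_i,X_j)$ and apply the standard Chevalley--Eilenberg homology differential to a $2$-chain,
\[
\partial(Z_i\wedge Z_j\otimes v)=Z_j\otimes[Z_i,v]-Z_i\otimes[Z_j,v]-[Z_i,Z_j]\otimes v .
\]
Summing over $i,j$ and using the antisymmetry of both $\phi$ and $Z_i\wedge Z_j$, the two ``action'' terms coincide and collapse to a single sum $\sum_{i,j}Z_j\otimes[Z_i,\phi(X_i,X_j)]$, while the bracket term contributes $-\tfrac12\sum_{i,j}[Z_i,Z_j]\otimes\phi(X_i,X_j)$. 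Note $[Z_i,Z_j]\in\mathfrak{p}_+$ since $\mathfrak{p}_+$ is a subalgebra, so the result genuinely lies in $\mathfrak{p}_+\otimes\mathfrak{g}$.

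Finally I would pair the resulting $1$-chain with $X+\mathfrak{p}$, i.e.\ read off the $(\mathfrak{g}/\mathfrak{p})^*$-coefficient against $X$ using $B$. For the action term the completeness relation turns $\sum_jB(Z_j,X)X_j$ into $X$ modulo $\mathfrak{p}$, producing a term of the form $\sum_i[Z_i,\phi(X,X_i)]$. For the bracket term I would use invariance of the Killing form, $B([Z_i,Z_j],X)=B(Z_i,[Z_j,X])$, and again the completeness relation to replace $\sum_iB(Z_i,[Z_j,X])X_i$ by $[Z_j,X]$ modulo $\mathfrak{p}$, producing a term of the form $\sum_i\phi([Z_i,X],X_i)$. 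Collecting the two contributions yields the claimed formula.

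The main obstacle is bookkeeping rather than conceptual: getting the overall signs and the relative factor of $2$ between the two summands exactly right. These depend on the precise sign convention fixed for the homology differential $\partial$, on the normalization built into the definition of $\partial^*$ (some authors insert a factor so that $\partial^*$ is literally the adjoint of $\partial$ with respect to an inner product on the cochains), and on the orientation convention in the identification $\mathfrak{p}_+\cong(\mathfrak{g}/\mathfrak{p})^*$. I would fix these conventions at the outset to match \cite{odk3} and pin down the constants on the lowest graded piece, where $[Z_i,Z_j]=0$ and only the action term survives, before asserting the identity in general.
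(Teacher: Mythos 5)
The paper does not actually prove this lemma --- it is quoted from \cite{odk3} as a known formula --- so there is no internal argument to compare against. Your plan (transport $\phi$ to a $2$-chain in $\bigwedge^2\mathfrak{p}_+\otimes\mathfrak{g}$ via the Killing-form duality, apply the Lie algebra homology boundary of $\mathfrak{p}_+$ with values in the adjoint representation, and read the result back off against $X+\mathfrak{p}$ using the completeness relation $\sum_iB(Z_i,X)X_i\equiv X\pmod{\mathfrak{p}}$ and invariance of $B$) is exactly the right, and essentially the only, way to derive it. The duality is well defined for the reason you give ($B(\mathfrak{p}_+,\mathfrak{p})=0$ by the grading), and the two structural mechanisms you identify --- the completeness relation producing the first term, and $B([Z_i,Z_j],X)=B(Z_i,[Z_j,X])$ producing the second --- are correct.

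There is, however, one concrete error that your hedge about conventions does not cover: the boundary formula you display, $\partial(Z_i\wedge Z_j\otimes v)=Z_j\otimes[Z_i,v]-Z_i\otimes[Z_j,v]-[Z_i,Z_j]\otimes v$, mixes two incompatible sign conventions and does not satisfy $\partial\circ\partial=0$ (with the $(-1)^{i+1}$ signs on the action terms the bracket term must appear with a $+$, or else the action signs must be flipped). This matters because the \emph{relative} coefficient between the two terms of the final formula is not a matter of convention: any boundary operator squaring to zero yields the ratio $-2:1$ between $\sum_i[Z_i,\phi(X,X_i)]$ and $\sum_i\phi([Z_i,X],X_i)$, in agreement with the lemma, and only the overall scalar depends on how one normalizes the identification $\bigwedge^2(\mathfrak{g}/\mathfrak{p})^*\otimes\mathfrak{g}\cong\bigwedge^2\mathfrak{p}_+\otimes\mathfrak{g}$. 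Your displayed operator instead gives the ratio $+2:1$, i.e.\ a formula that is not a multiple of the stated one. Note also that your proposed calibration on the lowest graded piece only pins down the first term, since for a $|1|$-grading one has $[Z_i,X]\in\mathfrak{p}$ and the second term vanishes identically there; the relative sign must come from the general computation, which is precisely why $\partial^2=0$ needs to be enforced on the displayed formula. With that correction the derivation goes through and yields the lemma.
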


Then the normalization condition is:

\begin{de}
We say that a parabolic geometry $(p: \mathcal{G}\to M,\omega)$ of type $(G,P)$ is normal if $\partial^*\kappa(u)=0$ for all $u\in \mathcal{G}$.
\end{de}

For a normal parabolic geometry we can project $\kappa(u)$ to $$\kappa_H(u)\in \operatorname{Ker}(\partial^*)/\operatorname{Im}(\partial^*).$$ In fact it can be shown, that $\kappa_H\in \mathcal{G}_0\times_{G_0}H^2(\mathfrak{g}_-,\mathfrak{g})$, where $H^2(\mathfrak{g}_-,\mathfrak{g})$ is the second cohomology group, which can be easily computed. We call $\kappa_H$ the harmonic curvature and it can be shown, that it contains all informations about the whole curvature. In particular:

\begin{lem}
A normal regular parabolic geometry $(p: \mathcal{G}\to M,\omega)$ of type $(G,P)$ is flat if and only if $\kappa_H$ vanishes, and is torsion free if and only if $\kappa_H$ has values only in $\mathfrak{p}$.
\end{lem}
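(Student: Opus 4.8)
The plan is to derive both equivalences from the algebraic Hodge theory of the Kostant codifferential combined with the Bianchi identity, everything organised by homogeneity. For the flat statement one implication is trivial: if $\kappa=0$ then $\kappa_H=0$. I would set up the ambient algebra first. Write $\partial$ for the Chevalley--Eilenberg differential on $\bigwedge^{\bullet}(\mathfrak{g}/\mathfrak{p})^{*}\otimes\mathfrak{g}$ adjoint to $\partial^{*}$, and $\Box=\partial\partial^{*}+\partial^{*}\partial$ for the Kostant Laplacian. Kostant's theorem gives the orthogonal decomposition $\bigwedge^{2}(\mathfrak{g}/\mathfrak{p})^{*}\otimes\mathfrak{g}=\operatorname{Im}(\partial)\oplus\operatorname{Ker}(\Box)\oplus\operatorname{Im}(\partial^{*})$ with $\operatorname{Ker}(\partial^{*})=\operatorname{Ker}(\Box)\oplus\operatorname{Im}(\partial^{*})$ and $\operatorname{Ker}(\Box)=\operatorname{Ker}(\partial)\cap\operatorname{Ker}(\partial^{*})\cong H^{2}(\mathfrak{g}_{-},\mathfrak{g})$; the passage $\kappa\mapsto\kappa_H$ is exactly the projection of $\operatorname{Ker}(\partial^{*})$ onto the harmonic summand $\operatorname{Ker}(\Box)$.

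The grading element of $\mathfrak{g}$ equips $\bigwedge^{2}(\mathfrak{g}/\mathfrak{p})^{*}\otimes\mathfrak{g}$ with a homogeneity grading, a component of homogeneity $\ell$ sending $\mathfrak{g}_i\times\mathfrak{g}_j$ into $\mathfrak{g}_{i+j+\ell}$, and a direct computation (using that the Killing-dual frame $Z_i$ in Lemma \ref{3.3.2} lies in $\mathfrak{p}_+$) shows that $\partial$, $\partial^{*}$ and hence $\Box$ all preserve homogeneity; by regularity --- which by the Lemma in \ref{3.3} means $\kappa(T^{i}M,T^{j}M)\subset A^{i+j+1}M$ --- the curvature $\kappa$ has only homogeneous components of degree $\geq 1$. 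The crux is then the claim that the lowest nonvanishing homogeneous component $\kappa^{(\ell)}$ of $\kappa$ is harmonic. Normality $\partial^{*}\kappa=0$ respects homogeneity, so $\partial^{*}\kappa^{(\ell)}=0$; for the complementary condition I would take the Bianchi identity for the Cartan connection and read off its component of homogeneity $\ell$. Every correcting term there is a covariant derivative of $\kappa$ or a bracket of $\kappa$ with itself, and each strictly raises homogeneity --- a quadratic term has homogeneity $\geq 2\ell>\ell$ since $\ell\geq 1$ --- so the bottom-order identity collapses to $\partial\kappa^{(\ell)}=0$. Hence $\kappa^{(\ell)}\in\operatorname{Ker}(\partial)\cap\operatorname{Ker}(\partial^{*})=\operatorname{Ker}(\Box)$, and it therefore coincides with the homogeneity-$\ell$ part of $\kappa_H$. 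Proving this lemma --- isolating the bottom-order Bianchi identity and checking that every correction genuinely increases homogeneity --- is the step I expect to be the main obstacle.

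Granting the lemma the flat case is immediate: if $\kappa_H=0$ then the homogeneity-$\ell$ part of $\kappa_H$ vanishes, forcing $\kappa^{(\ell)}=0$ against the minimality of $\ell$, so no nonzero component exists and $\kappa=0$. For the torsion statement I would run the same mechanism, now tracking the coefficient splitting $\mathfrak{g}=\mathfrak{g}_-\oplus\mathfrak{p}$ alongside the homogeneity. One direction again starts trivially from $\kappa$ having values in $\mathfrak{p}$; for the substantive converse I would argue by induction on homogeneity that every $\kappa^{(\ell)}$ has values in $\mathfrak{p}$. Assuming this for all lower degrees, the quadratic Bianchi corrections drop out --- a $\mathfrak{p}$-valued output cannot be reinserted into a slot of type $\mathfrak{g}/\mathfrak{p}$ --- so the bottom-order identity again exhibits the relevant part of $\kappa^{(\ell)}$ as harmonic, hence as a component of $\kappa_H$. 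Kostant's theorem pins down precisely which homogeneities carry $\mathfrak{g}_-$-valued harmonic classes, so the hypothesis that $\kappa_H$ has values in $\mathfrak{p}$ annihilates these components one homogeneity at a time, and the induction delivers $\kappa(TM,TM)\subset A^{0}M$, i.e. torsion freeness. Here the extra bookkeeping linking homogeneity to the $\mathfrak{g}_-/\mathfrak{p}$ splitting is the delicate point, but it runs parallel to the flat argument.
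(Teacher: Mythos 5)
The paper states this lemma without proof --- it is quoted from \cite{odk3} (Theorem 3.1.12 there) --- so your proposal has to be measured against the argument in that reference rather than against anything in the text. Your proof of the flatness equivalence is that standard argument and is correct: regularity puts $\kappa$ in homogeneity $\geq 1$; the homogeneity-$\ell$ component of the Bianchi identity, together with the estimates that quadratic terms have homogeneity $\geq 2\ell$ and that the invariant derivative raises homogeneity by at least one, shows that the lowest nonzero homogeneous component $\kappa^{(\ell)}$ lies in $\operatorname{Ker}(\partial)\cap\operatorname{Ker}(\partial^{*})=\operatorname{Ker}(\Box)$, hence equals the degree-$\ell$ part of $\kappa_H$; and $\kappa_H=0$ then forces $\kappa=0$ by induction.

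The torsion statement is where the sketch has a genuine gap. Your induction wants to conclude that, once $\kappa^{(m)}$ is $\mathfrak{p}$-valued for all $m<\ell$, the component $\kappa^{(\ell)}$ (or at least its $\mathfrak{g}_-$-valued part) is again harmonic. That is false in general: only the lowest nonzero homogeneous component of the \emph{full} curvature is harmonic. At homogeneity $\ell$ the Bianchi identity retains the contributions of $\nabla^{\omega}\kappa^{(m)}$ for $m<\ell$, and these do not vanish merely because those components take values in $\mathfrak{p}$ --- your remark that "the quadratic corrections drop out" only disposes of the terms of the form $\kappa(\kappa(\cdot,\cdot),\cdot)$. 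One can try to salvage the step by projecting the whole identity to $\mathfrak{g}/\mathfrak{p}$ (the invariant derivative preserves the value decomposition, so derivatives of $\mathfrak{p}$-valued components do die under this projection), but then the algebraic term $\sum_{\mathrm{cycl}}[X,\kappa^{(\ell)}_{\mathfrak{p}}(Y,Z)]$ still pollutes the $\mathfrak{g}_-$-valued part at homogeneity $\ell$, since $[\mathfrak{g}_{-},\mathfrak{p}]\not\subset\mathfrak{p}$; at best you obtain $\partial\kappa^{(\ell)}\equiv 0 \bmod \mathfrak{p}$ rather than harmonicity, and you would further need a Hodge decomposition for $\mathfrak{g}/\mathfrak{p}$-valued cochains compatible with the one for $\mathfrak{g}$-valued cochains to extract the torsion component of $\kappa_H$. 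This is precisely the bookkeeping you defer at the end, and it is the actual substance of the torsion half of the proof in \cite{odk3}; as written, your induction does not close.
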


The result of \cite{odk3} is that we can associate a normal regular parabolic geometry to a regular infinitesimal flag structure (we note that we assume, that $(G,P)$ is effective): 

\begin{prop}
There is a normal regular parabolic geometry for a given regular infinitesimal structure. If the grading does not contain factor corresponding to projective or contact projective structures, then there is an equivalence between a category of the normal regular parabolic geometries of type $(G,P)$ and a category of the regular infinitesimal flag structure of type $(G,P)$.
\end{prop}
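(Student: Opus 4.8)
The plan is to establish the two assertions separately: first the unconditional existence of a normal representative, then—under the cohomological hypothesis—the equivalence of categories. For existence, I would start from the (non-canonical) Cartan connection already produced in Section \ref{3.2}: given a regular infinitesimal flag structure $(T^iM,\mathcal{G}_0)$, choose an isomorphism $gr(TM)\cong TM$ and a principal connection $\gamma$ on $\mathcal{G}_0$, form $\omega_0=\gamma+\theta$ of affine type $(P^{op},G_0)$, and extend along $P^{op}\subset G$ to a Cartan connection $\omega$ of type $(G,P)$ inducing the given structure. This $\omega$ need not be normal, so the remaining task is to correct it so that $\partial^*\kappa=0$ without disturbing the underlying flag structure.

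For the normalization step I would exploit that the Cartan connections on $\mathcal{G}$ inducing the fixed underlying structure form an affine space: two of them differ by a $P$-equivariant horizontal $\mathfrak{p}_+$-valued one-form $\Phi$, i.e. a section of the bundle associated to $(\mathfrak{g}/\mathfrak{p})^*\otimes\mathfrak{p}_+$. Decomposing such deformations by homogeneity, the leading effect of $\omega\mapsto\omega+\Phi$ on the curvature is the algebraic action of the Lie-algebra cohomology differential $\partial$ adjoint to $\partial^*$, so the leading change of $\partial^*\kappa$ in a given homogeneity is $\partial^*\partial$ applied to the corresponding component of $\Phi$. Kostant's algebraic Hodge theory provides the splitting $\operatorname{Im}(\partial)\oplus\operatorname{Ker}(\Box)\oplus\operatorname{Im}(\partial^*)$ with $\Box=\partial\partial^*+\partial^*\partial$ invertible off the harmonic part; choosing $\Phi$ in $\operatorname{Im}(\partial^*)$ lets me solve $\partial^*\partial\Phi=-\partial^*\kappa$ in the current homogeneity without reviving lower homogeneities. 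Regularity bounds the homogeneities of $\kappa$ below and the grading bounds them above, so this terminates after finitely many steps and yields a normal $\omega$, which proves the first sentence.

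For the functorial equivalence, the functor from normal regular parabolic geometries to regular infinitesimal flag structures is essentially surjective by the existence just shown, so it remains to prove full faithfulness. Via effectivity and Proposition \ref{1.2.6}, this reduces to showing that an isomorphism of underlying structures lifts to a unique isomorphism of the associated normal Cartan geometries. Applying the same homogeneity analysis to the difference of two normalizing deformations, the obstruction to uniqueness in each positive homogeneity lives in $H^1(\mathfrak{g}_-,\mathfrak{g})$ in that degree; the essential point is that these positive-homogeneity pieces vanish, whence the normalizing deformation—and hence the lift—is unique.

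The main obstacle will be exactly this cohomological vanishing together with the bookkeeping that identifies the projective and contact-projective gradings as the only exceptions. In those cases the component of $\mathfrak{g}_0$ acting on $\mathfrak{g}_{-1}$ is all of $\mathfrak{gl}(\mathfrak{g}_{-1})$, so $\mathcal{G}_0=P^1M$ carries no genuine reduction and $H^1(\mathfrak{g}_-,\mathfrak{g})$ acquires a nonzero piece in homogeneity one, breaking uniqueness and hence the equivalence. Pinning down precisely where these pieces sit is best read off from Kostant's theorem describing $H^*(\mathfrak{g}_-,\mathfrak{g})$ as a $G_0$-module through Weyl-group data; I would invoke that theorem rather than grind out the cohomology by hand, and the genuine technical heart is verifying that the positive-homogeneity $H^1$ vanishes in all remaining cases.
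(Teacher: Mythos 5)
The paper does not actually prove this proposition---it is stated as a quoted result of \cite{odk3}---and your outline is a faithful reproduction of the standard argument given there: produce some Cartan connection of type $(G,P)$ from the data of Section \ref{3.2}, normalize it homogeneity by homogeneity using the Kostant Hodge decomposition (solving $\partial^*\partial\Phi=-\partial^*\kappa$ on $\operatorname{Im}(\partial^*)$, where $\Box$ is invertible), and obtain uniqueness up to gauge from the vanishing of $H^1(\mathfrak{g}_-,\mathfrak{g})$ in positive homogeneity, which by Kostant's theorem fails exactly for the projective and contact projective factors. Two small corrections: the space of admissible deformations is not the bundle associated to $(\mathfrak{g}/\mathfrak{p})^*\otimes\mathfrak{p}_+$ but the positive-homogeneity part of $(\mathfrak{g}/\mathfrak{p})^*\otimes\mathfrak{g}$ (already for $|1|$-gradings one must be free to modify the $\mathfrak{g}_0$-component, e.g.\ to pass to a torsion-free Weyl connection), although the elements of $\operatorname{Im}(\partial^*)$ you actually use do lie in this larger space, so the argument is unaffected; and your heuristic that $\mathfrak{g}_0$ acts as all of $\mathfrak{gl}(\mathfrak{g}_{-1})$ describes only the projective case, not the contact projective one, where the correct criterion remains the positive-homogeneity component of $H^1$ that you invoke.
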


Since any homogeneous parabolic geometry is an extension of a flat Cartan geometry, there is the following corollary:

\begin{cor}
For any homogeneous regular parabolic geometry, there is a homogeneous normal regular parabolic geometry over the same regular infinitesimal flag structure. In particular, if there is an extension of a flat Cartan geometry to a parabolic geometry, there is extension to a normal geometry.
\end{cor}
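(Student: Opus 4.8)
The plan is to realize the normal geometry as a modification of the given homogeneous one inside the class of extensions, so that homogeneity is preserved automatically. First I would take a homogeneous regular parabolic geometry $(p:\mathcal{G}\to M,\omega)$ with $K\subset\operatorname{Aut}(\mathcal{G},\omega)$ acting transitively; by Corollary~\ref{1.3.6} it is (locally isomorphic to) an extension $(i,\alpha)$ of the homogeneous model $(K\to K/H,\omega_K)$, and the functor of Section~\ref{3.2} turns it into a regular infinitesimal flag structure $(T^iM,\mathcal{G}_0)$. Since that functor carries automorphisms to automorphisms, the whole group $K$ acts on $(T^iM,\mathcal{G}_0)$ transitively on $M$, so the underlying infinitesimal flag structure is again homogeneous. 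It therefore suffices to produce a homogeneous normal geometry over it, which is exactly the ``in particular'' assertion.

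Next I would parametrize the parabolic geometries lying over this fixed infinitesimal flag structure. Two extensions $(i,\alpha)$ and $(i,\alpha')$ of $(K,H)$ to $(G,P)$ induce the same structure precisely when $\Psi:=\alpha'-\alpha$ vanishes on $\mathfrak{h}$ and takes values in $\mathfrak{p}_+=\mathfrak{g}^1$; the extension axioms then force $\Psi$ to be an $H$-equivariant map $\mathfrak{k}/\mathfrak{h}\cong\mathfrak{g}/\mathfrak{p}\to\mathfrak{p}_+$, and since $\Psi$ changes neither the $\mathfrak{g}_-$ nor the $\mathfrak{g}_0$ component of $\alpha$, regularity is preserved. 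By Corollary~\ref{1.3.6} the curvature of $(i,\alpha')$ at the base point is the algebraic expression $\kappa'([e,e])(X,Y)=[\alpha'(X),\alpha'(Y)]-\alpha'([X,Y])$ in $\Psi$. Because the geometry is $K$-homogeneous, its curvature function is determined by this single value through the $P$-action, and as $\partial^*$ is $P$-equivariant (Lemma~\ref{3.3.2}), the normalization $\partial^*\kappa'=0$ on all of $\mathcal{G}$ reduces to the one algebraic equation $\partial^*\kappa'([e,e])=0$ in the $H$-module $\operatorname{Hom}(\mathfrak{g}/\mathfrak{p},\mathfrak{p}_+)$.

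It then remains to solve this equation by an $H$-equivariant $\Psi$. The preceding proposition already guarantees that some normal regular parabolic geometry over $(T^iM,\mathcal{G}_0)$ exists, hence that the equation is solvable. Outside the projective and contact-projective cases the quoted equivalence of categories makes the normal geometry unique; then, since the normalization operator and the curvature formula are $\operatorname{Ad}(i(H))$-equivariant, uniqueness forces the solution $\Psi$ to be $H$-fixed, so $\alpha'=\alpha+\Psi$ is again an extension and $(K\times_i P\to M,\omega_{\alpha'})$ is the desired homogeneous normal geometry. Equivalently, in categorical terms, the $K$-action on the infinitesimal flag structure lifts along the equivalence to a transitive $K$-action on the normal geometry.

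I expect the main obstacle to be the projective and contact-projective cases, where the infinitesimal flag structure does not determine the normal geometry and the solution set of $\partial^*\kappa'([e,e])=0$ is a positive-dimensional affine subspace. There the uniqueness argument for $H$-equivariance fails, and I would instead have to exhibit an $H$-fixed point inside this $H$-invariant affine solution space directly — solving the normalization within the $H$-equivariant cochains at $[e,e]$ and spreading the result over $K/H$ by $K$-invariance — so that the outcome is still an extension and thus a homogeneous normal geometry over the same infinitesimal flag structure.
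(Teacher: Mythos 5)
Your overall strategy is sound and, for the generic types, essentially makes explicit what the paper leaves implicit: the paper's entire justification is the remark that a homogeneous geometry is an extension of a flat one, together with the naturality of the passage to the normal geometry, so that the transitive automorphism group carries over and Corollary \ref{1.3.6} applies. Your observation that ``the $K$-action on the infinitesimal flag structure lifts along the equivalence to a transitive $K$-action on the normal geometry'' is exactly this argument, and combined with effectivity (Proposition \ref{1.2.6}) it settles every case in which the equivalence of categories holds. Your more computational route through an $H$-equivariant modification $\Psi\colon\mathfrak{k}/\mathfrak{h}\to\mathfrak{p}_+$ is a legitimate refinement; note only that for contact gradings the curvature depends quadratically on $\Psi$ (the term $[\Psi(X),\Psi(Y)]\in\mathfrak{g}_{2}$ does not vanish), so the solution set of $\partial^*\kappa'([e,e])=0$ is not an affine subspace in general --- the standard way to linearize is induction over homogeneity degrees.

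The genuine weak point is your treatment of the projective and contact projective cases, which the corollary does cover and which the thesis actually needs (sections \ref{4.2} and \ref{5.5}). There your uniqueness argument is unavailable, and the proposed substitute --- exhibiting an $H$-fixed point inside an $H$-invariant affine solution space --- does not follow from invariance alone: an $H$-invariant affine subspace of an $H$-module need not contain a fixed vector when $H$ is not reductive (consider a one-parameter group acting by translations along an invariant line). The repair, which also subsumes your generic-case argument, is to use that the normalization procedure is canonical at the level of the Cartan geometry itself, not just of its underlying flag structure: it corrects $\omega$ degree by degree using the $P$-equivariant codifferential $\partial^*$ of Lemma \ref{3.3.2} and the $G_0$-equivariant algebraic Hodge splitting, hence commutes with every automorphism of $(\mathcal{G},\omega)$. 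Applied to the homogeneous $(K\times_iP,\omega_\alpha)$, each correction term is therefore $K$-invariant and $P$-equivariant, i.e. given by an $H$-equivariant $\Psi$ at $[e,e]$, so the result is again homogeneous and, by Corollary \ref{1.3.6}, an extension --- uniformly in all types, with no case distinction.
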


For projective or contact projective structures, we need more data to distinguish between different structures and we need to have more assumptions on morphisms of the regular infinitesimal flag structure in order to be automorphisms of the parabolic geometry. In particular, there are distinguished classes of connections and the morphisms  have to preserve these classes of connections, as we will see later.

\subsection{Symmetries of parabolic geometries}\label{3.4}

We can generalize the definition \ref{2.1.2} for regular infinitesimal flag structures, the modified conditions are indicated by '.

\begin{de}
We say that a regular infinitesimal flag structure \linebreak $(T^iM,\mathcal{G}_0)$ of type $(G,P)$ is symmetric if there is a smooth map $S: M\times M \to M$ satisfying the following five conditions:
\begin{itemize}
\item[GS1'] $S_x$ is an automorphism of $(T^iM,\mathcal{G}_0)$.
\item[GS2] $S_xx=x$ for all $x\in M$.
\item[GS3] $S_x^2=id_M$ for all $x\in M$.
\item[GS4] $S_x$ is an automorphism of the system of symmetries $S$ i.e.
$$S(S_xy,S_xz)=S_{S_xy}S_xz=S_xS_yz=S_xS(y,z)$$
for all $x,y,z\in M$.
\item[GS5'] Let $g_0\in G_0$ be such, that $gr(P^1S_x)(u)=u\cdot g_0$ for some $u$ in the fiber over $x$. Then there is no $g\in G_0,\ g^2=id$ such, that the $-1$ eigenspace of $Ad(g_0)$ is a proper subspace in the $-1$ eigenspace of $Ad(g)$. 
\end{itemize}

We define locally symmetric regular infinitesimal flag structures using the same localization as in definition \ref{2.1.3}.
\end{de}

We note that the maximality condition [GS5'] heavily restricts, which $M$ can be symmetric, because in all cases of interest there are always a nontrivial $g_0\in G_0$ such, that $g_0^2=id$. Since the symmetries $S_x$ are morphisms of the underlying regular infinitesimal flag structure, they define morphisms of the underlying $P_0$-structure. If they are automorphisms of the corresponding normal parabolic geometry (which are unique due to our assumption that $(G,P)$ is effective), then the geometry is (locally) symmetric according to the definition \ref{2.1.4}. In particular, we can say that a parabolic geometry is (locally) symmetric if the underlying regular infinitesimal flag structure is (locally) symmetric.

\begin{cor}
Assume, that the grading does not contain factor corresponding to projective or contact projective structures. Then there is the unique (locally) symmetric normal Cartan geometry of type $(G,P)$ equivalent to a given (locally) symmetric regular infinitesimal flag structure of type $(G,P)$.
\end{cor}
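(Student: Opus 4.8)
The plan is to derive the statement from the equivalence of categories established in the proposition of Section~\ref{3.3}. By hypothesis the grading carries no projective or contact projective factor, so the functor of Section~\ref{3.2}, restricted to normal regular parabolic geometries, is an equivalence onto regular infinitesimal flag structures and is in particular essentially surjective and fully faithful. First I would apply essential surjectivity to the given (locally) symmetric regular infinitesimal flag structure $(T^iM,\mathcal{G}_0)$ to obtain a normal regular parabolic geometry $(p:\mathcal{G}\to M,\omega)$ of type $(G,P)$ realising it; since such a realisation is unique up to isomorphism, this already settles the uniqueness of the Cartan geometry itself.

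Next I would transport the symmetries to $(\mathcal{G},\omega)$. Each $S_x$ is by [GS1'] an automorphism of $(T^iM,\mathcal{G}_0)$, and because $(G,P)$ is effective it lifts, via Proposition~\ref{1.2.6} and full faithfulness of the functor, to a unique automorphism $\tilde S_x$ of $(\mathcal{G},\omega)$ covering $S_x$. Hence every $S_x$ is the base map of an automorphism of the Cartan geometry, and $S$ restricts to a system of symmetries of the underlying $P_0$-structure in the sense of Definition~\ref{2.1.3}: the axioms [GS2]--[GS4] refer only to the base map $S$ and are inherited verbatim, while [GS1] holds because an automorphism of the Cartan geometry restricts to an automorphism of the underlying $P_0$-structure (Lemma~\ref{1.2.2}). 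Invoking Definition~\ref{2.1.4} then shows that $(\mathcal{G},\omega)$ is a (locally) symmetric normal Cartan geometry, and the purely local version follows by running the same argument over the neighbourhoods of the diagonal used to localise in Definition~\ref{2.1.3}.

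The step I expect to be the main obstacle is matching the two maximality axioms, because [GS5'] constrains involutions in $G_0$ acting on the graded space $\mathfrak{g}_-$, whereas the axiom [GS5] needed in Definition~\ref{2.1.3} constrains involutions in the larger group $P_0=P/G_k$ acting on the filtered tangent space $T_xM\cong\mathfrak{g}/\mathfrak{p}$. Writing $V_-(A)$ for the $-1$-eigenspace of an involutive linear map $A$, I would proceed in three observations. A filtration-preserving involution splits $T_xM$ into its $\pm1$-eigenspaces compatibly with the filtration, so $\dim V_-(j(p_0))=\dim V_-(Ad(g_0))$, where $g_0\in G_0$ is the graded involution induced by $T_xS_x=j(p_0)$. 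Decomposing $P_0=G_0\ltimes(P_+/G_k)$ with nilpotent second factor, every involution $p\in P_0$ is conjugate by an element of $P_+/G_k$ to some $g\in G_0$, and conjugation preserves the dimension of the $-1$-eigenspace, so $\dim V_-(j(p))=\dim V_-(j(g))$; moreover $g\in G_0$ acts in a grading-preserving way, whence $\dim V_-(j(g))=\dim V_-(Ad(g))$. Combining these with [GS5'] (maximality of $\dim V_-(Ad(g_0))$ among involutions of $G_0$) gives $\dim V_-(j(p))\le\dim V_-(j(p_0))$ for every involution $p\in P_0$; since a proper inclusion $V_-(j(p_0))\subsetneq V_-(j(p))$ would force strictly larger dimension, no such inclusion exists, which is precisely [GS5]. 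Uniqueness of the entire symmetric structure then follows, since both $(\mathcal{G},\omega)$ and, by Proposition~\ref{1.2.6}, each lift $\tilde S_x$ are uniquely determined by the given data.
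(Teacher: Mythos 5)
Your first two paragraphs reproduce the paper's own justification: the corollary is read off from the equivalence of categories in Section~\ref{3.3} together with effectiveness of $(G,P)$ and Proposition~\ref{1.2.6}, which is precisely the content of the paragraph preceding the statement in the paper. So for existence and uniqueness of the normal geometry and for the unique lifting of each $S_x$, you are on the paper's route.

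The third paragraph addresses a point the paper silently elides (it simply decrees that a parabolic geometry is (locally) symmetric when its underlying flag structure is), so attempting the [GS5']~$\Rightarrow$~[GS5] bridge is reasonable, but your argument there contains a genuine error. You read [GS5'] as asserting that $\dim V_-(Ad(g_0))$ is \emph{maximal} among involutions of $G_0$; the axiom only asserts that no involution has $-1$-eigenspace \emph{properly containing} $V_-(Ad(g_0))$, which is strictly weaker --- an involution may well have a higher-dimensional $-1$-eigenspace that is simply not comparable to $V_-(Ad(g_0))$. Hence the inequality $\dim V_-(j(p))\le\dim V_-(j(p_0))$ does not follow, and the dimension-counting chain collapses. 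The implication is nevertheless true and can be recovered along lines close to your first observation: a filtration-preserving involution of $T_xM$ splits it into $\pm1$-eigenspaces compatibly with the filtration, so a proper inclusion $V_-(j(p_0))\subsetneq V_-(j(p))$ of filtered subspaces induces a proper inclusion of the associated graded subspaces, and these are exactly the $-1$-eigenspaces of the induced graded involutions in $G_0$; that contradicts [GS5'] as actually stated, with no appeal to maximality of dimension and no need to conjugate $p$ into $G_0$.
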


Now, since the parabolic geometry is regular, the condition \linebreak $\kappa(T^iM,T^jM)\subset A^{i+j+1}M$ together with the condition, that $\mathfrak{g}_{-1}$ generates $\mathfrak{g}_{-}$, imply, that the only integrable distribution containing $T^{-1}M$ is $TM$. Thus there is the following proposition equivalent to the proposition \ref{2.4.2}.

\begin{prop}\label{3.4.3}
The following conditions are equivalent for any (locally) symmetric parabolic geometry $(p: \mathcal{G}\to M,\omega)$ of type $(G,P)$:
\begin{itemize}
\item The (pseudo)-group generated by symmetries acts transitively on $M$.
\item $TM$ is the only integrable distribution containing $T^-M$.
\item $T_{p(u)}p \circ \omega_u^{-1}([X,Y]-\tau(u)(X,Y))$ spans  $T^{-1}M\cap T_{p(u)}^+M$ for $u\in \mathcal{G}$ and $X,Y\in \mathfrak{g}$ such, that $T_{p(u)}p \circ \omega_u^{-1}(X),T_{p(u)}p \circ \omega_u^{-1}(Y)\in T_{p(u)}^-M$, where $\tau$ is the torsion of the parabolic geometry.
\end{itemize} 
\end{prop}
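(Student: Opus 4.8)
The plan is to follow the proof of Proposition~\ref{2.4.2}, feeding in the parabolic filtration only where the third condition forces it. I would first settle the equivalence of the first two conditions, which is a general fact about symmetric structures and ignores the grading. Each $S_x$ is an automorphism of the underlying regular infinitesimal flag structure, hence of the underlying $P_0$-structure, so the discussion preceding Proposition~\ref{2.4.2} applies: the orbit through a point of the pseudo-group generated by the symmetries is a leaf of the integrable orbit distribution $D$, which contains $T^-M$ and is generated by the transvections $R_x(X)$, $X\in T^-M$, together with their brackets. Transitivity is therefore equivalent to $D=TM$, and, as in Proposition~\ref{2.4.2}, to $TM$ being the only integrable distribution containing $T^-M$.

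Next I would identify the vector in the third condition with a transvection bracket. The defining formula of the curvature gives $\omega\bigl([\omega^{-1}(X),\omega^{-1}(Y)](u)\bigr)=[X,Y]-\kappa(u)(X,Y)$, and since $T_{p(u)}p\circ\omega_u^{-1}$ annihilates $\mathfrak{p}$ one may replace $\kappa$ by its $\mathfrak{g}/\mathfrak{p}$-part $\tau$, so that
$$T_{p(u)}p\circ\omega_u^{-1}\bigl([X,Y]-\tau(u)(X,Y)\bigr)=T_{p(u)}p\bigl([\omega^{-1}(X),\omega^{-1}(Y)](u)\bigr).$$
By the bracket formula of Proposition~\ref{lab_1} the right-hand side is, up to sign, the value at $x=p(u)$ of $[R_x(X),R_x(Y)]$. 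Since $(S_x)^*R_x(X)=-R_x(X)$ for $X\in T^-M$ (Proposition~\ref{1.1.10}), this bracket is fixed by $(S_x)^*$, so its value at the fixed point $x$ lies in $T^+_xM$. Writing $B$ for the span of these values, the orbit analysis gives $D_x=T^-_xM\oplus B$ with $B\subseteq T^+M$; hence condition~(2), $D=TM$, is equivalent to $B=T^+M$.

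The two remaining implications are then short. For (2)$\Rightarrow$(3) there is nothing to do, since $B=T^+M$ contains $T^{-1}M\cap T^+M$. For (3)$\Rightarrow$(2) I would use that each $S_x$ preserves the filtration $T^iM$, so the involution $T_xS_x$ splits $T^{-1}M=(T^{-1}M\cap T^-M)\oplus(T^{-1}M\cap T^+M)$. If $B\supseteq T^{-1}M\cap T^+M$, then
$$D\supseteq T^-M+(T^{-1}M\cap T^+M)\supseteq(T^{-1}M\cap T^-M)+(T^{-1}M\cap T^+M)=T^{-1}M,$$
and because $D$ is integrable while the only integrable distribution containing $T^{-1}M$ is $TM$ (the regularity fact recalled before the statement), we get $D=TM$, i.e.\ (2).

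I expect the main obstacle to be the filtration bookkeeping concealed in the third condition: one must check that the corrected bracket $[X,Y]-\tau(u)(X,Y)$ genuinely contributes to the bottom piece $T^{-1}M\cap T^+M$---its $\mathfrak{g}_{-1}$-component comes from the regularity-bounded torsion, while the algebraic bracket $[X,Y]$ lives in strictly deeper degrees---so that ``spanning $T^{-1}M\cap T^+M$'' is the correct reduced form of ``spanning $T^+M$''. Once this reduction to $T^{-1}M$ is in place, the regularity input that $\mathfrak{g}_{-1}$ generates $\mathfrak{g}_-$ carries the argument the rest of the way, just as the torsion condition did in the unfiltered case of Proposition~\ref{2.4.2}.
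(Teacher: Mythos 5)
Your argument follows the paper's own route: the equivalence of the first two conditions is delegated to Proposition~\ref{2.4.2}, and the third condition is identified with the bracket of the transvection fields via the curvature formula and regularity (the paper simply cites \cite[Proposition 3.1.8]{odk3} for this step, which you instead carry out, together with the reduction from $T^+M$ to $T^{-1}M\cap T^+M$ using the eigenspace splitting of $T^{-1}M$ and the fact that $T^{-1}M$ bracket-generates $TM$). This is correct and essentially the same proof, just with the details the paper leaves implicit filled in.
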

\begin{proof}
Since the axioms $[GS2]-[GS4]$ holds for the (locally) symmetric parabolic geometries, the first two conditions are equivalent due to proposition \ref{2.4.2}. The only thing we have to show is that the expression  $T_{p(u)}p\circ \omega_u^{-1}([X,Y]-\tau(u)(X,Y))$ is equivalent to the bracket of vector fields. That follows directly from the regularity of the geometry and the detailed proof can be found in \cite[Proposition 3.1.8]{odk3}.
\end{proof}

We again say that geometries satisfying the above conditions are of maximal torsion.

Finally, the following theorem follows from the corollary \ref{2.4.4}:

\begin{thm}\label{3.4.4}
Let $(p: \mathcal{G}\to M,\omega)$ be a locally symmetric regular normal parabolic geometry of type $(G,P)$ of maximal torsion. Then $(p: \mathcal{G}\to M,\omega)$ is a locally homogeneous Cartan geometry, in particular the pseudo-group of local diffeomorphisms generated by the local symmetries acts transitively on $M$. Moreover, there are unique $K$ and $H$ satisfying the following:
\begin{itemize}
\item $(p: \mathcal{G}\to M,\omega)$ is an extension of flat Cartan geometry of type $(K,H)$;
\item $K/H$ is connected, simply connected;
\item the maximal normal subgroup of $K$ contained in $H$ is trivial;
\item there is $h\in K$ such, that $h^2=id$ and $H$ is contained in  the centralizer of $h$ in $K$ and the symmetries are locally given by $S_{kH}fH=khk^{-1}fH$.
\end{itemize}
\end{thm}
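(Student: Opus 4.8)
The plan is to derive this theorem almost entirely from Corollary \ref{2.4.4} and Theorem \ref{3.1.3}, by verifying that a locally symmetric regular normal parabolic geometry of maximal torsion is, in particular, a locally symmetric Cartan geometry of maximal torsion in the sense of Definition \ref{2.1.4} and Proposition \ref{2.4.2}. Once this reduction is in place, all the listed structural conclusions are available from the general theory, and the only points genuinely proper to the parabolic setting are the identification of the maximality axiom and of the torsion condition.

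First I would record that the geometry really is a symmetric Cartan geometry. By Lemma \ref{1.2.2} the parabolic geometry $(p:\mathcal{G}\to M,\omega)$ carries an underlying $P_0$-structure $\mathcal{G}/G_k\to M$. By hypothesis the underlying regular infinitesimal flag structure is (locally) symmetric, and since $(G,P)$ is effective the symmetries $S_x$ lift uniquely to (local) automorphisms of the normal parabolic geometry (the corollary preceding Proposition \ref{3.4.3}, via Proposition \ref{1.2.6}); restricting these to $\mathcal{G}/G_k$ exhibits $S$ as a system of (local) symmetries of the underlying $P_0$-structure. Axioms [GS1]--[GS4] transfer verbatim. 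For [GS5] I would use that $S_x$ is involutive, so its linearization $T_xS_x$ on $T_xM\cong\mathfrak{g}/\mathfrak{p}$ is diagonalizable with eigenvalues $\pm1$ and its eigenspace splitting is compatible with the filtration; hence the $-1$-eigenspace computed from the $G_0$-action on $gr(T_xM)$ in [GS5'] agrees with the one computed from the $P_0$-action on $T_xM$ in [GS5]. This is the step I expect to require the most care, since it is where the passage from the associated graded $\mathcal{G}_0$ to the honest $P_0$-structure, and from $G_0$ to $P_0=P/G_k$, must be made precise.

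Next I would match the maximal-torsion hypotheses. Both Proposition \ref{3.4.3} (for parabolic geometries) and Proposition \ref{2.4.2} (for $P_0$-structures) list, among their equivalent conditions, transitivity of the pseudo-group generated by the symmetries on $M$. Since the geometry is assumed of maximal torsion, this transitivity holds, and therefore the underlying $P_0$-structure is of maximal torsion in the sense of Proposition \ref{2.4.2} as well; regularity is exactly what guarantees (via Proposition \ref{3.4.3}) that the algebraic bracket and the bracket of vector fields agree, so the two torsion conditions genuinely coincide. With this, Corollary \ref{2.4.4} applies directly and yields that the geometry is an extension of a flat Cartan geometry of type $(K,H)$ with $K/H$ connected, trivial maximal normal subgroup, an involution $h$ with $H$ contained in its centralizer, and symmetries given by $S_{kH}fH=khk^{-1}fH$. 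Local homogeneity and the transitivity of the local-symmetry pseudo-group are then immediate from this same transitivity together with the fact that extensions of (homogeneous) flat models are locally homogeneous.

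Finally, to upgrade ``$K/H$ connected'' to ``$K/H$ connected and simply connected'' and to secure uniqueness of $(K,H)$, I would invoke Theorem \ref{3.1.3} applied to the underlying locally symmetric $P_0$-structure of maximal torsion: its construction produces $K$ and $H$ via Lie's second fundamental theorem as the connected, simply connected groups integrating $\mathfrak{k}_x$ and the isotropy subalgebra of elements vanishing at $x$, so that $K/H$ is simply connected and the pair is determined uniquely up to the normalization already fixed. Assembling these ingredients gives all four bullet points of the theorem, with the eigenspace-compatibility of [GS5'] versus [GS5] remaining the only substantive verification beyond citing the general results.
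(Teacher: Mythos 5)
Your proposal is correct and follows essentially the same route as the paper, which simply asserts that the theorem follows from Corollary \ref{2.4.4} (with the simple connectedness and uniqueness supplied by Theorem \ref{3.1.3}); you have merely made explicit the reduction steps — lifting the symmetries via effectivity and Proposition \ref{1.2.6}, matching the two maximal-torsion conditions through Propositions \ref{3.4.3} and \ref{2.4.2} — that the paper leaves implicit.
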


We can summarize the general construction of symmetric parabolic geometries in the next theorem:

\begin{thm}\label{3.5.1}
Let $K$ be a Lie group with Lie subgroup $H$ satisfying:
\begin{itemize}
\item there is $h\in H$ such, that $h^2=e$;
\item $H$ is contained in the centralizer of $h$ in $K$ and $K/H$ is connected;
\item the maximal normal subgroup of $K$ contained in $H$ is trivial.
\end{itemize}
Let $(i,\alpha)$ be an extension of $(K,H)$ to a parabolic geometry of type $(G,P)$ such, that there is no $p\in P, p^2=e$ such that the $-1$ eigenspace of $Ad(i(h))$ in $\mathfrak{g}$ is proper subspace of the $-1$ eigenspace of  $Ad(p)$ in $\mathfrak{g}$. Then the extension of a flat geometry of type $(K,H)$ is a locally symmetric parabolic geometry of type $(G,P)$. It is a symmetric parabolic geometry if and only if it is an extension of the homogeneous model of $(K,H)$.
\end{thm}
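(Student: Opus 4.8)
The strategy is to produce the symmetries first on the flat geometry of type $(K,H)$, where they are visibly left translations, and then carry them to $(G,P)$ through the extension functor of Section~\ref{1.3}; this reverses the structural Theorem~\ref{3.4.4}, so the output should be precisely the data appearing there. First I would use Proposition~\ref{flat1} to present the given flat Cartan geometry of type $(K,H)$ by an atlas $(U_i,\phi_i)$ with values in $K/H$ whose transition maps are restrictions of left translations by elements of $K$, exactly as in the construction preceding Proposition~\ref{2.1.9}. On each chart I set $S_{fH}gH:=fhf^{-1}gH$. Well-definedness in the $gH$-slot is immediate from $h\in H$; well-definedness in the $fH$-slot is exactly where the hypothesis $H\subset Z_K(h)$ is used, since for $h_0\in H$ one has $fh_0\,h\,(fh_0)^{-1}=fhf^{-1}$; and independence of the chart follows from $S_{kfH}kgH=kfhf^{-1}gH$. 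Since each $S_{fH}$ is left translation by $fhf^{-1}\in K$, Proposition~\ref{flat2} shows it is a (local) automorphism of the homogeneous model and hence of the flat geometry of type $(K,H)$.

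Next I would verify [GS2]--[GS4] at the level of base maps: [GS2] reads $fhf^{-1}fH=fH$ because $h\in H$, [GS3] reads $fh^2f^{-1}gH=gH$ because $h^2=e$, and [GS4] is the reflection-space identity, obtained by substituting the defining formula into both sides. Applying the extension functor of Section~\ref{1.3} to the automorphisms $S_{fH}$ then yields automorphisms of the extended parabolic geometry $(\mathcal{G}'\times_iP\to M,\omega_\alpha)$ covering the same base maps; the corollary following~\ref{2.1.7} confirms that the extended geometry is again (locally) symmetric. Restricting each such automorphism to the underlying regular infinitesimal flag structure gives [GS1'], while [GS2]--[GS4] involve only the base maps and so persist unchanged.

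The decisive axiom is [GS5']. Working locally near $eH$, where the flat geometry is identified with the homogeneous model, the symmetry $S_{eH}$ is left translation by $h$, which lifts on $K\times_iP$ to $[k,p]\mapsto[hk,p]$; using $hu_0=u_0i(h)$ at $u_0=[e,e]$ this sends $u_0\mapsto u_0\cdot i(h)$, so the element distinguished by [GS5'] is $g_0=i(h)P_+\in G_0$, and $Ad(g_0)$ acts on $T_{eH}M\cong\mathfrak{g}/\mathfrak{p}\cong\mathfrak{g}_-$ as $Ad(i(h))$. It remains to match the hypothesis, which asserts maximality of the $(-1)$-eigenspace of $Ad(i(h))$ on the whole of $\mathfrak{g}$ among involutions in $P$, with the eigenspace condition of [GS5']. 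Here the parabolic structure is essential: the Killing form is $Ad(P)$-invariant and identifies $\mathfrak{p}_+\cong(\mathfrak{g}/\mathfrak{p})^*$, so for any involution the $(-1)$-eigenspace on $\mathfrak{p}_+$ is forced to be dual to the one on $\mathfrak{g}_-$, which controls how the eigenspace on $\mathfrak{g}$ is assembled from its graded pieces; together with the inclusion $G_0\subset P$ this lets me pass between the statement on $\mathfrak{g}$ and the statement on $\mathfrak{g}_-$. I expect this comparison to be the main obstacle, since an involution in $G_0$ couples its actions on $\mathfrak{g}_0$ and on $\mathfrak{g}_-$, and I must ensure that enlarging the $(-1)$-eigenspace on $\mathfrak{g}_-$ cannot be offset on $\mathfrak{g}_0$ without genuinely enlarging it on $\mathfrak{g}$.

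Finally, the local-versus-global alternative is immediate from Corollary~\ref{2.4.4}. The construction always delivers a locally symmetric parabolic geometry because $S$ is assembled chart by chart; it assembles to a single globally defined smooth map $S:M\times M\to M$ precisely when the charts exhaust $M=K/H$, i.e. when the flat geometry is the homogeneous model itself rather than merely locally isomorphic to it. Hence the geometry is symmetric if and only if it is an extension of the homogeneous model of $(K,H)$, which is the final assertion.
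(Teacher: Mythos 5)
Your proposal is correct and follows essentially the same route as the paper: the paper's proof likewise defines $S_{fH}gH:=fhf^{-1}gH$ on $K/H$, notes that well-definedness comes from $H$ commuting with $h$, checks [GS2]--[GS4] by direct computation, dismisses [GS5] by the eigenspace hypothesis, and concludes by functoriality of the extension. You are in fact more explicit than the paper on the one delicate point (reconciling the hypothesis on the $-1$ eigenspace of $Ad(i(h))$ in $\mathfrak{g}$ with the condition [GS5'] stated on $gr(TM)\cong\mathfrak{g}_-$), which the paper passes over with ``due to our assumptions, [GS5] is satisfied.''
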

\begin{proof}
It is enough to show, that there is a system of symmetries on $K\to K/H$ given by the multiplication by elements of $K$. Let us define $$S_{fH}gH:=fhf^{-1}gH.$$
Since $H$ commutes with $h$, $S$ is well a defined smooth map $M\times M\to M$. It is easy computation to check, that $S$ satisfies [GS2]-[GS4]. Due to our assumptions, the $[GS5]$ is satisfied. Since the extension is a functor, we get the claim.
\end{proof}

%Of course, if we denote $L$ the centralizer of $h$ in $K$, then we have seen in second chapter that $(K,L,h)$ is a homogeneous symmetric space.

% and we get the following diagram:

%\centerline{
%\xymatrix{
% & & (K\times_iP=\mathcal{G}, \omega_\alpha) \ar[dd]^p \\
%(K, \omega_K) \ar@{^{(}->}[rru] \ar[dd] \ar[rrd]& & \\
% & & K/H=M \ar[lld] \\
%K/L & & \\
%} 
%}

% Further, $Ad(H)$ preserves the $-1$ eigenspace $\mathfrak{m}$ of $Ad(h)$, so for $X\in \mathfrak{m}$ is  Thus $q_{-k}(Z)=Z$ and $-q_{0}(\alpha(X))=q_{0}(Ad(g_0)\alpha(X)+Ad(g_0)[Z,\alpha(X)])=q_{0}(\alpha(X))+[Z,q_{k}(\alpha(X))]$. Then $q_{0}(\alpha(X))=-\frac12[Z,q_k(\alpha(X))]$ and $-q_{-k}(\alpha(X))=-q_{-k}(\alpha(X))+[Z,q_{0}(\alpha(X))]-\frac12[Z,[Z,q_k(\alpha(X))]]$. Thus $-[Z,[Z,q_k(\alpha(X))]]=0$. Since $G$ is semisimple, it is possible only if $Z=0$. So we proved the following:

%\begin{lem}\label{3.5.2}
%Under above assumptions, there is $g_0\in G_0$ such, that $Ad(g_0)|_{\mathfrak{g}_i}=(-1)^i$ and $i(h)=g_0$. Moreover, $i(H)\subset G_0\rtimes \oplus_i \mathfrak{g}_{2i}$.
%\end{lem}
%\begin{proof}
%Since $h$ commutes with $H$, then $g_0p_0exp(Y)=p_0exp(Ad(g_0)Y)g_0=p_0exp(Y)g_0$ for $p_0exp(Y)$ in image of $H$. Thus $Y$ lies in $+1$ eigenspace of $Ad(g_0)$ and we get the claim.
%\end{proof}

\newpage
\section{Symmetric parabolic geometries I. - AHS-structures}

In this chapter, we look on the case of one graded parabolic geometries. These geometries were investigated by L. Zalabova in \cite{odk16} under more general assumptions on their symmetries. In fact, we can use our construction to construct most of such geometries. In the section \ref{4.1} we summarize our construction in the case of one graded parabolic geometries in the theorem \ref{4.1.4}, and we show under which assumption we construct all of them, see the proposition \ref{4.1.2}. In the sections \ref{4.2}, \ref{4.3}, \ref{4.4} and \ref{4.5} we deal with non-flat symmetric one graded parabolic geometries in detail and provide classification of those with semisimple groups (locally) generated by symmetries. 

%These geometries were investigated by L. Zalabova in \cite{odk16} under more general assumption. Our assumptions immediately imply, that we are making extensions of (locally) symmetric spaces. We give rough classification of such symmetric parabolic geometries and compute some interesting examples. effectiveness

\subsection{One graded parabolic geometries and symmetries}\label{4.1}

Firstly, we summarize basic results on one graded parabolic geometries from \cite{odk3}. These structures are called almost hermitian symmetric structures (AHS-structures shortly).

Let $G$ be a semisimple Lie group and $P$ parabolic subgroup corresponding to the grading $\mathfrak{g}=\mathfrak{g}_{-1}\oplus \mathfrak{g}_0\oplus \mathfrak{g}_1$. The structure of one gradings is the following according to \cite{odk3}:

\begin{prop}\label{4.1.1}
Let $\mathfrak{g}=\mathfrak{g}_{-1}\oplus \mathfrak{g}_0\oplus \mathfrak{g}_1$ be one graded. Then
\begin{itemize}
\item $\mathfrak{g}$ is sum of one graded simple Lie algebras $\mathfrak{g}^{(j)}$;
\item the decomposition of $\mathfrak{g}_0$-module $\mathfrak{g}_{-1}$ to irreducible components is given by $\mathfrak{g}_{-1}=\oplus_j \mathfrak{g}_{-1}^{(j)}$;
\item the only non isomorphic gradings of simple complex and real algebras of non exceptional type are those in appendix \ref{appB};
\item the maximal compact subgroup $K$ of $G$ acts transitively on $G/P$ and we denote $L:=K\cap P\subset G_0$ the stabilizer of a point.
%\item if $G$ is a complex Lie group, then $K/L$ is a Hermitian symmetric space and one obtains all Hermitian symmetric spaces this way;
\end{itemize}
\end{prop}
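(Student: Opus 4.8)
The plan is to handle the four assertions in turn, with the grading element carrying the first two. I would begin by introducing the grading element $E\in\mathfrak{g}_0$, the unique element with $\mathrm{ad}(E)|_{\mathfrak{g}_i}=i\cdot\mathrm{id}$; it exists because every derivation of a semisimple Lie algebra is inner, and it lies in $\mathfrak{g}_0$ since $\mathrm{ad}(E)E=0$. Writing $\mathfrak{g}=\bigoplus_j\mathfrak{g}^{(j)}$ as the sum of its simple ideals and decomposing $E=\sum_j E^{(j)}$, each ideal is $\mathrm{ad}(E)$-invariant and $\mathrm{ad}(E)|_{\mathfrak{g}^{(j)}}=\mathrm{ad}(E^{(j)})$, so each $E^{(j)}$ is a grading element of $\mathfrak{g}^{(j)}$ with eigenvalues in $\{-1,0,1\}$. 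Thus $\mathfrak{g}^{(j)}_i:=\mathfrak{g}^{(j)}\cap\mathfrak{g}_i$ defines a one grading of $\mathfrak{g}^{(j)}$ and $\mathfrak{g}_i=\bigoplus_j\mathfrak{g}^{(j)}_i$; the effectivity of $(G,P)$ rules out a simple factor sitting entirely in $\mathfrak{g}_0$, giving the first bullet. In particular $\mathfrak{g}_{-1}=\bigoplus_j\mathfrak{g}^{(j)}_{-1}$ as a vector space, and since distinct ideals annihilate one another, $\mathfrak{g}^{(k)}_0$ acts trivially on $\mathfrak{g}^{(j)}_{-1}$ for $k\neq j$, so the second bullet reduces to the irreducibility of each $\mathfrak{g}^{(j)}_{-1}$ as a $\mathfrak{g}^{(j)}_0$-module.

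For that irreducibility I would pass to the complexification and realize the one grading by the simple root $\alpha$ whose coefficient in the highest root equals $1$, so that $\mathfrak{g}_{-1}$ is the sum of the root spaces in which $\alpha$ occurs with coefficient $-1$. This module is generated under $\mathfrak{g}_0$ from the lowest root space $\mathfrak{g}_{-\theta}$, and any nonzero $\mathfrak{g}_0$-submodule, being a sum of weight spaces, must contain this extreme weight vector and hence equals the whole of $\mathfrak{g}_{-1}$; irreducibility as a real $\mathfrak{g}_0^{(j)}$-module then follows as in \cite{odk3}.

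For the third bullet the key observation is that a one grading is precisely a parabolic subalgebra with abelian nilradical $\mathfrak{p}_+=\mathfrak{g}_1$, equivalently a maximal parabolic determined by a single simple root $\alpha$ whose coefficient in the highest root is $1$. I would enumerate the admissible nodes for the classical Dynkin diagrams ($A_n$: every node; $B_n$: the first; $C_n$: the last; $D_n$: the first and the two spinorial nodes) and then read off the admissible crossed nodes of the Satake diagrams to obtain the real forms; the resulting list is exactly appendix \ref{appB}. This bookkeeping, though lengthy, is purely combinatorial once the correspondence with highest-root coefficients is established.

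The fourth bullet follows from the Iwasawa decomposition $G=KAN$: every parabolic contains a minimal parabolic and hence, after the usual conjugation, contains $AN$, so $G=KAN\subseteq KP$ and $K$ acts transitively on $G/P$ with isotropy $K\cap P=:L$ at the base point. To see $L\subseteq G_0$ I would note that $L$ is compact and that the projection $P\to P/P_+\cong G_0$ is injective on $L$, because $L\cap P_+$ is a compact subgroup of the simply connected nilpotent group $P_+$ and therefore trivial. The main obstacle in the whole argument is the representation-theoretic irreducibility of the second bullet together with the real-form bookkeeping of the third; both are carried out in \cite{odk3}, while the remaining points are formal consequences of the grading element and the Iwasawa decomposition.
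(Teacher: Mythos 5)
The paper offers no argument of its own here: Proposition \ref{4.1.1} is stated as a summary of results imported from \cite{odk3}. Your proof follows exactly the standard route of that reference --- grading element for the splitting into simple one-graded ideals, the ``coefficient $1$ in the highest root'' combinatorics for irreducibility of $\mathfrak{g}_{-1}$ and for the classification, and the Iwasawa decomposition for transitivity of $K$ --- so in substance you are reconstructing the cited proof rather than giving a different one. The first three bullets are handled correctly at the level of detail one can reasonably expect (the appeal to effectivity to exclude a simple ideal inside $\mathfrak{g}_0$ is exactly the hypothesis the paper imposes in Section \ref{3.1}, and deferring the real-form bookkeeping and real irreducibility to \cite{odk3} matches what the paper itself does).

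There is, however, one genuine gap, in the last bullet. Your argument for $L=K\cap P\subseteq G_0$ shows only that $L\cap P_+$ is trivial, hence that the projection $P\to P/P_+\cong G_0$ is injective on $L$. Injectivity of this quotient map gives an embedding of $L$ into $G_0$, not containment of $L$ in the subgroup $G_0\subset P$: an element $g_0\exp(Z)$ with $g_0\in G_0$, $Z\in\mathfrak{p}_+$ both nontrivial can generate a compact subgroup meeting $P_+$ only in $e$ without lying in $G_0$. The general fact is only that a compact subgroup of $G_0\ltimes P_+$ is \emph{conjugate} into $G_0$ by an element of $P_+$, which is weaker than what the proposition asserts. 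The correct argument is to choose the Cartan involution $\theta$ defining $K$ compatibly with the grading, i.e.\ $\theta(\mathfrak{g}_i)=\mathfrak{g}_{-i}$ (always possible). Then every $g\in K\cap P$ satisfies $g=\theta(g)\in P\cap\theta(P)=(G_0\ltimes P_+)\cap(G_0\ltimes P_-)=G_0$, which gives $L\subseteq G_0$ directly and also recovers the triviality of $L\cap P_+$ as a byproduct. With that replacement the proof is complete.
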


We will denote $\sigma$ the involution of $\mathfrak{g}$ defined as $(-1)^i$ on $\mathfrak{g}_i$. We know that, we can always extend $(G,P)$ with $\sigma$ so that $\sigma$ becomes inner involution, according to example \ref{1.4.10}. Now $\sigma$ restricts to an involution of $K$ with fixed point set $L$. If $\sigma$ is not an inner involution of $K$, then we extend $K$ as in example \ref{1.4.10} and, finally, we obtain symmetric space $(K/N,L/N,h)$, where $N$ is the maximal normal (discrete) subgroup of $K$ contained in $L$. If $G$ was complex, then $(K/N,L/N,h)$ is a Hermitian symmetric space (cf. example \ref{2.6.8}) and we obtain all Hermitian symmetric spaces this way. So, indeed, we can view AHS-structures as generalization of Hermitian symmetric spaces.

The following proposition is a result similar to the proposition \ref{2.5.1}.

\begin{prop}\label{4.1.2}
The following conditions are equivalent for each locally symmetric AHS-structure:
\begin{itemize}
\item the involution $\sigma$ of $\mathfrak{g}$ (defined as $(-1)^i$ on $\mathfrak{g}_i$) is inner i.e. $\sigma=Ad(g_0)$ for some $g_0\in G_0$;
\item the AHS-structure is torsion-free and of maximal torsion;
\item the AHS-structure is an extension of a locally symmetric space.
\end{itemize}
\end{prop}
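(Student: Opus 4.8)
The plan is to prove the three conditions equivalent by exhibiting a cycle of implications, leaning heavily on the already-established Proposition \ref{2.5.1} for general affine geometries and on the structural results of Section \ref{2.4}. The key observation I would start from is that for an AHS-structure the grading is one-graded, so $\mathfrak{g}=\mathfrak{g}_{-1}\oplus\mathfrak{g}_0\oplus\mathfrak{g}_1$ and the involution $\sigma$ acting as $(-1)^i$ on $\mathfrak{g}_i$ is simply $-\mathrm{id}$ on $\mathfrak{g}_{-1}\cong\mathfrak{g}/\mathfrak{p}$ and $+\mathrm{id}$ on $\mathfrak{g}_0$. Since $T_xM\cong\mathfrak{g}_{-1}$ (via the underlying $P_0$-structure of Lemma \ref{1.2.2}), the linear map induced on $T_xM$ by a symmetry $S_x$ is exactly $\mathrm{Ad}$ of the relevant group element restricted to $\mathfrak{g}_{-1}$. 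This is what links the algebraic condition on $\sigma$ to the geometric condition $T_xS_x=-\mathrm{id}_{T_xM}$ that drives Proposition \ref{2.5.1}.

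First I would show that innerness of $\sigma$ is equivalent to the symmetry realizing $-\mathrm{id}$ on the tangent space. By axiom [GS5'] the element $g_0\in G_0$ with $gr(P^1S_x)(u)=u\cdot g_0$ has a $-1$ eigenspace on $\mathfrak{g}_{-1}$ that is maximal among involutions in $G_0$. If $\sigma=\mathrm{Ad}(g_0')$ for some $g_0'\in G_0$, then $\mathrm{Ad}(g_0')$ is already $-\mathrm{id}$ on the whole of $\mathfrak{g}_{-1}$, so the maximality forced by [GS5'] makes the $-1$ eigenspace of $\mathrm{Ad}(g_0)$ all of $\mathfrak{g}_{-1}$, giving $T_xS_x=-\mathrm{id}_{T_xM}$; conversely, if $T_xS_x=-\mathrm{id}_{T_xM}$ then $\mathrm{Ad}(g_0)$ is $-\mathrm{id}$ on $\mathfrak{g}_{-1}$ and $+\mathrm{id}$ on $\mathfrak{g}_0$ (it lies in $G_0$, which preserves the grading), hence $\mathrm{Ad}(g_0)=\sigma$ and $\sigma$ is inner. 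This establishes the equivalence of the first bullet with the hypothesis $T_xS_x=-\mathrm{id}_{T_xM}$ of Proposition \ref{2.5.1}.

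Once this link is in place, the remaining two equivalences are immediate transfers through Proposition \ref{2.5.1}. An AHS-structure is in particular a Cartan geometry, and by the discussion in Section \ref{2.4} (and Proposition \ref{3.4.3}) its symmetries satisfy [GS1]--[GS4], so I may apply the affine-geometry machinery: Proposition \ref{2.5.1} tells us that $T_xS_x=-\mathrm{id}_{T_xM}$ holds if and only if the geometry is torsion-free and of maximal torsion, and that either of these holds if and only if the geometry is an extension of a locally symmetric space. Chaining these with the equivalence proved in the previous paragraph closes the cycle among all three bullets.

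The main obstacle I anticipate is the first step, namely making rigorous the passage between the abstract involution $\sigma$ on $\mathfrak{g}$ and the concrete linear action $T_xS_x$ on $T_xM$, and in particular using axiom [GS5'] correctly to upgrade ``$\mathrm{Ad}(g_0)$ has maximal $-1$ eigenspace'' to ``$\mathrm{Ad}(g_0)=\sigma$''. The subtlety is that a priori $g_0$ need only be \emph{an} involution in $G_0$ whose $(-1)$-eigenspace is not properly contained in that of any other involution; one must check that the one-graded structure actually admits the extremal candidate $\sigma$ as an element of $\mathrm{Ad}(G_0)$ (which is exactly the content of the first bullet) so that [GS5'] has something maximal to select. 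The fact that $G_0$ preserves the gradation, so every $\mathrm{Ad}(g)$ for $g\in G_0$ is block-diagonal with respect to $\mathfrak{g}_{-1}\oplus\mathfrak{g}_0\oplus\mathfrak{g}_1$, is what makes $\sigma$ the unique maximal possibility and keeps the argument clean.
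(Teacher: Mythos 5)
Your first step --- relating innerness of $\sigma$ to the condition $T_xS_x=-\mathrm{id}_{T_xM}$ via the maximality axiom [GS5'] and the fact that $G_0$ preserves the grading --- is sound and matches part of what the paper actually does (though to get $\mathrm{Ad}(g_0)=\sigma$ from $\mathrm{Ad}(g_0)|_{\mathfrak{g}_{-1}}=-\mathrm{id}$ you need the bracket-generating property $\mathfrak{g}_0=[\mathfrak{g}_{-1},\mathfrak{g}_1]$ together with effectiveness, not merely that $g_0$ is block-diagonal; the paper cites exactly these two facts).

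The genuine gap is your reduction of the remaining equivalences to Proposition \ref{2.5.1}. That proposition is stated for locally symmetric \emph{affine} geometries, i.e.\ Cartan geometries of reductive type $(K,H)$ admitting an extension to $(A(n,\mathbb{R}),Gl(n,\mathbb{R}))$. An AHS-structure is a parabolic geometry of type $(G,P)$ with $P$ parabolic, and the paper explicitly observes in Section \ref{3.1} that parabolic geometries are \emph{not} of affine type. Consequently the three notions you want to transfer --- ``torsion-free'', ``of maximal torsion'', and ``extension of a locally symmetric space'' --- refer in Proposition \ref{2.5.1} to a different Cartan geometry (of type $(\mathbb{R}^n\ltimes G_0,G_0)$, built from the underlying first-order $G_0$-structure via Lemma \ref{1.5.5}) than they do in Proposition \ref{4.1.2}, where torsion-freeness means $\kappa$ has values in $\mathfrak{p}$ and ``extension'' means an extension to the type $(G,P)$. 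Identifying these notions across the two settings is precisely the substance of the paper's proof, which it carries out by (i) decomposing the harmonic curvature of a one-graded geometry into the torsion component $\tau:\mathfrak{g}_{-1}\times\mathfrak{g}_{-1}\to\mathfrak{g}_{-1}$ and the curvature component $\zeta$, and killing $\tau$ by the parity argument $-\tau(X,Y)=\tau(-X,-Y)=\tau(X,Y)$; (ii) invoking the structure theorem \ref{3.4.4} / corollary \ref{2.4.4} plus torsion-freeness to produce the symmetric space; and (iii) writing a general extension as $i(h)=g_0\exp(Z)$ with $Z\in\mathfrak{g}_1$ and deducing $\mathrm{Ad}(g_0)=\sigma$. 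None of this is supplied by citing Proposition \ref{2.5.1}, so as written your argument does not close the cycle. (It would also fail for projective structures, where $G_0=Gl(n,\mathbb{R})$ and the underlying $P_0$-structure carries no information, so the passage to the associated affine geometry loses the parabolic geometry entirely.)
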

\begin{proof}
There are only two possible components of harmonic curvature of the one graded parabolic geometries in general dimensions. Torsion $\tau: \mathfrak{g}_{-1}\times \mathfrak{g}_{-1}\to \mathfrak{g}_{-1}$ and curvature $\zeta: \mathfrak{g}_{-1}\times \mathfrak{g}_{-1}\to \mathfrak{g}_{0}$ (except some low dimensional geometries, where the harmonic component is of the type $\mathfrak{g}_{-1}\times \mathfrak{g}_{-1}\to \mathfrak{g}_{1}$). If $Ad(g_0)=\sigma$, then due to the maximality condition [GS5] is $T^-M=T^{-1}M=TM$. Then $-\tau(X,Y)=\tau(-X,-Y)=\tau(X,Y)=0$. Thus the parabolic geometry is torsion-free and of maximal torsion. 

A torsion-free Cartan geometry of maximal torsion from corollary \ref{3.4.4} is a locally symmetric space, due to propositions \ref{3.4.3}, \ref{2.4.2} and torsion-freeness. 

For each extension $(i,\alpha)$ of a locally symmetric space $(K,L,h)$, there are some $g_0\in G_0$ and $Z\in \mathfrak{g}_1$ such that  $i(h)=g_0\exp(Z)$. Since $\underline{Ad}(i(h))=Ad(g_0)$ acts as $-id$ on $T^{-1}M$, it acts as $-id$ on $\mathfrak{g}_{-1}$. Then we obtain that $Ad(g_0)=\sigma$  from the bracket generating property and assumption that $(G,P)$ is effective.
\end{proof}

So we will assume, that there is $g_0\in G_0$ such, that $Ad(g_0)=\sigma$. Since we assume, that $(G,P)$ is effective, $g_0$ is unique.

The inclusion of the maximal compact subgroup $K$ to $G$ provides extension of $(K,L)$ to $(G,P)$. Since $L$ is the centralizer of $g_0$ in $K$, we can define (local) symmetries (cf. theorem \ref{3.5.1}) in charts in $K/L$ instead of in $G/P$, where it is not possible, because $P$ does not commute with $g_0$. Thus:

\begin{cor}
Every flat AHS-structure of type $(G,P)$ is locally symmetric.
\end{cor}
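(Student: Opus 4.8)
The plan is to reduce the statement to the homogeneous model and then invoke the general construction in Theorem \ref{3.5.1}. By Proposition \ref{flat1}, a flat AHS-structure of type $(G,P)$ is locally isomorphic to the homogeneous model $(G\to G/P,\omega_G)$, and since being locally symmetric is preserved under local isomorphism, it suffices to produce a system of local symmetries on this model. The key structural input, supplied by Proposition \ref{4.1.1}, is that the maximal compact subgroup $K$ acts transitively on $G/P$ with stabilizer $L=K\cap P\subset G_0$, so that $G/P\cong K/L$ as homogeneous spaces.

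First I would realize the homogeneous model as an extension coming from $K$. The inclusion $K\hookrightarrow G$ yields an extension $(i,\alpha)$ of $(K,L)$ to $(G,P)$, with $i$ the inclusion $L\hookrightarrow P$ and $\alpha$ the inclusion $\mathfrak{k}\hookrightarrow\mathfrak{g}$; because $(G\to G/P,\omega_G)$ is $K$-homogeneous, Corollary \ref{1.3.6} identifies it with the extension via $(i,\alpha)$ of the flat homogeneous model $(K\to K/L,\omega_K)$. Thus, up to local isomorphism, the flat AHS-structure is the extension of a flat Cartan geometry of type $(K,L)$, which is exactly the situation handled by Theorem \ref{3.5.1}.

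Next I would verify the hypotheses of Theorem \ref{3.5.1} for the pair $(K,L)$ equipped with the distinguished involution $h$ realizing $\sigma$, produced above together with the symmetric space $(K/N,L/N,h)$; here $Ad(h)=\sigma=Ad(g_0)$ on $\mathfrak{g}$ and $h\in L$ since $L$ is the centralizer of $h$ in $K$. One has $h^2=e$, $L$ is contained in the centralizer of $h$, and $K/L\cong G/P$ is connected; factoring out the maximal normal subgroup $N$ of $K$ contained in $L$ secures the remaining effectivity condition. The symmetries are then given in $K/L$-charts by $S_{kL}fL=khk^{-1}fL$, which is well-defined precisely because $L$ centralizes $h$, whereas the naive formula on $G/P$ fails since $g_0$ does not commute with $P_+=\exp(\mathfrak{g}_1)$; this is the structural reason for passing to $K/L$.

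The step needing the most care is the maximality condition [GS5] in Theorem \ref{3.5.1}, namely that no involution $p\in P$ has a $-1$ eigenspace of $Ad(p)$ in $\mathfrak{g}$ strictly enlarging that of $Ad(h)=\sigma$. For AHS-structures this is automatic: since $\sigma$ acts as $(-1)^i$ on $\mathfrak{g}_i$ and the grading is one-graded, $\sigma=-\mathrm{id}$ on $\mathfrak{g}_{-1}\cong\mathfrak{g}/\mathfrak{p}\cong T_{eP}(G/P)$, so the $-1$ eigenspace already fills the whole tangent space and cannot be properly enlarged. Equivalently, $T^-M=T^{-1}M=TM$, so the obstruction analyzed in Proposition \ref{4.1.2} simply does not arise. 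With all hypotheses confirmed, Theorem \ref{3.5.1} shows the extension is a locally symmetric parabolic geometry, and transporting this along the local isomorphism from Proposition \ref{flat1} establishes the claim for an arbitrary flat AHS-structure.
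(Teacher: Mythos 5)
Your proposal is correct and follows essentially the same route as the paper: the paper likewise uses the transitive action of the maximal compact subgroup $K$ on $G/P$ with stabilizer $L=K\cap P$, realizes the flat geometry as an extension of $(K,L)$ to $(G,P)$, and defines the local symmetries in $K/L$-charts via Theorem \ref{3.5.1}, noting precisely that this cannot be done in $G/P$ because $P$ does not commute with $g_0$. You merely make explicit some steps the paper leaves implicit (the reduction to the homogeneous model via Proposition \ref{flat1} and the verification of the maximality condition [GS5] from one-gradedness), which is a faithful elaboration rather than a different argument.
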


Now we describe, how the extensions $(i,\alpha)$ of a homogeneous symmetric space $(K,L,h)$ to a one graded parabolic geometry $(G,P)$ can look like.

Generally, $i(h)=g_0\exp(Z)$ for $g_0\in G_0$ and $Z\in \mathfrak{g}_1$, and $$e=i(h)^2=g_0\exp(Z)g_0\exp(Z)=g_0^2\exp(Ad(g_0)Z)\exp(Z)=g_0^2\exp(-Z+Z)$$ so really $Ad(g_0)=\sigma$. Now, we change the extension by conjugation by $\exp(\frac12Z)\in P$ and get equivalent extension according to proposition \ref{1.4.7}. Then $i(h)=\exp(\frac12Z)g_0\exp(Z)\exp(-\frac12Z)=g_0\exp(Ad(g_0)(\frac12Z))\exp(\frac12Z)=g_0$, thus we can assume that $i(h)=g_0$ without loss of generality.

Since $h$ commutes with $L$, the equality $$g_0p_0\exp(Y)=p_0\exp(Y)g_0$$ has to be satisfied for all $p_0\exp(Y)\in i(L)$. Thus $Y=0$ for all $p_0\exp(Y)\in i(L)$ and $i(L)\subset G_0$.

Now $\alpha(Ad(h)X)=Ad(i(h))\alpha(X)$ for $X$ in the $-1$ eigenspace of $Ad(h)$, thus $$-\alpha(X)=Ad(g_0)\alpha(X).$$  Let us decompose $\alpha(X)=\alpha(X)_{-1}+\alpha(X)_0+\alpha(X)_1$ according to the grading of $\mathfrak{g}$.
%Then $-\alpha(X)_{-1}=-\alpha(X)_{-1}$, $-\alpha(X)_{0}=\alpha(X)_{0}+[Z,\alpha(X)_{-1}]$ and $-\alpha(X)_{1}=-\alpha(X)_1-[Z,\alpha(X)_0]-\frac12[Z,[Z,\alpha(X)_{-1}]])$. 
Then the comparison of both sides provides us restriction $\alpha(X)_{0}=0.$

Altogether, we have proven the following:

\begin{thm}\label{4.1.4}
Let $(\mathcal{G}\to M,\omega)$ be a (locally) symmetric AHS-structure.  Then $(\mathcal{G}\to M,\omega)$ is (up to equivalence) an extension $(i,\alpha)$ of a (locally) symmetric space $(K,L,h)$ such that
\begin{itemize}
\item $i(h)=g_0$;
\item $i(L)\subset G_0$;
\item $\alpha$ is given for $X$ in the $-1$ eigenspace of $Ad(h)$ as follows:

$\alpha(X)_{-1}$ is an arbitrary isomorphism of the adjoint representations \linebreak $Ad(L)$ and $Ad(i(L))$, $\alpha(X)_0=0$, and $\alpha(X)_{1}$ is an arbitrary morphism of the adjoint representations.
\end{itemize}
\end{thm}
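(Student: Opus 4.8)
The plan is to assemble the structural results of this section and then let the one-graded setting force the three normal forms. Throughout I work under the standing hypothesis that $\sigma=Ad(g_0)$ is inner with $g_0\in G_0$, which by Proposition~\ref{4.1.2} is precisely the case in which a (locally) symmetric AHS-structure is torsion-free and of maximal torsion. Granting this, Theorem~\ref{3.4.4} (equivalently Corollary~\ref{2.4.4}) already gives the first claim: up to equivalence $(\mathcal{G}\to M,\omega)$ is an extension $(i,\alpha)$ of a flat Cartan geometry attached to a (locally) symmetric space $(K,H,h)$ with symmetries $S_{kH}fH=khk^{-1}fH$. Because $Ad(g_0)=\sigma$ acts as $-\mathrm{id}$ on $\mathfrak{g}_{-1}\cong\mathfrak{g}/\mathfrak{p}$, the maximality axiom forces $T^-M=TM$, so $T^+M=0$, the leaves degenerate to points, and $H=L$ is the full centralizer of $h$. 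Thus I may take the base to be the symmetric space $(K,L,h)$.

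First I would normalize $i(h)$. Writing $i(h)=g_0\exp(Z)$ with $g_0\in G_0$ and $Z\in\mathfrak{g}_1$ under $P\cong G_0\ltimes P_+$, the relation $i(h)^2=e$ together with $Ad(g_0)Z=\sigma Z=-Z$ and the fact that $\mathfrak{g}_1$ is abelian in the one-graded case collapses to $i(h)^2=g_0^2\exp(-Z)\exp(Z)=g_0^2$, whence $g_0^2=e$. Conjugating the extension by $\exp(\tfrac12 Z)\in P$ produces an equivalent extension by Proposition~\ref{1.4.7}, for which $i(h)=g_0$; the abelianness of $\mathfrak{g}_1$ is exactly what makes the three exponential factors cancel. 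Next, since $L$ centralizes $h$, its image $i(L)$ must centralize $i(h)=g_0$; writing a general element of $i(L)$ as $p_0\exp(Y)$ and comparing the $G_0$- and $P_+$-parts of $g_0p_0\exp(Y)=p_0\exp(Y)g_0$ --- where $Ad(g_0)$ is $-\mathrm{id}$ on $\mathfrak{g}_1$ --- yields $\exp(Y)=\exp(-Y)$, hence $Y=0$ and $i(L)\subset G_0$.

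Finally I would read off $\alpha$ from the extension axioms. Applying the $H$-equivariance $Ad(i(h))\circ\alpha=\alpha\circ Ad(h)$ to $X$ in the $-1$-eigenspace of $Ad(h)$ gives $\sigma\,\alpha(X)=-\alpha(X)$; decomposing $\alpha(X)=\alpha(X)_{-1}+\alpha(X)_0+\alpha(X)_1$ along the grading and comparing $\mathfrak{g}_0$-components forces $\alpha(X)_0=0$. Since the projection $\mathfrak{g}\to\mathfrak{g}/\mathfrak{p}\cong\mathfrak{g}_{-1}$ annihilates $\mathfrak{g}_0\oplus\mathfrak{g}_1$, the isomorphism axiom on $\alpha$ becomes the requirement that $\alpha_{-1}$ be a linear isomorphism onto $\mathfrak{g}_{-1}$. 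Finally, the $L$-equivariance $Ad(i(l))\circ\alpha=\alpha\circ Ad(l)$, now with $i(L)\subset G_0$ preserving the grading, splits component-wise and shows $\alpha_{-1}$ and $\alpha_1$ intertwine $Ad(L)$ with $Ad(i(L))$ on $\mathfrak{g}_{-1}$ and $\mathfrak{g}_1$ respectively; combined with the previous step, $\alpha_{-1}$ is an isomorphism and $\alpha_1$ an arbitrary morphism of these adjoint representations. This is the asserted form.

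I expect the only delicate step to be the normalization $i(h)=g_0$: one must check simultaneously that conjugation by $\exp(\tfrac12 Z)$ strips off the $P_+$-part and that it yields a genuinely equivalent extension in the precise sense of Proposition~\ref{1.4.7}. This clean cancellation is special to the one-graded case, where $\mathfrak{g}_1$ is abelian so no Baker--Campbell--Hausdorff corrections intervene; everything else is linear bookkeeping against the grading.
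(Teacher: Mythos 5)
Your proposal is correct and follows essentially the same route as the paper: invoke Proposition \ref{4.1.2} and Corollary \ref{2.4.4} to realize the geometry as an extension of a symmetric space, normalize $i(h)=g_0$ by conjugating with $\exp(\tfrac12 Z)$ via Proposition \ref{1.4.7}, deduce $i(L)\subset G_0$ from the commutation with $g_0$, and read off $\alpha(X)_0=0$ and the equivariance of $\alpha_{-1},\alpha_1$ from the $Ad(i(h))$-action and the extension axioms. If anything, you spell out slightly more than the paper does (the identification $H=L$ from maximality and the role of the abelianness of $\mathfrak{g}_1$), but the argument is the same.
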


There is a similar result to  the corollary \ref{2.5.3} and lemma \ref{2.5.4}.

\begin{cor}\label{4.1.5}
For one graded parabolic geometry of type $(G,P)$ and homogeneous symmetric space $(K,L,h)$, there is a bijection between:
\begin{itemize}
\item extensions of $(K,L)$ to $(G,P)$ such, that $i(h)=g_0$

\item couples $\beta, b_2$, where $\beta$ is a frame of  the $-1$ eigenspace of $Ad(h)$ such, that the inclusion $i_\beta(Ad(L))$ (cf. \ref{1.1.3}) induced by the frame $\beta$ is contained in $G_0$, and $b_2$ is an endomorphism of $\mathfrak{g}_1$ commuting with $i_\beta(Ad(L))$.
\end{itemize}

Two frames of the $-1$ eigenspace of $Ad(h)$ determine the same homomorphism $i: L\to G_0$ if and only if the transition map between them commutes with $i(L)$.

For fixed frame $\beta$, all endomorphisms of $\mathfrak{g}_1$ commuting with $i_\beta(Ad(L))$ always determine the same underlying regular infinitesimal flag structure.

Two frames of the $-1$ eigenspace of $Ad(h)$ determine equivalent parabolic geometries of type $(G,P)$ (except projective structures)  if and only if the transition map between them is composition of elements of $P$ and outer automorphisms of the Lie group $Ad(L)$ induced by automorphisms of $K$.
\end{cor}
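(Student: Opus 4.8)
The plan is to read the bijection directly off Theorem~\ref{4.1.4} and then to transport the three supplementary assertions from their affine counterparts, Corollary~\ref{2.5.3} and Lemma~\ref{2.5.4}, through Proposition~\ref{1.4.7}. First I would set up the forward map. Given an extension $(i,\alpha)$ of $(K,L,h)$ with $i(h)=g_0$, Theorem~\ref{4.1.4} says that $\alpha$ is completely pinned down on the $-1$ eigenspace of $Ad(h)$ by the two components $\alpha(\cdot)_{-1}$ and $\alpha(\cdot)_{1}$, since $\alpha(\cdot)_{0}=0$ and $\alpha|_{\mathfrak{l}}=T_e i$ is forced. The component $\alpha(\cdot)_{-1}$ is an $L$-isomorphism of the $-1$ eigenspace onto $\mathfrak{g}_{-1}$, i.e.\ a frame $\beta$; because $h$ acts by $-1$ on that eigenspace one automatically gets $i_\beta(Ad(h))=-\mathrm{id}_{\mathfrak{g}_{-1}}=Ad(g_0)|_{\mathfrak{g}_{-1}}$, and by effectivity the frame $\beta$ recovers $i=i_\beta\circ Ad\colon L\to G_0$, the admissibility constraint $i_\beta(Ad(L))\subset G_0$ being exactly the condition on the frame. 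The component $\alpha(\cdot)_1$, an $L$-morphism from the $-1$ eigenspace to $\mathfrak{g}_1$, is read through $\beta$ and the $G_0$-equivariant Killing duality $\mathfrak{g}_1\cong\mathfrak{g}_{-1}^{*}$ as the datum of an endomorphism $b_2$ of $\mathfrak{g}_1$ intertwining $i_\beta(Ad(L))$; the identification of these two morphism spaces is a routine equivariant linear-algebra step. The inverse assembles $(i,\alpha)$ from $(\beta,b_2)$ by the same formulas, the extension axioms being guaranteed by Theorem~\ref{4.1.4}, and mutual inverseness is then formal.

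For the first supplementary claim I would argue exactly as in the first part of Lemma~\ref{2.5.4}: a second frame $\hat\beta$ differs from $\beta$ by a transition map $g=\hat\beta\circ\beta^{-1}$, which conjugates $i_\beta$ into $i_{\hat\beta}$, so the two frames yield the same homomorphism $L\to G_0$ if and only if $g$ centralizes $i(L)$. For the second claim I would observe that $\mathfrak{g}_1=\mathfrak{p}_+$ and that the underlying regular infinitesimal flag structure lives on $\mathcal{G}_0=\mathcal{G}/P_+\cong K\times_i G_0$, whose soldering is built from $\alpha(\cdot)_{-1}=\beta$ and $\alpha(\cdot)_0=0$; since $b_2$ records only the $\mathfrak{g}_1$-part, which is quotiented away in passing to $\mathcal{G}_0$, every choice of $b_2$ over a fixed $\beta$ produces the same $\mathcal{G}_0$ and hence the same underlying structure.

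The last claim is the $|1|$-graded analogue of the second part of Lemma~\ref{2.5.4}, with $P_0$ replaced by $P$, and I would derive it from Proposition~\ref{1.4.7}. Applying that proposition to the two extensions and projecting the equation $\hat\alpha=Ad_{p_0^{-1}}\circ\alpha\circ T\sigma$ onto the $\mathfrak{g}_{-1}$-component gives $\hat\beta\circ\beta^{-1}=\underline{Ad}(p_0^{-1})\circ(\beta\circ T\sigma\circ\beta^{-1})$, where I use that $P$ preserves the filtration $\mathfrak{g}^{i}$, so the $\mathfrak{g}_{-1}$-part of $Ad(p_0^{-1})$ acting on $\mathfrak{g}_{-1}$ is precisely the $G_0$-action $\underline{Ad}(p_0^{-1})$. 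Thus the transition map factors as an element of $G_0$ (image of $P$ under $\underline{Ad}$) composed with the automorphism of $i_\beta(Ad(L))$ induced by $\sigma\in\mathrm{Aut}(K)$ preserving $L$, which is outer exactly when $\sigma|_L$ is outer; conversely, such a factorization reconstructs the data $(p_0,\sigma)$ needed in Proposition~\ref{1.4.7}. The projective structures are excluded here because for them the normal parabolic geometry is not determined by the underlying flag structure, so the parabolic geometry attached to a frame (after discarding $b_2$, by the second claim) is no longer well defined and frame-equivalence need not coincide with equivalence of the parabolic geometries.

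I expect the genuine obstacle to be this last assertion, and within it the filtration bookkeeping: one must check carefully that the $\mathfrak{g}_{-1}$-projection of $Ad(p_0^{-1})$ is $\underline{Ad}(p_0^{-1})$, so that a true $G_0$-element is produced and the $\mathfrak{p}_+$-part of $p_0$ (together with the freedom in $b_2$) drops out, and then to isolate precisely when the induced automorphism of $Ad(L)$ is outer and to justify the projective exclusion against the normalization theory of Section~\ref{3.3}. The remaining points, including the equivariant identification of $\alpha(\cdot)_1$ with $b_2$, are routine once the forward and backward maps are written down.
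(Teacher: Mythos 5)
Your proposal is correct and follows essentially the same route the paper intends: the bijection is read off Theorem \ref{4.1.4}, the equivalence statements are the $|1|$-graded analogues of Corollary \ref{2.5.3} and Lemma \ref{2.5.4} obtained via Proposition \ref{1.4.7}, and the independence of the underlying flag structure from $b_2$ comes from $\mathfrak{g}_1=\mathfrak{p}_+$ being quotiented out in $\mathcal{G}_0=\mathcal{G}/P_+$. The paper gives no further argument beyond this reduction, so your write-up matches it in both substance and level of detail.
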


If the only possible component of the harmonic curvature is the torsion, then the torsion-free AHS-structures of maximal torsion are flat i.e. the only possible non-flat AHS-structures (with non-complex simple $\mathfrak{g}$) are projective, conformal, quaternionic and para-qutermionic structures. We investigate them in the next sections.

\subsection{Projective structures}\label{4.2}

The projective structures correspond to the following grading of $\mathfrak{g}=\mathfrak{sl}(n+1,\mathbb{R})$:
$$
\left( \begin{array}{cc}
a & Z  \\
X & A  
  \end{array} \right),
$$
where $A\in \mathfrak{gl}(n,\mathbb{R})$, $a=-tr(A)$, $X\in \mathbb{R}^n$ and $Z\in (\mathbb{R}^n)^*$.

The corresponding effective homogeneous model has $G=PGl(n+1,\mathbb{R})$ and
$$
g_0=\left( \begin{array}{cc}
-1 & 0  \\
0 & E  
  \end{array} \right).
$$

These parabolic geometries are special, because they are not given by the underlying regular infinitesimal flag structure. According to \cite{odk3}, projective structures are equivalent to a chosen projective class $[\nabla]$ of affine connections on $M$. Precisely, for any two $\nabla,\ \nabla'$ in the projective class, there is the one form $\Upsilon$ on $M$ satisfying $\nabla_XY-\nabla'_XY=\Upsilon(X)Y+\Upsilon(Y)X$. Automorphisms of projective structures are then diffeomorphisms $\phi$ such, that $(\phi)^*\nabla$ is an affine connection in the projective class.

We show that there is a unique projective structure on any (locally) symmetric space. In fact, it is the projective class of the canonical connection $\nabla$ of the (locally) symmetric space. Thus in view of proposition \ref{4.1.2} all (locally) symmetric projective geometries, are of this form.

\begin{prop}
There is (up to equivalence) unique regular normal projective structure on any locally symmetric space. The canonical connection $\nabla$ of the (locally) symmetric space is in the corresponding projective class.
\end{prop}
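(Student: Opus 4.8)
The plan is to produce the projective class $[\nabla]$ of the canonical connection as a symmetric projective structure, and then to use Proposition~\ref{4.1.2} together with Theorem~\ref{4.1.4} to see that it is the only one. For existence I would start from the canonical connection $\nabla$ of the (locally) symmetric space, which is torsion-free and invariant under every symmetry $S_x$ (Section~\ref{2.2}). Since a projective structure is precisely a projective class of torsion-free connections and an automorphism is a diffeomorphism carrying the class to itself, the identity $(S_x)^*\nabla=\nabla$ gives $(S_x)^*[\nabla]=[\nabla]$, so each $S_x$ is an automorphism of $[\nabla]$. Because $T_xS_x=-\operatorname{id}_{T_xM}$, the $-1$ eigenspace is all of $T_xM$ and the maximality axiom [GS5'] holds automatically; a one-grading is regular, so passing to the unique normal Cartan connection attached to the projective class $[\nabla]$ yields a regular normal symmetric projective structure. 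This simultaneously settles the second assertion, that $\nabla$ lies in the associated projective class.

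For uniqueness, let $[\nabla']$ be an arbitrary (locally) symmetric regular normal projective structure. By Proposition~\ref{4.1.2} it is torsion-free and of maximal torsion and is, up to equivalence, an extension $(\hat\imath,\hat\alpha)$ of a (locally) symmetric space $(K,L,h)$; by Theorem~\ref{4.1.4} the extension has the normal form $\hat\alpha(X)=\hat\alpha(X)_{-1}+\hat\alpha(X)_{1}$ with $\hat\alpha(X)_{0}=0$ and $\hat\alpha(X)_{1}\in\mathfrak{g}_1$. The distinguished choice $\hat\alpha(X)_{1}=0$ is exactly the affine extension of $(K,L)$, whose induced connection is $\nabla$ and whose composition with the standard extension to $(G,P)$ realizes the class $[\nabla]$. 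I would then compare a general $\hat\alpha$ with this one through the curvature formula $\kappa([e,e])(X,Y)=[\hat\alpha(X),\hat\alpha(Y)]-\hat\alpha([X,Y])$ of Corollary~\ref{1.3.6}: since $\mathfrak{g}_{-1}$ and $\mathfrak{g}_{1}$ are abelian and $[X,Y]\in\mathfrak{l}$ for $X,Y$ in the $-1$ eigenspace, the only $\hat\alpha(X)_1$-dependent part is $[\hat\alpha(X)_{-1},\hat\alpha(Y)_1]-[\hat\alpha(Y)_{-1},\hat\alpha(X)_1]$, which is exactly $\partial\hat\alpha_1$ with respect to the Lie-algebra differential on $\mathfrak{g}_{-}$. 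Hence the curvatures differ by a $\partial$-exact cochain and all these extensions share the same harmonic curvature $\kappa_H$ as the $\hat\alpha(X)_1=0$ extension, i.e. as $[\nabla]$.

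The decisive step is to turn equality of harmonic curvatures into equality of projective structures. Here I would invoke that $\kappa_H$ is a complete invariant of a normal parabolic geometry (Section~\ref{3.3}), so that over the common homogeneous base $K/L$ — on which the symmetries act transitively by Proposition~\ref{3.4.3} — the normal geometries with the same $\kappa_H$ are isomorphic, forcing $[\nabla']=[\nabla]$ and thus $\nabla\in[\nabla']$. I expect the main obstacle to be precisely this implication: projective structures are the exceptional case, singled out in Corollary~\ref{4.1.5} and in Section~\ref{3.3}, that is \emph{not} determined by the underlying infinitesimal flag structure alone, and the $\mathfrak{g}_1$-freedom (the endomorphism $b_2$ of Corollary~\ref{4.1.5}) is visible only in the projective class and not in the flag data. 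The careful point is therefore to verify that, among normal symmetric representatives, the $\mathfrak{g}_1$-part is pinned down by normality so that the residual freedom does not split $[\nabla]$ into inequivalent symmetric projective classes; once this is checked, uniqueness follows and $\nabla$ is the distinguished member of the unique symmetric projective class.
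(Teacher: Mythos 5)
Your existence argument and the derivation of the second assertion from uniqueness are fine, and your identification of where the difficulty sits (the $\mathfrak{g}_1$-freedom, i.e.\ the endomorphism $b_2$ of Corollary \ref{4.1.5}) is exactly right. But the uniqueness argument has a genuine gap at precisely that point, and the bridge you propose to cross it does not hold. You argue that varying $\hat\alpha_1$ changes the curvature by the $\partial$-exact term $[\hat\alpha(X)_{-1},\hat\alpha(Y)_1]-[\hat\alpha(Y)_{-1},\hat\alpha(X)_1]$, hence leaves $\kappa_H$ unchanged, and you then want to conclude that two normal geometries with the same $\kappa_H$ over the same base are isomorphic. The first step already mixes up the two differentials: normality and the harmonic curvature live in $\operatorname{Ker}(\partial^*)/\operatorname{Im}(\partial^*)$, and the extensions with arbitrary $b_2$ are not normal, so "they all share the same $\kappa_H$" is not meaningful in the framework; more importantly, the second step is not a valid principle — the statement that $\kappa_H$ "contains all information about the curvature" (Section \ref{3.3}) does not mean that equality of harmonic curvatures forces the geometries to be isomorphic. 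You acknowledge this yourself ("I expect the main obstacle to be precisely this implication") and then defer the decisive verification ("the careful point is therefore to verify that \dots the $\mathfrak{g}_1$-part is pinned down by normality") without carrying it out. That verification \emph{is} the proof.

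The paper closes the gap by a direct computation: it writes the normality condition $\partial^*\kappa=0$ of Lemma \ref{3.3.2}, applied to the curvature formula of Corollary \ref{1.3.6}, as an inhomogeneous system of linear equations in $b_2$. Existence of a solution is guaranteed (a normal geometry over the given flag structure exists), and the associated homogeneous system $\sum_{i\neq j}a_j[Z_i,[X^j,b_2(X^i)]]=0$ has only the trivial solution because $\mathfrak{g}$ is semisimple; hence $b_2$ is unique. Combined with Proposition \ref{1.4.7} (all frames $\beta$ are equivalent since $G_0=Gl(n,\mathbb{R})$ for projective structures, a point your proposal also leaves untouched), this gives uniqueness up to equivalence without any appeal to harmonic curvature as a classifying invariant. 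Your cohomological observation that the $b_2$-dependence of $\kappa$ is $\partial$-exact is correct and could in principle be turned into an alternative proof (a nonzero $\partial$-exact term in $\operatorname{Ker}\partial^*$ is excluded by the Hodge decomposition), but as written the argument is incomplete.
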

\begin{proof}
Let $(K,L,h)$ be a homogeneous symmetric space. The choice of any frame $\beta$ of  the $-1$ eigenspace of $Ad(h)$ provides $i_\beta: Ad(L)\to Gl(n,\mathbb{R})=G_0$. Then $\alpha$ is of the form:

$$
\left( \begin{array}{cc}
A & X   
  \end{array} \right)\mapsto
\left( \begin{array}{cc}
-tr(A)k & (b_2X)^T  \\
X & A-tr(A)(1-k)E
  \end{array} \right),
$$
where $X$ is in $-1$ eigenspace of $Ad(h)$, $A\in ad(\mathfrak{l})$, $b_2$ is a matrix commuting with $i_\beta(Ad(L))$ and $k$ real number (depending on the dimensions) such, that the $tr(A)$ has the right action.

We know from corollary \ref{1.4.7} that all $i_\beta$ are equivalent and the extensions are equivalent if the matrices $b_2$ represent the same endomorphism.

Now for fixed frame $\beta$, we compute $b_2$ from the normality conditions using formula and notation from lemma \ref{3.3.2} on $\kappa([e,e])(X_1,X_2)$:
\[
\begin{split}
0&=(\partial^*\kappa)([e,e])(a_iX^i+\mathfrak{p})=\sum_i [Z_i,[\alpha(a_jX^j),\alpha(X^i)]-\alpha([a_jX^j,X^i])]\\
&=\sum_{i,j} a_j([Z_i,[X^j+b_2(X^j),X^i+b_2(X^i)]-\alpha([X^j,X^i])])\\
&=\sum_{i\neq j}a_j([Z_i,[X^j,b_2(X^i)]-\alpha([X^j,X^i])]),
\end{split}
\]
where $X^i$ is vector in $\mathfrak{g}_{-1}$ with $1$ on i-th row and rest $0$, $Z^i\in \mathfrak{g}_{1}$ is covector with $1$ on i-th column and rest $0$. So we get system of linear equations and we know, there always has to be at least one solution. The homogeneous part is $\sum_{i\neq j}a_j([Z_i,[X^j,b_2(X^i)])=0$ and, because $\mathfrak{g}$ is semisimple, zero is the only possible solution of the homogeneous part. Thus there is unique $b_2$ and the first part of the claim follows.

Consider a parabolic geometry $(\mathcal{G}\to M,\omega)$ of projective type. Then any global $G_0$ equivariant section of $\mathcal{G}_0\to \mathcal{G}$ pulls $\omega$ back to a $\mathfrak{g}$-valued one form and since its image in $\mathfrak{g}_0$ is $G_0$-invariant, it defines an affine connection $\nabla$. It can be shown that all such $\nabla$ form the projective class of affine connections. Since we are in the case of homogeneous parabolic geometry and $i(L) \subset G_0$, $\mathcal{G}_0=K\times_i G_0$ and we define a global section of $\mathcal{G}=K\times_i P$ as inclusion $(k,g)\mapsto (k,g)$. This section is invariant to all symmetries, thus the corresponding torsion-free affine connection is invariant to all symmetries i.e. it is the canonical connection of the (locally) symmetric space.
\end{proof}

\subsection{Conformal structures}\label{4.3}

The conformal structures correspond to the following grading of $\mathfrak{g}=\mathfrak{so}(p+1,q+1)$:
$$
\left( \begin{array}{ccc}
a & Z & 0 \\
X & A & -I_{p,q}Z^T\\
0 & -X^TI_{p,q} & -a
  \end{array} \right),
$$
where $A\in \mathfrak{so}(p,q)$, $a\in \mathbb{R}$, $X\in \mathbb{R}^{p+q}$, $Z\in (\mathbb{R}^{p+q})^*$ and is diagonal matrix $I_{p,q}$ with $\pm 1$ on diagonal according to the signature $(p,q)$.

The corresponding effective model has $G=PO(p+1,q+1)$ and
$$
g_0=\left( \begin{array}{ccc}
-1 & 0 & 0 \\
0 & E & 0\\
0 & 0 & -1
  \end{array} \right).
$$

We show that there is a conformal structure on any semisimple symmetric space. Thus in view of proposition \ref{4.1.2} all (locally) symmetric conformal geometries, whose symmetries (locally) generate semisimple groups, are of this form.

\begin{prop}
There is a regular normal conformal structure on any semisimple (locally) symmetric space. The equivalence classes of such structures are parametrized by torus of dimension equal to the number of simple factors with straight complex structure (cf. example \ref{2.6.10}).
\end{prop}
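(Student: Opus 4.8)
The plan is to realise the conformal structure as an extension of the given symmetric space in the sense of Theorem \ref{4.1.4} and Corollary \ref{4.1.5}, and then to read the equivalence classes off the freedom in the data $(\beta,b_2)$. Since the conformal grading comes equipped with the element $g_0$ with $Ad(g_0)=\sigma$ displayed above, Proposition \ref{4.1.2} tells us that any symmetric conformal structure is torsion-free, of maximal torsion, and an extension of the locally symmetric space, so this construction captures all of them. For existence I would first produce the frame $\beta$. As in the pseudo-Riemannian example, the Killing form $B$ restricts to a nondegenerate $Ad(L)$-invariant form on the $-1$ eigenspace $\mathfrak{k}/\mathfrak{l}$, so a $B$-orthonormal frame $\beta_0$ gives $i_{\beta_0}(Ad(L))\subset O(p,q)\subset CO(p,q)=G_0$, which is exactly the condition in Corollary \ref{4.1.5}. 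This already determines an underlying regular infinitesimal flag structure, namely a conformal structure, on the symmetric space, and the general existence of a normal regular representative (the grading being neither projective nor contact projective) yields the sought Cartan geometry; the endomorphism $b_2$ is then pinned down by the normalisation $\partial^*\kappa=0$ via the same linear-algebra argument as in the projective case, where semisimplicity of $\mathfrak{g}$ forces the homogeneous system to have only the zero solution.

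For the classification I would invoke Corollary \ref{4.1.5}, by which the equivalence classes of these extensions are in bijection with the frames $\beta$ satisfying $i_\beta(Ad(L))\subset CO(p,q)$, taken modulo transition maps that are compositions of elements of $P$ (reducing at the $G_0$-level to $CO(p,q)$) and outer automorphisms of $Ad(L)$ induced by automorphisms of $K$; the datum $b_2$ contributes nothing, since all choices give the same flag structure and normality fixes it. Because $Ad(L)$ is completely reducible, this set factors over the simple factors $(K_i,L_i,h)$ exactly as in the proof of Proposition \ref{2.6.4}, so it suffices to describe the contribution of a single simple factor.

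On a simple factor the computation of Proposition \ref{2.6.4} identifies the frames with $i_\beta(Ad(L_i))$ landing in the orthogonal group, modulo that group and outer automorphisms, with the invertible elements of $Sym^{H_i}(K_i/H_i)$; passing from metric to conformal geometry enlarges the orthogonal group to $CO=O\times\mathbb{R}^+$, so one further quotients by the conformal rescalings $\mathbb{R}^+$. I would then feed in the classification of $Sym^{H_i}(K_i/H_i)$ for simple symmetric spaces: in the real cases $Sym^{H_i}\cong\mathbb{R}$, every invertible element rescales $B$ by a positive factor, so the quotient by $\mathbb{R}^+$ is a single conformal class; in precisely the straight complex cases (cf. Example \ref{2.6.10}) one has $Sym^{H_i}\cong\mathbb{C}$, and the invertible elements $\mathbb{C}^*$ modulo $\mathbb{R}^+$ form a circle $\mathbb{C}^*/\mathbb{R}^+\cong S^1$. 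Taking the product over all simple factors yields a torus whose dimension equals the number of factors carrying a straight complex structure.

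I expect the main obstacle to be this last, classificatory step rather than existence. The delicate points are to justify rigorously that enlarging the orthogonal group to the conformal group corresponds exactly to the extra quotient by $\mathbb{R}^+$ on the level of $Sym^{H_i}$, and to verify that the outer automorphisms of $Ad(L_i)$ (together with the sign ambiguity $X\mapsto -X$, which leaves the induced form unchanged) act on each $S^1$ without collapsing it, so that the parametrising space is genuinely a torus of the stated dimension and not some lower-dimensional or non-toral quotient.
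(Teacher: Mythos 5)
Your proposal is correct and follows essentially the same route as the paper: existence via a Killing-form-orthonormal frame giving $i_\beta(Ad(L))\subset O(p,q)\subset CO(p,q)=G_0$, and classification via Corollary \ref{4.1.5} together with Proposition \ref{2.6.4}, with the real rescaling freedom absorbed by $CO(p,q)$ and the residual $\mathbb{C}^*/\mathbb{R}^+\cong S^1$ freedom appearing exactly on the straight complex factors (the paper phrases this as a rotation $b_1$ in the $\operatorname{Re}(X),\operatorname{Im}(X)$ plane). The delicate points you flag at the end are in fact passed over silently in the paper's own proof, so your treatment is if anything more careful.
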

\begin{proof}
Let $(K,L,h)$ be a semisimple homogeneous symmetric space. We know from the proposition \ref{2.6.4},  what are the possible inclusions of $i_\beta: Ad(L)\to O(p,q)\subset G_0$. If there is no straight complex structure on any factor, then we know from corollary \ref{4.1.5} that all $i_\beta$ are equivalent, because $\beta$ are unique up to real multiples on each simple factors and $G_0=CO(p,q)$. If there is a straight complex structure on some factor, then the additional freedom in the choice of $i_\beta$ on this factor is given by complex multiples. Thus the possible mappings $\alpha$ are:
$$
\left( \begin{array}{cc}
A & X   
  \end{array} \right)\mapsto
\left( \begin{array}{ccc}
0 & (b_2X)^T & 0 \\
b_1X & A & -I_{p,q}b_2X\\
0 & -(b_1X)^TI_{p,q} & 0
  \end{array} \right),
$$
where $A\in ad(\mathfrak{l})$ and $X$ is in $-1$ eigenspace of $Ad(h)$, $b_2$ is matrix commuting with $Ad(i_\beta(L))$ and $b_1$ is a rotation in $\operatorname{Re}(X), \operatorname{Im}(X)$ plane on any straight complex factors and identity elsewhere and the claim again follows from  the corollary \ref{4.1.5}.
\end{proof}

\subsection{Quaternionic structures}\label{4.4}

The quaternionic structures correspond to the following grading of $\mathfrak{g}=\mathfrak{sl}(n+1,\mathbb{H})$:
$$
\left( \begin{array}{cc}
a & Z \\
X & A 
  \end{array} \right),
$$
where $A\in \mathfrak{gl}(n,\mathbb{H})$, $a\in \mathbb{H}$, $\operatorname{Re}(a)+\operatorname{Re}(tr(A))=0$, $X\in \mathbb{H}^{p+q}$ and $Z\in (\mathbb{H}^{p+q})^*$.

The corresponding effective model has $G=PGl(n+1,\mathbb{H})$ and
$$
g_0=\left( \begin{array}{cc}
-1 & 0 \\
0 & E
  \end{array} \right).
$$

\begin{example}
Quaternionic structure on $SO^*(2n+2)/ SO^*(2)\times SO^*(2n)$.

If we look in the table in appendix B, $SO^*(2n)$ acts by a quaternionic representation i.e. there is $i: SO^*(2n)\to Gl(n,\mathbb{H})$. Further $SO^*(2)$ acts by multiples of $-k\in Sp(1)$ from left i.e. $ SO^*(2)\times SO^*(2n)$ sits in $G_0:=P(Sp(1)\times Gl(n,\mathbb{H}))$. In fact, we immediately get a flat quaternionic structure on $SO^*(2n+2)/ SO^*(2)\times SO^*(2n)$ just by inclusion of $SO^*(2n+2)$ to $PGl(n+1,\mathbb{H})$.
\end{example}

We show that there is a quaternionic structure on any pseudo--quater\-nionic--K\"ahler symmetric space (cf. example \ref{2.6.18}) and and there are no quaternionic structures on other semisimple symmetric spaces except the previous example. Thus in view of proposition \ref{4.1.2} all (locally) symmetric quaternionic geometries, whose symmetries (locally) generate semisimple groups, are of this form.

\begin{prop}
There is a regular normal quaternionic structure on any pseudo--quaternionic--K\"ahler locally symmetric space and there are no quaternionic structures on other semisimple locally symmetric spaces except $SO^*(2n+2)/ SO^*(2)\times SO^*(2n)$. 
\end{prop}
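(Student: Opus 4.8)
The plan is to combine the structural result of Theorem \ref{4.1.4} with the representation theory of the quaternionic structure group $G_0=P(Sp(1)\times Gl(n,\mathbb{H}))$. By Proposition \ref{4.1.2} and Theorem \ref{4.1.4}, any locally symmetric quaternionic geometry whose symmetries generate a semisimple group is, up to equivalence, an extension $(i,\alpha)$ of a semisimple locally symmetric space $(K,L,h)$ with $i(h)=g_0$, $i(L)\subset G_0$, and $\alpha$ prescribed on the $-1$ eigenspace of $Ad(h)$, which we identify with $\mathfrak{g}_{-1}\cong\mathbb{H}^n$, by a frame $\beta$ together with the datum $b_2$ of Corollary \ref{4.1.5}. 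Thus existence of the geometry is equivalent to existence of a frame $\beta$ of $\mathfrak{k}/\mathfrak{l}$ whose induced inclusion $i_\beta(Ad(L))$ lands in $G_0$, i.e. to the existence of an $Ad(L)$--invariant quaternionic structure on $\mathfrak{k}/\mathfrak{l}$.

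First I would treat existence. For a pseudo--quaternionic--K\"ahler locally symmetric space, Example \ref{2.6.18} gives $Ad(L)\subset Sp(1)\times Sp(p,q)$; since $Sp(p,q)\subset Gl(n,\mathbb{H})$ acts $\mathbb{H}$--linearly while the $Sp(1)$ factor acts by left multiplication of unit quaternions, this inclusion already realizes $Ad(L)$ inside $G_0$. I would use it to define $i$, take $\alpha|_{\mathfrak{g}_{-1}}$ to be the frame isomorphism with $\alpha_0=0$ as prescribed by Theorem \ref{4.1.4}, and fix the $\mathfrak{g}_1$--component $b_2$ by the normalization condition. Regularity and normality then follow exactly as in the projective case: expanding $\partial^*\kappa([e,e])=0$ via Lemma \ref{3.3.2} yields an inhomogeneous linear system for $b_2$ whose homogeneous part has only the zero solution because $\mathfrak{g}$ is semisimple, so a unique normal $b_2$ exists. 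The $SO^*(2n+2)/SO^*(2)\times SO^*(2n)$ case is already exhibited explicitly, and seen to be flat, in the preceding Example, so it is covered directly.

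The nonexistence part is the main obstacle, and I would handle it conceptually rather than by brute force. In $G_0=P(Sp(1)\cdot Gl(n,\mathbb{H}))$ the right--$\mathbb{H}$--linear maps $Gl(n,\mathbb{H})$ commute with the three quaternionic multiplications $I,J,K$, hence fix the rank--three subspace $Q\subset End_{\mathbb{R}}(\mathbb{H}^n)$ they span, while $Sp(1)$ permutes $I,J,K$ through the covering $Sp(1)\to SO(3)$. Therefore an inclusion $Ad(L)\subset G_0$ induces a homomorphism $Ad(L)\to SO(3)$ whose kernel acts $\mathbb{H}$--linearly. If this homomorphism is trivial, then $Ad(L)$ commutes with $I,J,K$, forcing the noncommutative algebra $\mathbb{H}$ into $End^{L}(\mathfrak{k}/\mathfrak{l})$; but the classification of $End^{L}(\mathfrak{k}/\mathfrak{l})$ in Section \ref{2.6} shows this commutant is a product of copies of $\mathbb{R},\mathbb{C},\mathbb{R}\times\mathbb{R},\mathbb{C}\times\mathbb{C}$ on the simple factors, hence commutative, and never contains $\mathbb{H}$ -- this is precisely the observation that the simple quaternionic case does not occur. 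So the trivial case is impossible, and the homomorphism $Ad(L)\to SO(3)$ must be nontrivial, which forces $\mathfrak{l}$ to contain a factor surjecting onto a positive--dimensional subgroup of $SO(3)$: either a genuine $\mathfrak{sp}(1)\cong\mathfrak{su}(2)$ simple factor, or a one--dimensional center mapping onto $SO(2)\subset SO(3)$ with the remaining factor acting $\mathbb{H}$--linearly.

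Finally I would match these two possibilities against the classification of simple symmetric spaces in Appendix \ref{appA}. The genuine $\mathfrak{sp}(1)$ case places $(K,L,h)$, by Example \ref{2.6.18}, exactly among the pseudo--quaternionic--K\"ahler spaces already handled by the existence argument. The center--plus--$\mathbb{H}$--linear case forces the remaining simple factor to carry a quaternionic representation compatible with an $SO(2)$ twist, and inspection of the table shows the only family realizing this is $\mathfrak{so}^*(2)\oplus\mathfrak{so}^*(2n)\subset\mathfrak{so}^*(2n+2)$, where $\mathfrak{so}^*(2)\cong\mathfrak{u}(1)$ supplies the $SO(2)$ through left multiplication by $-k$ and $\mathfrak{so}^*(2n)$ acts $\mathbb{H}$--linearly. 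All other semisimple locally symmetric spaces fall into neither case and hence admit no quaternionic structure, which is the assertion.
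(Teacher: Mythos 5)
Your proposal is correct and follows essentially the same route as the paper: both arguments analyze the projection of $i(L)$ onto the two factors of $G_0=P(Sp(1)\times Gl(n,\mathbb{H}))$ (your trivial/nontrivial dichotomy for the induced map $Ad(L)\to SO(3)$ is just a cleaner packaging of the paper's three cases "image in $Gl(n,\mathbb{H})$", "image in the $Sp(1)$ part", "image meets both"), rule out the $\mathbb{H}$-linear case via the commutativity of $End^{L}(\mathfrak{k}/\mathfrak{l})$ from Section \ref{2.6}, and finish by inspecting the classification in Appendix \ref{appA}. The only addition is your explicit normality argument for existence via Lemma \ref{3.3.2}, which the paper leaves to the general results of Section \ref{3.3}.
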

\begin{proof}
Let $(K,L,h)$ be a semisimple homogeneous symmetric space and assume that the image of $i$ is contained in $Gl(n,\mathbb{H})$. Then the representation of $Ad(i(L))$ is of quternionic type and there is no such in the classification of semisimple symmetric spaces. The same is true in the case that the image of $i$ is contained in the part given by $a\in \mathbb{H}$. So the image of $i$ has intersection with both parts, but this implies that the representation of $Ad(i(L))$ is irreducible and going through the list of simple symmetric spaces we check that the only possibilities are pseudo-quaternionic-K\"ahler symmetric spaces (where $Sp(1)\times Sp(p,q)$ trivially sits in $G_0$) and the previous example. 
\end{proof}

\subsection{Para-quaternionic structures}\label{4.5}

The para-quaternionic structures correspond to the following grading of $\mathfrak{g}=\mathfrak{sl}(n+2,\mathbb{R})$:
$$
\left( \begin{array}{cc}
a & Z \\
X & A 
  \end{array} \right),
$$
where $A\in \mathfrak{gl}(n,\mathbb{R})$, $a\in \mathfrak{gl}(2,\mathbb{R})$, $\operatorname{Re}(tr(a))+\operatorname{Re}(tr(A))=0$, $X\in \mathbb{R}^{n}\otimes (\mathbb{R}^{2})^*$ and $Z\in \mathbb{R}^{2}\otimes (\mathbb{R}^{n})^*$.

The corresponding effective model has $G=PGl(n+2,\mathbb{R})$ and
$$
g_0=\left( \begin{array}{ccc}
-1 & 0& 0 \\
0 & -1& 0 \\
0 & 0 & E
  \end{array} \right).
$$

\begin{example}
Para-quaternionic structures on $SO(k+1,l+1)/SO(1,1)\times SO(k,l)$ and  $SO(k,l+2)/SO(2)\times SO(k,l)$.

First we notice that $SO(n+2)/SO(2)\times SO(n)$ is equivalent to the homogeneous model of $(G,P)$,  according to the proposition \ref{4.1.1}. Since $SO(2)\times SO(n)$ sits in $G_0:=P(Gl(2,\mathbb{R})\times Gl(n,\mathbb{R}))$, it does not matter, which signature the matrices have and we immediately get a flat para-quaternionic structure on $SO(k+1,l+1)/SO(1,1)\times SO(k,l)$ and  $SO(k,l+2)/SO(2)\times SO(k,l)$ just by inclusion.
\end{example}

We show that there is a para-quaternionic structure on any pseudo-para-quaternionic-K\"ahler symmetric space (cf. example \ref{2.6.18}) and there are no para-quaternionic structures on other  semisimple symmetric spaces except the previous examples. Thus in view of proposition \ref{4.1.2} all (locally) symmetric para-quaternionic geometries, whose symmetries (locally) generate semisimple groups, are of this form.

\begin{prop}
There is a regular normal para-quaternionic structure on any pseudo-para-quaternionic-K\"ahler locally symmetric space and there are no para-quaternionic structures on other semisimple locally symmetric spaces except $SO(k+1,l+1)/SO(1,1)\times SO(k,l)$ and  $SO(k,l+2)/SO(2)\times SO(k,l)$.
\end{prop}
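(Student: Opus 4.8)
The plan is to mirror the proof of the quaternionic case in the preceding section. For the existence half, I would start from a pseudo-para-quaternionic-K\"ahler symmetric space $(K,L,h)$, for which the lemma of Example \ref{2.6.18} gives $Ad(L)\subset Sp(2,\mathbb{R})\times Sp(2n,\mathbb{R})$; since both symplectic factors are contained in the linear factors of $G_0=P(Gl(2,\mathbb{R})\times Gl(n,\mathbb{R}))$, the inclusion furnishes a frame $\beta$ of the $-1$ eigenspace of $Ad(h)$ with $i_\beta(Ad(L))\subset G_0$, hence an extension $(i,\alpha)$ of $(K,L)$ to $(G,P)$ by Corollary \ref{4.1.5}, which I would then normalise using the normalization result of Section \ref{3.3}. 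This produces the asserted regular normal para-quaternionic structure.

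For the converse I would take any semisimple $(K,L,h)$ admitting an extension $(i,\alpha)$ to $(G,P)$. By Theorem \ref{4.1.4} the map $\alpha_{-1}$ identifies $\mathfrak{k}/\mathfrak{h}$ with $\mathfrak{g}_{-1}=\mathbb{R}^n\otimes(\mathbb{R}^2)^*$ as $Ad(L)$-modules, and $i(L)\subset G_0$. First I would examine the image of $i$ relative to the two factors $Gl(2,\mathbb{R})$ and $Gl(n,\mathbb{R})$: if it projects trivially modulo scalars to the $Gl(2,\mathbb{R})$-factor the isotropy representation splits as two isomorphic copies of the $Gl(n,\mathbb{R})$-representation, while if it projects trivially to the $Gl(n,\mathbb{R})$-factor it splits as $n$ copies of a two-dimensional representation. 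Comparing with the classification of isotropy representations of simple symmetric spaces, where each simple factor is irreducible or splits into exactly two dual factors, I would rule out both of these degenerate cases (taking care of the self-dual subcase, where two dual factors become isomorphic copies, by checking that it reduces to one of the listed geometries).

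The remaining possibility is that $i(L)$ has nontrivial image in both factors, so that $\mathfrak{k}/\mathfrak{h}$ genuinely is a tensor product $\mathbb{R}^n\otimes(\mathbb{R}^2)^*$ with both tensor factors nontrivial and the isotropy representation irreducible. I would then run through the tables of Appendix \ref{appA} to isolate precisely those simple symmetric spaces whose isotropy representation has this tensor shape. The hard part will be separating the two outcomes inside this irreducible case, since both the pseudo-para-quaternionic-K\"ahler spaces and the two families $SO(k+1,l+1)/SO(1,1)\times SO(k,l)$ and $SO(k,l+2)/SO(2)\times SO(k,l)$ yield an irreducible tensor product of the same shape; they are told apart only by the group acting on the two-dimensional factor, namely the full $Sp(2,\mathbb{R})\cong Sl(2,\mathbb{R})$ preserving an area form in the K\"ahler case versus the one-parameter group $SO(1,1)$ or $SO(2)$ in the two flat families. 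To finish I would verify via the curvature formula of Corollary \ref{1.3.6} that the two exceptional families give flat structures, recovering the ones already exhibited in the example, whereas the pseudo-para-quaternionic-K\"ahler case gives the non-flat regular normal para-quaternionic structure, and that no other entry of the classification realises this tensor form.
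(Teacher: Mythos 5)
Your proposal is correct and follows essentially the same route as the paper: rule out the degenerate cases where $i(L)$ lands (up to scalars) in only one of the two factors of $G_0$ by noting that the isotropy representation would then split into isomorphic copies, which does not occur in the classification of semisimple symmetric spaces, and then handle the remaining irreducible tensor-product case by inspection of the tables, with the two flat $SO$ families and the pseudo-para-quaternionic-K\"ahler spaces as the only survivors. The extra details you supply (the self-dual subcase, and the curvature check for flatness of the two exceptional families) are consistent with what the paper delegates to the preceding example.
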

\begin{proof}
Let $(K,L,h)$ be a semisimple homogeneous symmetric space and assume that the image of $i$ is contained in $Sl(n,\mathbb{R})$. Then the representation of $Ad(i(L))$ decomposes to two copies of standard representation of $\mathfrak{sl}(n,\mathbb{R})$ and there is no such in the classification of semisimple symmetric spaces. The same is true in the case that the image of $i$ is contained in the part given by $a\in \mathfrak{gl}(2,\mathbb{R})$. So the image of $i$ has intersection with both parts, but this implies that the representation of $Ad(i(L))$ is irreducible and going through the list of simple symmetric spaces we check that the only possibilities are pseudo-para-quaternionic-K\"ahler symmetric spaces (where $Sp(2,\mathbb{R})\times Sp(2n,\mathbb{R})$ trivially sits in $P(Gl(2,\mathbb{R})\times Gl(2n,\mathbb{R}))$) and those in previous example. 
\end{proof}

\newpage
\section{Symmetric parabolic geometries II. \\--  Parabolic contact geometries}

We investigate the symmetric parabolic contact geometries in this chapter. The symmetries in this case are point-wise investigated in \cite{odk17}, again under more general assumptions. The results here are published in \cite{odk5}. The section \ref{5.1} contains main results on structure of symmetric parabolic contact geometries in the general case in proposition \ref{5.1.1} and in the semisimple case theorem \ref{5.2.1}. The details on construction in the semisimple case are contained in the proposition \ref{pp29}. The sections \ref{5.2}, \ref{5.3}, \ref{5.4} and \ref{5.5} deal with non-flat symmetric parabolic contact geometries in detail and provide classification of those with semisimple group (locally) generated by symmetries, assuming there is no straight complex factor. The section \ref{5.6} contains remarks on geometric interpretation of some examples in preceding sections.

%In this section, we investigate the symmetric parabolic geometries. The results are comming from \cite{odk5}, where one can found more details and computations.

\subsection{Parabolic contact geometries and symmetries}\label{5.1}

Let $G$ be a semisimple Lie group and $P$ parabolic subgroup corresponding to a grading of $\mathfrak{g}=\mathfrak{g}_{-2}+\mathfrak{g}_{-1} +\mathfrak{g}_{0}+\mathfrak{g}_{1}+\mathfrak{g}_{2}$ such, that $ \operatorname{dim}(\mathfrak{g}_{\pm 2})=1$ and the Lie bracket $ \mathfrak{g}_{-1}\times \mathfrak{g}_{-1}\to \mathfrak{g}_{-2}$ is non-degenerate. Then the parabolic geometries of type $(G,P)$ are called parabolic contact geometries.  The full list of parabolic contact geometries of non-exceptional type is in appendix \ref{appB}.

The following proposition is a similar result to the proposition \ref{2.5.1}.

\begin{prop}\label{5.1.1}
For each regular locally symmetric parabolic contact geometry $\mathcal{G}\to M$ of type $(G,P)$, the following conditions are equivalent:
\begin{enumerate}
\item the involution $\sigma$ of $\mathfrak{g}$ defined as $(-1)^i$ on $\mathfrak{g}_i$ is inner i.e. $\sigma=Ad(g_0)$ for some $g_0\in G_0$ and, at the same time, $T^-M=T^{-1}M$;
\item the parabolic contact geometry is torsion-free and of maximal torsion;
\item the parabolic contact geometry is an extension of a flat Cartan geometry, which is locally a fibre bundle over symmetric space with one dimensional fiber.
\end{enumerate}
\end{prop}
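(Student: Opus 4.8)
The plan is to prove the three conditions equivalent cyclically, $(1)\Rightarrow(2)\Rightarrow(3)\Rightarrow(1)$, following the pattern of Propositions \ref{2.5.1} and \ref{4.1.2} but keeping track of both negative pieces $\mathfrak{g}_{-1},\mathfrak{g}_{-2}$ of the contact grading. Throughout I identify the symmetry $S_x$ with the element $g_0\in G_0$ for which $gr(P^1S_x)(u)=u\cdot g_0$, and I use that $S_x$ being an automorphism forces the harmonic curvature to be invariant under the induced $G_0$-action, i.e. $\kappa_H(u)=Ad(g_0)\cdot\kappa_H(u)$. The structural fact that drives everything is the following: any $g_0\in G_0$ with $g_0^2=\mathrm{id}$ preserves the grading and the Lie bracket $\mathfrak{g}_{-1}\times\mathfrak{g}_{-1}\to\mathfrak{g}_{-2}$, and since this bracket is non-degenerate with $\dim\mathfrak{g}_{-2}=1$ there are exactly two maximal choices of $(-1)$-eigenspace: the involution $\sigma$ itself ($-1$ on all of $\mathfrak{g}_{-1}$, $+1$ on $\mathfrak{g}_{-2}$, so $T^-M=T^{-1}M$), and a Lagrangean involution ($-1$ on a Lagrangean $V\subset\mathfrak{g}_{-1}$ and $-1$ on $\mathfrak{g}_{-2}$). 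Both satisfy the maximality axiom [GS5'] because neither $(-1)$-eigenspace sits inside the other, but in the Lagrangean case $V$ together with the $\mathfrak{g}_{-2}$-direction spans a \emph{proper integrable} subalgebra of $\mathfrak{g}_-$, so by Proposition \ref{3.4.3} such a geometry is never of maximal torsion.

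For $(1)\Rightarrow(2)$, maximal torsion is immediate from $T^-M=T^{-1}M$: the only integrable distribution containing $T^{-1}M$ is $TM$ (the remark preceding Proposition \ref{3.4.3}). For torsion-freeness I would run the parity argument: $\sigma$ acts on a harmonic component $\mathfrak{g}_a\wedge\mathfrak{g}_b\to\mathfrak{g}_c$ by $(-1)^{a+b+c}=(-1)^{\mathrm{hom}}$, so $Ad(g_0)$-invariance kills every odd-homogeneity component. Regularity already removes the homogeneity-zero piece $\mathfrak{g}_{-1}\wedge\mathfrak{g}_{-1}\to\mathfrak{g}_{-2}$, and parity removes the homogeneity-one torsions $\mathfrak{g}_{-1}\wedge\mathfrak{g}_{-1}\to\mathfrak{g}_{-1}$ and $\mathfrak{g}_{-2}\wedge\mathfrak{g}_{-1}\to\mathfrak{g}_{-2}$; what is left has values in $\mathfrak{p}$, i.e. the geometry is torsion-free.

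The one exception, and the step I expect to be the main obstacle, is the single even-homogeneity torsion slot $\mathfrak{g}_{-2}\wedge\mathfrak{g}_{-1}\to\mathfrak{g}_{-1}$ (homogeneity two), which is $\sigma$-invariant and is eliminated neither by regularity nor by parity. To close $(1)\Rightarrow(2)$ I must show this component never occurs in the harmonic curvature of a contact grading, i.e. that all harmonic torsion components lie in $\bigwedge^2\mathfrak{g}_{-1}^*\otimes\mathfrak{g}_{-1}$ and hence in homogeneity one. This is exactly where the classification enters: I would read it off Kostant's description of $H^2(\mathfrak{g}_-,\mathfrak{g})$ together with the list in Appendix \ref{appB}, checking case by case that no $\mathfrak{g}_{-1}$-valued harmonic component built on $\mathfrak{g}_{-2}\wedge\mathfrak{g}_{-1}$ appears.

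For $(2)\Rightarrow(3)$, maximal torsion rules out the Lagrangean alternative of the first paragraph, so the symmetry is $\sigma$, $T^-M=T^{-1}M$, and $T^+M$ is the one-dimensional $\mathfrak{g}_{-2}$-direction. Theorem \ref{3.4.4} then presents the geometry as an extension of a flat Cartan geometry of type $(K,H)$ with $K/H$ connected and simply connected, trivial maximal normal subgroup, and $h\in K$, $h^2=\mathrm{id}$, realizing the symmetries by $S_{kH}fH=khk^{-1}fH$; Corollary \ref{2.4.6} identifies the leaf space of the one-dimensional foliation tangent to $T^+M$ with the symmetric space $(K/N,L/N,h)$, whence $M$ is locally a fibre bundle over this symmetric space with one-dimensional fibre, which is (3). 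Finally, for $(3)\Rightarrow(1)$, the symmetry acts on the fibre through $i(h)=g_0\exp(Z)$, and after conjugating by $\exp(\tfrac12 Z)$ (as in the paragraph before Theorem \ref{4.1.4}) we may take $i(h)=g_0$; one-dimensionality of the fibre forces $T^+M$ to be precisely the $\mathfrak{g}_{-2}$-part, so $Ad(g_0)$ is $-1$ on $\mathfrak{g}_{-1}$ and $+1$ on $\mathfrak{g}_{-2}$, and the bracket-generating property together with effectiveness of $(G,P)$ upgrades this to $Ad(g_0)=\sigma$ on all of $\mathfrak{g}$, giving both $\sigma$ inner and $T^-M=T^{-1}M$.
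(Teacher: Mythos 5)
Your proposal is correct and follows essentially the same route as the paper's proof: the parity argument under $Ad(g_0)=\sigma$ kills the harmonic torsion for $(1)\Rightarrow(2)$, Theorem \ref{3.4.4} together with Corollary \ref{2.4.6} gives the fibration over a symmetric space for $(2)\Rightarrow(3)$, and the conjugation by $\exp(\tfrac12 Z)$ plus the bracket-generating property gives $(3)\Rightarrow(1)$. You are in fact somewhat more careful than the paper in two spots it leaves implicit — ruling out the Lagrangean-type involutions via the maximal-torsion hypothesis, and noting that the even-homogeneity slot $\mathfrak{g}_{-2}\wedge\mathfrak{g}_{-1}\to\mathfrak{g}_{-1}$ must be excluded by the Kostant classification of harmonic components rather than by parity (the paper simply cites that the only components in general dimension are $\mathfrak{g}_{-1}\wedge\mathfrak{g}_{-1}\to\mathfrak{g}_{-1}$ and $\mathfrak{g}_{-1}\wedge\mathfrak{g}_{-1}\to\mathfrak{g}_{0}$).
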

\begin{proof}
There are only two possible components of harmonic curvature of the parabolic contact geometries in the general dimensions. Torsion $\tau: \mathfrak{g}_{-1}\times \mathfrak{g}_{-1}\to \mathfrak{g}_{-1}$ and curvature $\zeta: \mathfrak{g}_{-1}\times \mathfrak{g}_{-1}\to \mathfrak{g}_{0}$ (except some low dimensional geometries, where the harmonic component is of the type $\mathfrak{g}_{-2}\times \mathfrak{g}_{-1}\to \mathfrak{g}_{1}$).

If $Ad(g_0)=\sigma$ and $T^-M=T^{-1}M$, then $-\tau(X,Y)=\tau(-X,-Y)=\tau(X,Y)=0$. Thus 1. implies 2.

The corresponding torsion-free Cartan geometry of maximal torsion from corollary \ref{3.4.4} is a flat Cartan geometry, which is locally fibre bundle over symmetric space with one dimensional fiber, due to proposition \ref{3.4.3} and corollary \ref{2.4.6}.

Let $(i,\alpha)$ be an extension of a flat Cartan geometry of type $(K,H,h)$ such, that $L/H$ is one dimensional, where $L$ is the centralizer of $h$ in $K$. Then there are some $g_0\in G_0$ and $Z\in \mathfrak{g}_1\oplus \mathfrak{g}_2$ such that $i(h)=g_0\exp(Z)$. Since $\underline{Ad}(i(h))=Ad(g_0)$ acts as $-id$ on $T^{-1}M$, it acts as $-id$ on $\mathfrak{g}_{-1}$. Then we obtain , that $Ad(g_0)=\sigma$, again from the bracket generating property. Thus 3. implies 1. and the proof is concluded.
\end{proof}

Consequently, many types of the symmetric parabolic contact geometries are immediately flat, because the torsion is the only component of the harmonic curvature (see Appendix \ref{appB}):

\begin{cor}
Extensions to torsion-free locally symmetric regular normal parabolic contact geometries of maximal torsion with non-trivial curvature have $\mathfrak{g}$ equal to $\mathfrak{sl}(n,\mathbb{R})$, $\mathfrak{su}(p,q)$ or $\mathfrak{sp}(2n,\mathbb{R})$.
\end{cor}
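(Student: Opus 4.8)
The plan is to translate the hypotheses into a statement about the harmonic curvature $\kappa_H$ and then to read off the admissible algebras from the list of harmonic curvature modules in Appendix \ref{appB}. First I would invoke the Lemma characterizing normal regular parabolic geometries: such a geometry is flat exactly when $\kappa_H$ vanishes, and is torsion-free exactly when $\kappa_H$ takes values in $\mathfrak{p}$. Since the geometry in question has non-trivial curvature it is not flat, so $\kappa_H\neq 0$; and since it is torsion-free, $\kappa_H$ has values in $\mathfrak{p}=\mathfrak{g}_0\oplus\mathfrak{g}_1\oplus\mathfrak{g}_2$.

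Next I would use the description of the harmonic curvature of a parabolic contact geometry recalled in the proof of Proposition \ref{5.1.1}: in the generic dimensions its only possible components are a torsion $\tau$ of type $\mathfrak{g}_{-1}\times\mathfrak{g}_{-1}\to\mathfrak{g}_{-1}$ and a curvature $\zeta$ of type $\mathfrak{g}_{-1}\times\mathfrak{g}_{-1}\to\mathfrak{g}_0$, the only exception being certain low-dimensional geometries whose harmonic component is of type $\mathfrak{g}_{-2}\times\mathfrak{g}_{-1}\to\mathfrak{g}_1$. Since $\mathfrak{g}_{-1}\not\subset\mathfrak{p}$ while $\mathfrak{g}_0,\mathfrak{g}_1\subset\mathfrak{p}$, the torsion-free condition forces the $\mathfrak{g}_{-1}$-valued part $\tau$ to vanish. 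Hence a non-zero $\kappa_H$ must contain a non-zero $\mathfrak{p}$-valued (non-torsion) harmonic component, that is a component of type $\zeta$, or the exceptional $\mathfrak{g}_1$-valued component.

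Finally I would go through the classification of parabolic contact geometries in Appendix \ref{appB} and determine for which $\mathfrak{g}$ the cohomology $H^2(\mathfrak{g}_-,\mathfrak{g})$ carries such a non-torsion component; the assertion is that these are exactly $\mathfrak{sl}(n,\mathbb{R})$, $\mathfrak{su}(p,q)$ and $\mathfrak{sp}(2n,\mathbb{R})$. For every other parabolic contact grading the entire harmonic curvature is of torsion type, so torsion-freeness would give $\kappa_H=0$ and hence flatness, contradicting the non-triviality of the curvature; this eliminates all remaining algebras and proves the claim. The main obstacle is precisely this last bookkeeping step: one must identify, for each contact grading, the homogeneity in which the harmonic module $H^2(\mathfrak{g}_-,\mathfrak{g})$ sits, equivalently whether it lies in $\mathfrak{g}_{-1}$ or in $\mathfrak{p}$. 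This is a case-by-case application of Kostant's theorem, but it is exactly the information tabulated in Appendix \ref{appB}, so the work reduces to inspecting those tables.
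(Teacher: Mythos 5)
Your proposal is correct and follows essentially the same route as the paper: the paper justifies this corollary by the one-line remark that for the remaining contact gradings the torsion is the only component of the harmonic curvature, combined with the lemma that a normal regular geometry is flat iff $\kappa_H=0$ and torsion-free iff $\kappa_H$ has values in $\mathfrak{p}$, exactly as you argue. Your write-up merely makes explicit the bookkeeping over the table of contact gradings in Appendix B that the paper leaves implicit.
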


As in the one graded case, we will assume that there is $g_0\in G_0$ such, that $Ad(g_0)=\sigma$. Then the following holds for the same reasons as in the case of one graded geometries.

\begin{cor}
Every flat parabolic contact geometry of type $(G,P)$ is locally symmetric.
\end{cor}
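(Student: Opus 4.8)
The plan is to reproduce the one graded argument, the only essentially new feature being that the isotropy group will be strictly smaller than the centralizer of $g_0$. Since the geometry is flat, Proposition \ref{flat1} makes it locally isomorphic to the homogeneous model $(G\to G/P,\omega_G)$; as being locally symmetric is a local condition, it suffices to equip the model with a system of local symmetries. As for any parabolic subgroup of a semisimple group (compare the one graded statement in Proposition \ref{4.1.1}), a maximal compact subgroup $K\subset G$ acts transitively on the flag manifold $G/P$, and since $g_0$ is an involution we may choose $K\ni g_0$; then $g_0\in K\cap P=:L$ because $g_0\in G_0\subset P$. The inclusion $K\hookrightarrow G$ defines an extension $(i,\alpha)$ of $(K,L)$ to $(G,P)$ with $i,\alpha$ the inclusions, transitivity giving $\mathfrak k/\mathfrak l\cong\mathfrak g/\mathfrak p$; by Corollary \ref{1.3.6} the $K$-homogeneous model $(G\to G/P,\omega_G)$ is precisely the extension along $(i,\alpha)$ of $(K\to K/L,\omega_K)$. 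Hence Theorem \ref{3.5.1} will apply once its hypotheses are verified, producing the local symmetries $S_{kL}\,fL=k\,g_0\,k^{-1}\,fL$.

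First I would check the group theoretic hypotheses. With a Cartan involution $\theta$ adapted to the grading ($\theta\mathfrak g_i=\mathfrak g_{-i}$) one computes $\mathfrak l=\mathfrak k\cap\mathfrak g_0$, whereas the centralizer $Z_K(g_0)$ has Lie algebra $\mathfrak k\cap(\mathfrak g_{-2}\oplus\mathfrak g_0\oplus\mathfrak g_2)$, which enlarges $\mathfrak l$ by the one dimensional space $\{X+\theta X:X\in\mathfrak g_2\}$. This is exactly where the contact case differs from the one graded case: $L$ is now only contained in $Z_K(g_0)$, not equal to it, and the missing one parameter direction is the one dimensional fibre of Proposition \ref{5.1.1}(3) and of the leaf description in Corollary \ref{2.4.6}. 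Since Theorem \ref{3.5.1} only requires containment, this is harmless: indeed every $l\in L\subset G_0$ is grading preserving, hence commutes with $\sigma=Ad(g_0)$, so $Ad(lg_0l^{-1})=Ad(g_0)$ and $lg_0l^{-1}=g_0$ by uniqueness of $g_0$, giving $L\subseteq Z_K(g_0)$. Moreover $K/L\cong G/P$ is connected and $(K,L)$ is effective, because a normal subgroup of $K$ lying in $L$ acts trivially on $K/L\cong G/P$ and hence lies inside the trivial maximal normal subgroup of $G$ contained in $P$. With $g_0\in L$, $g_0^2=e$ and $L\subseteq Z_K(g_0)$, the formula above is well defined and satisfies \textrm{[GS2]} and \textrm{[GS3]} as in the proof of Theorem \ref{3.5.1}.

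The main obstacle is the maximality axiom \textrm{[GS5']}, namely that no involution $Ad(p)$, $p\in P$, has $-1$ eigenspace strictly containing the $-1$ eigenspace $\mathfrak g_{-1}\oplus\mathfrak g_1$ of $\sigma$. I would establish this by a short structural computation. Writing $p=g_0'\exp(Z)$ with $g_0'\in G_0$ and $Z\in\mathfrak g_1\oplus\mathfrak g_2$, and comparing $\mathfrak g_{-1}$ components in $Ad(p)X=-X$ for $X\in\mathfrak g_{-1}$, forces $Ad(g_0')=-\mathrm{id}$ on $\mathfrak g_{-1}$. Because $\mathfrak g_{-1}$ generates $\mathfrak g_-$ and $\mathfrak g_0$ acts faithfully on $\mathfrak g_{-1}$ (effectiveness), the bracket relations propagate this to $Ad(g_0')=\sigma$ on all of $\mathfrak g$; thus $g_0'=g_0$, and $Ad(g_0')Z=-Z$ then gives $Z\in\mathfrak g_1$. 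Finally $p=g_0\exp(Z)$ with $Z\in\mathfrak g_1$ is conjugate to $g_0$ by $\exp(\frac12 Z)\in P$, so the $-1$ eigenspace of $Ad(p)$ has the same dimension as that of $\sigma$ and cannot strictly contain it. This verifies \textrm{[GS5']}, so Theorem \ref{3.5.1} applies and the flat parabolic contact geometry is locally symmetric.
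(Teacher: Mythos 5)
Your proposal is correct and follows essentially the same route as the paper: the paper's argument (carried over verbatim from the one-graded case) is precisely to include the maximal compact subgroup $K$ with $L=K\cap P$, view the homogeneous model as the extension of $(K\to K/L,\omega_K)$ along the inclusions, and invoke Theorem \ref{3.5.1} to produce the symmetries $S_{kL}fL=kg_0k^{-1}fL$. You merely supply details the paper leaves implicit — in particular that in the contact case $L$ is only contained in, rather than equal to, the centralizer of $g_0$ in $K$, and the explicit verification of the maximality condition [GS5'] — both of which check out.
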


In the following theorem, we characterize all extensions to torsions-free symmetric regular parabolic contact geometries of maximal torsion with semisimple group (locally) generated by symmetries.

\begin{thm}\label{5.2.1}
Let $K/H$ be a connected homogeneous space such, that
 \begin{itemize}
\item there is $h\in K$ such, that $h^2=id$, $H$ is contained in the centralizer $L$ of $h$ in $K$ and $\operatorname{dim}(L/H)=1$;
\item the maximal normal subgroup of $K$ contained in $H$ is trivial;
\item $K$ is semisimple.
\end{itemize} 
Let $(i,\alpha)$ be (up to equivalence) an extension of $(K,H)$ to a regular parabolic contact geometry of type $(G,P)$ with $\mathfrak{g}$ equal to $\mathfrak{sl}(n,\mathbb{R})$, $\mathfrak{su}(p,q)$ or $\mathfrak{sp}(2n,\mathbb{R})$. Then:
\begin{enumerate}
\item $i(h)=g_0$, $\alpha(\mathfrak{l})\subset \mathfrak{g}_{-2}+ \mathfrak{g}_{0}+ \mathfrak{g}_{2}$, $\alpha(\mathfrak{k}/\mathfrak{l})\subset \mathfrak{g}_{-1}+ \mathfrak{g}_{1}$;

\item $\kappa(\mathfrak{g}_{-1},\mathfrak{g}_{-1})\subset \mathfrak{g}_{0}+\mathfrak{g}_{2}$, $\kappa(\mathfrak{g}_{-1},\mathfrak{g}_{-2})\subset \mathfrak{g}_{1}$ and $\kappa(\mathfrak{g}_{-2},\mathfrak{g}_{-2})=0$, i.e. $\alpha$ restricted to $\mathfrak{l}$ is a Lie algebra homomorphism;

\item $i(H)\subset G_0$, the geometry $K/H$ is reductive and $\mathfrak{l/h}$ is in center of $\mathfrak{l}$;

\item the semisimple symmetric space $(K/N,L/N,h)$ has only pseudo--hermi\-tian or para--pseudo--hermitian simple factors, where $N$ is the maximal normal  (discrete) subgroup of $K$ contained in $L$;

\item $H$ has trivial intersection with center of $L$ restricted to real factors and at most one dimensional  intersection, when restricted to (straight) complex factors;

\item the normality conditions are $$\sum_i [Z_i,\kappa(X^{-1},X_i)]=0,\ \sum_i \kappa([Z_i,X^{-2}],X_i)=0$$ for any $X=X^{-1}+X^{-2}\in \mathfrak{g}_{-1}+\mathfrak{g}_{-2}$, where $X_i$ is frame of $\mathfrak{g}/\mathfrak{p}$ and $Z_i$ frame of $\mathfrak{p}_+$  dual to $X_i$ (with respect to the Killing form of $\mathfrak{g}$).
\end{enumerate}
\end{thm}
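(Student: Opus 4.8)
The plan is to follow the pattern of the AHS analysis in Proposition \ref{4.1.2} and Theorem \ref{4.1.4}, adapting every step to the contact grading $\mathfrak{g}=\mathfrak{g}_{-2}+\mathfrak{g}_{-1}+\mathfrak{g}_0+\mathfrak{g}_1+\mathfrak{g}_2$ and to the new feature $\dim(\mathfrak{l}/\mathfrak{h})=1$, which records the contact direction $\mathfrak{g}_{-2}$. Proposition \ref{5.1.1} already supplies the unique $g_0\in G_0$ with $\sigma=Ad(g_0)$, so I would first normalize $i(h)$. Writing $i(h)=g_0\exp(Z)$ with $Z\in\mathfrak{p}_+=\mathfrak{g}_1+\mathfrak{g}_2$, the relation $i(h)^2=e$ together with $Ad(g_0)=\sigma$ and the Baker--Campbell--Hausdorff formula (using $[\mathfrak{g}_1,\mathfrak{g}_1]\subset\mathfrak{g}_2$ and the vanishing of higher brackets) constrains $Z$, and conjugating the extension by a suitable element of $\exp(\mathfrak{p}_+)$ produces an equivalent extension by Proposition \ref{1.4.7} with $i(h)=g_0$; this is the contact analogue of the conjugation by $\exp(\tfrac12 Z)$ used for AHS-structures. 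Once $i(h)=g_0$, the extension axiom $Ad(i(h))\circ\alpha=\alpha\circ Ad(h)$ forces $\alpha$ to intertwine the $\pm1$-eigenspace decompositions of $Ad(h)$ and of $\sigma$. Since the $+1$-eigenspace of $Ad(h)$ is $\mathfrak{l}$ and that of $\sigma$ is $\mathfrak{g}_{-2}+\mathfrak{g}_0+\mathfrak{g}_2$, while the $-1$-eigenspaces are $\mathfrak{k}/\mathfrak{l}$ and $\mathfrak{g}_{-1}+\mathfrak{g}_1$, this yields both inclusions in part (1).

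For part (2) I would use that the curvature of a symmetric geometry is invariant under the symmetry at the base point, whose linear action on $\mathfrak{g}$ is exactly $Ad(g_0)=\sigma$; consequently $\kappa$ preserves the $\sigma$-parity, i.e. $\kappa(\sigma\cdot,\sigma\cdot)=\sigma\,\kappa(\cdot,\cdot)$. Combining this parity with torsion-freeness, which by the lemma of Section \ref{3.3} puts $\kappa$ in $\mathfrak{p}=\mathfrak{g}_0+\mathfrak{g}_1+\mathfrak{g}_2$, gives $\kappa(\mathfrak{g}_{-1},\mathfrak{g}_{-1})\subset\mathfrak{g}_0+\mathfrak{g}_2$ (the even part of $\mathfrak{p}$) and $\kappa(\mathfrak{g}_{-1},\mathfrak{g}_{-2})\subset\mathfrak{g}_1$ (the odd part of $\mathfrak{p}$), whereas $\kappa(\mathfrak{g}_{-2},\mathfrak{g}_{-2})=0$ is automatic from $\dim\mathfrak{g}_{-2}=1$ and antisymmetry. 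The reformulation ``$\alpha|_{\mathfrak{l}}$ is a Lie algebra homomorphism'' then follows from the curvature formula $\kappa([e,e])(X,Y)=[\alpha(X),\alpha(Y)]-\alpha([X,Y])$ of Corollary \ref{1.3.6}, once one identifies the $\mathfrak{l}/\mathfrak{h}$-slot of $\mathfrak{k}/\mathfrak{h}$ with the one-dimensional $\mathfrak{g}_{-2}$, so that the deviation of $\alpha$ from a homomorphism on $\mathfrak{l}$ is measured precisely by the vanishing components above.

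Part (3) I would obtain by computing the centralizer of $g_0$ in $P$: writing an element of $i(L)$ as $p_0\exp(Y)$ with $Y=Y_1+Y_2\in\mathfrak{g}_1+\mathfrak{g}_2$, commutation with $g_0$ forces $Y_1=0$ through $\sigma Y_1=-Y_1$, and the sharper statement $i(H)\subset G_0$, reductivity of $K/H$, and centrality of $\mathfrak{l}/\mathfrak{h}$ in $\mathfrak{l}$ follow by identifying $\mathfrak{l}/\mathfrak{h}$ with $\mathfrak{g}_{-2}$ and tracking the grading of $[\mathfrak{l},\mathfrak{l}]$ (reductivity being the analogue of the argument in Proposition \ref{1.5.3}). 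Parts (4) and (5) are the substantial ones. Here I would exploit that the contact bracket $\mathfrak{g}_{-1}\times\mathfrak{g}_{-1}\to\mathfrak{g}_{-2}\cong\mathbb{R}$ is a nondegenerate $Ad(L)$-invariant skew form on $\mathfrak{g}_{-1}$, so that $Ad(L)$ acts by (conformal) symplectic transformations compatible with the symmetric-space structure of $K/L$. Matching this against the list of semisimple symmetric spaces in Appendix \ref{appA} and the admissible contact gradings of $\mathfrak{sl}(n,\mathbb{R})$, $\mathfrak{su}(p,q)$ and $\mathfrak{sp}(2n,\mathbb{R})$ in Appendix \ref{appB}, exactly as in the quaternionic and para-quaternionic analyses of Sections \ref{4.4} and \ref{4.5}, singles out the pseudo-hermitian and para-pseudo-hermitian factors, and the bound $\dim(L/H)=1$ then pins down factor by factor the intersection of $H$ with the centre of $L$.

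Finally, part (6) is a direct specialization: I would insert the curvature, now known from part (2) to live in the prescribed graded pieces, into the Kostant codifferential formula of Lemma \ref{3.3.2}, split the argument $X=X^{-1}+X^{-2}$ into its $\mathfrak{g}_{-1}$- and $\mathfrak{g}_{-2}$-components, and use the $\sigma$-parity to see that the two a priori coupled sums decouple into the two displayed equations. The main obstacle I anticipate lies in parts (4) and (5): in contrast to the purely formal parity and codifferential computations, these demand a genuine case-by-case passage through the classification, verifying for each candidate simple symmetric space whether its isotropy representation carries the required invariant symplectic (respectively split) structure so that $Ad(L)$ embeds into the relevant $G_0$, and excluding all the remaining types.
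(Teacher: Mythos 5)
Your overall strategy — normalize $i(h)=g_0$ by conjugating inside $\exp(\mathfrak{p}_+)$, read off part (1) from the $Ad(i(h))$-equivariance of $\alpha$, get part (2) from $\sigma$-parity of $\kappa$ plus torsion-freeness and $\dim\mathfrak{g}_{-2}=1$, derive part (4) from the $L$-invariant symplectic form that the contact bracket induces on $\mathfrak{k}/\mathfrak{l}$ together with the classification of symplectic structures on semisimple symmetric spaces, and obtain part (6) by grading bookkeeping in the codifferential formula of Lemma \ref{3.3.2} — is exactly the route the paper takes for (1), (2), (4), (5) and (6).

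The genuine gap is part (3). Your commutation argument with $g_0$ only yields $i(L)\subset Z_{G_0}(g_0)\cdot\exp(\mathfrak{g}_2)$, i.e. $\alpha(\mathfrak{h})\subset\mathfrak{g}_0+\mathfrak{g}_2$; it does not eliminate the $\mathfrak{g}_2$-component, which is precisely what $i(H)\subset G_0$ asserts, nor does it show that $\mathfrak{h}$ contains $[\mathfrak{l},\mathfrak{l}]$ (centrality of $\mathfrak{l}/\mathfrak{h}$). ``Tracking the grading of $[\mathfrak{l},\mathfrak{l}]$'' is not enough here: since $[\mathfrak{g}_{-2},\mathfrak{g}_2]\neq 0$, a nonzero $\mathfrak{g}_2$-component of $\alpha(\mathfrak{h})$ does not produce any immediate grading contradiction inside $\mathfrak{g}_{-2}+\mathfrak{g}_0+\mathfrak{g}_2$. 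The paper's proof needs a separate structural argument at this point: $\mathfrak{g}_{-2}+\mathfrak{g}_2$ generates a subalgebra $\mathfrak{z}\cong\mathfrak{sl}(2,\mathbb{R})$ or $\mathfrak{su}(2)$, these are the only parts of $\alpha(\mathfrak{l})$ acting nontrivially on $\mathfrak{g}_{-2}$, and one runs a three-way case analysis on $\alpha(\mathfrak{l})\cap\mathfrak{z}$ (nilpotent, equal to $\mathfrak{z}$, or solvable). The last two cases are excluded by combining the semisimplicity of $\mathfrak{k}$ (so $\mathfrak{l}$ has only semisimple or abelian factors, and $\mathfrak{z}$ cannot be a factor of $\mathfrak{k}$) with the extension axiom: a root vector of such an $\mathfrak{sl}(2)$ lying in $\mathfrak{h}$ would act nontrivially on $\mathfrak{k}/\mathfrak{l}$ while its image in $\mathfrak{g}_2$ acts trivially on $\mathfrak{g}/\mathfrak{p}$, contradicting the required isomorphism of the adjoint representations. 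Without this step, part (3) — and hence the reductivity used implicitly in your treatment of parts (4) and (5) — is unproven.
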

\begin{proof}
Clearly, $\alpha(\mathfrak{k}/\mathfrak{l})\subset \mathfrak{g}^{-1}$ and we can assume, that $i(h)=g_0$, for the same reasons as in the case of one graded geometries. Then the first claim is result of computation with $Ad(i(h))$-action.

The second claim is consequence of $Ad(i(h))$--action, regularity and \linebreak torsion--freeness. Last part follows, because $\mathfrak{g}_{-2}$ is one dimensional.

Now, $\alpha(\mathfrak{h})\subset \mathfrak{g}_{0}+\mathfrak{g}_{2}$. For each parabolic contact geometry, $\mathfrak{g}_{-2}+\mathfrak{g}_{2}$ generates subalgebra $\mathfrak{z}$ isomorphic to $\mathfrak{sl}(2,\mathbb{R})$ or $\mathfrak{su}(2)$, and these are the only parts of $\alpha(\mathfrak{l})\subset \mathfrak{g}_{-2}+\mathfrak{g}_{0}+\mathfrak{g}_{2}$ with nontrivial action on $\mathfrak{g}_{-2}$. Since $\mathfrak{k}$ is semisimple, $\mathfrak{l}$ contains only semisimple or abelian simple factors. We investigate all possible cases of $\alpha(\mathfrak{l})\cap \mathfrak{z}$:

a) $\alpha(\mathfrak{l})\cap \mathfrak{z}$ is nilpotent, then the third claim holds.

b) $\alpha(\mathfrak{l})\cap \mathfrak{z}=\mathfrak{z}$. Thus preimage of $\mathfrak{z}$ contains subalgebra isomorphic to $\mathfrak{z}$. Then since $\mathfrak{z}$ is not factor of $\mathfrak{k}$, the root space in $\mathfrak{z}\cap \mathfrak{h}$ has nontrivial action on $\mathfrak{k}/\mathfrak{l}$ and its image in $\mathfrak{g}_{2}$ has trivial action. Contradiction.

c) $\alpha(\mathfrak{l})\cap \mathfrak{z}$ is solvable. Since $\mathfrak{l}$ does not contain solvable factors, there is subalgebra of $\mathfrak{l}$ isomorphic to $\mathfrak{z}$ with an solvable subalgebra mapped onto $\alpha(\mathfrak{l})\cap \mathfrak{z}$. The image of root space in $\mathfrak{z}\cap \mathfrak{h}$ maps $\mathfrak{g}_{-2}$ to $\mathfrak{g}_{0}$. Contradiction.

The Lie bracket $\mathfrak{g}_{-1}\times \mathfrak{g}_{-1}\to \mathfrak{g}_{-2}\cong \mathfrak{l}/\mathfrak{h}$ is a non-degenerate antisymmetric bilinear map, which is $Ad(P)$-invariant and $Ad(\exp(\mathfrak{g}_{-2}))$ acts trivially on $\mathfrak{g}_{-1}$ and $\mathfrak{g}_{-2}$. Thus the Lie bracket is $ad(\alpha(\mathfrak{l}))$-invariant. The curvature $\kappa(\mathfrak{g}_{-1},\mathfrak{g}_{-2})\subset \mathfrak{g}_{1}$ measures  the difference between the $ad(\alpha(\mathfrak{l}))$-action and the action induced by left multiplication by elements of $L$. Thus the Lie bracket $\mathfrak{g}_{-1}\times \mathfrak{g}_{-1}\to \mathfrak{g}_{-2}$ is $L$-invariant and $L$ acts trivially on the $\mathfrak{g}_{-2}$-part. If we compose it with $\alpha$, then we obtain a non-degenerate antisymmetric $L$-invariant bilinear form on the symmetric space $(K/N,L/N,h)$ i.e. a symplectic form (up to choice of scale) and we know, from example \ref{2.6.16}, that only the semisimple symmetric spaces in the claim are possible.

The preimage of $\mathfrak{g}_{-1}$ in the $-1$ eigenspace of $Ad(H)$ generates $\mathfrak{l/h}$ by the Lie bracket and the brackets coincide due to regularity. Thus $H$ has trivial intersection with center of $L$, if the center is one dimensional, or at most one dimensional, if  the center is two dimensional.

We will use notation and formula from lemma \ref{3.3.2} in the last claim. So for $X_i\in \mathfrak{g}_{-2}$ we obtain $[Z_i,X]\in \mathfrak{p}$ and $\kappa([Z_i,X],X_i)=0$. For $X_i\in \mathfrak{g}_{-1}$ and for $X \in \mathfrak{g}_{-1}$ we get $\kappa([Z_i,X],X_i)=0$. So $\sum_i \kappa([Z_i,X],X_i)\in \mathfrak{g}_{0}+\mathfrak{g}_{-2}$. For $X_i\in \mathfrak{g}_{-2}$ we obtain $[Z_i,\kappa(X,X_i)]\in \mathfrak{g}_{1}$. For $X_i\in \mathfrak{g}_{-1}$ and $X\in \mathfrak{g}_{-2}$ we get $[Z_i,\kappa(X,X_i)]\in \mathfrak{g}_{0}+\mathfrak{g}_{2}$. For $X_i\in \mathfrak{g}_{-1}$ and $X\in \mathfrak{g}_{-1}$ we get $[Z_i,\kappa(X,X_i)]\in \mathfrak{g}_{1}$. Then normality conditions looks like as in the proposition.
\end{proof}

So we are interested in construction of such geometries. The construction starting with semisimple symmetric space with only pseudo-hermitian or para-pseudo-hermitian simple factors (cf. \ref{2.6.8}) is summarized in the following proposition.

\begin{prop}\label{pp29}
Let $(K,L,h)$ be a semisimple symmetric space with only pseudo-hermitian or para-pseudo-hermitian simple factors. Let $h\in H \subset L$ be the subgroup of dimension $\operatorname{dim}(L)-1$, whose Lie algebra contains the semisimple part of $\mathfrak{l}$. Let $(G,P)$ be a parabolic contact geometry with $\mathfrak{g}$ equal to $\mathfrak{sl}(n,\mathbb{R})$, $\mathfrak{su}(p,q)$ or $\mathfrak{sp}(2n,\mathbb{R})$.

Let $i: H\to G_0$ be a Lie group homomorphism such that the adjoint representations of $H$ on $\mathfrak{k}/\mathfrak{h}$ and $i(H)$ on $\mathfrak{g}_{-1}$ are isomorphic. 

Let $\alpha$ have the following components:

1) $i'$ on $\mathfrak{h}$ with values in $\mathfrak{g}_{0}$

2) induced by the isomorphism of adjoint representations on $\mathfrak{k}/\mathfrak{h}$ with values in $\mathfrak{g}_{-1}$ and induced by some morphism of adjoint representations on $\mathfrak{k}/\mathfrak{h}$ with values in $\mathfrak{g}_{1}$

3) $\alpha$ is arbitrary on $\mathfrak{l/h}$ with values in $\mathfrak{g}_{-2}$ (non-zero), $\mathfrak{g}_{2}$ or in the centralizer of $i'(\mathfrak{h})$ in $\mathfrak{g}_{0}$.

Then $(i,\alpha)$ is an extension to a symmetric parabolic contact geometry of type $(G,P)$ and all extensions $\alpha$ (for fixed $i$) are of this form. 

If $K$ is simple and non-complex, then the extended geometry is regular if and only if the value of $\alpha$ in $\mathfrak{g}_{-2}$ is determined by the Lie bracket on $\mathfrak{g}_{-1}$.

If $K$ is semisimple and without (straight) complex factors, then the extended geometry is regular if and only if the value of $\alpha$ in $\mathfrak{g}_{-2}$ is determined by the Lie bracket on $\mathfrak{g}_{-1}$ on one simple factor and does not depend on the choice of the simple factor.
\end{prop}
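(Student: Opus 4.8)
The plan is to verify first that the prescribed data $(i,\alpha)$ satisfy the four axioms of an extension of $(K,H)$ to $(G,P)$, then to deduce the symmetric property from the general construction, and finally to translate regularity into the stated bracket condition through the curvature formula. Throughout I use the symmetric decomposition $\mathfrak{k}=\mathfrak{l}\oplus\mathfrak{m}$, where $\mathfrak{m}$ is the $-1$-eigenspace of $Ad(h)$, together with the splitting $\mathfrak{l}=\mathfrak{h}\oplus\mathfrak{z}$ in which $\mathfrak{z}=\mathfrak{l}/\mathfrak{h}$ is the one-dimensional centre guaranteed by the hypotheses on $H$; thus $\mathfrak{k}/\mathfrak{h}=\mathfrak{z}\oplus\mathfrak{m}$. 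The homomorphism property of $i$ and the identity $\alpha|_{\mathfrak{h}}=i'$ hold by construction. Since $\alpha$ restricts on $\mathfrak{m}$ to an isomorphism onto $\mathfrak{g}_{-1}$ modulo $\mathfrak{p}$ and sends the generator of $\mathfrak{z}$ to a nonzero element of $\mathfrak{g}_{-2}$, while its remaining components lie in $\mathfrak{p}_+\subset\mathfrak{p}$, the induced map $\mathfrak{k}/\mathfrak{h}\to\mathfrak{g}/\mathfrak{p}=\mathfrak{g}_{-2}\oplus\mathfrak{g}_{-1}$ is a linear isomorphism.

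The one substantial axiom is equivariance $Ad(i(g))\circ\alpha=\alpha\circ Ad(g)$ for $g\in H$, which I would check summand by summand. On $\mathfrak{h}$ it is the intertwining of $i'$; on $\mathfrak{m}$ it is precisely the hypothesis that $Ad(H)$ on $\mathfrak{m}\cong\mathfrak{k}/\mathfrak{h}$ and $i(H)$ on $\mathfrak{g}_{-1}$ are isomorphic, supplemented by the requirement that the $\mathfrak{g}_1$-component be an $H$-morphism; on $\mathfrak{z}$, where $Ad(H)$ is trivial, one must see that $i(H)$ acts trivially on the targets $\mathfrak{g}_{\pm2}$ and on the centraliser of $i'(\mathfrak{h})$ in $\mathfrak{g}_0$. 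Triviality on $\mathfrak{g}_{-2}$ is the key point: the non-degenerate bracket $\mathfrak{g}_{-1}\times\mathfrak{g}_{-1}\to\mathfrak{g}_{-2}$ corresponds under the isomorphism of representations to the symplectic form $\mathfrak{m}\times\mathfrak{m}\to\mathfrak{z}$ preserved by $Ad(H)$ with trivial action on the one-dimensional $\mathfrak{z}$, so the character of $i(H)$ on $\mathfrak{g}_{-2}$ must be trivial. This establishes that $(i,\alpha)$ is an extension; conversely, once $i$ is fixed, the equivariance axiom together with the constraint $i(h)=g_0$, which forces the grading-component structure of $\alpha$ (cf. Theorem \ref{5.2.1}), leaves precisely the freedom enumerated in the statement, so every admissible $\alpha$ arises this way.

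For the symmetric property I would invoke Theorem \ref{3.5.1}. The triple $(K,H,h)$ satisfies its hypotheses by assumption, and since $i(h)=g_0$ with $Ad(g_0)=\sigma$, the $-1$-eigenspace of $Ad(i(h))$ equals $\mathfrak{g}_{-1}\oplus\mathfrak{g}_1$; for the contact gradings of $\mathfrak{sl}(n,\mathbb{R})$, $\mathfrak{su}(p,q)$ and $\mathfrak{sp}(2n,\mathbb{R})$ this is maximal among involutions of $P$, so the maximality condition of Theorem \ref{3.5.1} holds. Hence the extension of the flat geometry of type $(K,H)$ is a symmetric parabolic contact geometry, with symmetries $S_{kH}fH=khk^{-1}fH$ well-defined because $h\in H$ and $H$ centralises $h$.

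The regularity analysis is the last and most delicate step. Using $\kappa([e,e])(X,Y)=[\alpha(X),\alpha(Y)]-\alpha([X,Y])$ from Corollary \ref{1.3.6} and the characterisation $\kappa(T^iM,T^jM)\subset A^{i+j+1}M$, every component of $\kappa$ automatically meets the regularity bound except the $\mathfrak{g}_{-2}$-component for arguments $X,Y\in\mathfrak{m}$, which must vanish. As $[\mathfrak{m},\mathfrak{m}]\subset\mathfrak{l}$, this vanishing reads $[\alpha(X)_{-1},\alpha(Y)_{-1}]=\alpha([X,Y]_{\mathfrak{z}})_{-2}$, i.e. the value of $\alpha$ on $\mathfrak{z}$ in $\mathfrak{g}_{-2}$ is forced to reproduce the Lie bracket transported to $\mathfrak{g}_{-1}$; this settles the simple non-complex case. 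For semisimple $K=\prod_j K^{(j)}$ without straight complex factors, the factors commute, so $[\mathfrak{m}^{(j)},\mathfrak{m}^{(k)}]=0$ renders the off-diagonal conditions vacuous, while each diagonal condition ties the single map $\alpha|_{\mathfrak{z}\to\mathfrak{g}_{-2}}$ to the bracket of the $j$-th factor; since all $\mathfrak{z}^{(j)}$ project into the one-dimensional $\mathfrak{z}$, these are simultaneously solvable exactly when the induced scale agrees across factors, which is the stated independence of the choice of simple factor. The main obstacle I anticipate is precisely this cross-factor compatibility: one must confirm that the $G_0$-normalisations of the symplectic brackets on the several $\mathfrak{g}_{-1}^{(j)}$ can be matched by one scale, and that the excluded straight complex factors are exactly those where the extra one-dimensional freedom in $H\cap Z(L)$ would otherwise break this uniqueness.
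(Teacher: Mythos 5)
Your proposal is correct and follows essentially the same route as the paper: verify the extension axioms using the $Ad(H)$-invariant decomposition $\mathfrak{k}=\mathfrak{h}+\mathfrak{l}/\mathfrak{h}+\mathfrak{k}/\mathfrak{h}$, obtain the symmetric property from Theorem \ref{3.5.1}, and reduce regularity to the vanishing of the $\mathfrak{g}_{-2}$-component of $\kappa$ on $\mathfrak{g}_{-1}\times\mathfrak{g}_{-1}$, which forces $\alpha|_{\mathfrak{l}/\mathfrak{h}}$ in $\mathfrak{g}_{-2}$ to reproduce the bracket (with the cross-factor compatibility in the semisimple case). The paper's own proof is far terser and delegates the regularity step to the proof of Theorem \ref{5.2.1}; your direct computation of the same condition from Corollary \ref{1.3.6} is a faithful expansion, not a different argument.
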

\begin{proof}
Since the decomposition $\mathfrak{k}=\mathfrak{h}+\mathfrak{l/h}+\mathfrak{k}/\mathfrak{h}$ is $Ad(H)$-invariant the $\alpha$ is well-defined. The first three conditions from definition for $\alpha$ to be extension hold by definition of $\alpha$; the last one holds, because the adjoint representations are identified by $\alpha$. Defining $\alpha$ in another way breaks some of the defining conditions of the extension. 

The last two claims follows, because we have shown in the proof of the previous theorem, that the Lie bracket on $\mathfrak{g}_{-1}$ coincides with the Lie bracket generating $\mathfrak{l/h}$ on each real simple factor.
\end{proof}

The regularity in the complex case is more difficult and we will not deal with this case.

We will need the following proposition to show, how many $i:H\to P$ can exist up to equivalence.

\begin{prop}\label{pp30}
Let $P$ be one of $Sl(n,\mathbb{R})$, $SU(p,q)$ or $Sp(2n,\mathbb{R})$ and let $H$ be a semisimple Lie group. Let $i,\ j: H\to P$ be two homomorphisms of Lie groups with discrete kernels such, that restrictions of standard representations $\mathbb{R}^n$ to $i(H)$ and $j(H)$ are isomorphic and irreducible. Then there is $C\in P$ such, that $i(k)=Cj(k)C^{-1}$ for all $k\in H$.
\end{prop}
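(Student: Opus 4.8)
The plan is to first produce an intertwiner in the ambient general linear group and then to correct it so that it preserves the structure defining $P$, controlling the available freedom by Schur's lemma. Write $V$ for the standard module ($\mathbb{R}^n$ for $Sl(n,\mathbb{R})$ and $Sp(2n,\mathbb{R})$, and $\mathbb{C}^{p+q}$ for $SU(p,q)$) and let $\rho_i,\rho_j\colon H\to GL(V)$ be the representations obtained by composing $i,j$ with the standard representation. By hypothesis $\rho_i\cong\rho_j$, so there is $C\in GL(V)$ with $i(k)=Cj(k)C^{-1}$ for all $k\in H$; the entire content of the statement is that $C$ may be taken in $P$. Since $\rho_j$ is irreducible, Schur's lemma shows that its commutant $D:=\mathrm{End}_H(V)$ is a division algebra ($\mathbb{R},\mathbb{C}$ or $\mathbb{H}$ in the real cases, $\mathbb{C}$ in the unitary case), and that the full set of intertwiners conjugating $\rho_j$ to $\rho_i$ is exactly $C\cdot D^{\times}$. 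Hence it suffices to find $S\in D^{\times}$ with $CS\in P$.

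Next I would use the defining form. Each of the three groups is the special isometry group of a form $B$ on $V$ (the symplectic form for $Sp(2n,\mathbb{R})$, the Hermitian form of signature $(p,q)$ for $SU(p,q)$, and merely the determinant for $Sl(n,\mathbb{R})$), and both $\rho_i(H)$ and $\rho_j(H)$ preserve it. Transporting $B$ by $C$ yields a second $\rho_j$-invariant form $C^{*}B$ given by $(C^{*}B)(u,v)=B(Cu,Cv)$, and since $B$ is nondegenerate there is a well-defined operator $T$ with $B(Cu,Cv)=B(u,Tv)$. A direct check shows $T$ is $\rho_j$-equivariant, so $T\in D$, and the (anti)symmetry of $B$ forces $T$ to be self-adjoint for the anti-involution $*$ of $D$ induced by $B$. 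The correction then reduces to solving $S^{*}S=T$ with $S\in D^{\times}$: once such $S$ exists, $CS^{-1}$ preserves $B$ exactly and still conjugates $\rho_j$ to $\rho_i$ (because $S\in D$ commutes with $\rho_j$), so it lands in the isometry group, and a final rescaling by $D^{\times}$ places it in the special subgroup $P$.

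The main obstacle I expect is exactly this last step, namely solving $S^{*}S=T$ inside the small division algebra $D$ and simultaneously matching the determinant or multiplier that distinguishes $P$ from its similitude group; this is where positivity and signature are decisive. For $SU(p,q)$ the situation is clean: invariance forces $C^{*}B=T\,B$ with $T$ a real scalar, and since $C^{*}B$ has the same signature as $B$ (it is $B$ conjugated by the isomorphism $C$), one gets $T>0$, so $S=\sqrt{T}$ works. For $Sl(n,\mathbb{R})$ there is no form to preserve and one only has to absorb $\det C$ by a suitable element of $D^{\times}$, which is possible once $n$ is odd or $\det C>0$, the remaining sign being handled by the commutant. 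For $Sp(2n,\mathbb{R})$ the delicate point is the sign of the symplectic multiplier $T$, since symplectic forms carry no signature to pin it down; I would treat the three possibilities $D=\mathbb{R},\mathbb{C},\mathbb{H}$ separately, in each case writing $*$ as the identity or as conjugation so that $S^{*}S=T$ becomes an elementary square-root problem whose solvability is governed precisely by the positivity of $T$ established from the structure of the invariant forms.
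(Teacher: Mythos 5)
Your route is genuinely different from the paper's. The paper complexifies, invokes Onishchik--Vinberg (conjugacy of equivalent irreducible embeddings into the complex classical group) to obtain $C\in P_{\mathbb C}$, and then descends to the real form by showing that $C^{-1}\theta(C)$ is a scalar via Schur's lemma, where $\theta$ is the involution cutting out $P$. You instead stay over $\mathbb R$, parametrize all intertwiners as $C\cdot D^{\times}$ with $D=\mathrm{End}_H(V)$ a division algebra, and correct $C$ by solving $S^{*}S=T$ in $D$, where $T$ is the multiplier comparing $C^{*}B$ with the invariant form $B$. This reduction is correct, and where you actually complete it ($SU(p,q)$ with $p\neq q$, where the signature forces $T>0$, and $Sl(n,\mathbb R)$ with $n$ odd) it yields a cleaner, more self-contained argument than the paper's.

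However, the step you flag as the main obstacle is a genuine gap, and two of your claims about it are false. For $Sl(n,\mathbb R)$ with $n$ even, every element of $D^{\times}$ (for $D=\mathbb R,\mathbb C,\mathbb H$) acts on $\mathbb R^{n}$ with positive determinant, so the ``remaining sign'' of $\det C$ cannot be ``handled by the commutant''. For $Sp(2n,\mathbb R)$ with $D=\mathbb R$ (and likewise for $SU(p,p)$) one has $S^{*}S=S^{2}>0$ (resp.\ $|S|^{2}>0$), so a negative multiplier $T$ is an absolute obstruction. These are not defects of your method: the statement itself fails there. Take $H=P=Sl(2,\mathbb R)=Sp(2,\mathbb R)$, $j=\mathrm{id}$ and $i=$ conjugation by $g=\mathrm{diag}(1,-1)$; then $\rho_i\cong\rho_j$ via $g$ and both are irreducible, but any $C$ with $i=C\,j\,C^{-1}$ lies in $\mathbb R^{\times}g$ and has negative determinant, so no such $C$ exists in $P$. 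To close your argument you must therefore add the hypothesis that some intertwiner carries $B$ to a \emph{positive} multiple of itself (equivalently, that your $T$ can be taken positive) --- a condition that does hold in the paper's applications, where the dual or conjugate representation is treated separately by composing with an outer automorphism of $K$. For what it is worth, the paper's own proof elides the same point: from ``$C^{-1}\theta(C)$ is a scalar'' it concludes $\theta(C)=C$, which fails in the example above.
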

\begin{proof}
We will use the general concept described in \cite{odk16?}. After complexification to $P_{\mathbb{C}},\ H_{\mathbb{C}}$, we are in situation of \cite{odk16?}[Chapter 6, proposition 3.2]. Thus there is $C\in P_{\mathbb{C}}$ such, that $i(k)=Cj(k)C^{-1}$ for all $k\in H_{\mathbb{C}}$. Let $\theta$ be the involutive automorphism fixing the real form $P$, then $Cj(k)C^{-1}=i(k)=\theta(i(k))=\theta(Cj(k)C^{-1})=\theta(C)j(k)\theta(C)^{-1}$ for all $k\in H$. Thus $C^{-1}\theta(C)$ commutes with all elements in $j(H)$. Since $i(H)$ acts irreducibly on $\mathbb{R}^n$, $C^{-1}\theta(C)$ has to act as multiple of identity by Schur's lemma, thus $\theta(C)C^{-1}=e$ and $\theta(C)=C$ i.e. $C\in P$.
\end{proof} 

\subsection{Extensions to parabolic contact structures of dimension $3$}\label{5.2}

We treat the dimension $3$ separately, because on both sides of parabolic contact geometries and symmetric spaces exceptional phenomena arise.

There are only two types of simple symmetric spaces of dimension two to start with, i.e. $\mathfrak{so}(3)/\mathfrak{so}(2)$ and $\mathfrak{so}(2,1)/\mathfrak{so}(1,1)$. Thus $H$ is discrete in this situation, i.e. $K\cong \mathbb{Z}_2$ it consists only of the symmetry $h$.

The parabolic contact structures of dimension $3$ we are interested in, are those having $\mathfrak{g}$ one of $\mathfrak{sl}(3,\mathbb{R})$, $\mathfrak{su}(2,1)$ and $\mathfrak{sp}(4,\mathbb{R})$.

\begin{lem}
For any choice of $\mathfrak{g}$ and symmetric space $\mathfrak{so}(3)/\mathfrak{so}(2)$ or $\mathfrak{so}(2,1)/\mathfrak{so}(1,1)$ there is unique (up to equivalence) $i$ satisfying assumptions of proposition \ref{pp29}.
\end{lem}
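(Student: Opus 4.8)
The plan is to exploit the fact that in this dimension $H$ is the discrete group $\mathbb{Z}_2=\{e,h\}$ with $\mathfrak{h}=0$, so that a homomorphism $i\colon H\to G_0$ amounts to a single choice of the involution $i(h)\in G_0$; I would then show that the hypotheses of Proposition~\ref{pp29} determine this element completely. First I would unwind what those hypotheses demand. Since $\mathfrak{h}=0$, the only condition on $i$ is that the $\mathbb{Z}_2$-representation of $i(H)$ on $\mathfrak{g}_{-1}$ be isomorphic to that of $H$ on the $\mathfrak{g}_{-1}$-direction $\mathfrak{k}/\mathfrak{l}$ (cf. item~1 of Theorem~\ref{5.2.1}). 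On both $\mathfrak{so}(3)/\mathfrak{so}(2)$ and $\mathfrak{so}(2,1)/\mathfrak{so}(1,1)$ the symmetry $h$ acts as $-\mathrm{id}$ on the two-dimensional isotropy complement $\mathfrak{k}/\mathfrak{l}$, so the representation to be matched is simply $-\mathrm{id}$ on a $2$-dimensional space.

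For existence I would set $i(h)=g_0$, where $g_0\in G_0$ is the element with $Ad(g_0)=\sigma$ furnished by our standing assumption (and exhibited explicitly in each case, e.g. $g_0=\mathrm{diag}(-1,1,-1)$ for $\mathfrak{g}=\mathfrak{sl}(3,\mathbb{R})$, and analogous involutions for $\mathfrak{su}(2,1)$ and $\mathfrak{sp}(4,\mathbb{R})$). Because $(G,P)$ is effective, the centre of $G$ lies in the maximal normal subgroup $N\subset P$ and is therefore trivial, so $Ad$ is injective on $G$; together with $\sigma^2=\mathrm{id}$ this forces $g_0^2=e$, whence $i(h)=g_0$ genuinely defines a homomorphism $\mathbb{Z}_2\to G_0$. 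As $\sigma$ acts by $(-1)^i$ on $\mathfrak{g}_i$, it is $-\mathrm{id}$ on $\mathfrak{g}_{-1}$, which matches $Ad(h)|_{\mathfrak{k}/\mathfrak{l}}$; hence this $i$ satisfies the assumptions of Proposition~\ref{pp29}, and the remaining data $\alpha$ can then be chosen as prescribed there.

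For uniqueness, suppose $i'$ is any homomorphism satisfying the same assumptions. Then $Ad(i'(h))$ is grading preserving and acts by $-\mathrm{id}$ on $\mathfrak{g}_{-1}$; by the bracket-generating property it acts by $+\mathrm{id}$ on $\mathfrak{g}_{-2}=[\mathfrak{g}_{-1},\mathfrak{g}_{-1}]$, and the faithfulness of the $\mathfrak{g}_0$-action on $\mathfrak{g}_{-1}$ (effectivity) upgrades this to $Ad(i'(h))=\mathrm{id}$ on $\mathfrak{g}_0\oplus\mathfrak{g}_1\oplus\mathfrak{g}_2$ as well, i.e. $Ad(i'(h))=\sigma=Ad(g_0)$. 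Injectivity of $Ad$ then yields $i'(h)=g_0=i(h)$, so $i$ is in fact unique on the nose, a fortiori unique up to equivalence in the sense of Proposition~\ref{1.4.7}.

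The step I expect to be the crux, and the reason the statement is singled out as an \emph{exceptional} phenomenon, is precisely this last upgrading: the general uniqueness tool Proposition~\ref{pp30} is unavailable here, since $H$ is discrete rather than semisimple and the representation $-\mathrm{id}$ on $\mathfrak{g}_{-1}$ is reducible, so Schur's lemma does not apply. The substitute argument is the passage from ``$Ad(i(h))=-\mathrm{id}$ on $\mathfrak{g}_{-1}$'' to ``$Ad(i(h))=\sigma$'' via bracket generation and effectivity; once that is in place, the injectivity of $Ad$ on the effective model closes both existence and uniqueness without any genuine calculation.
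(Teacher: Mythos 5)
Your proof is correct and follows essentially the same route as the paper's, which merely exhibits $i(h)=g_0$ explicitly and asserts uniqueness from effectivity; you simply spell out the existence check (both representations are $-\mathrm{id}$ on a two-dimensional space, so any linear isomorphism intertwines them) and the uniqueness step in more detail. One cosmetic slip: in the uniqueness paragraph the bracket-generation argument forces $Ad(i'(h))$ to be $(-1)^i\,\mathrm{id}$ on each $\mathfrak{g}_i$, not $\mathrm{id}$ on $\mathfrak{g}_0\oplus\mathfrak{g}_1\oplus\mathfrak{g}_2$ as written — your stated conclusion $Ad(i'(h))=\sigma$ is nevertheless the correct one.
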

\begin{proof}
To define $i: H \to G_0$ for the extensions, it suffices to give the image of $h$, which will be unique, because we assume that $(G,P)$ is effective. We map $h$ to element \[ \left( \begin{array}{ccc}
-1 & 0 & 0 \\
0 & 1 & 0 \\
0 & 0 & -1 \end{array} \right)\] in $G_0$ for $\mathfrak{sl}(3,\mathbb{R})$, $\mathfrak{su}(2,1)$ and map $h$ to element 
\[ \left( \begin{array}{cccc}
-1 & 0 & 0 & 0 \\
0 & 1 & 0& 0 \\
0 & 0 & 1 & 0\\
0& 0& 0& -1 \end{array} \right)\] in $G_0$ for $\mathfrak{sp}(4,\mathbb{R})$.

Any linear isomorphism is isomorphism of representations $H$ and $i(H)$, thus $i$ satisfies assumptions of proposition \ref{pp29}.
\end{proof}

Then following the proposition \ref{pp29} we can construct (regular) $\alpha$ as follows. First, we write $(e,x_1,x_2)$ for the following matrices
\[ \left( \begin{array}{ccc}
0 & e & -x_1  \\
-ce & 0 & -cx_2  \\
x_1 & x_2 & 0    \end{array} \right)\]
in $\mathfrak{so}(2+c,1-c)$. Further, $b_1, b_2, b_3, b_4, a_1, a_2, c_1, d_1, d_2, d_3, d_4$ are real numbers such, that $b_1b_4-b_2b_3\neq 0$.

For $\mathfrak{g}=\mathfrak{sl}(3,\mathbb{R})$, the proposition \ref{pp29} implies
\[ \alpha(e,x_1,x_2)=
\left( \begin{array}{ccc}
a_1e & d_1x_1+d_2x_2 & c_1e \\
b_1x_1+b_2x_2 & a_2e  & d_3x_1+d_4x_2 \\
(b_1b_4-b_2b_3)e & b_3x_1+b_4x_2 & -(a_1+a_2)e \end{array} \right).\] 

Similarly, in the case $\mathfrak{g}=\mathfrak{su}(2,1)$ 
\[\alpha(e,x_1,x_2)=\]
\[
\left( \begin{array}{ccc}
a_1e+a_2ei & * & c_1ei \\
b_1x_1+b_2x_2+(b_3x_1+b_4x_2)i & -2a_2ei  & d_1x_1+d_2x_2+(d_3x_1+d_4x_2)i \\
2(b_1b_4-b_2b_3)ei & * & -a_1e+a_2ei \end{array} \right)y\] 

where $*$ means that, the entry is determined by the structure of Lie algebra $\mathfrak{su}(2,1)$.

For $\mathfrak{g}=\mathfrak{sp}(4,\mathbb{R})$, 
\[\alpha(e,x_1,x_2)=\]
\[
\left( \begin{array}{cccc}
a_1e & d_1x_1+d_2x_2& d_3x_1+d_4x_2 & c_1e \\
b_1x_1+b_2x_2 & a_2e & a3e & d_3x_1+d_4x_2 \\
b_3x_1+b_4x_2 & a4e &-a_2e  & -d_1x_1-d_2x_2 \\
2(b_1b_4-b_2b_3)e & b_3x_1+b_4x_2& -b_1x_1-b_2x_2 & -a_1e \end{array} \right).\] 

We skip computations of normality conditions and automorphisms, which can be easily done due to the dimension. But we look on equivalence classes of extensions in detail. We shall employ morphisms from proposition \ref{1.4.7} to construct suitable canonical forms of the morphisms $\alpha$, and thus we shall classify all equivalence classes of $\alpha$ for fixed $i$.

In $\mathfrak{g}=\mathfrak{sl}(3,\mathbb{R})$ case we can use morphisms to get
$$(b_1b_4-b_2b_3)'=\frac{(b_1b_4-b_2b_3)}{n_2^2n_3}, b_1'=\frac{b_1n_3}{n_2},b_2'=\frac{b_2n_3}{n_2}, b_3'=\frac{b_3}{n_3^2n_2}, b_4'=\frac{b_4}{n_3^2n_2}$$
so one can choose $b_1b_4-b_2b_3=1$ and one of $b_1, b_2, b_3, b_4=1$.

In the $c=1$ case we can use morphisms to get
\[b_1'=b_1cos(n_1)-b_2sin(n_1),\]
\[b_2'=b_1sin(n_1)+b_2cos(n_1),\]
\[b_3'=b_3cos(n_1)-b_4sin(n_1),\]
\[b_4'=b_3sin(n_1)+b_4cos(n_1),\]
so we can choose $b_2=0$. So $b_3'=\frac{b_1b_3+b_2b_4}{b_1b_4-b_2b_3}$, then in proposition \ref{1.4.7} we can get $b_3'=-b_3$ and rest the same.

In the $c=-1$ case we can use morphisms to get
\[b_1'=b_1cosh(n_1)-b_2sinh(n_1),\]
\[ b_2'=-b_1sinh(n_1)+b_2cosh(n_1)\]
\[b_3'=b_3cosh(n_1)-b_4sinh(n_1),\]
\[ b_4'=-b_3sinh(n_1)+b_4cosh(n_1).\]

Since we can use morphisms exchange $b_1, b_3$ with $b_2, b_4$, we can choose $b_1^2\geq b_2^2$. If $b_1^2>b_2^2$, then we can choose $b_2=0$, and then $t:=b_3'=\frac{-b_1b_3+b_2b_4}{b_1b_4-b_2b_3}$. If $b_1^2=b_2^2$, then we can choose $b_3^2\leq b_4^2$, if $b_3^2<b_4^2$, then we can choose $b_1=1, b_3=0$, if $b_3^2=b_4^2$, then we can choose $b_1=1, b_2=1, b_3=-1, b_4=1$. Again we can get $b_3'=-b_3$, if we use morphisms.

\begin{thm}
Up to equivalences, all regular normal extensions for  \linebreak $\mathfrak{so}(3)/\mathfrak{so}(2)$ to $\mathfrak{sl}(3,\mathbb{R})$ are given by the following one parameter classes with $t\geq 0$:
\[\alpha(e,x_1,x_2)=
\left( \begin{array}{ccc}
\frac{t}{4}e & -\frac{3t^2+4}{4}x_1+\frac{t}{4}x_2 & -\frac{15t^2+16}{16}e \\
x_1 & -\frac{t}{2}e  & -\frac{3t}{4}x_1-x_2 \\
e & tx_1+x_2 & \frac{t}{4}e \end{array} \right)\] 
with curvature
\[ \kappa((e,x_1,x_2),(h,y_1,y_2))=\]
\[
\left( \begin{array}{ccc}
0 & \frac{3(t^3+t)}{2}(hx_1-ey_1) & 0 \\
0 & 0  & -\frac{3t^2}{2}(hx_1-ey_1)-\frac{3t}{2}(hx_2-ey_2) \\
0 & 0 & 0 \end{array} \right).\]

Up to equivalences, all regular normal extensions for $\mathfrak{so}(2,1)/\mathfrak{so}(1,1)$ to $\mathfrak{sl}(3,\mathbb{R})$ are given by the following one parameter classes:

a) for $b_1^2>b_2^2$, there is one parameter class for $t\geq 0$
\[\alpha(e,x_1,x_2)=
\left( \begin{array}{ccc}
-\frac{t}{4}e & \frac{3t^2-4}{4}x_1-\frac{t}{4}x_2 & \frac{16-15t^2}{16}e \\
x_1 & \frac{t}{2}e  & \frac{3t}{4}x_1+x_2 \\
e & tx_1+x_2 & -\frac{t}{4}e \end{array} \right)\] 
with curvature
\[ \kappa((e,x_1,x_2),(h,y_1,y_2))=\]
\[
\left( \begin{array}{ccc}
0 & \frac{3(t^3-t)}{2}(hx_1-ey_1) & 0 \\
0 & 0  & -\frac{3t^2}{2}(hx_1-ey_1)-\frac{3t}{2}(hx_2-ey_2) \\
0 & 0 & 0 \end{array} \right);\]

b) for $b_1^2=b_2^2$ and $b_3^2<b_4^2$
\[\alpha(e,x_1,x_2)=
\left( \begin{array}{ccc}
\frac14e & -x_1-\frac34x_2 & \frac{1}{16}e \\
x_1+x_2 & -\frac12e  & \frac14x_1+\frac14x_2 \\
e & x_2 & \frac14e \end{array} \right)\] 
with curvature
\[ \kappa((e,x_1,x_2),(h,y_1,y_2))=\]
\[
\left( \begin{array}{ccc}
0 & \frac32(hx_1-ey_1)+\frac32(hx_2-ey_2) & 0 \\
0 & 0  & 0 \\
0 & 0 & 0 \end{array} \right);\]

c) for $b_1^2=b_2^2$ and $b_3^2=b_4^2$
\[\alpha(e,x_1,x_2)=
\left( \begin{array}{ccc}
\frac14e & -\frac18x_1+\frac18x_2 & \frac{1}{32}e \\
x_1+x_2 & -\frac12e  & \frac18x_1+\frac18x_2 \\
2e & -x_1+x_2 & \frac14e \end{array} \right)\] 
with is flat.
\end{thm}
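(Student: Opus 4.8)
The plan is to start from the explicit matrix form of the extension $\alpha$ supplied by proposition \ref{pp29} for $\mathfrak{g}=\mathfrak{sl}(3,\mathbb{R})$, in which regularity is already built in: the $\mathfrak{g}_{-2}$-entry is forced to be the determinant $b_1b_4-b_2b_3$ times $e$, so that the value in $\mathfrak{g}_{-2}$ is the one determined by the Lie bracket on $\mathfrak{g}_{-1}$. The remaining freedom is in the coefficients $b_1,\dots,b_4$ of the $\mathfrak{g}_{-1}$-part and in $a_1,a_2,c_1,d_1,\dots,d_4$ of the $\mathfrak{g}_0$- and $\mathfrak{g}_1$-parts. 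The first task is to fix the latter by imposing the normality condition $\partial^*\kappa([e,e])=0$, using the formula of lemma \ref{3.3.2} together with $\kappa([e,e])(X,Y)=[\alpha(X),\alpha(Y)]-\alpha([X,Y])$ from corollary \ref{1.3.6}. Exactly as in the projective case, this gives an inhomogeneous linear system for $a_i,c_1,d_j$ whose homogeneous part admits only the zero solution because $\mathfrak{g}$ is semisimple; hence for each fixed $b_i$ there is a unique normal $\alpha$, and solving the system expresses $a_i,c_1,d_j$ as explicit rational functions of the $b_i$.

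The second step is to reduce the $b_i$ to canonical representatives by the equivalences of proposition \ref{1.4.7}. Conjugation by elements of $G_0\subset P$ and composition with Lie algebra automorphisms of $\mathfrak{k}$ preserving $\mathfrak{h}$ act on the chosen frame of the $-1$-eigenspace, and the induced transformation rules for the $b_i$ are precisely those recorded just before the statement. Rescaling first normalizes $b_1b_4-b_2b_3=1$ and sets one $b_i$ equal to $1$. For $\mathfrak{so}(3)/\mathfrak{so}(2)$ (the case $c=1$) the automorphisms act by rotations, which allows one to put $b_2=0$ and then to reduce $b_3$ to a single nonnegative parameter $t\ge 0$, yielding the displayed one-parameter family. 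For $\mathfrak{so}(2,1)/\mathfrak{so}(1,1)$ (the case $c=-1$) the automorphisms act by boosts together with the exchange swapping $(b_1,b_3)$ with $(b_2,b_4)$; the orbit structure is controlled by the invariant sign of $b_1^2-b_2^2$, so the reduction splits into $b_1^2>b_2^2$ (a one-parameter family $t\ge 0$, case a), $b_1^2=b_2^2$ with $b_3^2<b_4^2$ (the isolated case b), and $b_1^2=b_2^2$ with $b_3^2=b_4^2$ (the isolated case c).

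Finally, I would substitute each canonical choice of the $b_i$ and the associated solved coefficients into the matrix form of $\alpha$, read off the displayed normal forms, and recompute the curvature once more through $\kappa([e,e])(X,Y)=[\alpha(X),\alpha(Y)]-\alpha([X,Y])$ to obtain the stated curvature expressions; in case c this bracket vanishes identically, establishing flatness. The purely computational parts---the explicit solution of the normality system and the curvature brackets---are manageable since everything is carried out with $3\times 3$ matrices. The main conceptual obstacle is the case analysis for the split form $\mathfrak{so}(2,1)/\mathfrak{so}(1,1)$: one must check that the boost action genuinely degenerates along the light cone $b_1^2=b_2^2$, that the three resulting cases are pairwise inequivalent, and that no further reduction of the surviving parameters is available, so that the resulting list is simultaneously exhaustive and irredundant.
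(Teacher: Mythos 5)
Your proposal follows essentially the same route as the paper: write down the general matrix form of $\alpha$ supplied by Proposition \ref{pp29} (with the $\mathfrak{g}_{-2}$-entry $(b_1b_4-b_2b_3)e$ enforcing regularity), impose the normality condition via Lemma \ref{3.3.2} to pin down the $\mathfrak{g}_0$- and $\mathfrak{g}_1$-coefficients, and then reduce $b_1,\dots,b_4$ to canonical representatives by the morphisms of Proposition \ref{1.4.7}, with exactly the paper's rotation/boost case split and the invariant sign of $b_1^2-b_2^2$ governing cases a)--c). One minor caveat: in the contact setting the normality system generically leaves one free $\mathfrak{g}_0$-parameter for fixed $b_i$ (compare the free $c_1$ in the paper's Lagrangean and CR computations), which is then absorbed by a further $P$-equivalence rather than being forced to zero ``exactly as in the projective case''; this does not affect the resulting classification.
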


In $\mathfrak{g}=\mathfrak{su}(2,1)$ case we can use morphisms to get $b_1b_4-b_2b_3>0$ and
\[ (b_1b_4-b_2b_3)'=2(b_1b_4-b_2b_3)(cosh(n_2)+sinh(n_2))^2,\]
so we can choose $b_1b_4-b_2b_3=1$. The actions of other morphisms are quite complicated, so we won't state them explicitly, but using morphisms we can get $b_2'=b_3'=0, b_1'=t,b_4'=\frac{1}{t}$, where $$t:=\sqrt{\frac{s+c\sqrt{s^2-4c}}{2c}},\ s=\frac{cb_1^2+b_2^2+cb_3^2+b_4^2}{b_1b_4-b_2b_3}.$$

\begin{thm}
Up to equivalences, all regular normal extensions for \linebreak $\mathfrak{so}(3)/\mathfrak{so}(2)$ to $\mathfrak{su}(2,1)$ are given by the following one parameter classes for $s\geq 2$:
\[\alpha(e,x_1,x_2)=
\left( \begin{array}{ccc}
\frac{1+t^4}{8t^2}ie & * & \frac{-(15t^8-34t^4+15)}{128t^4}ie \\
t x_1+\frac{i}{t}x_2 & -\frac{1+t^4}{4t^2}ie  & \frac{-3t^4+5}{16t}x_1+\frac{5t^4-3}{16t^3}ix_2 \\
2ie & * & \frac{1+t^4}{8t^2}ie \end{array} \right),\] 
where $*$ means that, the entry is determined by the structure of $\mathfrak{su}(2,1)$, with curvature 
\[ \kappa((e,x_1,x_2),(h,y_1,y_2))=\]
\[
\left( \begin{array}{ccc}
0 & * & 0 \\
0 & 0  & \frac{3(1-t^8)}{16t^5}(hx_2-ey_2)+\frac{3(1-t^8)}{16t^3}i(hx_1-ey_1)\\
0 & 0 & 0 \end{array} \right),\]
where $*$ means that, the entry is determined by the structure of $\mathfrak{su}(2,1)$.

Up to equivalences, all regular normal extensions for $\mathfrak{so}(2,1)/\mathfrak{so}(1,1)$ to $\mathfrak{su}(2,1)$ are given by the following one parameter classes for $s>-2$:
\[\alpha(e,x_1,x_2)=
\left( \begin{array}{ccc}
\frac{1-t^4}{8t^2}ie & * & \frac{-(15t^8+34t^4+15)}{128t^4}ie \\
t x_1+\frac{i}{t}x_2 & -\frac{1-t^4}{4t^2}ie  & \frac{3t^4+5}{16t}x_1+\frac{-5t^4-3}{16t^3}ix_2 \\
2ie & * & \frac{1-t^4}{8t^2}ie \end{array} \right),\] 
where $*$ means that, the entry is determined by the structure of $\mathfrak{su}(2,1)$, with curvature 
\[ \kappa((e,x_1,x_2),(h,y_1,y_2))=\]
\[
\left( \begin{array}{ccc}
0 & * & 0 \\
0 & 0  & \frac{3(1-t^8)}{16t^5}(hx_2-ey_2)+\frac{3(1-t^8)}{16t^3}i(hx_1-ey_1)\\
0 & 0 & 0 \end{array} \right),\]
where $*$ means that, the entry is determined by the structure of $\mathfrak{su}(2,1)$.
\end{thm}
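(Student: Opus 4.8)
The plan is to run the same three-stage argument that produced the preceding $\mathfrak{sl}(3,\mathbb{R})$ theorem, now with $\mathfrak{g}=\mathfrak{su}(2,1)$: write down the general extension, quotient by the equivalences, impose normality, and compute the curvature. First I would take $\alpha$ in the $\mathfrak{su}(2,1)$ matrix form displayed above, with free real parameters $a_1,a_2,c_1,b_1,\dots,b_4,d_1,\dots,d_4$; regularity (Proposition \ref{pp29}) already fixes the $\mathfrak{g}_{-2}$-entry to $2(b_1b_4-b_2b_3)ei$, the value of the Lie bracket on $\mathfrak{g}_{-1}$. Next I would quotient by the equivalences of Proposition \ref{1.4.7}, which act on $\alpha$ by conjugation by $p_0\in P$ together with automorphisms of $K$; as recorded just before the statement, these reduce to an explicit action on the block $(b_1,b_2,b_3,b_4)$. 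One such conjugation rescales $b_1b_4-b_2b_3$ to $1$, and further morphisms bring the block to $b_2=b_3=0$, $b_1=t$, $b_4=1/t$.

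The surviving invariant of this reduction is
$$s=\frac{cb_1^2+b_2^2+cb_3^2+b_4^2}{b_1b_4-b_2b_3},$$
with $c=+1$ for $\mathfrak{so}(3)/\mathfrak{so}(2)$ and $c=-1$ for $\mathfrak{so}(2,1)/\mathfrak{so}(1,1)$, and $t$ recovered from $s$ through $t=\sqrt{(s+c\sqrt{s^2-4c})/(2c)}$. I would then extract the admissible range of $s$ from this formula under the normalization $b_1b_4-b_2b_3=1$: in the compact case ($c=1$) the numerator is the positive definite form $b_1^2+b_2^2+b_3^2+b_4^2$, so reality of $t$ forces $s\ge 2$, with equality at $t=1$; in the split case ($c=-1$) the numerator is indefinite, and the realized family is the one with $s>-2$. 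Establishing that this, rather than all of $\mathbb{R}$, is the correct range in the split case is part of the equivalence analysis.

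With the $b$-block fixed, I would impose the two normality equations of Theorem \ref{5.2.1}(6), namely $\sum_i[Z_i,\kappa(X^{-1},X_i)]=0$ and $\sum_i\kappa([Z_i,X^{-2}],X_i)=0$, computed from the $\partial^*$-formula of Lemma \ref{3.3.2} with the standard dual frames of $\mathfrak{g}/\mathfrak{p}$ and $\mathfrak{p}_+$. This is a linear system for $a_1,a_2,c_1$ and $d_1,\dots,d_4$ whose coefficients are rational in $t$; since $\mathfrak{g}$ is semisimple its homogeneous part has only the zero solution (the same semisimplicity argument used in the projective case), so the solution is unique and produces the displayed rational-in-$t$ entries. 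Finally I would evaluate $\kappa([e,e])(X,Y)=[\alpha(X),\alpha(Y)]-\alpha([X,Y])$ from Corollary \ref{1.3.6} inside $\mathfrak{su}(2,1)$; the only surviving component lands in $\mathfrak{g}_1$ and is proportional to $1-t^8$, so in particular the extension is flat precisely when $t=1$.

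The main obstacle is not the normality linear algebra but the equivalence bookkeeping: the action of the Proposition \ref{1.4.7} morphisms on $(b_1,\dots,b_4)$ is, as the paper already notes, far more intricate for $\mathfrak{su}(2,1)$ than for $\mathfrak{sl}(3,\mathbb{R})$, so the genuinely delicate points are to verify that $s$ is a \emph{complete} invariant, in the sense that no two distinct admissible values give equivalent extensions, and that the stated range of $s$ is exactly the set realized by regular normal extensions, with the compact and split cases accounting for the two different ranges $s\ge 2$ and $s>-2$.
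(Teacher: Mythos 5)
Your proposal follows the same route as the paper: write the general $\alpha$ from Proposition \ref{pp29} with the $\mathfrak{g}_{-2}$-entry forced by regularity, normalize $(b_1,\dots,b_4)$ to $b_2=b_3=0$, $b_1=t$, $b_4=1/t$ via the Proposition \ref{1.4.7} morphisms with $s$ as the surviving invariant, solve the normality system (which the paper merely asserts is routine in this dimension), and read off the curvature proportional to $1-t^8$. The only caveat, which you flag yourself, is that the completeness of $s$ as an invariant and the exact ranges $s\ge 2$ versus $s>-2$ still need the detailed morphism bookkeeping that the paper likewise leaves implicit.
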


The $\mathfrak{g}=\mathfrak{sp}(4,\mathbb{R})$ case is flat and all $\alpha$ are equivalent.

\begin{thm}
Up to equivalence, there is the unique regular normal extension for $\mathfrak{so}(2,1)/\mathfrak{so}(1,1)$ to $\mathfrak{sp}(4,\mathbb{R})$ with

\[\alpha(e,x_1,x_2)=
\left( \begin{array}{cccc}
0 & -\frac14x_1& \frac14x_2 & \frac1/8e \\
x_1 & 0 & -\frac12e & \frac14x_2 \\
x_2 & -\frac12e &0  & \frac14x_1 \\
2e & x_2& -x_1 & 0 \end{array} \right),\] 
which is flat.

Up to equivalence, there is the unique regular normal extension for \linebreak $\mathfrak{so}(3)/\mathfrak{so}(2)$ to $\mathfrak{sp}(4,\mathbb{R})$ with
\[\alpha(e,x_1,x_2)=
\left( \begin{array}{cccc}
0 & -\frac14x_1& -\frac14x_2 & -\frac18e \\
x_1 & 0 & \frac12e & -\frac14x_2 \\
x_2 & -\frac12e &0  & \frac14x_1 \\
2e & x_2& -x_1 & 0 \end{array} \right),\] 
which is flat.
\end{thm}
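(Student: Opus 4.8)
The plan is to run, for $\mathfrak{g}=\mathfrak{sp}(4,\mathbb{R})$, exactly the normalization scheme already carried out above for $\mathfrak{sl}(3,\mathbb{R})$ and $\mathfrak{su}(2,1)$, and to show that here the procedure collapses to a single equivalence class with vanishing curvature. First I would fix the bookkeeping: reading the grading off $g_0=\operatorname{diag}(-1,1,1,-1)$, the entry $2(b_1b_4-b_2b_3)e$ sits in $\mathfrak{g}_{-2}$, the columns $b_ix_1+b_jx_2$ in $\mathfrak{g}_{-1}$, the $a_ie$ entries in $\mathfrak{g}_0$, the $d_ix_1+d_jx_2$ entries in $\mathfrak{g}_1$ and $c_1e$ in $\mathfrak{g}_2$, so that the displayed $\alpha$ is precisely the general extension prescribed by proposition \ref{pp29}. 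Here $\mathfrak{h}=0$, the isotropy $\mathfrak{l}$ is one-dimensional (spanned by $e$), $\mathfrak{k}=\mathfrak{so}(2+c,1-c)$ with $c=1$ for $\mathfrak{so}(3)/\mathfrak{so}(2)$ and $c=-1$ for $\mathfrak{so}(2,1)/\mathfrak{so}(1,1)$, and I would record its structure constants $[e,x_1]=-x_2$, $[e,x_2]=x_1$, $[x_1,x_2]=-ce$ for use in the curvature formula.

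Next I would impose the constraints. Regularity forces the $\mathfrak{g}_{-2}$-coefficient to be the value determined by the Lie bracket on $\mathfrak{g}_{-1}$ (theorem \ref{5.2.1}(3),(6)), fixing the $(4,1)$ entry; the two normality conditions $\sum_i[Z_i,\kappa(X^{-1},X_i)]=0$ and $\sum_i\kappa([Z_i,X^{-2}],X_i)=0$ of theorem \ref{5.2.1}(6), evaluated through the $\partial^{*}$-formula of lemma \ref{3.3.2}, give a linear system in the remaining entries $a_i,c_1,d_i$. As in the projective computation, the homogeneous part of this system has only the trivial solution because $\mathfrak{g}$ is semisimple, so $a_i,c_1,d_i$ are determined uniquely as functions of the frame parameters $b_i$. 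The decisive step is then the reduction of the $b_i$ by the morphisms of proposition \ref{1.4.7}: here $G_0$ contains the block $SL(2,\mathbb{R})$ acting on $\mathfrak{g}_{-1}\cong\mathbb{R}^2$ by its standard representation, and every such $p_0$ centralizes $g_0$ (it acts trivially on the $+1$-eigenspace of $g_0$), hence is an admissible equivalence with $i(h)=g_0$ preserved. Together with the $SO(2)$ (resp.\ $SO(1,1)$) automorphisms of $K$ this lets me bring the frame to $b_1=b_4=1$, $b_2=b_3=0$, i.e.\ $b=\mathrm{id}$, leaving no residual modulus. This is exactly where the case differs from the earlier two, in which $G_0$ acts on $\mathfrak{g}_{-1}$ only through a torus and a one-parameter family $t$ survives.

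Finally I would substitute the normalized $\alpha$ into $\kappa([e,e])(X,Y)=[\alpha(X),\alpha(Y)]-\alpha([X,Y])$ from corollary \ref{1.3.6}, evaluating on the three brackets $[e,x_1]$, $[e,x_2]$, $[x_1,x_2]$, and check that every component vanishes; for instance $[\alpha(x_1),\alpha(x_2)]=-\alpha([x_1,x_2])$ gives $\kappa(x_1,x_2)=0$, and likewise for the mixed brackets, so $\kappa\equiv 0$ and the geometry is flat. This yields the two explicit canonical matrices of the statement, one for each sign of $c$. The main obstacle is the second paragraph: one must verify that the extra $SL(2,\mathbb{R})$ worth of morphisms in $G_0$ genuinely exhausts the freedom in $\alpha$, so that a single equivalence class remains rather than a one-parameter family. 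Conceptually this reflects that, once the full $\operatorname{Ad}(i(L))$-action through $G_0$ is taken into account, the relevant harmonic curvature component of type $\mathfrak{g}_{-2}\times\mathfrak{g}_{-1}\to\mathfrak{g}_1$ for the dimension-three $\mathfrak{sp}(4,\mathbb{R})$ contact grading admits no nonzero invariant element, which is what simultaneously forces both uniqueness and flatness; the explicit normalization is the concrete manifestation of this.
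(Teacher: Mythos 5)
Your plan follows the same route the paper takes (and largely leaves implicit) for the $\mathfrak{sp}(4,\mathbb{R})$ case: write down the general $\alpha$ from Proposition \ref{pp29}, solve the normality conditions for the remaining coefficients in terms of the frame $b$, and use the morphisms of Proposition \ref{1.4.7} to normalize $b$ before checking $\kappa=0$ directly. Your identification of why no modulus survives here --- $G_0$ acts on $\mathfrak{g}_{-1}\cong\mathbb{R}^2$ through the full standard representation of $GL(2,\mathbb{R})$ rather than through a torus as in the $\mathfrak{sl}(3,\mathbb{R})$ and $\mathfrak{su}(2,1)$ cases, and every element of $G_0$ centralizes $g_0$ so is admissible --- is precisely the justification the paper compresses into the single sentence that all $\alpha$ are equivalent.
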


\subsection{Extensions to Lagrangean contact structures}\label{5.3}

In this section we construct examples of symmetric Lagrangean contact structures. We want to find extension to Cartan geometry of type $(\mathfrak{sl}(n+2,\mathbb{R}),P)$ with the following gradation, where the blocks are $(1,n,1)$:
\[ \left( \begin{array}{ccc}
\mathfrak{g}_{0} & \mathfrak{g}_{1} & \mathfrak{g}_{2} \\
\mathfrak{g}_{-1} & \mathfrak{g}_{0} & \mathfrak{g}_{1} \\
\mathfrak{g}_{-2} & \mathfrak{g}_{-1} & \mathfrak{g}_{0} \end{array} \right)\]

The representation of the semisimple part of $\mathfrak{g}_{0}$ on $\mathfrak{g}_{-1}$ is $V\oplus V^*$, where $V$ is standard representation of $\mathfrak{sl}(n,\mathbb{R})$ and $V^*$ is its dual.

Firstly we look on Lagrangean contact structures for simple symmetric spaces.

\begin{prop}\label{odw1}
The only non-complex simple symmetric spaces allowing extensions to regular Lagrangean contact structures are simple para--pseudo--hermitian symmetric space and the pseudo--hermitian symmetric spaces \linebreak $\mathfrak{so}(p+2,q)/ \mathfrak{so}(p,q)+\mathfrak{so}(2)$. For the latter cases, the inclusion $i$ from proposition \ref{pp29} is unique up to equivalence.
\end{prop}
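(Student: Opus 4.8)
The plan is to reduce the statement, via Theorem~\ref{5.2.1} and Proposition~\ref{pp29}, to a purely representation-theoretic question about isotropy representations. By those results, an extension $(i,\alpha)$ to a regular (torsion-free, maximal torsion) symmetric Lagrangean contact structure exists precisely when the symmetric space $(K/N,L/N,h)$ has only pseudo-hermitian or para-pseudo-hermitian simple factors and there is a homomorphism $i\colon H\to G_0$ under which the isotropy representation of $\mathfrak{l}$ on $\mathfrak{k}/\mathfrak{l}\cong\mathfrak{g}_{-1}$ is isomorphic to the $G_0$-representation on $\mathfrak{g}_{-1}$. For Lagrangean contact structures the latter is $V\oplus V^*$, where $V$ is the standard representation of the semisimple part $\mathfrak{sl}(n,\mathbb{R})$ of $\mathfrak{g}_0$; moreover the contact bracket $\mathfrak{g}_{-1}\times\mathfrak{g}_{-1}\to\mathfrak{g}_{-2}$ is the invariant symplectic pairing making $V,V^*$ dual Lagrangian subspaces, which is exactly the symplectic form available on para-/pseudo-hermitian spaces (cf.\ examples \ref{2.6.16}, \ref{2.6.8}). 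So the whole problem becomes: for which simple symmetric spaces does $\mathfrak{k}/\mathfrak{l}$ admit an $Ad(L)$-invariant splitting into two dual pieces $V\oplus V^*$ with $V$ the restriction of a standard representation of $\mathfrak{sl}(n,\mathbb{R})$?

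For a simple \emph{para}-pseudo-hermitian symmetric space the para-complex structure already supplies an $Ad(L)$-invariant decomposition $\mathfrak{k}/\mathfrak{l}=W\oplus W^*$ into two dual real irreducible $\mathfrak{l}$-modules. Taking $V:=W$ and letting $\mathfrak{l}_{ss}$ act through $\mathfrak{sl}(W)=\mathfrak{sl}(n,\mathbb{R})$ (the action being faithful by simplicity) produces the required $i$ and $\alpha$ of Proposition~\ref{pp29} directly, with $n=\dim W$ and $\mathfrak{g}=\mathfrak{sl}(n+2,\mathbb{R})$. Hence every simple para-pseudo-hermitian symmetric space admits such an extension, which accounts for the first family in the statement.

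The delicate direction, and the step I expect to be the main obstacle, is the \emph{pseudo}-hermitian case, where $End^H(K/H)\cong\mathbb{C}$ forces $\mathfrak{k}/\mathfrak{l}$ to be a complex irreducible $\mathfrak{l}$-module $U$. I would decide when the underlying real module $U_{\mathbb{R}}$ splits as $V\oplus V^*$ by the Frobenius--Schur trichotomy: $U_{\mathbb{R}}$ is reducible if and only if $U$ is of real type, in which case $U=U_0\otimes_{\mathbb{R}}\mathbb{C}$ and $U_{\mathbb{R}}\cong U_0\oplus U_0$, whereas for complex or quaternionic type $U_{\mathbb{R}}$ is real irreducible and cannot be $V\oplus V^*$. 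When $U$ is of real type, $U_0\oplus U_0=V\oplus V^*$ forces $V\cong V^*\cong U_0$, i.e.\ $U_0$ self-dual. Running through the classification of Hermitian (hence pseudo-hermitian) simple symmetric spaces in appendix~\ref{appA}, the isotropy representation of $\mathfrak{l}_{ss}$ is of real type with self-dual $U_0$ exactly for the family $\mathfrak{so}(p+2,q)/\mathfrak{so}(p,q)+\mathfrak{so}(2)$, where $U_0=\mathbb{R}^{p+q}$ is the standard representation of $\mathfrak{so}(p,q)$, self-dual via the metric and traceless, so $\mathfrak{so}(p,q)\hookrightarrow\mathfrak{sl}(p+q,\mathbb{R})$; the remaining families (types $\mathfrak{su}$, $\mathfrak{so}^*$, $\mathfrak{sp}$, and exceptional) have isotropy of complex or quaternionic type and are excluded. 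Here one sets $V=U_0$, $n=p+q$, and $\mathfrak{g}=\mathfrak{sl}(p+q+2,\mathbb{R})$. This classification sweep is the hard part, as it leans on the complete list.

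Finally, for the uniqueness of $i$ in the $\mathfrak{so}(p+2,q)$ family, I would invoke Proposition~\ref{pp30}: the semisimple part $\mathfrak{so}(p,q)$ acts irreducibly on $V=\mathbb{R}^{p+q}$, so any two homomorphisms $i,j\colon H\to G_0$ whose composites with the standard representation of $\mathfrak{sl}(n,\mathbb{R})$ yield isomorphic irreducible representations are conjugate by an element of $P$. Thus $i$ is unique up to the equivalence of Proposition~\ref{1.4.7}, which completes the proof.
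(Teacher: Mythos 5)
Your proposal is correct and follows essentially the same route as the paper: reduce via Theorem \ref{5.2.1} and Proposition \ref{pp29} to the condition that the isotropy module split as $W\oplus W^*$ with $W$ real irreducible (your Frobenius--Schur trichotomy simply makes explicit the paper's terse ``type $\mathbb{R}$ with $W^*\cong\bar{W}$'' criterion), sweep the classification in Appendix \ref{appA}, and obtain uniqueness of $i$ from Proposition \ref{pp30}. The only step you omit is the paper's explicit principal-bundle morphism $(k,p)\mapsto((k^{-1})^T,p)$ identifying the extensions built from $W$ and from $W^*$; this is harmless for $\mathfrak{so}(p+2,q)/\mathfrak{so}(p,q)+\mathfrak{so}(2)$, where $W$ is self-dual, but would be needed if the uniqueness claim is read as also covering the para-pseudo-hermitian family with $W\not\cong W^*$.
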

\begin{proof}
Let $(K,L,h)$ be a non-complex simple homogeneous symmetric space and let $H$ be the simisimple part of $L$ extended by the symmetry $h$. Since in the para-pseudo-hermitian case, the $\mathfrak{h}$ has representation $W\oplus W^*$ for some irreducible representation $W: \mathfrak{h}\to \mathfrak{sl}(n,\mathbb{R})$, we define $i$ by $W$. Then $H$ and $i(H)$ are isomorphic, because $(V\oplus V^*)\circ W=V\circ W\oplus V^*\circ W=W\oplus W^*$. For the pseudo-hermitian symmetric spaces the same is possible only in the case of type $\mathbb{R}$ and $W^*\cong \bar{W}$.

Since semisimple part of $G_0$ is simple, we can use proposition \ref{pp30} and we see that $i$ is $W$ or $W^*$, up to equivalence. Then we define morphism $K\times_WP\to K\times_{W^*}P$ as $(k,p)\mapsto ((k^{-1})^T,p)$, which maps extension $(W,\alpha)$ to $(W^*,-\alpha^T)$, and the claim follows from proposition \ref{pp29}.
\end{proof}

Now we explicitly compute one flat example.

\begin{example}
Extension from $(PGl(n+1,\mathbb{R}),Gl(n,\mathbb{R}))$ to $(PGl(n+2,\mathbb{R}),P)$:

The subgroup $Gl(n,\mathbb{R})$ is represented by the following matrices, where the blocks are $(1,n)$ and $B\in Gl(n,\mathbb{R})$
\[ \left( \begin{array}{cc}
1 & 0  \\
0 & B  \end{array} \right).\] 

The symmetry at $o$ is a left multiplication by the following matrix in $Gl(n,\mathbb{R})$, where $E$ is the identity matrix
\[ \left( \begin{array}{cc}
1 & 0  \\
0 & -E  \end{array} \right).\] 

$H$ is the following subgroup, where $A\in Sl(n,\mathbb{R})$
\[ \left( \begin{array}{cc}
1 & 0  \\
0 & \pm A  \end{array} \right).\] 

Now $i$ is the following injective homomorphism, which maps $H$ into $P$
\[ \left( \begin{array}{ccc}
1 & 0 & 0 \\
0 & \pm A & 0 \\
0 & 0 & 1 \end{array} \right).\] 

Since both adjoint representations are $\lambda_1 \oplus \lambda_{n-1}$, the only possible homomorphisms are nonzero multiples. Thus the only possible $\alpha$ are the following, where $a=-Tr(A)$ and $b_1,b_2\in \mathbb{R}$ are nonzero and $c_1,c_2,d_1,d_2,e_1\in \mathbb{R}$
\[ \left( \begin{array}{cc}
a & Y^T \\ 
X & A   \end{array} \right) 
\mapsto 
\left( \begin{array}{ccc}
c_1a & d_1Y^T & e_1a \\ 
b_1X & A+\frac{c2}{n}Ea & d_2X \\
b_1b_2a & b_2Y^T & (1-c_1-c_2)a   \end{array} \right).\]

For fixed $b_1,b_2$ the normality conditions are equivalent to $c_2=0$, $e_1=d_1d_2$, $(n+2)b_1d_1+nd_2b_2-2c_1=n$ and $nb_1d_1-2c_1+(n+2)b_2d_2=n+2$. Thus there are four conditions on five variables and the solution is $d_1=\frac{c_1}{b_1}, d_2=-\frac{c_1-1}{b_2}, c_2=0,e_1=-\frac{c_1-1}{b_2}\frac{c_1}{b_1}$ and $c_1$ free parameter. Thus we can choose $c_1=\frac12$ and then the $\alpha$ extending to normal geometry for fixed $b_1,b_2$ is
\[ \left( \begin{array}{cc}
a & Y^T \\ 
X & A   \end{array} \right) 
\mapsto 
\left( \begin{array}{ccc}
\frac{1}{2}a & \frac{1}{2b_1}Y^T & \frac{1}{4b_1b_2}a \\ 
b_1X & A & \frac{1}{2b_2}X \\
b_1b_2a & b_2Y^T & \frac{1}{2}a   \end{array}\right).\]

Further $\kappa_{\alpha}(X,Y)=0$ for any of these $\alpha$. So they are all equivalent and locally isomorphic to homogeneous model. We can summarize the results in the following proposition.

\begin{prop}
Up to equivalence, there is the unique regular normal extension from $(PGl(n+1,\mathbb{R}),H)$ to Lagrangean contact geometry, which is flat.
\end{prop}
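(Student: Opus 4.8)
The plan is to obtain the three claims of the proposition---existence of such an extension, its flatness, and its uniqueness up to equivalence---from the explicit classification carried out in the preceding example, supplying the equivalence argument that the example leaves implicit. The example already fixes the homomorphism $i$ as the block inclusion $\operatorname{diag}(1,\pm A)\mapsto\operatorname{diag}(1,\pm A,1)$ and, applying Schur's lemma to the two irreducible $\mathfrak{sl}(n,\mathbb{R})$-summands $V$ and $V^{*}$ of $\mathfrak{g}_{-1}$, reduces every admissible morphism $\alpha$ to the displayed family with nonzero scalar parameters $b_1,b_2$ (together with a residual diagonal scalar $c_1$), the remaining entries being pinned by the normality condition $\partial^{*}\kappa=0$ computed from Lemma \ref{3.3.2}. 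So the first step is simply to invoke this classification as the starting point.

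Flatness is then immediate from the curvature formula of Corollary \ref{1.3.6}, $\kappa([e,e])(X,Y)=[\alpha(X),\alpha(Y)]-\alpha([X,Y])$, which the example records as vanishing identically on the normalized representatives; hence each extended geometry is flat. For uniqueness I would argue through this flatness: by Proposition \ref{flat1} every flat Cartan geometry of type $(G,P)$ is locally isomorphic to the homogeneous model $(G\to G/P,\omega_{G})$, so any two of our extensions are locally isomorphic to one another, and for homogeneous extensions this local isomorphism is exactly the equivalence of Proposition \ref{1.4.7}. It is worth recording the equivalence concretely as well: conjugation by $p_{0}=\operatorname{diag}(b_1/b_1',E_{n},b_2'/b_2)\in G_{0}\subset P$ (taking $\sigma=\operatorname{id}_{K}$) sends the extension with parameters $(b_1,b_2)$ to the one with $(b_1',b_2')$ and fixes $i$, since $i(H)$ lies in the middle block $\operatorname{diag}(1,\pm A,1)$, which commutes with $p_{0}$.

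The genuinely routine parts---solving the four normality equations and checking $\kappa_{\alpha}=0$---are done in the example, so the only real content is the equivalence step, and that is where care is needed. The subtlety is that the normality solution retains a free scalar $c_1$, so a regular normal extension is not literally rigid before passing to equivalence classes; this residual freedom is a $P_{+}$-gauge, removed by conjugation by a suitable $\exp(Z)$ with $Z\in\mathfrak{g}_{1}$, reducing to the representatives with $c_1=\tfrac12$, after which the flatness argument collapses the entire $(b_1,b_2)$-family into one class and simultaneously yields the final clause ``which is flat''. I would also note, for completeness, that $i$ itself is forced up to equivalence by Proposition \ref{pp30} applied to the irreducible standard representation of the $\mathfrak{sl}(n,\mathbb{R})$-factor---the two options $V$ and $V^{*}$ being interchanged by the outer automorphism of $PGl$ exactly as in Proposition \ref{odw1}---so that uniqueness holds for the full extension $(i,\alpha)$.
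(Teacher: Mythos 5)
Your proposal is correct and follows the paper's own route: the paper's proof of this proposition is exactly the preceding example (classification of the admissible $\alpha$ via Schur's lemma, solution of the normality conditions, and the computation $\kappa_{\alpha}=0$) followed by the one-line observation that flatness makes all these extensions locally isomorphic to the homogeneous model and hence mutually equivalent in the sense of proposition \ref{1.4.7}, which is also your main uniqueness step. Your supplementary details are consistent with this, modulo two harmless slips: the element realizing the equivalence $(b_1,b_2)\mapsto(b_1',b_2')$ should be $p_0=\operatorname{diag}(b_1'/b_1,E,b_2/b_2')$ (the inverse of what you wrote, which implements the equivalence in the opposite direction), and the residual $c_1$-freedom is gauged away by conjugation by $\exp(Z)$ with $Z\in\mathfrak{g}_{2}$ rather than $\mathfrak{g}_{1}$, since only the $\mathfrak{g}_{2}$-part of $\mathfrak{p}_+$ commutes with $i(H)$ and therefore preserves $i$.
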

\end{example}

In the case $W$ and $W^*$ are not isomorphic as the representations of $\mathfrak{h}$, then by the Schur lemma only the multiples of identity are isomorphisms. After identification of the representations of $\mathfrak{h}$ and $i(\mathfrak{h})$ via $W$, we are in situation of the previous example. Since the symmetric space has now different curvature $R(X,Y)$, and $\kappa_{\alpha}(X,Y)=[\alpha(X),\alpha(Y)]-\alpha(R(X,Y))$, the resulting contact geometry will not be flat. But using morphism from proposition \ref{1.4.7} we get that again they are all isomorphic. Thus we get the following theorem.

\begin{thm}
Up to equivalence, there is an unique regular normal extension for any non-complex simple para-pseudo-hermitian symmetric space with $W\neq W^*$ to Lagrangean contact structure. The extended geometry is flat only in the case of the previous example.
\end{thm}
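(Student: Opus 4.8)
The plan is to reduce the whole statement to the explicit computation already carried out in the preceding example: once the representations of $\mathfrak{h}$ and $i(\mathfrak{h})$ are matched through $W$, the formal shape of the extension is identical to that example, and the only genuinely new ingredient is that the underlying symmetric space now carries nonzero curvature.

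First I would fix the inclusion. By Proposition \ref{odw1}, a non-complex simple para-pseudo-hermitian symmetric space admits an extension whose $i$ is built from the irreducible representation $W$, so that $i(H)$ acts on $\mathfrak{g}_{-1}=V\oplus V^*$ as $W\oplus W^*$, matching the action of $\mathfrak{h}$ on $\mathfrak{k}/\mathfrak{h}$. Proposition \ref{pp30} then shows this $i$ is unique up to conjugation in the simple part of $G_0$, the only competitor $W^*$ being tied to $W$ by the morphism $(k,p)\mapsto((k^{-1})^T,p)$; hence $i$ is unique up to equivalence. Proposition \ref{pp29} guarantees that a regular extension $\alpha$ exists, regularity being secured by taking the $\mathfrak{g}_{-2}$-value of $\alpha$ to be the one prescribed by the Lie bracket on $\mathfrak{g}_{-1}$.

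Next I would pin down the shape and normalization of $\alpha$. Since $W$ and $W^*$ are irreducible and non-isomorphic, Schur's lemma forces every $\mathfrak{h}$-equivariant map between the isotypic pieces of $\mathfrak{g}_{-1}$ and $\mathfrak{g}_{\pm1}$ to be a scalar on each piece. Consequently $\alpha$ has exactly the form displayed in the previous example, with scalar parameters $b_1,b_2,c_1,\dots$, and the normalization condition $\partial^*\kappa=0$ (lemma \ref{3.3.2}, condition (6) of theorem \ref{5.2.1}) reduces to the same linear system in those scalars, because that system only detects the structure constants of $\mathfrak{sl}(n+2,\mathbb{R})$ and the scalar nature of the intertwiners. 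This determines a unique normal $\alpha$ for each choice of the scales $b_1,b_2$. The morphisms provided by Proposition \ref{1.4.7} act on these scales exactly as in the example, rescaling $b_1,b_2$, so all resulting geometries are mutually isomorphic; combined with the uniqueness of $i$ this yields uniqueness of the regular normal extension up to equivalence.

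Finally I would settle flatness via $\kappa_\alpha(X,Y)=[\alpha(X),\alpha(Y)]-\alpha(R(X,Y))$ from corollary \ref{1.3.6}, where $R$ is the curvature of the underlying symmetric space. In the previous example $W$ is the defining representation $V$ of the simple part of $\mathfrak{g}_0$, so $i$ is essentially the identity inclusion and the term $\alpha(R(X,Y))$ is precisely cancelled by the intrinsic bracket $[\alpha(X),\alpha(Y)]$, giving $\kappa_\alpha=0$. For any other para-pseudo-hermitian symmetric space, $W$ is a genuinely different irreducible representation of the simple part $\mathfrak{s}$ of $\mathfrak{h}$; then the $\mathfrak{g}_0$-component of $[\alpha(X),\alpha(Y)]$, which reflects the $\mathfrak{sl}(n,\mathbb{R})$-module structure of $V$, cannot coincide with $\alpha(R(X,Y))$, which reflects the $\mathfrak{s}$-module structure of $W$, so the cancellation fails and $\kappa_\alpha\neq0$. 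The main obstacle is exactly this comparison: one must verify that for $W\neq V$ the image of the symmetric-space curvature can never match the intrinsic $\mathfrak{sl}(n+2,\mathbb{R})$-bracket, which is what forces non-flatness in every case except the example.
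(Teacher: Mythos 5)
Your proposal is correct and follows essentially the same route as the paper: Schur's lemma (via $W\neq W^*$) forces $\alpha$ into the scalar-parametrized shape of the preceding $(PGl(n+1,\mathbb{R}),Gl(n,\mathbb{R}))$ example, the morphisms of Proposition \ref{1.4.7} identify all admissible choices of scales, and non-flatness is read off from $\kappa_{\alpha}(X,Y)=[\alpha(X),\alpha(Y)]-\alpha(R(X,Y))$ once the symmetric-space curvature $R$ is no longer the one that made the model example cancel. The only imprecision is your claim that normality reduces to ``the same linear system'': its inhomogeneous terms come from $\alpha([X,Y])$ and therefore do depend on $R$, but the system has the same unknowns and rank, so the unique-solution conclusion stands (the paper glosses over this in exactly the same way).
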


We investigate two remaining cases with simple group generated by symmetries, where the representation $W$ is self dual.

\begin{example}
Extension from $(O(p+2,q),O(p,q)\times O(2))$ and $O(p+1,q+1),O(p,q)\times O(1,1))$ to $(PGl(n+2,\mathbb{R}),P)$:

The subgroups $O(p,q)\times O(2)$ and $O(p,q)\times O(1,1)$ are represented by the following matrices, where the blocks are $(2,n)$ and $B\in O(p,q)$ and $b\in O(2)$ or $b\in O(1,1)$
\[ \left( \begin{array}{cc}
b & 0  \\
0 & B  \end{array} \right)\] 

The symmetry at $o$ is represented by a left multiplication by the following matrix in $O(p,q)\times O(2)$ or $O(p,q)\times O(1,1)$, where $E$ are the identity matrices
\[ \left( \begin{array}{cc}
E & 0  \\
0 & -E  \end{array} \right).\] 

The $H$ is following subgroup, where $A\in O(p,q)$
\[ \left( \begin{array}{cc}
E & 0  \\
0 & A  \end{array} \right).\] 

Now $i$ is the following injective homomorphism, which maps $H$ into $P$
\[ \left( \begin{array}{ccc}
1 & 0 & 0 \\
0 & A & 0 \\
0 & 0 & 1 \end{array} \right).\] 

The adjoint representation of $K$ is $\lambda_1 \oplus \lambda_1$ and $i(K)$ is $\lambda_1 \oplus \lambda_{n-1}$, since $H=O(p,q)$, the $\lambda_{n-1}\cong \lambda_1$ as representation of $H$. Now the possible isomorphisms are maps $(X,Y)\mapsto (b_1X+b_2Y,b_3X+b_4Y)$ for $b_1b_4-b_2b_3\neq 0$. Thus the only possible $\alpha$ are the following, where $a\in \mathbb{R}$ and $c$ is $1$ in the $O(2)$ case and $-1$ in the $O(1,1)$ case, $I$ is matrix with $p$ entries on diagonal $1$ and remaining $q$ entries $-1$ and $c_1,c_2,d_1,d_2,d_3,d_4,e_1\in \mathbb{R}$
\[ \left( \begin{array}{ccc}
0 & a & -X^TI \\
-c a & 0 & -c Y^TI \\ 
X & Y & A   \end{array} \right) 
\\
\mapsto \]
\[
\\
\left( \begin{array}{ccc}
c_1a & d_1X^TI+d_2Y^TI & e_1a \\ 
b_1X+b_2Y & A+\frac{c_2a}{n}E & d_3X+d_4Y \\
(b_1b_4-b_2b_3)a & b_3X^TI+b_4Y^TI & (-c_1-c_2)a   \end{array} \right).\]

We denote $\gamma=cb_1b_3+b_2b_4$ and $\delta=b_1b_4-b_2b_3$. For fixed $b_1, b_2, \gamma, \delta$ the normality conditions are $c_2=\frac{n}{n+1}\frac{-\gamma}{\delta}$,$e_1=d_2d_3-d_1d_4$, $b_4d_1-b_3d_2=\frac{-b_4^2-cb_3^2}{\delta}$, $b_2d_3-b_1d_4=\frac{b_2^2+cb_1^2}{\delta}$, $b_2d_1-b_1d_2-b_4d_3+b_3d_4=\frac{n}{n+1}\frac{-\gamma}{\delta}$ and $b_2d_1-b_1d_2+b_4d_3-b_3d_4+2c_1=\frac{n}{n+1}\frac{\gamma}{\delta}$. Thus there are six conditions on seven variables and we compute the solution for $d_1, d_2, d_3, d_4, e_1$ and $c_2$ and let $c_1$ as a free parameter. Thus we can choose $c_1=\frac{n\gamma}{2(n+1)\delta}$ and the $\alpha$ extending to normal geometry for fixed $b_1, b_2, \gamma, \delta$ is
\[ \left( \begin{array}{ccc}
0 & a & -X^TI \\
-c a & 0 & -c Y^TI \\ 
X & Y & A   \end{array} \right) 
\mapsto 
\]
\[
\left( \begin{array}{ccc}
\frac{n\gamma}{2(n+1)\delta} a& V_1 &-(\frac{((3n+2)(n+2)\gamma^2}{4(n+1)^2\delta^3}+\frac{c}{\delta})a \\ 
b_1X+b_2Y & A-\frac{1}{n+1}\frac{\gamma}{\delta}Ea & V_2\\
\delta a & b_3X^TI+b_4Y^TI & \frac{n\gamma}{2(n+1)\delta} a   \end{array}\right),  \]
where 
\[ V_1=-(\frac{(n+2)\gamma b_3}{2(n+1)\delta^2}+\frac{b_4}{\delta})X^TI-(\frac{c(n+2)\gamma b_4}{2(n+1)\delta^2}-\frac{cb_3}{\delta})Y^TI, \] 
\[ V_2=-(\frac{(n+2)\gamma b_1}{2(n+1)\delta^2}-\frac{b_2}{\delta})X-(\frac{c(n+2)\gamma b_2}{2(n+1)\delta^2}+\frac{cb_1}{\delta})Y.\]

The curvature of the extended geometry by this $\alpha$ is:
\[ \kappa_{\alpha}\left( \left( \begin{array}{ccc}
0 & a & -X^TI \\
-c a & 0 & -c Y^TI \\ 
X & Y & 0   \end{array} \right) ,
\left( \begin{array}{ccc}
0 & b & -Z^TI \\
-c b & 0 & -c W^TI \\ 
Z & W & 0   \end{array} \right)  \right)=
\]
\[
\left( \begin{array}{ccc}
0 & -\frac{n+2}{n+1}\frac{(cb_3^2+b_4^2)\gamma}{\delta^3}V_3 & 0\\
0 & \gamma \frac{R_1\delta-\frac{(c+1)(n+2)}{2(n+1)}R_2+\frac{n+2}{2(n+1)}R_3}{\delta^2} -\frac{\gamma (W^TIX-Y^TIZ)}{(n+1)\delta} E & -\frac{n+2}{n+1}\frac{(cb_1^2+b_2^2)\gamma}{\delta^3}V_4 \\ 
0 & 0 & 0   \end{array} \right),\] 
where 
\[ R_1=XW^TI+WX^TI-YZ^TI-ZY^TI,\] 
\[ R_2=b_1b_4(XW^TI-YZ^TI)-b_2b_3(WX^TI-ZY^TI),\]
\[ R_3=b_1b_3(ZX^TI-XZ^TI)+b_2b_4(WY^TI-YW^TI\]
are $n\times n$ matrices and 
\[ V_3=b_1bX^TI+b_2bY^TI-b_1aZ^TI-b_2aW^TI,\] 
\[ V_4=b_3bX+b_4bY-b_3aZ-b_4aW\]
are matrices $1\times n$ and $n\times 1$.

For $\gamma=cb_1b_3+b_2b_4=0$ the extended geometry is flat. Using algorithm in proposition \ref{lab_1} we compute, that the infinitesimal automorphisms for $\gamma \neq 0$ are of the form $\alpha(\mathfrak{k})$. The equivalence classes are determined by $(b_1,b_2,b_3,b_4)$ in the same way as in dimension $3$. In particular, $t=\frac{\gamma}{\delta}$.

%Let us briefly discuss a geometric realization of the extension. Notice, that $K/H$ is a generalization of Stiefel variety, i.e. pairs of orthonormal vectors in $\mathbb{R}^{n+2}$ with induced metric of $O(2)$ and $O(1,1)$ in the plain given by this two vectors. The coefficients $(b_1,b_2)$ from definition of $\alpha$, together with the frame $(X,Y)$ in the Stiefel variety are interpreted as a vector in $\mathbb{R}^{n+2}$. Clearly, there is a $n$-dimensional subbundle of such points in the Stiefel variety leading to the same vector for the chosen coordinates $(b_1,b_2)$. Choosing other non-collinear coordinates $(b_3,b_4)$, we get for any point in the Stiefel variety two subbundles and the tangent bundles to them gives the Lagrangean contact structure. Let $\phi$ be angle between vectors of coordinates $(b_1,b_2)$ and $(b_3,b_4)$, which does not depend on the choice of frame. Then for $c=1$ (the $O(2)$ case), we get $t=\frac{\gamma}{\delta}=cotan(\phi)$ and for $c=-1$ (the $O(1,1)$ case), we get $t=\frac{\gamma}{\delta}=cotanh(\phi)$.

\begin{thm}\label{5.3.6}
Up to equivalence, all regular normal extensions from \linebreak $(O(p+2,q),O(p,q))$ and $O(p+1,q+1),O(p,q))$ in the case $b_1^2>b_2^2$ to a Lagrangean contact geometry are given by the following one parameter classes for $t\geq 0$:
\[\alpha \left( \begin{array}{ccc}
0 & a & -X^TI \\
-c a & 0 & -c Y^TI \\ 
X & Y & A   \end{array} \right) 
= 
\]
\[
\left( \begin{array}{ccc}
\frac{n}{2(n+1)}t a& -(\frac{(n+2)}{2(n+1)}ct^2+1)X^TI-\frac{n}{2(n+1)}tY^TI&-\frac{((3n+2)(n+2)}{4(n+1)^2}t^2a-ca \\ 
X & A-\frac{1}{n+1}t Ea &-\frac{(n+2)}{2(n+1)}tX-cY\\
a & c t X^TI+Y^TI & \frac{n}{2(n+1)} t a   \end{array} \right)  \]
with curvature
\[ \kappa_{\alpha}\left( \left( \begin{array}{ccc}
0 & a & -X^TI \\
-c a & 0 & -c Y^TI \\ 
X & Y & 0   \end{array} \right) ,
\left( \begin{array}{ccc}
0 & b & -Z^TI \\
-c b & 0 & -c W^TI \\ 
Z & W & 0   \end{array} \right)  \right)=
\]
\[
\left( \begin{array}{ccc}
0 & \frac{(n+2)t}{n+1}(1+ct^2)(bX^TI-aZ^TI) & 0\\
0 & \frac{t}{(n+1)}((n+1)R_1-R_2+(n+2)ctR_3) -\frac{tr_1}{(n+1)}E& -\frac{(n+2)t}{n+1}V_1 \\ 
0 & 0 & 0   \end{array} \right), \]
where 
\[ R_1=WX^TI-ZY^TI,\]
\[ R_2=XW^TI-YZ^TI,\]
\[ R_3=ZX^TI-XZ^TI,\]
\[ r_1=W^TIX-Y^TIZ,\]
\[ V_1=t(bX-aZ)+c(bY-aW).\]

For $b_1^2=b_2^2$ and $O(p+1,q+1),O(p,q))$:

a) for $b_3^2<b_4^2$
\[\alpha \left( \begin{array}{ccc}
0 & a & -X^TI \\
a & 0 & Y^TI \\ 
X & Y & A   \end{array} \right) 
= 
\]
\[
\left( \begin{array}{ccc}
\frac{n}{2(n+1)} a& -\frac{(n+2)}{2(n+1)}Y^TI-X^TI&-\frac{(3n+2)(n+2)}{4(n+1)^2}a+a \\ 
X+Y & A-\frac{1}{n+1} Ea &\frac{n}{2(n+1)}(X+Y)\\
a & Y^TI & \frac{n}{2(n+1)} a   \end{array} \right)  \]
with curvature
\[ \kappa_{\alpha}\left( \left( \begin{array}{ccc}
0 & a & -X^TI \\
a & 0 & Y^TI \\ 
X & Y & 0   \end{array} \right) ,
\left( \begin{array}{ccc}
0 & b & -Z^TI \\
b & 0 & W^TI \\ 
Z & W & 0   \end{array} \right)  \right)=
\]
\[
\left( \begin{array}{ccc}
0 & \frac{(n+2)}{n+1}(bY^TI-aZ^TI)+\frac{(n+2)}{n+1}(bX^TI-aW^TI) & 0\\
0 & \frac{1}{(n+1)}((n+1)R_1-R_2-(n+2)R_3) -\frac{r_1}{(n+1)}E& 0 \\ 
0 & 0 & 0   \end{array} \right), \]
where 
\[ R_1=WX^TI-YZ^TI,\]
\[ R_2=XW^TI-ZY^TI,\]
\[ R_3=YW^TI-WY^TI,\]
\[ r_1=W^TIX-Y^TIZ.\]

b) for $b_3^2=b_4^2$
\[\alpha \left( \begin{array}{ccc}
0 & a & -X^TI \\
a & 0 & Y^TI \\ 
X & Y & A   \end{array} \right) 
= 
\]
\[
\left( \begin{array}{ccc}
\frac{n}{2(n+1)} a& -\frac{n}{4(n+1)}X^TI+\frac{n}{4(n+1)}Y^TI&\frac{n^2}{8(n+1)^2}a \\ 
X+Y & A-\frac{1}{n+1} Ea &\frac{n}{4(n+1)}(X+Y)\\
2a & -X^TI+Y^TI & \frac{n}{2(n+1)} a   \end{array} \right)  \]
with curvature
\[ \kappa_{\alpha}\left( \left( \begin{array}{ccc}
0 & a & -X^TI \\
a & 0 & Y^TI \\ 
X & Y & 0   \end{array} \right) ,
\left( \begin{array}{ccc}
0 & b & -Z^TI \\
b & 0 & W^TI \\ 
Z & W & 0   \end{array} \right)  \right)=
\]
\[
\left( \begin{array}{ccc}
0 & 0 & 0\\
0 & \frac{1}{2(n+1)}(nR_1+nR_2+(n+2)R_3-(n+2)R_4) -\frac{r_1}{(n+1)}E& 0 \\ 
0 & 0 & 0   \end{array} \right), \]
where 
\[ R_1=XW^TI-ZY^TI,\]
\[ R_2=WX^TI-YZ^TI,\]
\[ R_3=XZ^TI-ZX^TI,\]
\[ R_4=YW^TI-WY^TI,\]
\[ r_1=W^TIX-Y^TIZ.\]
\end{thm}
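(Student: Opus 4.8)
The plan is to take the explicit normal extension produced in the preceding Example as the starting point and to reduce the theorem to a classification of its parameters up to the equivalence described in Proposition \ref{1.4.7}. There the normal $\alpha$ extending the flat geometry of type $(K,H)$, with $K=O(p+2,q)$ or $O(p+1,q+1)$ and $H=O(p,q)$, was shown to depend only on the $2\times 2$ block $B=\bigl(\begin{smallmatrix} b_1 & b_2 \\ b_3 & b_4 \end{smallmatrix}\bigr)$ (equivalently on $\gamma=cb_1b_3+b_2b_4$ and $\delta=b_1b_4-b_2b_3\neq 0$), together with the displayed formula for $\kappa_\alpha$. Thus it remains only to normalise $B$ and substitute into those two formulas.

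First I would determine how the admissible morphisms of Proposition \ref{1.4.7} act on $B$. By Proposition \ref{pp30} and Schur's lemma the homomorphism $i$ is rigid on the semisimple factor $O(p,q)$, so any equivalence is governed by an element $p_0\in P$ and by a Lie algebra automorphism $\sigma$ of $K$ preserving $H$ and the symmetry $h$; these act on $B$ only through the two-dimensional $\mathfrak{g}_{-1}$-directions spanned by $X$ and $Y$, that is, through the $O(2)$-block in the $O(p+2,q)$ case ($c=1$) and the $O(1,1)$-block in the $O(p+1,q+1)$ case ($c=-1$), by left multiplication and by an overall scaling. The essential point to verify is that this reproduces verbatim the action already computed in dimension $3$ in Section \ref{5.2}: the extra $O(p,q)$-directions contribute no new equivalences, and the discrete automorphisms realising $b_3\mapsto -b_3$ persist as automorphisms of $O(p+2,q)$ respectively $O(p+1,q+1)$ preserving $h$ and $H$.

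Granting this, I would invoke the normal forms already established for $\mathfrak{sl}(3,\mathbb{R})$ in Section \ref{5.2}. When $b_1^2>b_2^2$ — which is automatic in the compact $c=1$ case and is one of the three orbits in the $c=-1$ case — the action lets one reduce $B$ to $b_2=0$, $\delta=1$, leaving the single invariant $t=\gamma/\delta\ge 0$; substituting into the general $\alpha$ and $\kappa_\alpha$ of the Example (with the sign $c$ retained) gives the first pair of displayed formulas. The two further $O(1,1)$-orbits, namely $b_1^2=b_2^2$ with $b_3^2<b_4^2$ and $b_1^2=b_2^2$ with $b_3^2=b_4^2$, occur only for $O(p+1,q+1)$ and produce cases (a) and (b) upon the same substitution, so that the three families exhaust all equivalence classes.

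It then remains to record the special value $\gamma=0$ (equivalently $t=0$), for which the curvature vanishes and the geometry is flat, and to confirm using the algorithm of Proposition \ref{lab_1} that for $\gamma\neq 0$ the algebra of infinitesimal automorphisms equals $\alpha(\mathfrak{k})$, so that distinct values of $t$ (and distinct degenerate cases) really give non-isomorphic geometries. The main obstacle is the verification in the second paragraph — matching the $\operatorname{Aut}(K,H)$-plus-$P$ action to the two-dimensional $O(2)$/$O(1,1)$ picture and checking that the orbit invariant $t$ together with the discrete case distinctions form a complete set of invariants — since everything afterwards is substitution into formulas already derived in the Example.
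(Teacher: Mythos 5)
Your proposal follows essentially the same route as the paper: the theorem is stated as the culmination of the preceding Example, whose computations (the general form of $\alpha$, the solution of the normality conditions for fixed $b_1,b_2,\gamma,\delta$, and the curvature formula) constitute the bulk of the proof, and the paper then simply asserts that "the equivalence classes are determined by $(b_1,b_2,b_3,b_4)$ in the same way as in dimension $3$," with $t=\gamma/\delta$, before substituting the normal forms — exactly the reduction you describe. Your explicit flagging of the need to match the $\operatorname{Aut}(K,H)$-plus-$P$ action on $B$ to the two-dimensional $O(2)$/$O(1,1)$ picture, and the use of Proposition \ref{lab_1} to separate non-isomorphic values of $t$, matches the paper's (terser) treatment.
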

\end{example}

The classification in the semisimple case is the following:
 
\begin{thm}
The only semisimple non-simple symmetric spaces without complex factors allowing extensions to regular Lagrangean contact structures are semisimple para-pseudo-hermitian symmetric spaces. For the latter cases, the inclusion $i$ from proposition \ref{pp29} is unique up to equivalence.
\end{thm}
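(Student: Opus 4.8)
The plan is to feed the structural output of Theorem \ref{5.2.1} into a representation-theoretic comparison of the tangent module of a product symmetric space with $\mathfrak{g}_{-1}$, and then to close the converse and the uniqueness by the construction of Proposition \ref{pp29} together with the simple-case results.

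First I would set up the forward direction. By Theorem \ref{5.2.1}, a semisimple $K$ carrying such an extension has $i(H)\subset G_0$, central one-dimensional $\mathfrak{l/h}$, and an isomorphism $\mathfrak{k/l}\cong\mathfrak{g}_{-1}$ of $i(H)$-modules induced by $\alpha$; moreover every simple factor of $(K/N,L/N,h)$ is pseudo-hermitian or para-pseudo-hermitian. For Lagrangean contact structures $\mathfrak{g}_{-1}=V\oplus V^*$ with $V=\mathbb{R}^n$ the standard $\mathfrak{sl}(n,\mathbb{R})$-module, and $\mathfrak{g}_0$ preserves this Lagrangian splitting. Writing $K=\prod_j K_j$, the tangent module decomposes as $\mathfrak{k/l}=\bigoplus_j \mathfrak{k}_j/\mathfrak{l}_j$ with $\mathfrak{l}_j$ acting only on its own summand, so the embedding $i(\mathfrak{l}_{ss})\subset\mathfrak{sl}(n,\mathbb{R})$ is block-diagonal and each factor acts on a block $V_{(j)}\oplus V_{(j)}^*\cong\mathfrak{k}_j/\mathfrak{l}_j$.

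The heart of the argument, and the step I expect to be the main obstacle, is to exclude pseudo-hermitian factors once there are at least two factors. Here $\mathfrak{z}(\mathfrak{l})=\bigoplus_j\mathbb{R}e_j$ has dimension equal to the number of factors while $\mathfrak{l/h}$ is one-dimensional; the regularity clause of Proposition \ref{pp29} forces $\mathfrak{l/h}$ to be the diagonal line so that each factor's symplectic bracket lands in the same $\mathfrak{g}_{-2}$, whence $\mathfrak{h}\cap\mathfrak{z}(\mathfrak{l})$ is the traceless hyperplane and contains every difference $e_j-e_{j'}$. For a para-pseudo-hermitian factor $e_j$ acts on $W_j\oplus W_j^*$ with real eigenvalues $\pm c$ preserving the two halves, and such an action is realizable inside $\mathfrak{g}_0$. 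For a pseudo-hermitian factor the generator acts as the invariant complex structure $J_j$; the only type for which $\mathfrak{k}_j/\mathfrak{l}_j$ even admits a splitting into two dual real submodules is the self-dual $\mathfrak{so}(p,q)$-case, since otherwise the realification is real-irreducible and its factor-$j$ isotypic part cannot equal a nonzero $V_{(j)}\oplus V_{(j)}^*$. In that remaining case $J_j$, being multiplication by $i$ on $U_0\otimes\mathbb{C}\cong U_0\oplus U_0$, \emph{interchanges} the two halves $V_{(j)}\leftrightarrow V_{(j)}^*$, so $i(e_j-e_{j'})\in\mathfrak{g}_0$ would have to swap $V_{(j)}$ with $V_{(j)}^*$, contradicting that $\mathfrak{g}_0$ preserves $V$ and $V^*$. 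Hence no pseudo-hermitian factor occurs and the space is semisimple para-pseudo-hermitian. The single-factor case escapes this because then $\mathfrak{l/h}=\mathfrak{z}(\mathfrak{l})$ and $J_j$ is carried into $\mathfrak{g}_{-2}$, on which $\mathfrak{g}_{-1}$ acts trivially, recovering Proposition \ref{odw1}.

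For the converse, every semisimple para-pseudo-hermitian symmetric space meets the hypotheses of Proposition \ref{pp29}: taking $V_{(j)}=W_j$ on each factor produces $i:H\to G_0$ with $\mathfrak{k/l}\cong\mathfrak{g}_{-1}$, and the value of $\alpha$ on $\mathfrak{l/h}$ can be chosen so that the geometry is regular by the regularity criterion of Proposition \ref{pp29} for spaces without complex factors. Finally, for uniqueness of $i$ up to equivalence I would argue block by block: Proposition \ref{pp30}, applied to each irreducible block $W_j$, pins down the embedding up to conjugation and the duality ambiguity $W_j\leftrightarrow W_j^*$, the latter realized by $(k,p)\mapsto((k^{-1})^T,p)$ exactly as in the simple case; assembling these conjugations and the attendant outer automorphisms of $Ad(H_j)$ yields a single equivalence between any two choices of $i$. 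A secondary technical point is that Proposition \ref{pp30} requires irreducibility, so it must be applied to the blocks $W_j$ rather than to the reducible module $V$ itself.
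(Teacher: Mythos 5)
Your proposal is correct and reaches the same conclusion by the same overall skeleton (exclude pseudo-hermitian factors once a second factor is present, then build the para-pseudo-hermitian extensions factor-wise and settle uniqueness via Proposition \ref{pp30} and the transpose morphism), but the decisive exclusion step is executed by a different mechanism. The paper uses complete reducibility of $i(H)$ to restrict the extension to the submatrix belonging to each simple factor, obtaining a lower-dimensional Lagrangean contact extension for that factor; a pseudo-hermitian factor then forces $\mathfrak{l}/\mathfrak{h}$ to coincide with that factor's center (its generator has eigenvalues $\pm i$, so it cannot map into $\mathfrak{g}_0$), which contradicts regularity because $\mathfrak{g}_{-2}\cong\mathfrak{l}/\mathfrak{h}$ must receive the contact bracket from every factor. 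You instead place the normalized differences $e_j-e_{j'}$ of the central generators inside $\mathfrak{h}$ and derive the contradiction from the fact that $i(e_j-e_{j'})\in\mathfrak{g}_0$ must preserve the Lagrangian splitting $V\oplus V^*$, while the complex structure $J_j$ of a pseudo-hermitian factor either acts on a real-irreducible module (no splitting exists) or, in the self-dual $\mathfrak{so}(p,q)$ case, fails to preserve any pair of real dual submodules since it has no real eigenvalues. These are two views of the same obstruction; your version is more self-contained (it does not re-invoke the simple classification of Proposition \ref{odw1} for the restricted factors) at the price of extra bookkeeping about the isotypic decomposition of $V$, whereas the paper's restriction argument is shorter because it delegates to the already-proved simple case. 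Likewise, for existence and uniqueness the paper factors the construction through the intermediate group $(Gl(n,\mathbb{R})\times Gl(n,\mathbb{R}))\cap O(n,n)$, unique up to para-complex multiple, while you apply Proposition \ref{pp30} block by block; both routes are valid, and your remark that Proposition \ref{pp30} must be applied to the irreducible blocks $W_j$ rather than to the reducible module $V$ is exactly the right precaution.
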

\begin{proof}
For semisimple para--pseudo--hermitian symmetric spaces without \linebreak complex factors, the extension can be done in two steps. First we take extension from the sum of symmetric spaces to the structure group $(Gl(n,\mathbb{R})\times Gl(n,\mathbb{R}))\cap O(n,n)$, which acts as standard and dual to standard representation and is unique up to para-complex multiple. Then the claim follows in the same way as proposition \ref{odw1}.

Now assume the extension exists. Then since the representation of $i(H)$ is completely reducible, the simple factors have extension to Lagrangean contact geometry, when we restrict to the submatrix (in frame compatible with factors) with values in this factor. This defines extension to Lagrangean contact geometry of lower dimension. Assume that one factor is pseudo-hermitian and not para-hermitian, then the eigenvalues of its center are $\pm i$ and $L/H$ has to be this center, which is contradiction since due to regularity the $L/H$ intersects all factors.
\end{proof}

\subsection{Extensions to CR structures}\label{5.4}

In this section we construct examples of symmetric partially integrable almost CR structures. This means, due to the torsion freeness we construct the CR structures. So we want to find extension to Cartan geometry of type $(\mathfrak{su}(p+1,q+1),P)$ with the following gradation:,

\[ \left( \begin{array}{ccc}
\mathfrak{g}_{0} & \mathfrak{g}_{1} & \mathfrak{g}_{2} \\
\mathfrak{g}_{-1} & \mathfrak{g}_{0} & \mathfrak{g}_{1} \\
\mathfrak{g}_{-2} & \mathfrak{g}_{-1} & \mathfrak{g}_{0} \end{array} \right),\]

where the blocks are $(1,n,1)$ and $AJ+JA^*=0$ for $A\in \mathfrak{su}(p+1,q+1)$, where $J$ is representing the pseudo hermitian form $$(x_0,x_i,x_{n+1})J(y_0,y_i,y_{n+1})^*=x_0\bar{y}_{n+1}+x_{n+1}\bar{y}_{0}+\sum_{i=1}^p x_i\bar{y}_{i}-\sum_{i=p+1}^n x_i\bar{y}_{i}.$$

The representation of the semisimple part of $\mathfrak{g}_{0}$ on $\mathfrak{g}_{-1}$ is $V$, where $V$ is standard representation of $\mathfrak{su}(p,q)$.

\begin{prop}\label{odw2}
The only non-complex simple symmetric spaces allowing extensions to regular CR structures are simple pseudo-hermitian symmetric spaces and simple para-pseudo-hermitian symmetric spaces $\mathfrak{so}(p+1,q+1)/ \mathfrak{so}(p,q)+\mathfrak{so}(1,1)$. For the latter cases, the inclusion $i$ from proposition \ref{pp29} is unique up to equivalence.
\end{prop}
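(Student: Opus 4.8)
The plan is to follow the template of the proof of proposition \ref{odw1}, replacing the Lagrangean model $\mathfrak{g}_{-1}=V\oplus V^*$ by the CR model, where $\mathfrak{g}_{-1}=V$ is the \emph{single} standard representation of $\mathfrak{su}(p,q)$ and, crucially, carries a genuine complex structure $I$ with $I^{2}=-\mathrm{id}$. By theorem \ref{5.2.1}, any non-complex simple symmetric space $(K,L,h)$ admitting an extension to a regular parabolic contact geometry is already pseudo-hermitian or para-pseudo-hermitian, so it suffices to decide which of these admit an extension of CR type $(\mathfrak{su}(p+1,q+1),P)$ and to control the inclusion $i$. Setting $H$ to be the semisimple part of $L$ enlarged by $h$ as in proposition \ref{pp29}, an extension exists exactly when there is a homomorphism $i\colon H\to G_0$ identifying the isotropy representation of $H$ on $\mathfrak{k}/\mathfrak{h}$ with the representation of $i(H)$ on $\mathfrak{g}_{-1}=V$.

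First I would treat the pseudo-hermitian case. Here the centre of $\mathfrak{l}$ supplies a complex structure on $\mathfrak{k}/\mathfrak{h}$ making it an irreducible \emph{complex} $\mathfrak{h}$-representation $W$; since $V$ is the standard complex representation of $\mathfrak{su}(p,q)$, the composite $V\circ W$ is again $W$, so defining $i$ by $W$ into the $\mathfrak{su}(p,q)$ whose Hermitian form matches the invariant metric of the symmetric space yields a valid extension. This works for \emph{every} simple pseudo-hermitian factor. Next I would treat the para-pseudo-hermitian case, where $\mathfrak{k}/\mathfrak{h}=W\oplus W^*$ with $W$ an irreducible \emph{real} representation and the para-complex structure acting as $+1$ on $W$ and $-1$ on $W^*$. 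For such a splitting to be realized inside $V$, the honest complex structure $I$ of $V$ must commute with $i(\mathfrak{h})$ and interchange the two real summands; this forces $W\cong W^*$ of real type, so that $V$ realifies to $W\oplus W$. Running through the classification in the appendix, the unique simple para-pseudo-hermitian family with self-dual isotropy representation of real type is $\mathfrak{so}(p+1,q+1)/(\mathfrak{so}(p,q)+\mathfrak{so}(1,1))$, for which $i(\mathfrak{h})=\mathfrak{so}(p,q)\subset\mathfrak{su}(p,q)$ is the standard real embedding; all other para-pseudo-hermitian isotropy types are excluded.

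For the uniqueness statement I would invoke proposition \ref{pp30}: the semisimple part of $G_0$ is the simple group $SU(p,q)$ and $V$ restricts to $i(H)$ as an irreducible representation, so any two such inclusions realizing the same representation are conjugate in $P$, hence equivalent by proposition \ref{1.4.7}. In the $\mathfrak{so}(p,q)$ case one has moreover $W\cong\overline{W}$, so the complex-conjugate embedding gives nothing new and $i$ is genuinely unique up to equivalence. The main obstacle I anticipate is precisely the case analysis of the second paragraph: one must show that the irreducible complex module $V$ can accommodate the reducible real module $W\oplus W^*$ only when $W$ is self-dual of real type, and then verify against the tables that this real-type condition singles out exactly the orthogonal family $\mathfrak{so}(p+1,q+1)/(\mathfrak{so}(p,q)+\mathfrak{so}(1,1))$ and no other. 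This is the step where the real, complex, or quaternionic type of each isotropy representation must be tracked carefully, since it is exactly this type that distinguishes which symmetric spaces survive.
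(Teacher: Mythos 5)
Your proposal is correct and follows essentially the same route as the paper: define $i$ by the complex isotropy module $W$ in the pseudo-hermitian case (using $V\circ W=W$), observe that in the para-pseudo-hermitian case the realification condition forces $W\cong W^*$ of real type, which singles out $\mathfrak{so}(p+1,q+1)/(\mathfrak{so}(p,q)+\mathfrak{so}(1,1))$ from the tables, and settle uniqueness via proposition \ref{pp30}. The only cosmetic difference is that the paper disposes of the residual $W$ versus $\bar W$ ambiguity by the explicit morphism $(k,p)\mapsto((k^{-1})^*,p)$ sending $(W,\alpha)$ to $(\bar W,-\alpha^*)$, whereas you use self-conjugacy of the real embedding in the relevant case, which suffices for the stated uniqueness claim.
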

\begin{proof}
Let $(K,L,H)$ be a non-complex simple homogeneous symmetric space and let $H$ be the simisimple part of $L$ extended by the symmetry $h$. In the pseudo-hermitian case, the $\mathfrak{h}$ has representation $W$ for some representation $W: \mathfrak{k}\to \mathfrak{su}(p,q)$ and we can define $i$ by $W$. Then $H$ and $i(H)$ are isomorphic, because $V\circ W=W$. In the para-pseudo-hermitian case, the same is possible only if $W^*\cong \bar{W}$.

Since semisimple part of $G_0$ is simple, we can use proposition \ref{pp30} and we see that $i$ is up to equivalence $W$ or $\bar{W}$. Then we define morphism $K\times_WP\to K\times_{\bar{W}}P$ as $(k,p)\mapsto ((k^{-1})^*,p)$, which maps extension $(W,\alpha)$ on $(\bar{W},-\alpha^*)$, and the claim follows from proposition \ref{pp29}.
\end{proof}

Now we explicitly compute one flat example.

\begin{example}
Extension from  $(PSU(p+1,q),U(p,q))$ to $(PSU(p+1,q+1),P)$:

The subgroup $U(p,q)$ is represented by the following matrixes, where the blocks are $(1,n)$ and $B\in U(p,q)$
\[ \left( \begin{array}{cc}
1 & 0  \\
0 & B  \end{array} \right).\] 

The symmetry at $o$ is a left multiplication by the following matrix in $PSU(p+1,q)$, where $E$ is the identity matrix
\[ \left( \begin{array}{cc}
1 & 0  \\
0 & -E  \end{array} \right).\] 

$H$ is the following subgroup, where $A\in SU(p,q)$

\[ \left( \begin{array}{cc}
1 & 0  \\
0 & \pm A  \end{array} \right)\] 

Now $i$ is the following injective homomorphism, which maps $H$ into $P$
\[ \left( \begin{array}{ccc}
1 & 0 & 0 \\
0 & \pm A & 0 \\
0 & 0 & 1 \end{array} \right).\] 

Since both adjoint representations are $\lambda_1$, the only possible homomorphisms are nonzero (complex) multiples. Thus the only possible $\alpha$ are the following, where $a=-Tr(A)$ and $b\in \mathbb{C}$ is nonzero, $c,d \in \mathbb{C}$ and $e\in \mathbb{R}$.
\[ \left( \begin{array}{cc}
a & -\bar{X}^TI \\ 
X & A   \end{array} \right) 
\mapsto 
\left( \begin{array}{ccc}
ca & -\bar{d}\bar{X}^TI & ea \\ 
bX & A+\frac{1-2\operatorname{Re}(c)}{n}Ea & dX \\
b\bar{b}a & -\bar{b}\bar{X}^TI & \bar{c}a   \end{array} \right).\]

For fixed $b$, the normality conditions are equivalent to $\operatorname{Re}(c)=1/2$, $e=d\bar{d}$, $c=\bar{b}d$. Thus there are four conditions on five variables and the solution is $d=\frac{c}{\bar{b}}, \operatorname{Re}(c)=0,e=\frac{c\bar{c}}{b\bar{b}}$ and $\operatorname{Im}(c)$ free parameter. Thus if we choose $\operatorname{Im}(c)=0$, the resulting $\alpha$ for fixed $b$ is:
\[ \left( \begin{array}{cc}
a & -\bar{X}^TI \\ 
X & A   \end{array} \right) 
\mapsto 
\left( \begin{array}{ccc}
\frac12a & -\frac{1}{2b}\bar{X}^TI & \frac{1}{4b\bar{b}}a \\ 
bX & A & \frac{1}{2\bar{b}}X \\
b\bar{b}a & -\bar{b}\bar{X}^TI & \frac12a   \end{array}\right).\]

Further $\kappa_{\alpha}(X,Y)=0$ for any of these $\alpha$. So they are all isomorphic and locally isomorphic to homogeneous model. We can summarize the result in the following proposition.

\begin{prop}
Up to equivalence, there is an unique regular normal extension from $(PSU(p+1,q),H)$ to CR structure, which is flat.
\end{prop}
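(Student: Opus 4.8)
The plan is to reduce the statement to the explicit parametrization of extensions established just above, and then to pin down the parameters by the normality condition and by the curvature. First I would invoke proposition \ref{pp29} (with theorem \ref{5.2.1}) to see that any extension $(i,\alpha)$ of $(PSU(p+1,q),H)$ to the CR geometry $(PSU(p+1,q+1),P)$ has the displayed shape: the inclusion $i$ is fixed up to equivalence by proposition \ref{pp30}, since $H$ (whose Lie algebra contains $\mathfrak{su}(p,q)$) acts irreducibly through the standard representation $\lambda_1$ on both $\mathfrak{k}/\mathfrak{h}\cong\mathfrak{g}_{-1}$ and on the $\mathfrak{g}_1$-slot. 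By Schur's lemma the intertwiners in the $\mathfrak{g}_{\pm1}$ components are then the complex scalars $b\neq 0$ and $d$, while the $\mathfrak{l}/\mathfrak{h}$-direction contributes the scalar $c$ in the centralizer of $i'(\mathfrak{h})$ in $\mathfrak{g}_0$ and the real scalar $e$ in $\mathfrak{g}_2$. This gives exactly the four-parameter family of $\alpha$ written above.

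Next I would impose normality. Using corollary \ref{1.3.6} to write $\kappa([e,e])(X_1,X_2)=[\alpha(X_1),\alpha(X_2)]-\alpha([X_1,X_2])$, and then the codifferential formula of lemma \ref{3.3.2}, the equation $\partial^*\kappa=0$ becomes the stated algebraic system $\operatorname{Re}(c)=\tfrac12$, $e=d\bar d$, $c=\bar b d$. Solving this leaves $b\neq 0$ free together with the phase $\operatorname{Im}(c)$; normalising $\operatorname{Im}(c)=0$ produces the explicit normal $\alpha$ depending only on $b$. The key point here is that the homogeneous part of this linear system has only the trivial solution because $\mathfrak{g}$ is semisimple, so a normal representative exists and is determined by $b$.

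The heart of the argument is then the curvature computation: substituting the normal $\alpha$ into $\kappa_\alpha(X,Y)=[\alpha(X),\alpha(Y)]-\alpha([X,Y])$ and checking that it vanishes identically on $\mathfrak{g}_{-1}\times\mathfrak{g}_{-1}$ (the remaining slots vanish automatically, since $\kappa$ kills $\mathfrak{p}$-arguments and $\kappa(\mathfrak{g}_{-2},\mathfrak{g}_{-2})=0$ by theorem \ref{5.2.1}). This shows every member of the normal family is flat. Finally, proposition \ref{flat1} says a flat Cartan geometry of type $(G,P)$ over a connected base is locally isomorphic to the homogeneous model $(G\to G/P,\omega_G)$; hence all members of the family are locally isomorphic to one another, which is precisely equivalence of extensions in the sense of proposition \ref{1.4.7}. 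This yields both flatness and uniqueness up to equivalence, so it would close the proposition.

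I expect the curvature computation to be the main obstacle: one must verify the cancellation $[\alpha(X),\alpha(Y)]=\alpha([X,Y])$ for the normal value of $\alpha$, a block-matrix identity in $\mathfrak{su}(p+1,q+1)$ whose bookkeeping (especially the coupling between the $b$, $d$ and $e$ entries and the conjugations in the pseudo-Hermitian form) is where sign errors are likely. A shortcut avoiding the direct check would be to argue abstractly that a torsion-free symmetric CR structure of maximal torsion built from a flat model with one-dimensional fibre over a symmetric space has vanishing harmonic curvature, so flatness follows from the structure of $H^2(\mathfrak{g}_-,\mathfrak{g})$ in this dimension; but the explicit substitution is the more transparent route and makes the equivalence classes visible.
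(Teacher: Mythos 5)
Your proposal follows essentially the same route as the paper: parametrize $\alpha$ by Schur's lemma (complex scalars $b,c,d$ and real $e$), impose the normality conditions $\operatorname{Re}(c)=\tfrac12$, $e=d\bar d$, $c=\bar bd$ with $\operatorname{Im}(c)$ as the residual freedom, then observe $\kappa_\alpha=0$ so that all members are flat and hence locally isomorphic to the homogeneous model, which gives uniqueness up to equivalence. The only caveat is your aside that the $\mathfrak{g}_{-1}\times\mathfrak{g}_{-2}$ slot of $\kappa$ vanishes ``automatically'' --- theorem \ref{5.2.1} only places it in $\mathfrak{g}_1$, so that slot still needs the explicit check --- but this does not change the argument.
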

\end{example}

In the case $W$ and $\bar{W}$ are not isomorphic as the representations of $H$, then by the Schur lemma only the multiples of identity are isomorphisms. After identification of the representations of $\mathfrak{h}$ and $i'(\mathfrak{h})$ via $W$, we are in situation of the previous example. Since the symmetric space has now different curvature $R(X,Y)$, and $\kappa_{\alpha}(X,Y)=[\alpha(X),\alpha(Y)]-\alpha(R(X,Y))$, the resulting parabolic contact geometry will not be flat. But using morphism from proposition \ref{1.4.7} we get that again they are all isomorphic. Thus we get the following theorem.

\begin{thm}
Up to equivalence, there is an unique regular normal extension for any non-complex simple pseudo-hermitian symmetric space with $W\neq\bar{W}$ to CR structure. The extended geometry is flat only in the case of the previous example.
\end{thm}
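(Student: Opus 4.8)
The plan is to follow the template of the analogous Lagrangean contact theorem, with the conjugate representation $\bar W$ now playing the role that the linear dual $W^{*}$ played there. By Proposition \ref{odw2} a non-complex simple pseudo-hermitian symmetric space admits a regular CR extension, and the inclusion $i\colon H\to G_0$ is fixed up to equivalence: Proposition \ref{pp30} shows that any two inclusions carrying the same irreducible representation are conjugate in $P$, while the two a priori possibilities $W$ and $\bar W$ are interchanged by the outer automorphism $k\mapsto (k^{-1})^{*}$ of $K$ already used in the proof of Proposition \ref{odw2}. So first I would fix $i$ to be the one induced by $W$ and identify $\mathfrak{k}/\mathfrak{h}$ with $\mathfrak{g}_{-1}$ as $H$-modules through $W$.

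Next I would pin down the admissible $\alpha$. By Proposition \ref{pp29} the components of $\alpha$ land in the prescribed graded pieces, and on $\mathfrak{k}/\mathfrak{h}$ the maps into $\mathfrak{g}_{-1}$ and $\mathfrak{g}_1$ are $H$-module morphisms from $W$ to (copies of) $W$. Since $W$ is irreducible and $W\neq\bar W$, Schur's lemma forces each such morphism to be a complex scalar multiple of the chosen identification, so $\alpha$ depends only on a single nonzero complex parameter $b$, together with the scalars controlling $\mathfrak{g}_{\pm2}$ and the centre of $\mathfrak{g}_0$. After the identification through $W$ this is exactly the normal form of $\alpha$ written in the previous example, which I would therefore reuse verbatim.

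Then I would impose the normality conditions, i.e.\ condition (6) of Theorem \ref{5.2.1}. These reduce to the same linear system in the scalar parameters as in the example, whose unique solution expresses every remaining scalar in terms of $b$; hence for each $b$ there is a single normal $\alpha$. The curvature is read off from Corollary \ref{1.3.6} as $\kappa_\alpha(X,Y)=[\alpha(X),\alpha(Y)]-\alpha([X,Y])$, where $[X,Y]$ is the bracket of $\mathfrak{k}$, that is, the curvature $R$ of the symmetric space. The map $\alpha$ is a Lie-algebra homomorphism, and so $\kappa_\alpha$ vanishes, exactly for the embedding of the symmetric space treated in the previous example; for any other simple pseudo-hermitian symmetric space the bracket $R$ differs from the one matched by $\alpha$, so $\kappa_\alpha\neq0$ and the geometry is non-flat.

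Finally, for uniqueness up to equivalence I would apply Proposition \ref{1.4.7}: to compare the extensions attached to two parameter values $b$ and $\hat b$ it suffices to exhibit $p_0\in P$ and an automorphism $\sigma$ of $K$ preserving $H$ with $\hat i(\sigma(h))=p_0 i(h)p_0^{-1}$ and $\hat\alpha=\mathrm{Ad}_{p_0^{-1}}\circ\alpha\circ T\sigma$. The natural candidate for $p_0$ is a grading element of $G_0$ scaling $\mathfrak{g}_{-1}$ and $\mathfrak{g}_1$ by reciprocal factors (combined, if necessary, with the automorphism interchanging $W$ and $\bar W$), which absorbs $b$ just as the explicit morphisms did in dimension three. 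The main obstacle is precisely this last step: one must verify that such a rescaling fixes $i(h)=g_0$, respects the normal form, and carries one normal $\alpha$ to the other for arbitrary signature $(p,q)$, not merely in the low-dimensional cases computed by hand earlier. Granting this, all regular normal extensions are locally isomorphic, and the flatness dichotomy of the preceding paragraph finishes the theorem.
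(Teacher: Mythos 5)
Your proposal is correct and follows essentially the same route as the paper: Schur's lemma (using $W\neq\bar W$) reduces $\alpha$ to the normal form of the preceding flat example up to a complex parameter $b$, the curvature formula $\kappa_\alpha(X,Y)=[\alpha(X),\alpha(Y)]-\alpha(R(X,Y))$ shows non-flatness whenever the symmetric space's bracket differs from that of $PSU(p+1,q)/U(p,q)$, and Proposition \ref{1.4.7} identifies the extensions for different $b$. The paper is in fact terser than you are at the final step — it simply asserts that the morphisms of Proposition \ref{1.4.7} identify all choices of $b$ — so the verification you flag as the "main obstacle" is not carried out in any more detail there either.
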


Now we investigate the remaining cases with simple group generated by symmetries, where $W$ is self conjugate.

\begin{example}
Extension from $(O(p+2,q),O(p,q)\times O(2))$, $(O(p+1,q+1),O(p,q)\times O(1,1))$ to $(PSU(p+1,q+1),P)$:

The symmetric space and the $i$ are the same as in Lagrangean contact case.

Since there is no complex structure on $H$, we choose two identifications of $W=\lambda_1+\lambda_1$ with complex numbers, i.e. $(X_1,X_2)\mapsto X_1+iX_2=X$ and $(X_1,X_2)\mapsto X_2+iX_1=-i\bar{X}$. Then the isomorphisms of representations are given by complex multiples of those two identifications by $b_1,b_2\neq 0$ such, that $|b_1|\neq |b_2|$. So all the possible $\alpha$ are the following, where $a\in \mathbb{R}$ and $c$ is $1$ in the $O(2)$ case and $-1$ in the $O(1,1)$ case, $I$ is matrix with $p$ entries on diagonal $1$ and remaining $q$ entries $-1$, $c_1,d_1,d_2\in \mathbb{C}$ and $e_1\in \mathbb{R}$:
\[ \left( \begin{array}{ccc}
0 & a & -X_1^TI \\
-c a & 0 & -c X_2^TI \\ 
X_1 & X_2 & A   \end{array} \right) 
\mapsto \]
\[
\left( \begin{array}{ccc}
c_1a & -(\bar{d}_1\bar{X}^T+\bar{d}_2iX^T)I  & e_1ai \\ 
b_1X-b_2i\bar{X} & A-\frac{2\operatorname{Im}(c_1)}{n}Eai & d_1X-d_2i\bar{X} \\
2(|b_1|^2-|b_2|^2)ai & -(\bar{b}_1\bar{X}^T+\bar{b}_2iX^T)I & -\bar{c_1}a   \end{array} \right).\]

For fixed $b_1,b_2$, the normality conditions are different for $c=1$ and $c=-1$, so we skip the exact form of them. We only mention, that $\operatorname{Re}(c_1)$ is a free parameter and we choose $\operatorname{Re}(c_1)=0$. The resulting $\alpha$ for fixed $b_1,b_2$ is:

For $c=-1$
\[ \left( \begin{array}{ccc}
0 & a & -X_1^TI \\
a & 0 & X_2^TI \\ 
X_1 & X_2 & A   \end{array} \right) \mapsto
\]
\[ 
\left( \begin{array}{ccc}
\frac{n}{2(n+1)}tai & * & (\frac{-1}{2(|b_1|^2-|b_2|^2)}-\frac{(n+2)(3n+2)}{8(n+1)^2}\frac{t^2}{|b_1|^2-|b_2|^2})ai \\ 
b_1X-b_2i\bar{X} & A-\frac{2}{2(n+1)}tEai & V_1 \\
2(|b_1|^2-|b_2|^2)ai & * & \frac{n}{2(n+1)}tai   \end{array} \right)\]
where $t:=\frac{(b_1,b_2)}{|b_1|^2-|b_2|^2}=2\frac{\operatorname{Re}(b_1)\operatorname{Im}(b_2)-\operatorname{Re}(b_2)\operatorname{Im}(b_1)}{|b_1|^2-|b_2|^2}$, entry on $*$ comes from structure of Lie algebra $\mathfrak{su}(p+1,q+1)$ and 
\[V_1=(\frac{ib_2}{2(|b_1|^2-|b_2|^2)}-\frac{n+2}{4(n+1)}\frac{tb_1}{|b_1|^2-|b_2|^2})X\]\[+(\frac{ib_1}{2(|b_1|^2-|b_2|^2)}+\frac{n+2}{4(n+1)}\frac{tb_2}{|b_1|^2-|b_2|^2})i\bar{X}.\]

For $c=1$
\[ \left( \begin{array}{ccc}
0 & a & -X_1^TI \\
-a & 0 & -X_2^TI \\ 
X_1 & X_2 & A   \end{array} \right) 
\mapsto 
\]
\[
\left( \begin{array}{ccc}
\frac{n}{2(n+1)}tai & * & (\frac{1}{2(|b_1|^2-|b_2|^2)}-\frac{(n+2)(3n+2)}{8(n+1)^2}\frac{t^2}{|b_1|^2-|b_2|^2})ai \\ 
b_1X-b_2i\bar{X} & A-\frac{1}{n+1}tEai & V_2 \\
2(|b_1|^2-|b_2|^2)ai & * & \frac{n}{2(n+1)}tai   \end{array} \right),\]
where $t:=\frac{(b_1,b_2)}{|b_1|^2-|b_2|^2}=\frac{|b_1|^2+|b_2|^2}{|b_1|^2-|b_2|^2}$, entry on $*$ comes from structure of Lie algebra $\mathfrak{su}(p+1,q+1)$ and 
\[V_2=(\frac{b_1}{2(|b_1|^2-|b_2|^2)}-\frac{n+2}{4(n+1)}\frac{tb_1}{|b_1|^2-|b_2|^2})X\]\[+(\frac{b_2}{2(|b_1|^2-|b_2|^2)}+\frac{n+2}{4(n+1)}\frac{tb_2}{|b_1|^2-|b_2|^2})i\bar{X}. \]

Explicit computation of the curvature using Maple reveals, that $\kappa=0$ for $t=0$, and $\kappa\neq 0$ otherwise. Using the algorithm from proposition \ref{lab_1} we compute, that the infinitesimal automorphisms for $t\neq 0$ are of the form $\alpha(\mathfrak{g})$. Further using morphisms from proposition \ref{1.4.7} we get that the $\alpha$ can be chosen with for $c=-1$ with $b_1=\sqrt{\frac{1+\sqrt{t^2+1}}{2}}, b_2=i\sqrt{\frac{-1+\sqrt{t^2+1}}{2}},t>-1$ and for $c=1$ with $b_1=\sqrt{\frac{1+t}{2}}, b_2=\sqrt{\frac{t-1}{2}}i, t>1$.

\begin{thm}
Up to equivalence, all regular normal extensions from \linebreak $(O(p+2,q),O(p,q))$ to CR structures form one parameter class for $t\geq 1$.

Up to equivalence, all regular normal extensions from \linebreak $(O(p+1,q+1),O(p,q))$ to CR structures form one parameter class for $t>-1$.
\end{thm}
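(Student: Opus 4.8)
The plan is to read the classification directly off the explicit family of extensions constructed in the preceding example, so the proof is essentially the organization of those computations. After imposing the normality conditions and the normalization $\operatorname{Re}(c_1)=0$, every regular normal extension $\alpha$ (for a fixed inclusion $i$) is determined by a pair of nonzero complex scalars $(b_1,b_2)$ subject to $|b_1|\neq|b_2|$; the first step is simply to record that this is the complete list of extensions and that the two geometric situations are distinguished by the sign $c=\pm 1$, i.e. the $O(2)$ case $(O(p+2,q),O(p,q))$ versus the $O(1,1)$ case $(O(p+1,q+1),O(p,q))$.

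The second step is to pass to equivalence classes via proposition \ref{1.4.7}. Two extensions are locally isomorphic exactly when some pair $(p_0,\sigma)$, with $p_0\in P$ and $\sigma$ an automorphism of $K$ preserving $H$, carries one onto the other, and I would compute the induced action on $(b_1,b_2)$. Exactly as in the dimension-three computations and in theorem \ref{5.3.6}, the torus part of $G_0$ together with the two admissible identifications of the self-conjugate $W\cong\bar W$ with $\mathbb{C}$ let one bring $(b_1,b_2)$ to the canonical form stated after the example: $b_1=\sqrt{\tfrac{1+\sqrt{t^2+1}}{2}}$, $b_2=i\sqrt{\tfrac{-1+\sqrt{t^2+1}}{2}}$ for $c=-1$, and $b_1=\sqrt{\tfrac{1+t}{2}}$, $b_2=\sqrt{\tfrac{t-1}{2}}\,i$ for $c=1$, where $t$ is the invariant $\tfrac{(b_1,b_2)}{|b_1|^2-|b_2|^2}$ in each case.

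The third step is to confirm that $t$ is a genuine modulus and not a gauge artefact. Applying the algorithm of proposition \ref{lab_1} shows that for $t\neq 0$ the infinitesimal automorphisms are exactly $\alpha(\mathfrak{g})$, so no residual symmetry can identify distinct values of $t$; comparing the nonvanishing part of $\kappa_\alpha$, which is a nonconstant function of $t$ (vanishing only in the flat limit), then shows that inequivalent values of $t$ yield inequivalent geometries. The parameter ranges follow by reading off the image of $t$: the constraints $b_1,b_2\neq 0$ and $|b_1|\neq|b_2|$ keep the denominator $|b_1|^2-|b_2|^2$ away from zero, which gives $t>-1$ in the $O(1,1)$ case and $t\geq 1$ in the $O(2)$ case, the endpoint corresponding to the flat geometry already isolated in the example.

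I expect the main obstacle to be the second step, namely pinning down the full orbit of $(b_1,b_2)$ under proposition \ref{1.4.7} and verifying that $t$ is a complete invariant. The self-conjugacy of $W$ enlarges the group of admissible $\sigma$, since there are two inequivalent identifications of $W\cong\bar W$ with $\mathbb{C}$, and one must track their action on the complex pair carefully to avoid overcounting equivalences. This case-dependence is precisely what produces the different endpoints for the $O(2)$ and $O(1,1)$ families.
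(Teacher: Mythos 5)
Your outline follows the paper's own route: the theorem is proved by the computations in the preceding example, namely writing down all $\alpha$ for the fixed $i$, solving the normality conditions with $\operatorname{Re}(c_1)=0$, using the morphisms of proposition \ref{1.4.7} to bring $(b_1,b_2)$ to the stated canonical form, and using proposition \ref{lab_1} to see that for $t\neq 0$ the infinitesimal automorphisms are exactly $\alpha(\mathfrak{g})$, so $t$ is a genuine modulus. Your additional suggestion of using the curvature as an invariant to separate distinct values of $t$ is a sensible way to make the last point explicit, since the paper only exhibits the canonical forms.

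Two corrections, though. First, your closing remark that ``the endpoint corresponds to the flat geometry'' is wrong on both counts: the example states that $\kappa=0$ exactly when $t=0$, and $t=0$ lies in the \emph{interior} of the range $t>-1$ for the $O(1,1)$ case (where $t=2\frac{\operatorname{Re}(b_1)\operatorname{Im}(b_2)-\operatorname{Re}(b_2)\operatorname{Im}(b_1)}{|b_1|^2-|b_2|^2}$ vanishes e.g.\ for real $b_1,b_2$), while in the $O(2)$ case $t=\frac{|b_1|^2+|b_2|^2}{|b_1|^2-|b_2|^2}$ satisfies $|t|\geq 1$, so that family contains no flat member at all; the endpoint $t=1$ is the non-flat geometry with $b_2=0$. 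Second, the parameter ranges do not ``follow by reading off the image of $t$'' from the constraint $|b_1|\neq|b_2|$: before quotienting, $t$ sweeps out $(-\infty,-1]\cup[1,\infty)$ in the $O(2)$ case and all of $\mathbb{R}$ in the $O(1,1)$ case, and the stated ranges $t\geq 1$ and $t>-1$ are the outcome of the orbit computation under proposition \ref{1.4.7} (including the swap of the two identifications of $W$ with $\mathbb{C}$, which acts nontrivially on $t$). That computation is exactly the step you defer to, and it is where the asymmetry between the two cases is produced; without carrying it out the theorem's ranges are not established.
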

\end{example}

\begin{example}
Extension from $(SO^*(2n+2),SO^*(2n)\times SO^*(2))$ to \linebreak $(PSU(n,n),P)$:

We will not give the explicit form of $i$, the symmetric spaces and explicit computations, which were done using Maple,, but we start already with the $\alpha$. The representation $\lambda_1$ of $SO^*(2n)$ is quaternionic and the isomorphism are of the form 
\[ (f_1,f_2): X=X_1+iX_2+jX_3+kX_4\mapsto (X_1+iX_2,X_3+iX_4)\]
up to right quaternionic multiple. We also skip details on the computation of normality conditions and present the $\alpha$ leading the regular normal extension: 
\[ \left( \begin{array}{cccc}
0 & -X_1^T-iX_2^T & ai & -X_3^T+iX_4^T \\
X_1+iX_2 & A+iB & X_3+iX_4& C+iD \\ 
ai & X_3^T+iX_4^T & 0 & -X_1^T-iX_2^T \\
-X_3+iX_4 & -C+iD & X_1-iX_2& A-iB \\   \end{array} \right) 
\mapsto\]
\[
\left( \begin{array}{cccc}
\frac{nt}{(2n+1)|b|}ai & -f_1(\bar{X}\bar{d})^T & f_2(\bar{X}\bar{d})^T &|d|ai \\
f_1(Xb) & A-Di-\frac{1t}{(2n+1)|b|}aiE & B-Ci& f_1(Xd) \\ 
f_2(Xb) & -B-Ci & A+iD-\frac{1t}{(2n+1)|b|}aiE & f_2(Xd) \\
|b|ai & -f_1(\bar{X}\bar{b})^T & f_2(\bar{X}\bar{b})^T& \frac{nt}{(2n+1)|b|}ai \\   \end{array} \right), 
\]
where $b=b_1+ib_2+jb_3+kb_4\neq 0$, $t=b_1^2-b_2^2-b_3^2+b_4^2$ and 
\[d=\frac{(b_1+kb_4)((2n+1)|b|-(n+1)t)}{(2n+1)|b|^2}+\frac{(ib_2+jb_3)((n+1)|b|-(2n+1)t)}{(2n+1)|b|^2}.\]

The extension is flat for $t=0$ and non-flat otherwise. Using algorithm from proposition \ref{lab_1} we compute, that the infinitesimal automorphisms for $t\neq 0$ are of the form $\alpha(\mathfrak{g})$. Further using morphism M1) and M2) we get that the $\alpha$ can be chosen with $b=\sqrt{\frac{1+t}{2}}+\sqrt{\frac{1-t}{2}}j$.

\begin{thm}
Up to equivalence, all regular normal extensions from\linebreak $(SO^*(2n+2),SO^*(2n))$ to CR structures form one parameter class for $t\geq 0$. They are non flat for $t\neq 0$.
\end{thm}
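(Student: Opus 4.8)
The plan is to read the classification off the explicit data already assembled in the preceding example, so the argument is organizational rather than computational. First I would fix the inclusion $i\colon H\to G_0$. Since the representation of $i(H)$ on $\mathfrak{g}_{-1}$ is the irreducible quaternionic one, Proposition \ref{pp30} (together with the discussion in Proposition \ref{odw2}) shows that $i$ is determined up to equivalence, so it suffices to classify the extension maps $\alpha$ for this fixed $i$. Proposition \ref{pp29} then guarantees that every regular normal $\alpha$ is of the displayed form once the normality conditions computed in the example are imposed; hence the exhibited family, parametrized by the nonzero quaternion $b=b_1+ib_2+jb_3+kb_4$, already exhausts all regular normal extensions up to the choice of $b$.

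Next I would pass to equivalence classes by invoking Proposition \ref{1.4.7}. The Lie algebra automorphisms of $K$ preserving $H$, together with the conjugations by $p_0\in P$, act on the parameter $b$, and I would use precisely the morphisms M1) and M2) already isolated in the example to bring $b$ into the canonical form $b=\sqrt{\tfrac{1+t}{2}}+\sqrt{\tfrac{1-t}{2}}\,j$. After this reduction the only surviving modulus is the real scalar $t=b_1^2-b_2^2-b_3^2+b_4^2$, and a residual reflection symmetry restricts the range to $t\ge 0$. This exhibits every regular normal extension as equivalent to a member of the one-parameter family indexed by $t\ge 0$.

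The flatness assertion then follows from the curvature: the explicit computation in the example (carried out in Maple) shows that $\kappa_\alpha$ vanishes exactly when $t=0$, so the geometries are non-flat for every $t\neq 0$. To separate the classes I would further appeal to the infinitesimal automorphism algebra computed via Proposition \ref{lab_1}, which equals $\alpha(\mathfrak{g})$ for $t\neq 0$ and is strictly larger in the flat case $t=0$; comparing the explicit curvatures shows that distinct $t\ge 0$ give curvatures in distinct structure-group orbits.

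I expect the main obstacle to be confirming that $t$ is a complete invariant. One must check that the morphisms M1) and M2), together with the available rescalings, already realize the entire equivalence action of Proposition \ref{1.4.7} on the quaternionic parameter $b$, so that no further Lie algebra automorphism of $K$ preserving $H$ and no conjugation by $p_0\in P$ can alter $t$. Establishing this invariance, and reading off from the explicit $\kappa_\alpha$ that its orbit genuinely depends on $t$, is where the real content lies; the remainder of the proof is bookkeeping over Propositions \ref{pp29}, \ref{pp30} and \ref{1.4.7}.
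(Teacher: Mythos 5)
Your proposal is correct and follows essentially the same route as the paper: fix $i$ up to equivalence via Propositions \ref{odw2} and \ref{pp30}, parametrize $\alpha$ by the quaternion $b$ with the normality conditions determining the remaining data, reduce $b$ to the canonical form $\sqrt{\tfrac{1+t}{2}}+\sqrt{\tfrac{1-t}{2}}\,j$ using the morphisms of Proposition \ref{1.4.7}, and read off flatness from the explicit Maple curvature computation together with the infinitesimal automorphism algebra from Proposition \ref{lab_1}. The only substantive content beyond bookkeeping is exactly where you locate it — verifying that the equivalence action on $b$ preserves only the invariant $t$ — and this is also the step the paper itself treats by direct computation rather than by a structural argument.
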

\end{example}

In the semisimple case is the situation following:

\begin{thm}
The only semisimple non-simple symmetric spaces without complex factors allowing extensions to regular CR structures are semisimple pseudo-hermitian symmetric spaces. For the latter cases, the inclusion $i$ from proposition \ref{pp29} is unique up to equivalence.
\end{thm}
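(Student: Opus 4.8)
The plan is to transcribe the proof of the corresponding theorem for Lagrangean contact structures in Section~\ref{5.3}, interchanging throughout the roles played there by the pseudo-hermitian and the para-pseudo-hermitian factors. The two arguments are mirror images because for CR structures $\mathfrak{g}_{-1}=V$ is a single irreducible \emph{complex} module of the semisimple part $\mathfrak{su}(p,q)$ of $\mathfrak{g}_0$, whereas in the Lagrangean case $\mathfrak{g}_{-1}=V\oplus V^*$ carries a para-complex structure. Consequently the one-dimensional center $\mathfrak{l}/\mathfrak{h}\cong\mathfrak{g}_{-2}$ that generates the grading must here restrict to $\mathfrak{g}_{-1}$ as a genuine complex structure (eigenvalues $\pm i$) rather than as a para-complex one (eigenvalues $\pm 1$), and this is exactly what will force the factors to be pseudo-hermitian.

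For the existence-and-uniqueness direction I would first treat a semisimple pseudo-hermitian symmetric space without complex factors as a product of its simple pseudo-hermitian factors $(K_i,L_i,h)$, and build the extension in two steps as in proposition~\ref{odw2}. Each factor carries an irreducible complex module $W_i\colon\mathfrak{h}_i\to\mathfrak{su}(p_i,q_i)$, and the block-diagonal inclusion realizes $\prod_i i(H_i)$ inside the semisimple part of $\mathfrak{g}_0$ via $\bigoplus_i W_i$, which is unique up to a complex rescaling on each factor. Since every factor is pseudo-hermitian, the several $\mathfrak{u}(1)$-centers all act with eigenvalues $\pm i$, so a single central generator can act simultaneously as the complex structure on all blocks while the remaining central directions are absorbed into $\mathfrak{h}$ (the one-dimensional intersection with the center allowed on complex factors in part~5 of theorem~\ref{5.2.1}); proposition~\ref{pp29} then produces the extension. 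Uniqueness of $i$ up to equivalence follows exactly as in proposition~\ref{odw2}: after complexification proposition~\ref{pp30} pins $i$ down up to conjugation on each irreducible block, and the residual ambiguity $W_i$ versus $\bar W_i$ is absorbed by the morphism $(k,p)\mapsto((k^{-1})^*,p)$ of proposition~\ref{1.4.7}.

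For the necessity direction I would assume an extension exists and exploit that the $i(H)$-representation on $\mathfrak{g}_{-1}$ is completely reducible: restricting to the sub-block carried by a single simple factor, in a frame compatible with the factors, yields an extension of that factor to a CR geometry of lower dimension. By proposition~\ref{odw2} each such factor is either simple pseudo-hermitian or one of the special para-pseudo-hermitian spaces $\mathfrak{so}(p+1,q+1)/\mathfrak{so}(p,q)+\mathfrak{so}(1,1)$. To exclude the latter whenever there is more than one factor, I would argue in the mirror of the Lagrangean computation: the center of such a para-pseudo-hermitian factor is $\mathfrak{so}(1,1)$, acting with real eigenvalues $\pm 1$, yet in any CR extension the one-dimensional $\mathfrak{l}/\mathfrak{h}\cong\mathfrak{g}_{-2}$ induces the complex structure on $\mathfrak{g}_{-1}$ with eigenvalues $\pm i$, and by theorem~\ref{5.2.1} (regularity, together with the fact that $\mathfrak{g}_{-1}$ generates $\mathfrak{g}_{-2}$) this central direction must intersect every simple factor. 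A factor whose central generator acts with real eigenvalues is therefore incompatible with the required $\pm i$ action, a contradiction, and only pseudo-hermitian factors survive.

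The main obstacle, as in the Lagrangean case, is making the eigenvalue/regularity dichotomy fully rigorous: one must show that the single central generator of $\mathfrak{l}/\mathfrak{h}$ is forced to restrict to each simple factor as (a nonzero multiple of) that factor's central generator, so that the $\pm i$ versus $\pm 1$ spectra can be compared factor by factor. This rests on the regularity statements of theorem~\ref{5.2.1}—that $H$ meets the center of $L$ trivially on real factors and in at most one dimension on complex factors, and that the non-degenerate bracket $\mathfrak{g}_{-1}\times\mathfrak{g}_{-1}\to\mathfrak{g}_{-2}$ recovers the bracket generating $\mathfrak{l}/\mathfrak{h}$—which together guarantee that a nontrivial central direction is distributed across all factors rather than confined to one, thereby precluding any mismatch of induced complex versus para-complex structures.
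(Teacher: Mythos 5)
Your proposal follows essentially the same route as the paper's proof: the two-step construction (block-diagonal inclusion into $U(p,q)$ on each simple factor, then the argument of proposition \ref{odw2} with uniqueness via proposition \ref{pp30} and the conjugation morphism), and for necessity the restriction to simple factors by complete reducibility followed by the eigenvalue contradiction — a para-pseudo-hermitian factor has center acting with eigenvalues $\pm 1$, while regularity forces $\mathfrak{l}/\mathfrak{h}$ to meet every factor and act as the complex structure with eigenvalues $\pm i$. The argument is correct and matches the paper's.
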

\begin{proof}
For semisimple pseudo-hermitian symmetric space without complex factors, the extension can be done in two steps. First we take extension from the sum of symmetric spaces to the structure group $U(p,q)$, which acts as standard representation. Then the claim follows in the same way as proposition \ref{odw2}.

Now assume the extension exists. Then for the same reasons as in the Lagrangian case, the simple factors have extension to integrable almost CR structures. Assume that one factor is para-pseudo-hermitian and not pseudo-hermitian, then the eigenvalues of its center are $\pm 1$ and $L/H$ has to be this center, which is contradiction since due to regularity the $L/H$ intersects all factors.
\end{proof}

\subsection{Extension to contact projective structures}\label{5.5}

In the case of contact projective structure, we need to impose more conditions on symmetries to be automorphisms of the corresponding Cartan geometry. In particular, we need to assume that symmetries preserve a contact projective class of partial connections on $T^-M$. Any two partial connections $\nabla$ and $\hat{\nabla}$ in such a class differ by
$$\hat{\nabla}_\nu\xi=\nabla_\nu\xi+\Upsilon(\nu)\xi+\Upsilon(\xi)\nu+ \Upsilon'(\mathcal{L}(\nu,\xi)),$$
where $\Upsilon$ is a smooth section of $(T^-M)^*$ and $\Upsilon': TM/T^-M\to T^-M$ is characterized by $\mathcal{L}(\Upsilon'(\beta),\xi))=\Upsilon(\xi)\beta$.

In this section we construct examples of symmetric contact projective structures. This means, we find extensions to Cartan geometry of type $(\mathfrak{sp}(2n+2,\mathbb{R}),P)$ with the following gradation:
\[ \left( \begin{array}{ccc}
\mathfrak{g}_{0} & \mathfrak{g}_{1} & \mathfrak{g}_{2} \\
\mathfrak{g}_{-1} & \mathfrak{g}_{0} & \mathfrak{g}_{1} \\
\mathfrak{g}_{-2} & \mathfrak{g}_{-1} & \mathfrak{g}_{0} \end{array} \right),\]
where the blocks are $(1,2n,1)$ and $AJ+JA^T=0$ for $A\in \mathfrak{sp}(2n+2,\mathbb{R})$, where $J$ is representing the symplectic form $$(x_0,x_i,x_{2n+1})J(y_0,y_i,y_{2n+1})^*=x_0y_{2n+1}+x_{2n+1}y_{0}+\sum_{i=1}^n (x_iy_{n+i}- x_{n+i}y_{i}).$$

The representation of the semisimple part of $\mathfrak{g}_{0}$ on $\mathfrak{g}_{-1}$ is the standard representation of $\mathfrak{sp}(2n,\mathbb{R})$.

\begin{prop}
The only non-complex simple symmetric spaces allowing extensions to regular contact projective structures are simple para-pseudo-hermitian or pseudo-hermitian symmetric spaces. For these cases, the inclusion $i$ from proposition \ref{pp29} is unique up to equivalence.
\end{prop}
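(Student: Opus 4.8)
The plan is to follow the template of Propositions \ref{odw1} and \ref{odw2}, exploiting that here the semisimple part of $G_0$ is $Sp(2n,\mathbb{R})$ and that its standard representation $V=\mathbb{R}^{2n}$ on $\mathfrak{g}_{-1}$ carries the invariant symplectic form coming from the bracket $\mathfrak{g}_{-1}\times\mathfrak{g}_{-1}\to\mathfrak{g}_{-2}$. The \emph{only} direction is essentially immediate from the structure theory already established: by the fourth claim of Theorem \ref{5.2.1} together with Proposition \ref{pp29}, any extension of a simple $(K,L,h)$ to a regular torsion-free parabolic contact geometry of maximal torsion of this type forces the $L$-invariant nondegenerate antisymmetric pairing $\mathfrak{g}_{-1}\times\mathfrak{g}_{-1}\to\mathfrak{g}_{-2}$ to descend to a symplectic form on $\mathfrak{k}/\mathfrak{l}$, so by Example \ref{2.6.16} the symmetric space admits a compatible complex or para-complex structure; being simple and non-complex it is then pseudo-hermitian or para-pseudo-hermitian.

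For the converse I would construct $i$ in each case. In the pseudo-hermitian case the isotropy representation $W\colon\mathfrak{h}\to\mathfrak{u}(p,q)$ is irreducible and preserves a Hermitian form whose imaginary part is the invariant symplectic form; since $U(p,q)\subset Sp(2n,\mathbb{R})$, composing with the standard representation gives $V\circ W\cong W$, so $i:=W$ meets the hypothesis of Proposition \ref{pp29}. In the para-pseudo-hermitian case the isotropy representation is $W\oplus W^{*}$ with $W\colon\mathfrak{h}\to\mathfrak{gl}(n,\mathbb{R})$ irreducible, and the block embedding $\mathfrak{gl}(n,\mathbb{R})\hookrightarrow\mathfrak{sp}(2n,\mathbb{R})$, $A\mapsto\mathrm{diag}(A,-A^{T})$, identifies $W\oplus W^{*}$ with $V$ on the two complementary Lagrangians; again $i:=W$ satisfies \ref{pp29}. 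I would emphasize here that, in contrast to the $\mathfrak{sl}$ and $\mathfrak{su}$ cases of \ref{odw1} and \ref{odw2}, no type coincidence is needed: the symplectic form of $V$ accommodates \emph{both} the Hermitian pairing and the para-Hermitian $W$-$W^{*}$ pairing, and this is exactly why both families occur and why the exceptional $\mathfrak{so}$-spaces disappear from the statement.

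For uniqueness I would invoke Proposition \ref{pp30}. In the pseudo-hermitian case $V$ restricts irreducibly to $i(H)$, so \ref{pp30} with $P=Sp(2n,\mathbb{R})$ directly yields that $i$ is determined up to conjugation. The main obstacle will be the para-pseudo-hermitian case, where $V=W\oplus W^{*}$ is reducible and \ref{pp30} does not apply to $Sp(2n,\mathbb{R})$ as stated. I would circumvent this by applying \ref{pp30} to the irreducible half $W$, viewed as a homomorphism into $SL(n,\mathbb{R})$, concluding that $i$ is conjugate to the extension built from $W$ or from $W^{*}$, and then exhibit the bundle morphism $(k,p)\mapsto((k^{-1})^{T},p)$, which sends $(W,\alpha)$ to $(W^{*},-\alpha^{T})$ and, through Proposition \ref{1.4.7}, collapses the residual $W$-versus-$W^{*}$ ambiguity. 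Geometrically this transpose-inverse swap interchanges the two Lagrangian subspaces, an operation realized inside $Sp(2n+2,\mathbb{R})$, so all the resulting extensions are equivalent and $i$ is unique up to equivalence.
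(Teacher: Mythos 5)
Your proposal is correct and follows essentially the same route as the paper's proof: the ``only'' direction is delegated to item 4 of Theorem \ref{5.2.1}, the extensions are produced from the standard embeddings $\mathfrak{u}(p,q)\subset\mathfrak{sp}(2n,\mathbb{R})$ and $\mathfrak{gl}(n,\mathbb{R})\hookrightarrow\mathfrak{sp}(2n,\mathbb{R})$, $A\mapsto\operatorname{diag}(A,-A^{T})$, via Proposition \ref{pp29}, and uniqueness of $i$ rests on Proposition \ref{pp30} together with the simplicity of the semisimple part $Sp(2n,\mathbb{R})$ of $G_0$. The one place you add something is worth noting: the paper cites Proposition \ref{pp30} directly even though in the para-pseudo-hermitian case the standard representation restricts to the reducible $W\oplus W^{*}$, so its irreducibility hypothesis is not literally met, whereas you reroute through the irreducible block $W$ in $Sl(n,\mathbb{R})$ and dispose of the residual $W$-versus-$W^{*}$ ambiguity by the transpose-inverse morphism of Proposition \ref{1.4.7}, exactly as in Proposition \ref{odw1}.
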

\begin{proof}
Let $(K,L,h)$ be a non-complex simple homogeneous symmetric space and $H$ simisimple part of $L$ extended by $h$. For simple pseudo-hermitian symmetric spaces, the $i'$ is 
\[
\left( \begin{array}{cc}
0& 0 \\ 
0 & A+iB   \end{array} \right) 
\mapsto
 \left( \begin{array}{cccc}
0 & 0 & 0 & 0 \\
0 & A & -BI& 0 \\ 
0 & IB & IAI & 0 \\
0 & 0 & 0& 0 \\   \end{array} \right), 
\]
where $I$ is diagonal matrix given by the signature of the metric as before, $IA+A^TI=0$ and $IB-B^TI=0$. 

For simple para-pseudo-hermitian symmetric spaces, the $i'$ is inclusion of $\mathfrak{so}(n,n)$ as a subgroup.

The element $i(h)$ is 
\[ \left( \begin{array}{cccc}
-1 & 0 & 0 & 0 \\
0 & E & 0& 0 \\ 
0 & 0 & E & 0 \\
0 & 0 & 0& -1 \\   \end{array} \right).
\]

Then the representations of $H$ and $i(H)$ are isomorphic and the extension exists from proposition \ref{pp29}. Since semisimple part of $G_0$ is simple, we can use proposition \ref{pp30} and we see that $i$ is unique up to equivalence.
\end{proof}

In the same way as for the previous types of geometries, we conclude the following theorem. We consider representation $W$ as in Theorems \ref{odw1} or \ref{odw2}.

\begin{thm}
If the representation $W$ is not self dual in the para-pseudo-hermitian case or not self-conjugate in the pseudo-hermitian case, then there is (up to equivalence) unique regular normal contact projective structure for this non-complex simple (para)-pseudo-hermitian symmetric space.
\end{thm}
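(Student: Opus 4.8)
The plan is to mimic the arguments already used in the Lagrangean contact case (Proposition \ref{odw1} and the theorem following it) and the CR case (Proposition \ref{odw2} and the theorem following it), since the present statement is explicitly their contact-projective counterpart. The starting point is the preceding proposition, which already establishes that for a non-complex simple (para-)pseudo-hermitian symmetric space the inclusion $i$ of proposition \ref{pp29} is unique up to equivalence. Thus only the extension data $\alpha$ for this fixed $i$ must be classified, and all admissible $\alpha$ must be shown to lie in a single equivalence class.

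First I would invoke proposition \ref{pp29} to decompose $\alpha$ into its components: an isomorphism of the adjoint representation on $\mathfrak{k}/\mathfrak{h}$ with values in $\mathfrak{g}_{-1}$, a morphism with values in $\mathfrak{g}_1$, and the $\mathfrak{l}/\mathfrak{h}$-data with values in $\mathfrak{g}_{-2}$, $\mathfrak{g}_2$, and the centralizer of $i'(\mathfrak{h})$ in $\mathfrak{g}_0$. The hypothesis that $W$ is not self-dual (para case) or not self-conjugate (pseudo case) is precisely what makes the classification rigid: after identifying the two copies of the representation via $W$, Schur's lemma forces every such isomorphism to be a scalar multiple. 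Consequently the isomorphism part of $\alpha$ is a single scalar $b$, and the remaining morphism into $\mathfrak{g}_1$ together with the $\mathfrak{l}/\mathfrak{h}$-data are scalars constrained by $b$, exactly as in the flat examples computed earlier.

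Next I would impose the normality condition $\partial^*\kappa=0$ via the formula of lemma \ref{3.3.2}, using $\kappa_\alpha(X,Y)=[\alpha(X),\alpha(Y)]-\alpha(R(X,Y))$ from corollary \ref{1.3.6}. Here $R$ is the now non-trivial curvature of the symmetric space, so the extended geometry need not be flat. This produces a linear system in the scalar parameters whose homogeneous part has only the zero solution because $\mathfrak{g}$ is semisimple; hence every parameter except $b$ is determined, leaving at most a one-parameter family of regular normal extensions.

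Finally I would collapse this family using the morphisms of proposition \ref{1.4.7}: conjugation by a suitable $p_0\in P$ rescales $b$, while precomposition with an automorphism of $K$ preserving $H$ absorbs the discrete ambiguity between $W$ and $W^*$ (para case) or $W$ and $\bar{W}$ (pseudo case). As in the two preceding sections, these operations act transitively on $b$, so all extensions are mutually locally isomorphic and the structure is unique up to equivalence. The main obstacle I expect is verifying that the normality system has a unique solution for each fixed $b$ and that proposition \ref{1.4.7} really acts transitively on the residual scalar; both are the contact-projective analogues of the explicit calculations already displayed for the Lagrangean and CR cases, and the semisimplicity of $\mathfrak{g}$ together with Schur's lemma are what guarantee they go through.
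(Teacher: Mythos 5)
Your proposal matches the paper's intended argument: the paper itself gives no separate proof here, stating only that the theorem follows ``in the same way as for the previous types of geometries,'' i.e.\ by the Schur-lemma rigidity of the isomorphism $\mathfrak{k}/\mathfrak{h}\to\mathfrak{g}_{-1}$, reduction to the explicitly computed example after identifying via $W$, solving the normality conditions with $c_1$ (resp.\ the residual scalar) free, and collapsing the remaining parameter by the morphisms of proposition \ref{1.4.7} — exactly the steps you outline. Your reconstruction is correct and takes essentially the same route.
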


Now we compute the simple examples, where $W$ is self-dual or self-conjugate.

\begin{example}
Extension from $(O(p+2,q),O(p,q)\times O(2))$, $(O(p+1,q+1),O(p,q)\times O(1,1))$ to $(PSp(2n+2,\mathbb{R}),P)$:

The symmetric spaces are the same as in the case of the previous structures. The all possible $\alpha$ are: 
\[ \left( \begin{array}{ccc}
0 & a & -X_1^TI \\
-ca & 0 & -cX_2^TI \\ 
X_1 & X_2 & A   \end{array} \right) 
\mapsto\]
\[
\left( \begin{array}{cccc}
c_1a & * & * & e_1a \\
b_1X_1+b_2X_2 & A+c_2aE & gaI& d_3X_1+d_4X_2 \\ 
b_3X_1I+b_3X_2I & haI & IAI-c_2aE & -d_1X_1I-d_2X_2I \\
2(b_1b_4-b_2b_3)a & * & *& -c_1a \\   \end{array} \right),
\]
where entries on $*$ comes from structure of the Lie algebra $\mathfrak{sp}(2n+2,\mathbb{R})$ and all coefficients are real numbers such, that $b_1b_4-b_2b_3\neq 0$.

For fixed $b$'s, the normality conditions give us, that $c_1$ can be chosen as free parameter and remaining parameters are dependent. Using the morphisms from proposition \ref{1.4.7}, we get, that all choices of $b$'s are isomorphic. So we get the following result:

\begin{thm}
Up to equivalence, there is the unique regular normal extension from $(O(p+2,q),O(p,q))$ or $(O(p+1,q+1),O(p,q))$ to contact projective structures given by:
\[ \left( \begin{array}{ccc}
0 & a & -X_1^TI \\
-ca & 0 & -cX_2^TI \\ 
X_1 & X_2 & A   \end{array} \right) 
\mapsto
\left( \begin{array}{cccc}
0 & \frac{-n}{2(n+1)}(X_1I)^T & \frac{-cn}{2(n+1)}X_2^T & \frac{-2cn^2}{4(n+1)^2}a \\
X_1 & A & \frac{1}{n+1}aI& \frac{-cn}{2(n+1)}X_2 \\ 
X_2I & \frac{-1}{n+1}aI & IAI & \frac{n}{2(n+1)}X_1I \\
2a & (X_2I)^T & -X_1^T& 0 \\   \end{array} \right)
\]
with curvature 
\[ \kappa(\left( \begin{array}{ccc}
0 & a & -X_1^TI \\
-ca & 0 & -cX_2^TI \\ 
X_1 & X_2 & A   \end{array} \right),
 \left( \begin{array}{ccc}
0 & b & -Y_1^TI \\
-cb & 0 & -cY_2^TI \\ 
Y_1 & Y_2 & B   \end{array} \right) )
=\]
\[
\left( \begin{array}{cccc}
0 & 0 & 0 & 0 \\
0 & R_1 & -cR_3I-R_2I& 0 \\ 
0 &  IR_3+IR_2 & IR_1I & 0 \\
0 & 0 & 0& 0 \\   \end{array} \right),
\]
where \[ R_1=\frac{n+2}{2(n+1)}(X_1Y_1^T-Y_1X_1^T+cX_2Y_2^T-cY_2X_2^T),  \]
\[R_2=\frac{1}{(n+1)}(X_2^TY_1-X_1^TY_2),\]
\[R_3=\frac{n}{2(n+1)}(X_1Y_2^T+Y_2X_1^T-X_2Y_1^T-Y_1X_2^T). \]
\end{thm}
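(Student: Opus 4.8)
The plan is to reduce the entire statement to a finite computation starting from the parametrized family of admissible $\alpha$ written down in the preceding example, and then to normalize away all remaining freedom. First I would invoke the structural results already established: by Proposition \ref{pp29} and Theorem \ref{5.2.1} any extension of the symmetric space $(K,L,h)$ to the contact projective type $(PSp(2n+2,\mathbb{R}),P)$ must satisfy $i(h)=g_0$, $i(H)\subset G_0$, and $\alpha|_{\mathfrak{l}}$ must be a Lie algebra homomorphism, while the classification proposition opening this section (together with Proposition \ref{pp30}) shows that $i$ itself is unique up to equivalence. Hence the only data left to pin down is $\alpha$ on the $-1$ eigenspace and the central direction, which is exactly the matrix displayed in the example with free real parameters $b_1,\dots,b_4$ (subject to $b_1b_4-b_2b_3\neq 0$), $c_1,c_2,e_1,g,h,d_1,\dots,d_4$; the entries marked $*$ are then forced by the condition $AJ+JA^T=0$ defining $\mathfrak{sp}(2n+2,\mathbb{R})$.

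Next I would impose regularity and normality. Using Corollary \ref{1.3.6} I compute the curvature at the base point via $\kappa([e,e])(X,Y)=[\alpha(X),\alpha(Y)]-\alpha([X,Y])$, and then apply the Kostant codifferential through the explicit formula of Lemma \ref{3.3.2}. As in the analogous computations for the Lagrangean and CR cases, $\partial^*\kappa=0$ becomes an inhomogeneous linear system in the parameters $c_2,e_1,g,h,d_1,\dots,d_4$ with coefficients depending on the $b$'s, leaving $c_1$ undetermined. The crucial point, identical in spirit to the projective and conformal arguments, is that the homogeneous part of this system has only the trivial solution because $\mathfrak{g}$ is semisimple; this yields, for each fixed choice of the $b$'s and of $c_1$, a unique solution, so the dependent parameters are genuinely determined functions of the $b$'s.

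Finally I would eliminate the residual freedom using the equivalences of Proposition \ref{1.4.7}. The morphisms are given by a pair $(p_0,\sigma)$ with $p_0\in P$ and a Lie algebra automorphism $\sigma$ of $K$ preserving $H$; concretely these include the rotations or boosts coming from the $O(2)$, respectively $O(1,1)$, factor, the outer automorphism exchanging the two copies of the standard representation of $\mathfrak{h}$, and conjugation by elements of $G_0$ and $\exp(\mathfrak{p}_+)$. Applying these I would first gauge $c_1$ to $0$, and then show that the whole $(b_1,\dots,b_4)$-family collapses to a single representative, after which the displayed canonical form of $\alpha$ is read off and the stated $\kappa$ with its blocks $R_1,R_2,R_3$ is obtained by substitution into $[\alpha(X),\alpha(Y)]-\alpha([X,Y])$.

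The main obstacle is precisely this last collapse: in the Lagrangean and CR examples (cf. Theorem \ref{5.3.6}) the analogous normalization left a genuine one-parameter invariant $t$, so the heart of the proof is to verify that here the symplectic constraint couples the $b$-parameters tightly enough that the automorphisms of Proposition \ref{1.4.7} act transitively on the admissible frames, forcing uniqueness. Since equivalence of normal Cartan geometries implies equivalence of the underlying contact projective structures, this also settles uniqueness in the coarser contact projective sense, so no appeal to the extra $\Upsilon,\Upsilon'$ freedom of the contact projective class is needed.
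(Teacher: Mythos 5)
Your plan coincides with the paper's own (very terse) argument: write down all admissible $\alpha$ with parameters $b_1,\dots,b_4$ and $c_1,c_2,g,h,d_1,\dots,d_4,e_1$, solve the normality conditions from Lemma \ref{3.3.2} leaving $c_1$ as the free parameter, and then use the morphisms of Proposition \ref{1.4.7} to show that all choices of the $b$'s (unlike in the Lagrangean and CR cases, where the invariant $t$ survives) give isomorphic extensions, because $G_0\cong\mathbb{R}^\times\times Sp(2n,\mathbb{R})$ contains the full multiplicity-space action on $\lambda_1\oplus\lambda_1$. The level of detail you leave to computation is the same as what the paper leaves implicit, so this is essentially the paper's proof.
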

\end{example}

\begin{example}
Extension from  $(SO^*(2n+2),SO^*(2n)\times SO^*(2))$ to\linebreak $(PSp(2n+2,\mathbb{R}),P)$:

Technical computations using Maple lead to the following theorem. We skip the exact form of the symmetric spaces. The representation $\lambda_1$ of $SO^*(2n)$ is quaternionic and the isomorphism is 
\[ X_1+iX_2+jX_3+kX_4\mapsto (X_1,X_2,X_3,X_4) \]
up to a quaternionic multiple. We also skip the details on computation of normality conditions and computation of automorphisms and isomorphisms here.

\begin{thm}
Up to equivalence, there is unique regular normal extensions from $(SO^*(2n+2),SO^*(2n))$ to contact projective structures given by:
\[ \left( \begin{array}{cccc}
0 & -X_1^T-iX_2^T & ai & -X_3^T+iX_4^T \\
X_1+iX_2 & A+iB & X_3+iX_4& C+iD \\ 
ai & X_3^T+iX_4^T & 0 & -X_1^T-iX_2^T \\
-X_3+iX_4 & -C+iD & X_1-iX_2& A-iB \\   \end{array} \right) 
\mapsto\]
\[
\left( \begin{array}{cccccc}
0 & *& * & *& * & -\frac{4n^2}{(2n+1)^2}a \\
X_1 & A& -B& -D_1& -C& -\frac{2n}{(2n+1)}X_3 \\ 
X_2 & B& A& -C& D_1& -\frac{2n}{(2n+1)}X_4 \\
X_3 & D_1& C& A& -B&\frac{2n}{(2n+1)}X_1\\ 
-X_4 & C& -D_1& B& A & \frac{-2n}{(2n+1)}X_2 \\
a & * & *& * & *& 0 \\   \end{array} \right),
\]
where $D_1=D-\frac{a}{(2n+1)}E$ and entries on $*$ comes from structure of the Lie algebra $\mathfrak{sp}(2n+2,\mathbb{R})$, with curvature
\[ \kappa(\left( \begin{array}{cccc}
0 & -X_1^T-iX_2^T & ai & -X_3^T+iX_4^T \\
X_1+iX_2 & 0 & X_3+iX_4& 0 \\ 
ai & X_3^T+iX_4^T & 0 & -X_1^T-iX_2^T \\
-X_3+iX_4 & 0 & X_1-iX_2& 0 \\   \end{array} \right),\]
\[
\left( \begin{array}{cccc}
0 & -Y_1^T-iY_2^T & bi & -Y_3^T+iY_4^T \\
Y_1+iY_2 & 0 & Y_3+iY_4& 0 \\ 
bi & Y_3^T+iY_4^T & 0 & -Y_1^T-iY_2^T \\
-Y_3+iY_4 & 0 & Y_1-iY_2& 0 \\   \end{array} \right) )
=\]
\[
\left( \begin{array}{cccccc}
0 & 0 & 0& 0 & 0 & 0 \\
0 & R1 & R3& R5 & R7 & 0 \\
0 & R4 & R2& R8 & R6 & 0 \\
0 & -R5 & R7& R1 & -R3 & 0 \\
0 & R8 & -R6 & -R4 & R2 & 0 \\
0 & 0 & 0& 0 & 0 & 0 \\  \end{array} \right),
\]
where 
\begin{align*}
 R1=&\frac{1}{(2n+1)}(X_1Y_1^T-Y_1X_1^T+X_4Y_4^T-Y_4X_4^T)\\&-(X_2Y_2^T-Y_2X_2^T+X_3Y_3^T-Y_3X_3^T),\\
 R2=&(X_1Y_1^T-Y_1X_1^T+X_4Y_4^T-Y_4X_4^T)\\
&-\frac{1}{(2n+1)}(X_2Y_2^T-Y_2X_2^T+X_3Y_3^T-Y_3X_3^T),\\
 R3=&\frac{-1}{(2n+1)}(X_1Y_3^T-Y_1X_3^T+X_4Y_2^T-Y_4X_2^T)\\&-(X_2Y_4^T-Y_2X_4^T+X_3Y_1^T-Y_3X_1^T), \\
 R4=&(X_1Y_3^T-Y_1X_3^T+X_4Y_2^T-Y_4X_2^T)\\&+\frac{1}{(2n+1)}(X_2Y_4^T-Y_2X_4^T+X_3Y_1^T-Y_3X_1^T), \\
 R5=&\frac{1}{(2n+1)}(X_1Y_4^T-Y_1X_4^T-X_4Y_1^T+Y_4X_1^T)-(X_2Y_3^T-Y_2X_3^T\\&-X_3Y_2^T+Y_3X_2^T)-\frac{2}{(2n+1)}(X_1^TY_4-X_4^TY_1+X_2^TY_3-X_3^TY_2)E,\\ 
 R6=&-(X_1Y_4^T-Y_1X_4^T-X_4Y_1^T+Y_4X_1^T)+\frac{1}{(2n+1)}(X_2Y_3^T-Y_2X_3^T\\&-X_3Y_2^T+Y_3X_2^T)-\frac{2}{(2n+1)}(X_1^TY_4-X_4^TY_1+X_2^TY_3-X_3^TY_2)E, \\
 R7=&\frac{1}{(2n+1)}(X_1Y_2^T-Y_1X_2^T-X_4Y_3^T+Y_4X_3^T)\\&+(X_2Y_1^T-Y_2X_1^T-X_3Y_4^T+Y_3X_4^T), \\
 R8=&-(X_1Y_2^T-Y_1X_2^T-X_4Y_3^T+Y_4X_3^T)\\&-\frac{1}{(2n+1)}(X_2Y_1^T-Y_2X_1^T-X_3Y_4^T+Y_3X_4^T).
\end{align*}
\end{thm}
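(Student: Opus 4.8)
The plan is to specialize the general construction of proposition \ref{pp29} to the present data and then carry out three explicit computational steps: the normalization, the reduction to a canonical representative, and the evaluation of the curvature. First I would fix the homomorphism $i$. Since the semisimple part of $\mathfrak{g}_0$ is simple and the standard representation of $\mathfrak{sp}(2n,\mathbb{R})$ restricts irreducibly to $i(H)$, proposition \ref{pp30} guarantees that $i$ is determined up to conjugation in $P$ once the isomorphism class of the representation of $H=SO^*(2n)$ on $\mathfrak{g}_{-1}$ is fixed. The representation $\lambda_1$ of $SO^*(2n)$ is of quaternionic type, so I would realize the identification $\mathfrak{k}/\mathfrak{l}\cong \mathfrak{g}_{-1}$ through the map $X_1+iX_2+jX_3+kX_4\mapsto (X_1,X_2,X_3,X_4)$, which leaves exactly the freedom of a right quaternionic multiple $b=b_1+ib_2+jb_3+kb_4$.

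Next I would write down the general admissible $\alpha$ dictated by proposition \ref{pp29}: its $\mathfrak{g}_0$-component is $i'$ on $\mathfrak{h}$, its $\mathfrak{g}_{-1}$-component is the fixed isomorphism scaled by $b$, its $\mathfrak{g}_1$-component is an arbitrary morphism of the adjoint representations, and its $\mathfrak{g}_{\pm 2}$-components on $\mathfrak{l}/\mathfrak{h}$ are free scalars, with the $\mathfrak{g}_{-2}$-value nonzero and tied to the Lie bracket on $\mathfrak{g}_{-1}$ to ensure regularity. This yields a finite-dimensional family of candidate extensions parametrized by $b$ together with the coefficients of the $\mathfrak{g}_1$- and $\mathfrak{g}_2$-parts. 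Throughout, I must keep the entries inside $\mathfrak{sp}(2n+2,\mathbb{R})$ by enforcing $AJ+JA^T=0$, which couples the four real blocks of each term.

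I would then impose the normalization $\partial^*\kappa=0$ in the explicit form recorded in theorem \ref{5.2.1}(6), computed from the codifferential formula of lemma \ref{3.3.2}. Because $\mathfrak{g}$ is semisimple, the homogeneous part of the resulting linear system admits only the trivial solution, exactly as in the projective case, so the normalization pins down the remaining coefficients uniquely in terms of $b$ and produces the entries such as $D_1=D-\tfrac{a}{2n+1}E$ and the factors $\tfrac{2n}{2n+1}$ and $\tfrac{4n^2}{(2n+1)^2}$ in the displayed $\alpha$. To obtain the uniqueness assertion I would apply proposition \ref{1.4.7}: conjugating by a suitable $p_0\in P$ together with an automorphism $\sigma$ of $K$ carries any choice of the quaternionic multiple $b$ to the normalized value, so all the normalized extensions are mutually locally isomorphic. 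Finally the curvature follows from corollary \ref{1.3.6} through the single identity $\kappa([e,e])(X,Y)=[\alpha(X),\alpha(Y)]-\alpha([X,Y])$, and grading the output into its $\mathfrak{g}_0$- and $\mathfrak{g}_2$-blocks gives the matrices $R1,\dots,R8$.

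The main obstacle is purely computational: the quaternionic structure forces one to track four real components at once, so both the normalization system and the bracket $[\alpha(X),\alpha(Y)]$ become large matrix identities, which is why I expect to discharge them with a computer algebra system (Maple). The delicate point is to verify that the symplectic constraint $AJ+JA^T=0$ is respected at every stage, so that each computed entry genuinely lies in the graded piece claimed in the statement and that the surviving free parameter is indeed absorbed by the morphisms rather than by an accidental cancellation.
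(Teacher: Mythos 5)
Your proposal follows essentially the same route as the paper: fix $i$ via the quaternionic identification $X_1+iX_2+jX_3+kX_4\mapsto(X_1,X_2,X_3,X_4)$ up to a quaternionic multiple $b$ (using propositions \ref{pp29} and \ref{pp30}), impose the normality conditions from lemma \ref{3.3.2}, absorb the residual freedom in $b$ and the one free normalization parameter by the morphisms of proposition \ref{1.4.7}, and read off the curvature from $[\alpha(X),\alpha(Y)]-\alpha([X,Y])$ — all of which the paper likewise delegates to Maple. The only caveat is that the paper explicitly treats the leftover scalar from normalization as a free parameter later killed by equivalence rather than as uniquely determined, but you acknowledge this absorption at the end, so the argument is the same.
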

\end{example}

The classification in the semisimple case is the following:

\begin{thm}
The only semisimple symmetric spaces without complex factors allowing extensions to regular contact projective structures are sums of simple (para)-pseudo-hermitian symmetric spaces. For the latter cases, the inclusion $i$ from proposition \ref{pp29} is unique up to equivalence.
\end{thm}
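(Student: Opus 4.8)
The plan is to follow the two-directional strategy already used for the Lagrangean and CR classifications (Propositions \ref{odw1} and \ref{odw2} together with their semisimple counterparts), adapting it to the fact that the model algebra is now $\mathfrak{g}=\mathfrak{sp}(2n+2,\mathbb{R})$ and that the semisimple part of $\mathfrak{g}_0$ acts on $\mathfrak{g}_{-1}$ by the standard representation of $\mathfrak{sp}(2n,\mathbb{R})$. For the forward direction I would start with a sum of simple (para)-pseudo-hermitian symmetric spaces without complex factors and build the inclusion $i$ factor by factor, exactly as the extension in the semisimple Lagrangean and CR cases was assembled from the simple ones: each pseudo-hermitian factor embeds into $Sp(2n,\mathbb{R})$ by the complex form $i'$ exhibited in the simple proposition at the start of this section, while each para-pseudo-hermitian factor embeds through the inclusion $\mathfrak{so}(n,n)\hookrightarrow\mathfrak{sp}(2n,\mathbb{R})$. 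Since the standard symplectic representation restricts to the required symplectic (respectively para-Kähler) form on each summand, Proposition \ref{pp29} then produces an extension to a regular symmetric contact projective structure on the whole product.

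For the converse I would assume an extension exists and invoke complete reducibility of the representation $i(H)$ to split $\mathfrak{g}_{-1}$ along the simple factors of $\mathfrak{k}$. Restricting $\alpha$ to the submatrix block carrying a given simple factor (in a frame adapted to the factors) yields an extension of that factor to a contact projective structure of lower dimension, so the simple classification above forces each factor to be para-pseudo-hermitian or pseudo-hermitian. The uniqueness of $i$ up to equivalence then follows factorwise from Proposition \ref{pp30}, using that the standard representation of $\mathfrak{sp}(2n,\mathbb{R})$ is self-dual via the symplectic form, so that no $W$ versus $W^*$ ambiguity of the type handled separately in Propositions \ref{odw1} and \ref{odw2} survives here.

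The essential point, and the place where this case genuinely differs from the Lagrangean and CR ones, is that \emph{no} factor type is excluded. In those cases a single type was ruled out by computing the eigenvalues ($\pm 1$ or $\pm i$) of the center realizing $\mathfrak{l}/\mathfrak{h}$ and observing that regularity forces $\mathfrak{l}/\mathfrak{h}$ to meet every factor, which clashed with the real ($V\oplus V^*$) or complex ($V$) nature of $\mathfrak{g}_{-1}$. Here $\mathfrak{g}_{-1}$ is merely a symplectic vector space, and both complex ($J^2=-\mathrm{id}$) and para-complex ($J^2=\mathrm{id}$) compatible structures sit inside $Sp(2n,\mathbb{R})$, so centers with either eigenvalue pattern are admissible and may be mixed. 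The main thing to check carefully is therefore the compatibility across factors of the single one-dimensional $\mathfrak{l}/\mathfrak{h}$ with the one-dimensional $\mathfrak{g}_{-2}$: I would verify, precisely as in Proposition \ref{pp29}, that the symplectic bracket $\mathfrak{g}_{-1}\times\mathfrak{g}_{-1}\to\mathfrak{g}_{-2}$ restricts to the symplectic form of each summand and that the induced value of $\alpha$ in $\mathfrak{g}_{-2}$ is independent of the chosen factor. This is exactly the regularity condition and presents no obstruction once the factorwise embeddings are fixed, which is why both factor types persist in the conclusion.
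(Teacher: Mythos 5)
Your proposal is correct and follows essentially the same route as the paper: the extension is assembled factor by factor as in Proposition \ref{pp29} and the preceding simple examples, the exclusion of other factor types rests on the classification already contained in Theorem \ref{5.2.1} (which you re-derive by block restriction, exactly as in the Lagrangean and CR semisimple theorems), and uniqueness of $i$ comes from Proposition \ref{pp30} together with self-duality of the standard symplectic representation. The paper's own proof is terser — it records only that the central part of $\mathfrak{l}$ landing in $\mathfrak{h}$ is realized by elements of $\mathfrak{g}_0$ commuting with the image of the semisimple part, and that $\alpha(\mathfrak{l}/\mathfrak{h})$ collects the $\mathfrak{g}_{-2}$-contributions of the relevant simple examples — but the substance and the regularity check across factors are the same as in your argument.
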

\begin{proof}
Apart the center of $\mathfrak{l}$ the extension can be taken as in previous examples. If we have in mind, that any multiplication on invariant subspaces of $\mathfrak{g}_{-1}$ can be obtained by bracket with an element of  $\mathfrak{g}_{0}$, which commutes with image of semisimple part of $\mathfrak{l}$, then image of center of $\mathfrak{l}$ can be chosen to be such elements with appropriate action. The $\mathfrak{l}/\mathfrak{h}$ is then a sum of preimages of $\mathfrak{g}_{-2}$ parts of the relevant previous examples.
\end{proof}

\subsection{Remarks on geometric interpretation}\label{5.6}

As described in \cite{odk14?} one can relate Lagrangean contact geometry with system of differential equations. In our case (cf. \ref{5.3}) the relation is as follows.

Let $(i,\alpha)$ be an extension of $(K,H)$ to Lagrangean contact geometry $(p: K\times_i P\to K/H, \omega_\alpha)$. Let $E$ be
\[ Tp \circ \omega_\alpha^{-1} \left( \begin{array}{ccc}
\mathfrak{g}_{0} & \mathfrak{g}_{1} & \mathfrak{g}_{2} \\
\mathfrak{g}_{-1} & \mathfrak{g}_{0} & \mathfrak{g}_{1} \\
0 & 0 & \mathfrak{g}_{0} \end{array} \right)\]
and let $V$ be
\[ Tp \circ \omega_\alpha^{-1} \left( \begin{array}{ccc}
\mathfrak{g}_{0} & \mathfrak{g}_{1} & \mathfrak{g}_{2} \\
0 & \mathfrak{g}_{0} & \mathfrak{g}_{1} \\
0 & \mathfrak{g}_{-1} & \mathfrak{g}_{0} \end{array} \right).\]

Then, since the latter Cartan geometry is torsion-free, the distributions $E,V$ are integrable. Since $i(H)\subset G_0$, these distributions are invariant with respect to $K$ action i.e. they are given by $\mathfrak{e},\mathfrak{v}\subset \mathfrak{k}/\mathfrak{h}$ and the leaf space corresponding to $V$ is homogeneous space $M=K/exp(\mathfrak{v})$. Now the Cartan geometry corresponds to system of differential equations on $M$. The space of solutions is then a homogeneous space $S=K/exp(\mathfrak{e})$ and the correspondence is as follows: For any point $k\cdot exp(\mathfrak{e})$ of $S$, the fiber containing $k$ in $K/H$ projects to hypersurface in $M$. Thus the symmetry group of the differential equation is $K$ (if the geometry is not flat).

\begin{example}
Extension from $(O(p+2,q),O(p,q)\times O(2))$ to $(PGl(n+2,\mathbb{R}),P)$. If $\alpha$ is given as in theorem \ref{5.3.6}, then $\mathfrak{e}$ is given by $a=0, t X^TI+Y^TI=0$ and $\mathfrak{v}$ is given by $a=0, X=0$. Thus both $M$ and $S$ are $O(p+2,q)/O(p+1,q)$ i.e. quadric in $R^{n+2}$. The correspondence is as follows: The point $k\cdot exp(\mathfrak{e})$ is associated with the intersection of quadric with hyperplane through $k\cdot O(p+1,q)$ orthogonal (in the metric defining the quadric) to $k\cdot (t,1,0,\dots,0)$.
\end{example}

\begin{example}
Extension from $(O(p+1,q+1),O(p,q)\times O(1,1))$ to\linebreak $(PGl(n+2,\mathbb{R}),P)$. There are three possible non-equivalent $\alpha$.

a) $b_1^2>b_2^2$

Then $\mathfrak{e}$ is given by $a=0, -t X^TI+Y^TI=0$ and $\mathfrak{v}$ is given by $a=0, X=0$. Now $M$ is $O(p+1,q+1)/O(p+1,q)$, if $t>1$ then $S$ is $O(p+1,q+1)/O(p+1,q)$, if $t<1$ then $S$ is $O(p+1,q+1)/O(p,q+1)$ and if $t=1$ then $S$ is $O(p+1,q+1)/(O(p,q)\ltimes R^n)$ i.e. again quadric in $R^{n+2}$. The correspondence is as follows: The point $k\cdot exp(\mathfrak{e})$ is associated with the intersection of quadric with hyperplane through $k\cdot O(p,q+1)$ orthogonal (in the metric defining the quadric) to $k\cdot (-t,1,0,\dots,0)$.

b) $b_1^2=b_2^2$ and $b_3^2<b_4^2$

Then $\mathfrak{e}$ is given by $a=0, Y=0$ and $\mathfrak{v}$ is given by $a=0, Y=-X$. Now $M$ is $O(p+1,q+1)/(O(p,q)\ltimes R^n)$ and $S$ is $O(p+1,q+1)/O(p+1,q)$. The correspondence is as follows: The point $k\cdot exp(\mathfrak{e})$ is associated with the intersection of quadric with hyperplane through $k\cdot O(p+1,q)$ orthogonal (in the metric defining quadric) to $k\cdot (0,1,0,\dots,0)$.

c) $b_1^2=b_2^2$ and $b_3^2=b_4^2$

Then $\mathfrak{e}$ is given by $a=0, X=Y$ and $\mathfrak{v}$ is given by $a=0, Y=-X$. Now $M$ is $O(p+1,q+1)/(O(p,q)\ltimes R^n)$ and $S$ is $O(p+1,q+1)/(O(p,q)\ltimes R^n)$. The correspondence is as follows: The point $k\cdot exp(\mathfrak{e})$ is associated with the intersection of quadric with hyperplane through $k\cdot O(p,q)\ltimes R^n$ orthogonal (in the metric defining the quadric) to $k\cdot (-1,1,0,\dots,0)$.
\end{example}

Further, we remark that in dimension three all homogeneous CR--geome\-tries were found by Cartan in \cite{odk15?}. As generalization of the defining functions found by Cartan, we conjecture that in $(O(p+2,q),O(p,q))$ case, the CR-hypersurface is given by equation $$1+\sum_{i=1}^p|z_i|^2-\sum_{i=p+1}^n|z_i|^2+|w|^2=t|1+\sum_{i=1}^pz_i^2-\sum_{i=p+1}^nz_i^2+w^2|$$ in $\mathbb{C}^{n+1}$, and in $O(p+1,q+1),O(p,q))$ case, the CR-hypersurface is given by equation $$1+\sum_{i=1}^p|z_i|^2-\sum_{i=p+1}^n|z_i|^2-|w|^2=t|1+\sum_{i=1}^pz_i^2-\sum_{i=p+1}^nz_i^2-w^2|$$ in $\mathbb{C}^{n+1}$.

\newpage
\appendix

\section{Tables of simple symmetric spaces}\label{appA}

Here we present the classification of pairs $(\mathfrak{k},\mathfrak{h})$ of Lie algebras, such that there is a simple homogeneous symmetric space $(K,H,h)$ with those Lie algebras. The main inputs for the tables are classification of Berger \cite{odk9} and section \ref{2.6}. We use the following symbols and abbreviations in the table below:

\begin{itemize}
\item[$\mathfrak{k}$] is a simple Lie group, not exceptional
\item[$\mathfrak{h}$] is a Lie subgroup of $\mathfrak{k}$, where $p+q=n$ or $k+l+p+q=n$
\item[$ad(\mathfrak{h})$] is the adjoint representation of Lie algebra $\mathfrak{h}$. Each irreducible part is separated by $\oplus$ and the action of simple factors of $\mathfrak{h}$ is given in terms of fundamental representations in the order of the factors separated by $\otimes$. The action of center depends if it is $h$ or $ph$ and is visible from the $1$-grading. The $\bar{\lambda}$ means representation conjugate to $\lambda$ and $\lambda^*$ representation dual to $\lambda$. 
\item[signature] is the signature of the invariant pseudo-Riemannian metric.
\item[prop.] indicates whether there exists such $P_0$-structure on the homogeneous symmetric space using the following parameters:
\begin{itemize}
\item[h] = pseudo-Hermitian structure
\item[ph] = pseudo-para-Hermitian structure
\item[q] = pseudo-quaternionic-K\"ahler structure
\item[pq] = pseudo-para-quaternionic-K\"ahler structure
\end{itemize}
\end{itemize}

\begin{tabular}{|c|c|c|}
\cline{1-3}
\multicolumn{1}{|c|}{\multirow{2}{*}{$\mathfrak{k}$}} &
\multicolumn{1}{|c|}{$\mathfrak{h}$} &
\multicolumn{1}{|c|}{$ad(\mathfrak{h})$}      \\ \cline{2-3}
     & signature & prop.    \\ \cline{1-3}

\multicolumn{1}{|c|}{\multirow{2}{*}{$\mathfrak{sl}(n,\mathbb{C})$}} &
\multicolumn{1}{|c|}{$\mathfrak{sl}(p,\mathbb{C})+\mathfrak{sl}(q,\mathbb{C})+\mathbb{C}$}&
\multicolumn{1}{|c|}{$\lambda_1\otimes \lambda_{1}^*\oplus \lambda_{1}^*\otimes \lambda_{1}$}      \\ \cline{2-3}
      & $(2pq,2pq)$ & h,ph  \\ \cline{1-3}

\multicolumn{1}{|c|}{\multirow{2}{*}{$\mathfrak{sl}(n,\mathbb{C})$}} &
\multicolumn{1}{|c|}{$\mathfrak{so}(n,\mathbb{C})$}&
 \multicolumn{1}{|c|}{$2\lambda_1$}      \\ \cline{2-3}
 & $(\frac12(n^2+n-2),\frac12(n^2+n-2))$ &    \\ \cline{1-3}

\multicolumn{1}{|c|}{\multirow{2}{*}{$\mathfrak{sl}(2n,\mathbb{C})$}} &
\multicolumn{1}{|c|}{$\mathfrak{sp}(2n,\mathbb{C})$}&
 \multicolumn{1}{|c|}{$\lambda_2$}      \\ \cline{2-3}
 & $(2n^2-n-1,2n^2-n-1)$ &    \\ \cline{1-3}

\multicolumn{1}{|c|}{\multirow{2}{*}{$\mathfrak{sl}(n,\mathbb{C})$}} &
\multicolumn{1}{|c|}{$\mathfrak{su}(p,q)$}&
 \multicolumn{1}{|c|}{$\lambda_1+\lambda_{n-1}$}      \\ \cline{2-3}
 & $(p^2+q^2-1,2pq)$ &    \\ \cline{1-3}

\multicolumn{1}{|c|}{\multirow{2}{*}{$\mathfrak{sl}(n,\mathbb{C})$}} &
\multicolumn{1}{|c|}{$\mathfrak{sl}(n,\mathbb{R})$}&
 \multicolumn{1}{|c|}{$\lambda_1+\lambda_{n-1}$}      \\ \cline{2-3}
 & $(\frac12(n^2-n),\frac12(n^2+n-2))$ &    \\ \cline{1-3}

\multicolumn{1}{|c|}{\multirow{2}{*}{$\mathfrak{sl}(2n,\mathbb{C})$}} &
\multicolumn{1}{|c|}{$\mathfrak{sl}(n,\mathbb{H})$}&
 \multicolumn{1}{|c|}{$\lambda_1+\lambda_{n-1}$}      \\ \cline{2-3}
 & $(2n^2+n,2n^2-n-1)$ &    \\ \cline{1-3}
\end{tabular}

\resizebox{\textwidth-18pt}{!}{
{
\renewcommand{\arraystretch}{1.2}
\begin{tabular}{|c|c|c|}
\cline{1-3}
\multicolumn{1}{|c|}{\multirow{2}{*}{$\mathfrak{k}$}} &
\multicolumn{1}{|c|}{$\mathfrak{h}$} &
 \multicolumn{1}{|c|}{$ad(\mathfrak{h})$}      \\ \cline{2-3}
 & signature & properties    \\ \cline{1-3}

\multicolumn{1}{|c|}{\multirow{2}{*}{$\mathfrak{su}(k+p,l+q)$}} &
\multicolumn{1}{|c|}{$\mathfrak{su}(k,l)+\mathfrak{su}(p,q)+\mathfrak{so}(2)$}&
 \multicolumn{1}{|c|}{$\lambda_1\otimes \lambda_{1}^*\oplus \lambda_{1}^*\otimes \lambda_{1}$}      \\ \cline{2-3}
 & $(2kq+2lp,2kp+2lq)$ & h, q (k=2,l=0), pq (k=1,l=1) \\ \cline{1-3}

\multicolumn{1}{|c|}{\multirow{2}{*}{$\mathfrak{su}(p,q)$}} &
\multicolumn{1}{|c|}{$\mathfrak{so}(p,q)$}&
 \multicolumn{1}{|c|}{$2\lambda_1$}      \\ \cline{2-3}
 & $(4pq,2p^2+2q^2-p-q-1)$ &    \\ \cline{1-3}

\multicolumn{1}{|c|}{\multirow{2}{*}{$\mathfrak{su}(2p,2q)$}} &
\multicolumn{1}{|c|}{$\mathfrak{sp}(p,q)$}&
 \multicolumn{1}{|c|}{$\lambda_2$}      \\ \cline{2-3}
 & $(2n^2-n-1,2n^2-n-1)$ &   \\ \cline{1-3}

\multicolumn{1}{|c|}{\multirow{2}{*}{$\mathfrak{su}(n,n)$}} &
\multicolumn{1}{|c|}{$\mathfrak{sl}(n,\mathbb{C})+\mathfrak{so}(1,1)$}&
 \multicolumn{1}{|c|}{$\lambda_1+ \bar{\lambda}_1 \oplus \lambda_{n-1}+ \bar{\lambda}_{n-1}$}      \\ \cline{2-3}
 & $(n^2,n^2)$ &  ph  \\ \cline{1-3}

\multicolumn{1}{|c|}{\multirow{2}{*}{$\mathfrak{su}(n,n)$}} &
\multicolumn{1}{|c|}{$\mathfrak{so}^\star(2n)$}&
 \multicolumn{1}{|c|}{$2\lambda_1$}      \\ \cline{2-3}
 & $(n^2+n,n^2-1)$ &    \\ \cline{1-3}

\multicolumn{1}{|c|}{\multirow{2}{*}{$\mathfrak{su}(n,n)$}} &
\multicolumn{1}{|c|}{$\mathfrak{sp}(2n,\mathbb{R})$}&
 \multicolumn{1}{|c|}{$\lambda_2$}      \\ \cline{2-3}
 & $(n^2-n,n^2-1)$ &    \\ \cline{1-3}

\multicolumn{1}{|c|}{\multirow{2}{*}{$\mathfrak{sl}(n,\mathbb{R})$}} &
\multicolumn{1}{|c|}{$\mathfrak{sl}(p,\mathbb{R})+\mathfrak{sl}(q,\mathbb{R})+\mathfrak{so}(1,1)$}&
 \multicolumn{1}{|c|}{$\lambda_1\otimes \lambda_{1}^*\oplus \lambda_{1}^*\otimes \lambda_{1}$}      \\ \cline{2-3}
 & $(pq,pq)$ & ph, , pq (p=2)  \\ \cline{1-3}

\multicolumn{1}{|c|}{\multirow{2}{*}{$\mathfrak{sl}(n,\mathbb{R})$}} &
\multicolumn{1}{|c|}{$\mathfrak{so}(p,q)$}&
 \multicolumn{1}{|c|}{$2\lambda_1$}      \\ \cline{2-3}
 & $(\frac12(p^2+q^2+p+q-2),pq)$ &    \\ \cline{1-3}

\multicolumn{1}{|c|}{\multirow{2}{*}{$\mathfrak{sl}(2n,\mathbb{R})$}} &
\multicolumn{1}{|c|}{$\mathfrak{sl}(n,\mathbb{C})+\mathfrak{so}(2)$}&
 \multicolumn{1}{|c|}{$\lambda_1+ \bar{\lambda}_1 \oplus \lambda_{n-1}+ \bar{\lambda}_{n-1}$}      \\ \cline{2-3}
 & $(n^2+n,n^2-n)$ &  h  \\ \cline{1-3}

\multicolumn{1}{|c|}{\multirow{2}{*}{$\mathfrak{sl}(2n,\mathbb{R})$}} &
\multicolumn{1}{|c|}{$\mathfrak{sp}(2n,\mathbb{R})$}&
 \multicolumn{1}{|c|}{$\lambda_2$}      \\ \cline{2-3}
 & $(n^2-1,n^2-n)$ &    \\ \cline{1-3}

\multicolumn{1}{|c|}{\multirow{2}{*}{$\mathfrak{sl}(n,\mathbb{H})$}} &
\multicolumn{1}{|c|}{$\mathfrak{sl}(p,\mathbb{H})+\mathfrak{sl}(q,\mathbb{H})+\mathfrak{so}(1,1)$}&
 \multicolumn{1}{|c|}{$\lambda_1\otimes \lambda_{1}^*\oplus \lambda_{1}^*\otimes \lambda_{1}$}      \\ \cline{2-3}
 & $(4pq,4pq)$ & ph, q (p=1)  \\ \cline{1-3}

\multicolumn{1}{|c|}{\multirow{2}{*}{$\mathfrak{sl}(n,\mathbb{H})$}} &
\multicolumn{1}{|c|}{$\mathfrak{sp}(p,q)$}&
 \multicolumn{1}{|c|}{$\lambda_2$}      \\ \cline{2-3}
 & $(2p^2+2q^2-p-q-1,4pq)$ &   \\ \cline{1-3}

\multicolumn{1}{|c|}{\multirow{2}{*}{$\mathfrak{sl}(n,\mathbb{H})$}} &
\multicolumn{1}{|c|}{$\mathfrak{sl}(n,\mathbb{C})+\mathfrak{so}(2)$}&
 \multicolumn{1}{|c|}{$\lambda_1+ \bar{\lambda}_1 \oplus \lambda_{n-1}+ \bar{\lambda}_{n-1}$}      \\ \cline{2-3}
 & $(n^2-n,n^2+n)$ &  h  \\ \cline{1-3}

\multicolumn{1}{|c|}{\multirow{2}{*}{$\mathfrak{sl}(n,\mathbb{H})$}} &
\multicolumn{1}{|c|}{$\mathfrak{so}^\star(2n)$}&
 \multicolumn{1}{|c|}{$2\lambda_1$}      \\ \cline{2-3}
 & $(n^2-1,n^2+n)$ &    \\ \cline{1-3}
\end{tabular}
}}

{
\renewcommand{\arraystretch}{1.2}
\begin{tabular}{|c|c|c|}
\cline{1-3}
\multicolumn{1}{|c|}{\multirow{2}{*}{$\mathfrak{k}$}} &
\multicolumn{1}{|c|}{$\mathfrak{h}$} &
 \multicolumn{1}{|c|}{$ad(\mathfrak{h})$}      \\ \cline{2-3}
 & signature & properties    \\ \cline{1-3}

\multicolumn{1}{|c|}{\multirow{2}{*}{$\mathfrak{so}(n,\mathbb{C})$}} &
\multicolumn{1}{|c|}{$\mathfrak{so}(p,q)$}&
 \multicolumn{1}{|c|}{$\lambda_2$}      \\ \cline{2-3}
 & $(\frac12(p^2+q^2-p-q),pq)$ &  \\ \cline{1-3}

\multicolumn{1}{|c|}{\multirow{2}{*}{$\mathfrak{so}(2n,\mathbb{C})$}} &
\multicolumn{1}{|c|}{$\mathfrak{so}^\star(2n)$}&
 \multicolumn{1}{|c|}{$\lambda_2$}      \\ \cline{2-3}
 & $(n^2,n^2-n)$ &    \\ \cline{1-3}

\multicolumn{1}{|c|}{\multirow{2}{*}{$\mathfrak{so}(n,\mathbb{C})$}} &
\multicolumn{1}{|c|}{$\mathfrak{so}(p,\mathbb{C})+\mathfrak{so}(q,\mathbb{C})$}&
 \multicolumn{1}{|c|}{$\lambda_1 \otimes \lambda_{1}$}      \\ \cline{2-3}
 & $(pq,pq)$ &    \\ \cline{1-3}

\multicolumn{1}{|c|}{\multirow{2}{*}{$\mathfrak{so}(n,\mathbb{C})$}} &
\multicolumn{1}{|c|}{$\mathfrak{so}(n-2,\mathbb{C})+\mathfrak{so}(2,\mathbb{C})$}&
 \multicolumn{1}{|c|}{$\lambda_1 \oplus \lambda_{1}$}      \\ \cline{2-3}
 & $(2n-4,2n-4)$ &  h, ph  \\ \cline{1-3}

\multicolumn{1}{|c|}{\multirow{2}{*}{$\mathfrak{so}(2n,\mathbb{C})$}} &
\multicolumn{1}{|c|}{$\mathfrak{sl}(n,\mathbb{C})+\mathbb{C}$}&
 \multicolumn{1}{|c|}{$\lambda_2\oplus \lambda_{2}^*$}      \\ \cline{2-3}
 & $(n^2-n,n^2-n)$ &  h,ph  \\ \cline{1-3}

\cline{1-3}
\multicolumn{1}{|c|}{\multirow{2}{*}{$\mathfrak{so}(n,n)$}} &
\multicolumn{1}{|c|}{$\mathfrak{sl}(n,\mathbb{R})+\mathfrak{so}(1,1)$}&
 \multicolumn{1}{|c|}{$\lambda_2\oplus \lambda_{2}^*$}      \\ \cline{2-3}
 & $(\frac12(n^2-n),\frac12(n^2-n))$ & ph \\ \cline{1-3}

\multicolumn{1}{|c|}{\multirow{2}{*}{$\mathfrak{so}(n,n)$}} &
\multicolumn{1}{|c|}{$\mathfrak{so}(n,\mathbb{C})$}&
 \multicolumn{1}{|c|}{$\lambda_1+ \bar{\lambda}_{1}$}      \\ \cline{2-3}
 & $(\frac12(n^2+n),\frac12(n^2-n))$ &    \\ \cline{1-3}

\multicolumn{1}{|c|}{\multirow{2}{*}{$\mathfrak{so}(k+p,l+q)$}} &
\multicolumn{1}{|c|}{$\mathfrak{so}(k,l)+\mathfrak{so}(p,q)$}&
 \multicolumn{1}{|c|}{$\lambda_1 \otimes \lambda_{1}$}      \\ \cline{2-3}
 & $(kq+lp,kp+lq)$ &  q (k=4,l=0), pq (k=2,l=2)  \\ \cline{1-3}

\multicolumn{1}{|c|}{\multirow{2}{*}{$\mathfrak{so}(k+2,l)$}} &
\multicolumn{1}{|c|}{$\mathfrak{so}(k,l)+\mathfrak{so}(2)$}&
 \multicolumn{1}{|c|}{$\lambda_1 \oplus \lambda_{1}$}      \\ \cline{2-3}
 & $(2l,2k)$ &  h  \\ \cline{1-3}

\multicolumn{1}{|c|}{\multirow{2}{*}{$\mathfrak{so}(k+1,l+1)$}} &
\multicolumn{1}{|c|}{$\mathfrak{so}(k,l)+\mathfrak{so}(1,1)$}&
 \multicolumn{1}{|c|}{$\lambda_1 \oplus \lambda_{1}$}      \\ \cline{2-3}
 & $(k+l,k+l)$ &  ph  \\ \cline{1-3}

\multicolumn{1}{|c|}{\multirow{2}{*}{$\mathfrak{so}(2p,2q)$}} &
\multicolumn{1}{|c|}{$\mathfrak{su}(p,q)+\mathfrak{so}(2)$}&
 \multicolumn{1}{|c|}{$\lambda_2\oplus \lambda_{2}^*$}      \\ \cline{2-3}
 & $(2pq,p^2+q^2-p-q)$ &  h  \\ \cline{1-3}

\multicolumn{1}{|c|}{\multirow{2}{*}{$\mathfrak{so}^\star(2n)$}} &
\multicolumn{1}{|c|}{$\mathfrak{su}(p,q)+\mathfrak{so}(2)$}&
 \multicolumn{1}{|c|}{$\lambda_2\oplus \lambda_{2}^*$}      \\ \cline{2-3}
 & $(p^2+q^2-p-q,2pq)$ & h \\ \cline{1-3}

\multicolumn{1}{|c|}{\multirow{2}{*}{$\mathfrak{so}^\star(2n)$}} &
\multicolumn{1}{|c|}{$\mathfrak{so}(n,\mathbb{C})$}&
 \multicolumn{1}{|c|}{$\lambda_1 + \bar{\lambda}_{1}$}      \\ \cline{2-3}
 & $(\frac12(n^2-n),\frac12(n^2+n))$ &    \\ \cline{1-3}

\multicolumn{1}{|c|}{\multirow{2}{*}{$\mathfrak{so}^\star(2n)$}} &
\multicolumn{1}{|c|}{$\mathfrak{so}^\star(2p)+\mathfrak{so}^\star(2q)$}&
 \multicolumn{1}{|c|}{$\lambda_1 \otimes \lambda_{1}^*$}      \\ \cline{2-3}
 & $(2pq,2pq)$ & q, pq (p=2)  \\ \cline{1-3}

\multicolumn{1}{|c|}{\multirow{2}{*}{$\mathfrak{so}^\star(2n+2)$}} &
\multicolumn{1}{|c|}{$\mathfrak{so}^\star(2n)+\mathfrak{so}^\star(2)$}&
 \multicolumn{1}{|c|}{$\lambda_1 \oplus \lambda_{1}^*$}      \\ \cline{2-3}
 & $(2pq,2pq)$ & h  \\ \cline{1-3}

\multicolumn{1}{|c|}{\multirow{2}{*}{$\mathfrak{so}^\star(4n)$}} &
\multicolumn{1}{|c|}{$\mathfrak{sl}(n,\mathbb{H})+\mathfrak{so}(1,1)$}&
 \multicolumn{1}{|c|}{$\lambda_2\oplus \lambda_{2}^*$}      \\ \cline{2-3}
 & $(2n^2-n,2n^2-n)$ &  ph  \\ \cline{1-3}
\end{tabular}
}

{
\renewcommand{\arraystretch}{1.2}
\begin{tabular}{|c|c|c|}
\cline{1-3}
\multicolumn{1}{|c|}{\multirow{2}{*}{$\mathfrak{k}$}} &
\multicolumn{1}{|c|}{$\mathfrak{h}$} &
 \multicolumn{1}{|c|}{$ad(\mathfrak{h})$}      \\ \cline{2-3}
 & signature & properties    \\ \cline{1-3}

\multicolumn{1}{|c|}{\multirow{2}{*}{$\mathfrak{sp}(n,\mathbb{C})$}} &
\multicolumn{1}{|c|}{$\mathfrak{sp}(p,q)$}&
 \multicolumn{1}{|c|}{$2\lambda_1$}      \\ \cline{2-3}
 & $(2p^2+2q^2+p+q,4pq)$ &  \\ \cline{1-3}

\multicolumn{1}{|c|}{\multirow{2}{*}{$\mathfrak{sp}(2n,\mathbb{C})$}} &
\multicolumn{1}{|c|}{$\mathfrak{sp}(2n,\mathbb{R})$}&
 \multicolumn{1}{|c|}{$2\lambda_1$}      \\ \cline{2-3}
 & $(n^2,n^2+n)$ &    \\ \cline{1-3}

\multicolumn{1}{|c|}{\multirow{2}{*}{$\mathfrak{sp}(n,\mathbb{C})$}} &
\multicolumn{1}{|c|}{$\mathfrak{sp}(p,\mathbb{C})+\mathfrak{sp}(q,\mathbb{C})$}&
 \multicolumn{1}{|c|}{$\lambda_1 \otimes \lambda_{1}^*$}      \\ \cline{2-3}
 & $(4pq,4pq)$ &    \\ \cline{1-3}

\multicolumn{1}{|c|}{\multirow{2}{*}{$\mathfrak{sp}(n,\mathbb{C})$}} &
\multicolumn{1}{|c|}{$\mathfrak{sl}(n,\mathbb{C})+\mathbb{C}$}&
 \multicolumn{1}{|c|}{$2\lambda_1\oplus 2\lambda_{1}^*$}      \\ \cline{2-3}
 & $(n^2+n,n^2+n)$ &  h,ph  \\ \cline{1-3}

\multicolumn{1}{|c|}{\multirow{2}{*}{$\mathfrak{sp}(n,n)$}} &
\multicolumn{1}{|c|}{$\mathfrak{sl}(n,\mathbb{H})+\mathfrak{so}(1,1)$}&
 \multicolumn{1}{|c|}{$2\lambda_1\oplus 2\lambda_{1}^*$}      \\ \cline{2-3}
 & $(2n^2+n,2n^2+n)$ & ph \\ \cline{1-3}

\multicolumn{1}{|c|}{\multirow{2}{*}{$\mathfrak{sp}(n,n)$}} &
\multicolumn{1}{|c|}{$\mathfrak{sp}(n,\mathbb{C})$}&
 \multicolumn{1}{|c|}{$\lambda_1+ \bar{\lambda}_{1}$}      \\ \cline{2-3}
 & $(2n^2-n,2n^2+n)$ &    \\ \cline{1-3}

\multicolumn{1}{|c|}{\multirow{2}{*}{$\mathfrak{sp}(k+p,l+q)$}} &
\multicolumn{1}{|c|}{$\mathfrak{sp}(k,l)+\mathfrak{sp}(p,q)$}&
 \multicolumn{1}{|c|}{$\lambda_1 \otimes \lambda_{1}^*$}      \\ \cline{2-3}
 & $(4kq+4lp,4kp+4lq)$ &  q (k=1,l=0)  \\ \cline{1-3}

\multicolumn{1}{|c|}{\multirow{2}{*}{$\mathfrak{sp}(p,q)$}} &
\multicolumn{1}{|c|}{$\mathfrak{su}(p,q)+\mathfrak{so}(2)$}&
 \multicolumn{1}{|c|}{$2\lambda_1\oplus 2\lambda_{1}^*$}      \\ \cline{2-3}
 & $(2pq,p^2+q^2+p+q)$ &  h  \\ \cline{1-3}

\multicolumn{1}{|c|}{\multirow{2}{*}{$\mathfrak{sp}(2n,\mathbb{R})$}} &
\multicolumn{1}{|c|}{$\mathfrak{su}(p,q)+\mathfrak{so}(2)$}&
 \multicolumn{1}{|c|}{$\lambda_2\oplus \lambda_{2}^*$}      \\ \cline{2-3}
 & $(p^2+q^2+p+q,2pq)$ & h \\ \cline{1-3}

\multicolumn{1}{|c|}{\multirow{2}{*}{$\mathfrak{sp}(2n,\mathbb{R})$}} &
\multicolumn{1}{|c|}{$\mathfrak{sp}(n,\mathbb{C})$}&
 \multicolumn{1}{|c|}{$\lambda_1 +\bar{\lambda}_{1}$}      \\ \cline{2-3}
 & $(2n^2+n,2n^2-n)$ &    \\ \cline{1-3}

\multicolumn{1}{|c|}{\multirow{2}{*}{$\mathfrak{sp}(2n,\mathbb{R})$}} &
\multicolumn{1}{|c|}{$\mathfrak{sp}(2p,\mathbb{R})+\mathfrak{sp}(2q,\mathbb{R})$}&
 \multicolumn{1}{|c|}{$\lambda_1 \otimes \lambda_{1}^*$}      \\ \cline{2-3}
 & $(2pq,2pq)$ &  pq (p=1)  \\ \cline{1-3}

\multicolumn{1}{|c|}{\multirow{2}{*}{$\mathfrak{sp}(2n,\mathbb{R})$}} &
\multicolumn{1}{|c|}{$\mathfrak{sl}(n,\mathbb{R})+\mathfrak{so}(1,1)$}&
 \multicolumn{1}{|c|}{$2\lambda_1\oplus 2\lambda_{1}^*$}      \\ \cline{2-3}
 & $(\frac12(n^2+n),\frac12(n^2+n))$ &  ph  \\ \cline{1-3}
\end{tabular}
}

\section{Tables of gradings}\label{appB}

The following tables comes form \cite{odk3}. First table contains $1$-gradings of simple Lie algebras. We use the following symbols and abbreviations in the table below:

\begin{itemize}
\item[$\mathfrak{g}$] is simple Lie algebra with one grading $\mathfrak{g}=\mathfrak{g}_{-1}+\mathfrak{g}_0+\mathfrak{g}_1$ of non-exceptional type
\item[$\mathfrak{g}_{-1}$] is given as a representation of $ad(\mathfrak{g}_0)$ on $\mathfrak{g}_{-1}$ in terms of fundamental representations.
\end{itemize}

{
\renewcommand{\arraystretch}{1.2}
\begin{center}
\begin{tabular}{|c|c|c|}
\hline
$\mathfrak{g}$ & $\mathfrak{g}_0$ & $\mathfrak{g}_{-1}$ \\
\hline
$\mathfrak{sl}(n+1,\mathbb{R})$ & $\mathfrak{sl}(n,\mathbb{R})+\mathbb{R}$ & $\lambda_1$ \\
\hline
$\mathfrak{sl}(n,\mathbb{R})$ & $\mathfrak{sl}(p,\mathbb{R})+\mathfrak{sl}(q,\mathbb{R})+\mathbb{R}$ & $\lambda_{p-1}\otimes \lambda_{1}$ \\
\hline
$\mathfrak{sl}(n,\mathbb{C})$ & $\mathfrak{sl}(p,\mathbb{C})+\mathfrak{sl}(q,\mathbb{C})+\mathbb{C}$ & $\lambda_{p-1}\otimes \lambda_{1}$ \\
\hline
$\mathfrak{sl}(n,\mathbb{H})$ & $\mathfrak{sl}(p,\mathbb{H})+\mathfrak{sl}(q,\mathbb{H})+\mathbb{R}$ & $\lambda_{p-1}\otimes \lambda_{1}$ \\
\hline
$\mathfrak{su}(n,n)$ & $\mathfrak{sl}(n,\mathbb{C})+\mathbb{R}$ & $\lambda_{1}+ \bar{\lambda}_{1}$ \\
\hline
$\mathfrak{sp}(2n,\mathbb{R})$ &  $\mathfrak{sl}(n,\mathbb{R})+\mathbb{R}$ & $2\lambda_{1}$ \\
\hline
$\mathfrak{sp}(2n,\mathbb{C})$ &  $\mathfrak{sl}(n,\mathbb{C})+\mathbb{C}$ & $2\lambda_{1}$ \\
\hline
$\mathfrak{sp}(n,n)$ &  $\mathfrak{sl}(n,\mathbb{H})+\mathbb{R}$ & $2\lambda_{1}$ \\
\hline
$\mathfrak{so}(p+1,q+1)$ & $\mathfrak{so}(p,q)+\mathbb{R}$ & $\lambda_{1}$ \\
\hline
$\mathfrak{so}(n+2,\mathbb{C})$ & $\mathfrak{so}(n,\mathbb{C})+\mathbb{C}$ & $\lambda_{1}$ \\
\hline
$\mathfrak{so}(p,p)$ & $\mathfrak{sl}(n,\mathbb{R})+\mathbb{R}$ & $\lambda_{2}$ \\
\hline
$\mathfrak{so}(2n,\mathbb{C})$ & $\mathfrak{sl}(n,\mathbb{C})+\mathbb{C}$ & $\lambda_{2}$ \\
\hline
$\mathfrak{so}^\star(4n)$ & $\mathfrak{sl}(n,\mathbb{H})+\mathbb{R}$ & $\lambda_{2}$ \\
\hline
\end{tabular}
\end{center}
}

This table contains complex contact gradings of simple complex Lie algebras. We use the following symbols and abbreviations in the table below:

\begin{itemize}
\item[$\mathfrak{g}$] is simple complex Lie algebra with complex contact two grading $\mathfrak{g}=\mathbb{C}+\mathfrak{g}_{-1}+\mathfrak{g}_0+\mathfrak{g}_1+\mathbb{C}$ of non-exceptional type
\item[$\mathfrak{g}_{-1}$] is given as a representation of $ad(\mathfrak{g}_0)$ on $\mathfrak{g}_{-1}$ in terms of fundamental representations.
\end{itemize}

\begin{center}
\begin{tabular}{|c|c|c|}
\hline
$\mathfrak{g}$ & $\mathfrak{g}_0$ & $\mathfrak{g}_{-1}$ \\
\hline
$\mathfrak{sl}(n+2,\mathbb{C})$ & $\mathfrak{sl}(n,\mathbb{C})+\mathbb{C}^2$  & $\lambda_1\oplus \lambda_{n-1}$ \\
\hline
$\mathfrak{so}(n+4,\mathbb{C})$ & $\mathfrak{so}(n,\mathbb{C})+\mathfrak{sl}(2,\mathbb{C})+\mathbb{C}$ & $\lambda_1\otimes \lambda_1$ \\
\hline
$\mathfrak{sp}(2n+2,\mathbb{C})$ & $\mathfrak{sp}(2n,\mathbb{C})+\mathbb{C}$ & $\lambda_1$ \\
\hline
\end{tabular}
\end{center}

This table contains contact gradings of simple Lie algebras. We use the following symbols and abbreviations in the table below:

\begin{itemize}
\item[$\mathfrak{g}$] is simple Lie algebra with contact two grading $\mathfrak{g}=\mathbb{R}+\mathfrak{g}_{-1}+\mathfrak{g}_0+\mathfrak{g}_1+\mathbb{R}$ of non-exceptional type
\item[$\mathfrak{g}_{-1}$] is given as a representation of $ad(\mathfrak{g}_0)$ on $\mathfrak{g}_{-1}$ in terms of fundamental representations.
\end{itemize}

\begin{center}
\begin{tabular}{|c|c|c|}
\hline
$\mathfrak{g}$ & $\mathfrak{g}_0$ & $\mathfrak{g}_{-1}$ \\
\hline
$\mathfrak{sl}(n+2,\mathbb{R})$ & $\mathfrak{sl}(n,\mathbb{R})+\mathbb{R}^2$ & $\lambda_1\oplus \lambda_{n-1}$ \\
\hline
$\mathfrak{su}(p+1,q+1)$ & $\mathfrak{su}(p,q)+\mathbb{R}^2$ &  $\lambda_1$\\
\hline
$\mathfrak{so}(p+2,q+2)$ & $\mathfrak{so}(p,q)+\mathfrak{sl}(2,\mathbb{R})+\mathbb{R}$ & $\lambda_1\otimes \lambda_1$ \\
\hline
$\mathfrak{sp}(2n+2,\mathbb{R})$ & $\mathfrak{sl}(2n,\mathbb{R})+\mathbb{R}$ & $\lambda_1$ \\
\hline
$\mathfrak{so}^\star(2n+2)$ & $\mathfrak{so}^\star(2n)+\mathfrak{su}(2)+\mathbb{R}$ & $\lambda_1\otimes \lambda_1$ \\
\hline
\end{tabular}
\end{center}

\newpage

\end{document}